%
%

\documentclass{memo-l}


\usepackage{amsmath,amssymb,amsthm,amscd,dsfont,paralist}

\newtheorem{thm}{Theorem}[chapter]
\newtheorem{lem}[thm]{Lemma}
\newtheorem{cor}[thm]{Corollary}
\newtheorem{prop}[thm]{Proposition}

\theoremstyle{definition}

\theoremstyle{remark}
\newtheorem{rem}[thm]{Remark}

\numberwithin{section}{chapter}
\numberwithin{equation}{chapter}


\newcommand{\dist}{\operatorname{dist}} 
\renewcommand{\div}{\operatorname{div}} 

\newcommand{\im}{\operatorname{im}}

\newcommand{\lin}{\operatorname{span}}  
\newcommand{\0}{\mspace{0mu}_0}       
\renewcommand{\Re}{\operatorname{Re}}
\newcommand{\supp}{\operatorname{supp}} 

\newcommand{\tr}{\operatorname{tr}}

\newcommand{\Jump}[1]{[\![ #1]\!]}

\makeindex

\begin{document}

\frontmatter

\title{Rayleigh-Taylor instability for the two-phase Navier-Stokes equations with surface tension\\in cylindrical domains}


\author{Mathias Wilke}
\address{Universit\"at Regensburg, Fakult\"at f\"ur Mathematik, 93040 Regensburg, Germany}
\curraddr{}
\email{mathias.wilke@ur.de}
\thanks{First of all I would like to thank Jan Pr\"{u}ss for all the support and the motivation during the last years. Furthermore I am grateful to Dieter Bothe for proposing this topic. }

\author{}
\address{}
\curraddr{}
\email{}
\thanks{}


\subjclass[2010]{35R35 Primary, 35B35, 76D03, 76D45, 76E17, 76D05}

\keywords{}


\maketitle

\tableofcontents

\begin{abstract}
This article is concerned with the dynamic behaviour of two immiscible and incompressible fluids in a cylindrical domain, which are separated by a sharp interface. In case that the heavy fluid is situated on top of the light fluid, one expects that the fluid on top sags down into the lower phase. This effect is known as the \emph{Rayleigh-Taylor-Instability}. We present a rather complete analysis of the corresponding free boundary problem which involves a \emph{contact angle}.
Our main result implies the existence of a critical surface tension with the following property. In case that the surface tension of the interface separating the two fluids is smaller than the critical surface tension, one has {Rayleigh-Taylor-Instability}. On the contrary, if the interface has a greater surface tension than the critical value, the instability effect does not occur and one has exponential stability of a flat interface. The last part of this article is concerned with the bifurcation of nontrivial equilibria in multiple eigenvalues. The invariance of the corresponding bifurcation equation with respect to rotations and reflections yields the existence of bifurcating subcritical equilibria. Finally it is proven that the bifurcating equilibria are unstable.
\end{abstract}

%

\chapter*{Introduction}

In a wider sense, this article is concerned with the mathematical analysis of the dynamics of fluids. To be more precise, the behavior of two fluids inside a bounded container, separated by a sharp interface is investigated. 

Let $u=u(t,x)$ and $p=p(t,x)$ denote the velocity field and the pressure field of a single incompressible fluid in a domain $\Omega$. By saying that the fluid is incompressible, we mean that its density $\rho>0$ is constant. Then the dynamics of the fluid are described by the Navier-Stokes equations
\begin{align}\label{int:NS}
\begin{split}
\partial_t(\rho u)-\mu\Delta u+\rho(u\cdot\nabla)u+\nabla p=\rho f,&\quad t>0,\ x\in\Omega,\\
\div u=0,&\quad t>0,\ x\in\Omega,
\end{split}
\end{align}
where $\mu>0$ represents the viscosity of the fluid and $f$ is some external force (e.g.\ gravity). The first equation reflects the balance of momentum, while the second equation states the conservation of mass.

Let us consider a more comprehensive situation, where the domain $\Omega$ is occupied by two incompressible and immiscible fluids, \emph{fluid 1} and \emph{fluid 2}, which are separated by a sharp interface $\Gamma(t)$ for each $t\ge0$. We denote by $\Omega_j(t)$ the subset of $\Omega$ which is filled with \emph{fluid j}, $j\in\{1,2\}$ with $\rho_j$,$\mu_j$ being the density and viscosity, respectively, of \emph{fluid j}. If $u^j$ and $p^j$ are the velocity fields and the pressure fields of \emph{fluid j}, respectively, then, for $t\ge 0$, one sets
$$u(t,x):=
\begin{cases}
u^1(t,x),&\ x\in\Omega_1(t),\\
u^2(t,x),&\ x\in\Omega_2(t),
\end{cases}\quad
p(t,x):=
\begin{cases}
p^1(t,x),&\ x\in\Omega_1(t),\\
p^2(t,x),&\ x\in\Omega_2(t).
\end{cases}
$$
Assuming that $(u^j,p^j)$ satisfies the Navier-Stokes equations in each of the phases $\Omega_j(t)$, then we may conclude that $(u,p)$ satisfies \eqref{int:NS} for all $t>0$ and $x\in\Omega\backslash\Gamma(t)$, where $\rho$ and $\mu$ are defined by
$$\rho(x):=
\begin{cases}
\rho_1,&\ x\in\Omega_1(t),\\
\rho_2,&\ x\in\Omega_2(t),
\end{cases}\quad
\mu(x):=
\begin{cases}
\mu_1,&\ x\in\Omega_1(t),\\
\mu_2,&\ x\in\Omega_2(t).
\end{cases}
$$
Clearly one expects that the two fluids should affect each other in their dynamics. Therefore, it is natural to ask for relations that describe the coupling of the two fluids across the interface $\Gamma(t)$. If one neglects effects of phase transitions between the phases $\Omega_1(t)$ and $\Omega_2(t)$ (e.g.\ the exchange of mass) then the motion of the moving boundary $\Gamma(t)$ should only be caused by the velocity fields of the both fluids. Therefore it is natural to propose that $u^2|_{\Gamma(t)}=u^1|_{\Gamma(t)}$. Then the \emph{normal velocity} $V_\Gamma$ of $\Gamma(t)$ is given by
\begin{equation}\label{int:NS2}
V_\Gamma=u\cdot\nu_\Gamma,
\end{equation}
where $\nu_{\Gamma}$ denotes the unit normal field on $\Gamma(t)$ pointing from $\Omega_1(t)$ to $\Omega_2(t)$.
We call the quantity $\Jump{u}:=u^2|_{\Gamma(t)}-u^1|_{\Gamma(t)}$ the \emph{jump of $u$ across $\Gamma(t)$}. Note that $\Jump{u}=0$ if and only if the velocity field $u$ is continuous across the interface $\Gamma(t)$. Another condition on $\Gamma(t)$ reads
\begin{equation}\label{int:NS3}
-\Jump{\mu(\nabla u+\nabla u^{\sf T})}\nu_\Gamma+\Jump{p}\nu_\Gamma=\sigma H_\Gamma\nu_\Gamma,
\end{equation}
where $\sigma>0$ denotes the (constant) \emph{surface tension} of $\Gamma(t)$ and $H_\Gamma:=-\div_\Gamma\nu_\Gamma$ is the \emph{mean curvature} of $\Gamma(t)$ with $\div_\Gamma$ being the surface divergence on $\Gamma(t)$. Condition \eqref{int:NS3} describes the balance of forces on the interface. To be precise, there is no contribution to the rate of change of the momentum coming from the interface $\Gamma(t)$.

If the fixed boundary $\partial\Omega$ of $\Omega$ is not empty, then the system \eqref{int:NS}-\eqref{int:NS3} with $\Jump{u}=0$ has to be equipped with appropriate boundary conditions on $\partial\Omega$ as well as some initial conditions on $u(0)=u_0$ and $\Gamma(0)=\Gamma_0$. There is a vast literature concerning the mathematical treatment of free boundary problems for the Navier-Stokes equations with or without surface tension. To this end we can only give a subjective selection and refer the reader to \cite{Bea84, BP08, BP08a, Deni91, Deni93, Deni94, DS91, DS95, DS02, KPW10, Kull91, PrSi09, PrSi09a, PrSi09b, PrSi09c, Ray1883, ShSh07, ShSh07a, Sol83, So84, So86, So90, So91, So99, SoTa91, SoTa92, SoTa90, Tanaka93b, Tanaka93, Ta96, TT95, Tay50}. For a derivation of \eqref{int:NS}-\eqref{int:NS3} we refer to \cite{Ish06}.

To describe the effect of what is called \emph{Rayleigh-Taylor instability}, let us consider the case that $\Omega=\mathbb{R}^n$ consists of two phases $\Omega_1(t)$ and $\Omega_2(t)$ which are separated by an interface $\Gamma(t)$, given by a graph of a height function $h$ over $\mathbb{R}^{n-1}$, i.e.\
$$\Gamma(t):=\{x=(x',x_n)\in\Omega:x_n=h(t,x'),\ x'\in\mathbb{R}^{n-1}\}.$$
Assume furthermore, that $\Omega_2(t)$ is the upper phase, hence $$\Omega_2(t)=\{x=(x',x_n)\in\Omega:x_n>h(t,x'),\ x'\in\mathbb{R}^{n-1}\}.$$
Both phases are filled with two fluids with possibly different densities which are accelerated in the direction of $-e_n$ by the gravitational force.

Taking a close look at the system \eqref{int:NS}-\eqref{int:NS3} it turns out that the vanishing velocity fields, constant pressure fields and the flat interfaces belong to the set of \emph{equilibria}, i.e.\ the set of all solutions, which are constant with respect to $t$. Henceforth we will speak of the trivial equilibrium, when $u=0$, $p$ is constant  and $h=0$. Heuristically one expects that the stability behavior of the trivial equilibrium is being influenced by the densities $\rho_2>0$ and $\rho_1>0$ of the fluids. Indeed, if $\Jump{\rho}=\rho_2-\rho_1>0$, i.e.\ if the heavier fluid is placed above the lighter fluid, then one expects that the trivial equilibrium is unstable while in case that $\Jump{\rho}\le0$, the trivial equilibrium should be stable. Indeed, if $\Jump{\rho}>0$ then the upper phase, which is the heavier one, should sack down into the lower phase, see Figure 1. This effect is called Rayleigh-Taylor instability and it goes back to the pioneering works of \textsc{Rayleigh} \cite{Ray1883} and \textsc{Taylor} \cite{Tay50}. For more information concerning Rayleigh-Taylor instability we refer the interested reader to \textsc{Chandrasekhar} \cite{Cha61} \& \textsc{Kull} \cite{Kull91} and the references cited therein.
A rigorous proof of Rayleigh-Taylor instability for the two-phase Navier-Stokes equations in the above setting has been given by \textsc{Pr\"{u}ss} \& \textsc{Simonett} \cite{PrSi09c}. The basic strategy is to consider the full linearization of the quasilinear problem \eqref{int:NS}-\eqref{int:NS3} at the equilibrium and to compute the spectrum of the linearization. Due to the lack of compactness, there is a portion of approximate eigenvalues in the spectrum of the linearization. In addition, there is no spectral gap which would allow to apply classical tools to carry over the linear stability properties to the nonlinear case. To this end the authors in \cite{PrSi09c} apply Henry's instability theorem \cite[Theorem 5.1.5]{Hen81} which does not require a spectral gap.

In the periodic framework, i.e.\ if $\Omega=\mathbb{T}^2\times\mathbb{R}$, where $\mathbb{T}=\mathbb{R}/\mathbb{Z}$ is the 1-torus a rigorous proof of Rayleigh-Taylor instability has been given by \textsc{Tice} \& \textsc{Wang} in \cite{TiWa12}. Note that if $\Jump{\rho}>0$, then the result in \cite{PrSi09c} states that the trivial equilibrium is always unstable, no matter what the remaining parameters $\mu>0$ and $\sigma>0$ are. However, in the periodic setting considered in \cite{TiWa12}, the stability properties of the trivial equilibrium do also depend on the surface tension. To be more precise, there exists a critical surface tension $\sigma_c>0$ such that if $\sigma>\sigma_c$, then the trivial equilibrium is stable, while if $0<\sigma<\sigma_c$, it is unstable. In other words, even if $\Jump{\rho}>0$, a sufficiently large surface tension $\sigma>0$ of $\Gamma(t)$ prevents the heavier phase of sacking down into the lower phase.

An advantage of the approach via maximal regularity of type $L_p$ which has been used in \cite{PrSi09c} is that one obtains a semi-flow for the free boundary problem in a natural phase space. In particular, there is no loss of regularity. With the help of functional calculus for sectorial operators and harmonic analysis it is then shown that there exists $\lambda_\infty>0$ such that the interval $[0,\lambda_\infty]$ is the unstable part of the spectrum of the linearization. The functional analytic setting used in \cite{PrSi09c} then allows to apply Henry's instability theorem \cite[Theorem 5.1.5]{Hen81} to conclude instability for the nonlinear problem. In contrast to the result in \cite{PrSi09c}, the authors in \cite{TiWa12} construct so-called growing mode solutions (horizontal Fourier modes growing exponentially in time) for the linearized problem and use several energy estimates to study the spectrum of the full linearization. The passage from linear to nonlinear (in-)stability follows from a Guo-Strauss bootstrap procedure, which has been introduced by \textsc{Guo} \& \textsc{Strauss} in \cite{GuoStr95}. Due to the higher order energy estimates, the regularity of the initial values is considerably high and therefore not optimal, when one compares with the assumptions in \cite{PrSi09c}. However, the authors in \cite{TiWa12} obtain a clear picture of the stability properties of the trivial equilibrium in dependence of $\Jump{\rho}$ and $\sigma>0$. Concerning further results on Rayleigh-Taylor instability for different problems, we refer the reader to the selection \cite{BSW14,EMM12,GuoTic11,JTW16,JJW14,JJW16,WX16}.

So far, there seem to be no results on Rayleigh-taylor instability for \eqref{int:NS} in bounded domains. It is one purpose of this article to extend the results obtained in \cite{PrSi09c} to the framework of bounded cylindrical domains. To be precise, we assume that $\Omega=G\times (H_1,H_2)$, where $G\subset\mathbb{R}^{n-1}$, $n\in\{2,3\}$ is a bounded domain with smooth boundary and $H_1<0<H_2$. Suppose furthermore that there is a family of hypersurfaces $\{\Gamma(t)\}_{t\ge 0}$ given as a graph of some height function $h$ over $G$, i.e.
$$\Gamma(t)=\{(x',x_n)\in\Omega:x_n=h(t,x'),\ x'\in G\},\quad t>0,$$
such that for each $t\ge 0$ the interface $\Gamma(t)$ divides $\Omega$ into two subdomains $\Omega_1(t)$ and $\Omega_2(t)$ which are filled with two fluids, respectively. Let us make the convention that $\Omega_2(t)$ is the upper phase. Assuming that the equations \eqref{int:NS}-\eqref{int:NS3} together with the condition $\Jump{u}=0$ are satisfied, we are led in a natural way to the problem of finding suitable boundary conditions on the vertical part $S_1:=\partial G\times (H_1,H_2)$ and the horizontal part $S_2:=(G\times\{H_1\})\cup (G\times \{H_2\})$ of the boundary $\partial\Omega$ of $\Omega$. This turns out to be a delicate question, since within the above setting we are on the one side concerned with two parts $S_1$ and $S_2$ of the boundary such that $\partial S_1=\partial S_2$. Therefore the boundary conditions on $S_1$ and $S_2$ have to be chosen in such a way that they are compatible to each other. On the other side we have to deal with a \emph{contact angle problem}, as $\partial\Gamma(t)$ is a moving contact line on $S_1$. At this point we want to emphasize that the choice of the periodic setting in \cite{TiWa12} allows to circumvent the formation of a contact angle.

The theory of contact angle problems, in particular with a dynamic contact angle which depends on $t$, is yet not well understood. In fact, there exist different point of views about how to model such a problem. One party supposes that the dynamic contact angle is determined by an additional equation, while the other party assumes that the contact angle will be determined by the dynamic equations for the interface and the fluid, hence the equation for the contact angle should be redundant. We refer to \cite{Bil06} \& \cite{Shik97} and to the references given therein.

Therefore, in order to avoid this lack of clarity, we assume throughout this article, that the contact angle is constant and equal to 90 degrees. One can interpret this ansatz as a kind of idealization. It is possible to translate the condition on the contact angle to a condition on the height function $h$ from above. Indeed, if $h$ is sufficiently smooth, then the unit normal on $\Gamma(t)$ with respect to $\Omega_1(t)$ is given by
$$\nu_\Gamma=\frac{1}{\sqrt{1+|\nabla_{x'}h|^2}}
\begin{pmatrix}
-\nabla_{x'} h\\ 1
\end{pmatrix}.$$
Since the outer unit normal on $S_1$ is given by $\nu_{S_1}=(\nu_{\partial G},0)^{\sf T}$, the condition on the contact angle reads $\nu_\Gamma\cdot\nu_{S_1}=0$ or equivalently $\partial_{\nu_{\partial G}} h=0$ at the contact line. Concerning $S_1$ it is not possible to propose Dirichlet boundary conditions, the so-called \emph{no-slip} boundary conditions, since this leads to a paradoxon for the moving contact line, see e.g.\ \cite{Sol83}. The next canonical choice are the so-called \emph{Navier} boundary conditions or \emph{partial-slip} boundary conditions
$$u\cdot\nu_{S_1}=0,\quad P_{S_1}(\mu(\nabla u+\nabla u^{\sf T})\nu_{S_1})+\alpha u=0,$$
where $P_{S_1}:=I-\nu_{S_1}\otimes\nu_{S_1}$ denotes the projection to the tangent space on $S_1$. The parameter $\alpha>0$ has the physical meaning of a friction coefficient. However, it turns out that this kind of boundary condition does not allow the interface to move along $S_1$ which is not very reasonable, as numerical simulations show. To see this, consider for simplicity the case $n=2$. The equation \eqref{int:NS2} in terms of $h$ then reads
\begin{equation}\label{int:NS4}
\partial_t h=u_2-u_1\partial_{1} h,
\end{equation}
where $u=(u_1,u_2)$. Observe that for $n=2$ the partial slip conditions read as follows
$$u_1=0,\quad \mu(\partial_1 u_2+\partial_2 u_1)+\alpha u_2=0.$$
Therefore it holds that $\mu\partial_1 u_2+\alpha u_2=0$, which is a Robin boundary condition for $u_2$ on $S_1$. Differentiating \eqref{int:NS4} with respect to $x_1$, and taking into account that $\partial_1 h=0$ at $S_1$ (by the contact angle condition) we obtain
$\partial_1 u_2=0$, hence $u_2=0$ if $\alpha>0$. Consequently it holds that $\partial_t h=0$ at $S_2$ and therefore $h(t)$ is constant with respect to $t$.

In order to circumvent this problem, we will consider the case $\alpha=0$, the so-called \emph{pure-slip} boundary conditions. From a physical point of view this means that there is no friction on the boundary $S_1$. Having fixed the boundary conditions on $S_1$ we may choose suitable boundary conditions on $S_2$, having in mind that these conditions have to match those on $S_1$. It turns out that homogeneous Dirichlet boundary conditions are a good choice, since they are compatible with the pure-slip boundary conditions on $S_1$ and furthermore they allow to apply \emph{Korn's inequality} for $Du:=\nabla u+\nabla u^{\sf T}$, see Theorem \ref{thm:korn}. Note that the no-slip boundary conditions on $S_2$ do not cause any problems with the moving interface, since we will always have $\Gamma(t)\cap S_2=\emptyset$ for all $t\ge 0$. We are thus led to the problem
\begin{align}\label{int:NScap}
\begin{split}
\partial_t(\rho u)-\mu\Delta u+\rho (u\cdot\nabla)u+\nabla p&=-\rho\gamma_a e_n,\quad \text{in}\ \Omega\backslash\Gamma(t),\\
\div u&=0,\quad \text{in}\ \Omega\backslash\Gamma(t),\\
-\Jump{\mu(\nabla u+\nabla u^{\sf T})}\nu_\Gamma+\Jump{p}\nu_\Gamma&=\sigma H_\Gamma\nu_\Gamma,\quad \text{on}\ \Gamma(t),\\
\Jump{u}&=0,\quad \text{on}\ \Gamma(t),\\
V_\Gamma&=u\cdot\nu_\Gamma,\quad \text{on}\ \Gamma(t),\\
P_{S_1}\left(\mu(\nabla u+\nabla u^{\sf T})\nu_{S_1}\right)&=0,\quad \text{on}\ S_1\backslash\partial\Gamma(t),\\
u\cdot\nu_{S_1}&=0,\quad \text{on}\ S_1\backslash\partial\Gamma(t),\\
u&=0,\quad \text{on}\ S_2,\\
\nu_\Gamma\cdot\nu_{S_1}&=0,\quad \text{on}\ \partial\Gamma(t),\\
u(0)&=u_0,\quad \text{in}\ \Omega\backslash\Gamma(0),\\
\Gamma(0)&=\Gamma_0,
\end{split}
\end{align}
where we denote by $\gamma_a>0$ the acceleration constant due to gravity.

With this article, we present a rather complete analysis of \eqref{int:NScap}. In Chapter \ref{chptr:redmodprbl} we will first transform \eqref{int:NScap} to a fixed domain which does not vary in time. This will be done by means of a height function $h$, assuming that $\Gamma(t)$ is given as the graph of $h$ over the domain $G$. By means of local charts the transformed problem can be drawn back to certain model problems. As the analysis of two types of these model problems, namely the Stokes equations in quarter-spaces and the two-phase Stokes equations in half spaces is not known, we will provide a systematic treatment of these problems subsequently. At this point we want to emphasize that the analysis of the latter problems is more involved than the usual model problems in half spaces. This is due to the fact that one has to deal with mixed boundary conditions meeting at the contact line. However, our assumption on the contact angle enables us to use reflection techniques in order to draw back the quarter space to a half space with dirichlet boundary conditions and the two-phase half space to a two-phase full space with a flat interface.

In Chapter \ref{chptr:localization} we use the results from Chapter \ref{chptr:redmodprbl} combined with a localization procedure to prove existence and uniqueness of a solution of the principal linearization having maximal regularity of type $L_p$. To be precise, if $u$ and $p$ denote the (transformed) velocity field and pressure field, respectively, we will show that $(u,p,\Jump{p},h)$ enjoys the regularity
$$u\in H_p^1(J;L_p(\Omega)^n)\cap L_p(J;H_p^2(\Omega\backslash\Sigma)^n),\quad p\in L_p(J;\dot{H}_p^1(\Omega)),$$
$$\Jump{p}\in W_p^{1/2-1/2p}(J;L_p(\Sigma))\cap L_p(J;W_p^{1-1/p}(\Sigma)).$$
and
$$h\in W_p^{2-1/2p}(J;L_p(\Sigma))\cap H_p^1(J;W_p^{2-1/p}(\Sigma))\cap L_p(J;W_p^{3-1/p}(\Sigma)),$$
where $J=[0,T]$ is some nonempty bounded interval. This optimal regularity result in turn allows to apply the contraction mapping principle in Chapter \ref{chptr:LWP} to obtain a unique solution of the nonlinear problem having optimal regularity as well. In particular, the problem \eqref{int:NScap} generates a local semiflow in a naural phase space.

Chapter \ref{QualBeh} is devoted to the investigation of the stability properties of the trivial equilibrium, i.e.\ $u=0$, $h=0$ and $p$ is constant. It turns out that if $\Jump{\rho}>0$ then there exists a critical surface tension $\sigma_c:=\Jump{\rho}\gamma_a/\lambda_1>0$, where $\lambda_1>0$ denotes the first nontrivial eigenvalue of the Neumann Laplacian in $L_2(G)$. If $\sigma>\sigma_c$ then the trivial equilibrium is exponentially stable in the natural phase space, while in case $\sigma\in(0,\sigma_c)$ it will be unstable. If $\Jump{\rho}\le 0$, then the trivial equilibrium is always exponentially stable. Specializing to the case $G=B_R(0)$, we obtain as a corollary that for fixed surface tension $\sigma>0$ and if $\Jump{\rho}>0$ there exists a critical radius
$$R_c:=\left(\frac{\sigma\lambda_1^*}{\Jump{\rho}\gamma_a}\right)^{1/2},$$
such that if $R<R_c$, then the trivial equilibrium is exponentially stable, while for $R>R_c$ it will be unstable. Here $\lambda_1^*>0$ denotes the first nontrivial eigenvalue of the Neumann Laplacian in $L_2(B_1(0))$, given by $\lambda_1^*=(j_{1,1}')^2$, where $j_{1,1}'$ is the first zero of the derivative $J_1'$ of the Bessel function $J_1$ (see \cite{Abra64}). The proof of the stability result requires some effort, since after the transformation to a fixed domain one has to pay the price that in particular the (transformed) velocity field is no longer divergence free. Therefore, one has to split the solution into two parts in a suitable way such that one part is divergence free while the other part, whose divergence does not vanish, satisfies a nonlinear problem, which can be handled by the implicit function theorem.

The results in Chapter \ref{QualBeh} suggest that if $\sigma$ descreases from $\sigma>\sigma_c$ to $\sigma<\sigma_c$, then an eigenvalue of the full linearization will cross the imaginary axis. Therefore it is natural to ask for possible bifurcations from the trivial equilibrium. In Chapter \ref{chptr:bifurc} we will see that the eigenvalue which crosses the imaginary axis through zero is, unfortunately, not simple if $n=3$. Therefore it is not possible to apply the bifurcation results of Crandall-Rabinowitz directly. By the choice of the boundary conditions, the equilibria of the transformed problem are $u=0$, $p$ is constant and the height function $h$ satisfies the \emph{capillary equation}
\begin{align}\label{int:capeq}
\begin{split}
\sigma \div_{x'}\left(\frac{\nabla_{x'} {h}}{\sqrt{1+|\nabla_{x'} {h}|^2}}\right)+\Jump{\rho}\gamma_a {h}&=0,\quad x'\in B_R(0),\\
\partial_{\nu_{B_R(0)}}{h}&=0,\quad x'\in\partial B_R(0).
\end{split}
\end{align}
This equation for $h$ exhibits certain symmetry properties, in particular we will show that it is invariant under the group action of the orthogonal group $O(2)$. This fact enables us to reduce the bifurcation equation to a one dimensional equation and to apply the implicit function theorem which yields the existence of \emph{subcritical} bifurcating branches from the trivial solution. The remaining part of Chapter \ref{chptr:bifurc} deals with the proof that the bifurcating equilibria are unstable. To this end we compute the full linearization in these equilibria and show that there is at least one eigenvalue in the unstable part of the spectrum of the linearization. The passage to nonlinear instability follows the same lines as in Chapter \ref{QualBeh}.

Finally we decided to collect all technical results which are needed for the execution of the above program in an appendix. Several results concerning extension operators, auxiliary elliptic and parabolic problems in quarter spaces and two-phase half spaces but also in bounded cylindrical domains are provided. In addition, we state the divergence theorem for bounded Lipschitz domains as well as Korn's inequality for functions having a vanishing trace on some nontrivial part of the boundary of the domain.

\medskip

\noindent\textbf{Notation:} The symbols $H_p^s$, $W_p^s$, $s\ge 0$ refer to the Bessel potential spaces and Sobolev-Slobodeckii spaces, respectively (Sobolev spaces for $s\in\mathbb{N}$ with $H=W$). If $J=[0,T]$ is some interval and $X$ a suitable Banach space, then $_0W_p^s(J;X)$ denotes the subspace of $W_p^s(J;X)$ consisting of all functions having a vanishing trace at $t=0$, whenever it exists. Finally we denote by $\dot{W}_p^k(\Omega)=\dot{H}_p^k(\Omega)$ the homogeneous Sobolev space of order $k\in\mathbb{N}$, where $\Omega\subset\mathbb{R}^n$ is some domain.

\aufm{Mathias Wilke}


\mainmatter
%
%
%

\chapter{Preliminaries and model problems}\label{chptr:redmodprbl}

For the sake of readability we will assume throughout this article that the space dimension $n$ is equal to $3$. This is the most important case from a viewpoint of applications. 
Furthermore we will assume from now on that $p>n+2$. In Chapter \ref{chptr:LWP} about the well-posedness of the nonlinear model, this condition on $p$ is a result of some Sobolev embeddings which are needed for the proof. Observe that in case $n=3$ this yields $p>5$.

It is convenient to introduce the \emph{modified pressure} $\tilde{\pi}:=\pi+\rho\gamma_a x_3$ in \eqref{int:NScap}. Then we obtain the following problem.
\begin{align}\label{eq:NScap1mod}
\begin{split}
\partial_t(\rho u)-\mu\Delta u+\rho (u\cdot\nabla)u+\nabla \tilde{\pi}&=0,\quad \text{in}\ \Omega\backslash\Gamma(t),\\
\div u&=0,\quad \text{in}\ \Omega\backslash\Gamma(t),\\
-\Jump{\mu(\nabla u+\nabla u^{\sf T})}\nu_\Gamma+\Jump{\tilde{\pi}}\nu_\Gamma&=\sigma H_\Gamma\nu_\Gamma+\Jump{\rho}\gamma_a x_3\nu_\Gamma,\quad \text{on}\ \Gamma(t),\\
\Jump{u}&=0,\quad \text{on}\ \Gamma(t),\\
V_\Gamma&=u\cdot\nu_\Gamma,\quad \text{on}\ \Gamma(t),\\
P_{S_1}\left(\mu(\nabla u+\nabla u^{\sf T})\nu_{S_1}\right)&=0,\quad \text{on}\ S_1\backslash\partial\Gamma(t),\\
u\cdot\nu_{S_1}&=0,\quad \text{on}\ S_1\backslash\partial\Gamma(t),\\
u&=0,\quad \text{on}\ S_2,\\
\nu_\Gamma\cdot\nu_{S_1}&=0,\quad \text{on}\ \partial\Gamma(t),\\
u(0)&=u_0,\quad \text{in}\ \Omega\backslash\Gamma(0),\\
\Gamma(0)&=\Gamma_0.
\end{split}
\end{align}
Here $\Omega=G\times(H_1,H_2)$, $H_1<0<H_2$, is a cylindrical domain where $G\subset\mathbb{R}^{2}$ is an open bounded domain with a smooth boundary $\partial G$. The compact free boundary $\Gamma(t)$ divides $\Omega$ into two unbounded disjoint phases $\Omega_j(t)$, $j=1,2$, so that $\Omega=\Omega_1(t)\cup\Gamma(t)\cup\Omega_2(t)$. The convention is that $\Omega_2(t)$ is the upper phase while $\Omega_1(t)$ is the lower one with the unit normal $\nu_\Gamma$ at $x\in\Gamma(t)$ pointing from $\Omega_1(t)$ to $\Omega_2(t)$. We denote by $\nu_{S_1}$ the outer unit normal at the fixed boundary $S_1$. The operator $P_{S_1}:=I-\nu_{S_1}\otimes \nu_{S_1}$ stands for the projection to the tangential space on $S_1$

\section{Reduction to a flat interface}

In this section we transform the time-dependent domain $\Omega\backslash\Gamma(t)$ to a fixed domain. To this end,
we assume that
$$\Gamma(t)=\{x\in G\times (H_1,H_2):x_3=h(t,x'),\ x'=(x_1,x_2)\in G,\ t\ge 0\}.$$
Let $\varphi\in C^\infty(\mathbb{R};[0,1]))$ such that $\varphi(s)=1$ if $|s|\le\delta/2$ and $\varphi(s)=0$ if $|s|\ge \delta$, where $\delta<\min\{-H_1,H_2\}/2$. Define a mapping
$$\Theta_h(t,\bar{x}):=\bar{x}
+\varphi(\bar{x}_3)h(t,\bar{x}')e_3=:\bar{x}
+\theta_h(t,\bar{x}),$$
where $\bar{x}:=(\bar{x}',\bar{x}_3)$ and for fixed $t>0$ set $x=\Theta_h(t,\bar{x})$. An easy computation shows 
$$\theta_h'^{\sf T}=
\begin{pmatrix}
0 & 0 & \partial_1h\varphi\\
0 & 0 & \partial_2h\varphi\\
0 & 0 & h\varphi'
\end{pmatrix},
$$
It follows that $\Theta_h'$ is invertible if $\|h\|_{\infty,\infty}<1/(2|\varphi'|_\infty)$ and
$$(\Theta_h')^{-\sf T}=(I+\theta_h'^{\sf T})^{-1}=\frac{1}{1+h\varphi'}\begin{pmatrix}
1+h\varphi' & 0 & -\partial_1h\varphi\\
0 & 1+h\varphi' & -\partial_2h\varphi\\
0 & 0 & 1
\end{pmatrix}.
$$
In the sequel, let $\|h\|_{\infty,\infty}<\eta$ with $0<\eta\le1/(2|\varphi'|_\infty)$ being sufficiently small. Then the inverse $\Theta_h^{-1}:\Omega\to\Omega$ is well defined and it transforms the free interface $\Gamma(t)$ to the flat and fixed interface $\Sigma:=G\times\{0\}$. Now we define the transformed quantities
\begin{align*}
\bar{u}(t,\bar{x})&:=u(t,\Theta_h(t,\bar{x}))\\
\bar{\pi}(t,\bar{x})&:=\tilde{\pi}(t,\Theta_h(t,\bar{x}))
\end{align*}
and compute $\nu_\Gamma=(-\nabla_{x'}h,1)^{\sf T}/\sqrt{1+|\nabla_{x'} h|^2}$,
\begin{align*}\nabla \tilde{\pi}&=\nabla\bar{\pi}-M_0(h)\nabla\bar{\pi}\\
\div u&=\div\bar{u}-(M_0(h)\nabla|\bar{u})\\
\Delta u&=\Delta\bar{u}-M_1(h):\nabla^2\bar{u}-M_2(h)\nabla\bar{u}\\
\partial_t u&=\partial_t\bar{u}-\varphi\partial_t h(1+\varphi'h)^{-1}\partial_{3}\bar{u},
\end{align*}
where $M_0(h):=\theta_h'^{\sf T}(I+\theta_h'^{\sf T})^{-1}$,
$$M_1(h):\nabla^2\bar{u}:=\left[2\operatorname{sym}(\theta_h'^{\sf T}[I+\theta_h']^{-\sf T})-
[I+\theta_h']^{-1}\theta_h'\theta_h'^{\sf T}[I+\theta_h']^{-\sf T}\right]:\nabla^2\bar{u},$$
and
$$M_2(h)\nabla\bar{u}:=\left([\Delta\Theta_h^{-1}]\circ\Theta_h|\nabla\right)\bar{u}.$$
Furthermore it holds that $V_\Gamma=(\partial_t\Theta_h|\nu_\Gamma)=\partial_t h(e_3|\nu_\Gamma)=\partial_t h/\sqrt{1+|\nabla_{x'} h|^2}$.
This yields the following transformed problem for $\bar{u}$ and $\bar{\pi}$ (for convenience we drop the bars in the sequel).
\begin{align}\label{eq:NScap2}
\begin{split}
\partial_t(\rho u)-\mu\Delta u+\nabla \pi&=F(u,\pi,h),\quad \text{in}\ \Omega\backslash\Sigma,\\
\div u&=F_d(u,h),\quad \text{in}\ \Omega\backslash\Sigma,\\
-\Jump{\mu \partial_3 v}-\Jump{\mu\nabla_{x'} w}&=G_v(u,h),\quad \text{on}\ \Sigma,\\
-2\Jump{\mu \partial_3 w}+\Jump{\pi}-\sigma\Delta_{x'} h-\Jump{\rho}\gamma_a h&=G_w(u,h),\quad \text{on}\ \Sigma,\\
\Jump{u}&=0,\quad \text{on}\ \Sigma,\\
\partial_t h-w&=H_1(u,h),\quad \text{on}\ \Sigma,\\
P_{S_1}\left(\mu(\nabla u+\nabla u^{\sf T})\nu_{S_1}\right)&=H_2(u,h),\quad \text{on}\ S_1\backslash\partial\Sigma,\\
u\cdot\nu_{S_1}&=0,\quad \text{on}\ S_1\backslash\partial\Sigma,\\
u&=0,\quad \text{on}\ S_2,\\
\partial_{\nu_{\partial G}}h&=0,\quad \text{on}\ \partial\Sigma,\\
u(0)&=u_0,\quad \text{in}\ \Omega\backslash\Sigma\\
h(0)&=h_0,\quad \text{on}\ \Sigma.
\end{split}
\end{align}
Here
\begin{align*}
F(u,p,h)&:=\rho\varphi\partial_t h(1+\varphi'h)^{-1}\partial_{3}u-\mu(M_1(h):\nabla^2 u+M_2(h)\nabla u)+M_0(h)\nabla \pi\\
F_d(u,h)&:=(M_0(h)\nabla|u)\\
G_v(u,h)&:=-\Jump{\mu(\nabla v+\nabla v^{\sf T})}\nabla h+|\nabla h|^2\Jump{\mu\partial_3 v}\\
&\hspace{3cm}+\left((1+|\nabla h|^2)\Jump{\mu\partial_3 w}-(\nabla h|\Jump{\mu\nabla w})\right)\nabla h\\
G_w(u,h)&:=-(\nabla h|\Jump{\mu\nabla w})-(\nabla h|\Jump{\mu\partial_3 v})+|\nabla h|^2\Jump{\mu \partial_3 w}+\sigma G_\kappa(h)\\
G_\kappa(h)&:=\div\left(\frac{\nabla h}{\sqrt{1+|\nabla h|^2}}\right)-\Delta h\\
H_1(u,h)&:=-(v|\nabla h)\\
H_2(u,h)&:=P_{S_1}(\mu(M_0(h)\nabla u+\nabla u^{\sf T}M_0(h)^{\sf T})\nu_{S_1}),
\end{align*}
where we have set $v:=(u_1,u_2)$, $w:=u_3$ and $\nabla w=\nabla_{x'} w$, $\nabla v=\nabla_{x'} v$, $\nabla h=\nabla_{x'}h$ for the sake of readability.

\section{Linearization, regularity and compatibility conditions}

We consider first the linear part, defined by the left side of \eqref{eq:NScap2}, that is
\begin{align}\label{eq:NScap3}
\begin{split}
\partial_t(\rho u)-\mu\Delta u+\nabla \pi&=f,\quad \text{in}\ \Omega\backslash\Sigma,\\
\div u&=f_d,\quad \text{in}\ \Omega\backslash\Sigma,\\
-\Jump{\mu \partial_3 v}-\Jump{\mu\nabla_{x'} w}&=g_v,\quad \text{on}\ \Sigma,\\
-2\Jump{\mu \partial_3 w}+\Jump{\pi}-\sigma\Delta_{x'} h&=g_w,\quad \text{on}\ \Sigma,\\
\Jump{u}&=u_\Sigma,\quad \text{on}\ \Sigma,\\
\partial_t h-m[w]&=g_h,\quad \text{on}\ \Sigma,\\
P_{S_1}\left(\mu(\nabla u+\nabla u^{\sf T})\nu_{S_1}\right)&=P_{S_1}g_1,\quad \text{on}\ S_1\backslash\partial\Sigma,\\
u\cdot\nu_{S_1}&=g_2,\quad \text{on}\ S_1\backslash\partial\Sigma,\\
u&=g_3,\quad \text{on}\ S_2,\\
\partial_{\nu_{\partial G}}h&=g_4,\quad \text{on}\ \partial\Sigma,\\
u(0)&=u_0,\quad \text{in}\ \Omega\backslash\Sigma\\
h(0)&=h_0,\quad \text{on}\ \Sigma,
\end{split}
\end{align}
where $m[w]:=(w_++w_-)/2$ is the arithmetic mean of the directional traces $w_\pm$ of $w$ to $\Sigma$ from $\Omega_2$ and $\Omega_1$.
Note that we neglected the term $\Jump{\rho}\gamma_a h$ in the jump of the stress tensor, as it is of lower order compared to $\Delta_{x'} h$.

Let $J=[0,T]$ with $T\in (0,\infty)$. We are looking for solutions $(u,\pi)$ of the Stokes equation with
$$u\in H_p^1(J;L_p(\Omega)^3)\cap L_p(J;H_p^2(\Omega\backslash\Sigma)^3),\quad \pi\in L_p(J;\dot{H}_p^1(\Omega)),$$
and
$$\Jump{\pi}\in W_p^{1/2-1/2p}(J;L_p(\Sigma))\cap L_p(J;W_p^{1-1/p}(\Sigma)).$$
Note that the latter regularity condition on $\Jump{\pi}$ is determined by the regularity of the Neumann trace of $u$ on $\Sigma$. For the height function $h$ this yields
$$\Delta_{x'}h\in W_p^{1/2-1/2p}(J;L_p(\Sigma))\cap L_p(J;W_p^{1-1/p}(\Sigma))$$
and
$$\partial_t h\in W_p^{1-1/2p}(J;L_p(\Sigma))\cap L_p(J;W_p^{2-1/p}(\Sigma)),$$
hence the optimal regularity class for $h$ is given by
$$h\in W_p^{2-1/2p}(J;L_p(\Sigma))\cap H_p^1(J;W_p^{2-1/p}(\Sigma))\cap L_p(J;W_p^{3-1/p}(\Sigma)).$$
In the sequel we will always assume that $p>2$. Let us discuss the necessary regularity and compatibility conditions on the data $(f,f_d,g_v,g_w,g_h,g_1,g_2,g_3,g_4,u_\Sigma,u_0,h_0)$. If $(u,\pi,\Jump{\pi},h)$ is a solution of \eqref{eq:NScap3} in the regularity classes stated above, then it holds that $f\in L_p(J;L_p(\Omega)^3)$, $f_d\in L_p(J;H_p^1(\Omega\backslash\Sigma))$
$$(g_v,g_w)\in W_p^{1/2-1/2p}(J;L_p(\Sigma)^3)\cap L_p(J;W_p^{1-1/p}(\Sigma)^3),$$
$$u_\Sigma\in W_p^{1-1/2p}(J;L_p(\Sigma)^3)\cap L_p(J;W_p^{2-1/p}(\Sigma)^3),$$
$$g_h\in W_p^{1-1/2p}(J;L_p(\Sigma))\cap L_p(J;W_p^{2-1/p}(\Sigma)),$$
$$P_{S_1}g_1\in W_p^{1/2-1/2p}(J;L_p(S_1)^3)\cap L_p(J;W_p^{1-1/p}(S_1\backslash\partial\Sigma)^3),$$
$$g_2\in W_p^{1-1/2p}(J;L_p(S_1))\cap L_p(J;W_p^{2-1/p}(S_1\backslash\partial\Sigma)),$$
$$g_3\in W_p^{1-1/2p}(J;L_p(S_2)^3)\cap L_p(J;W_p^{2-1/p}(S_2)^3),$$
$$g_4\in W_p^{3/2-1/p}(J;L_p(\Sigma))\cap H_p^1(J;W_p^{1-2/p}(\Sigma))\cap L_p(J;W_p^{2-2/p}(\Sigma)),$$
$$u_0\in W_p^{2-2/p}(\Omega\backslash\Sigma)^3,\quad h_0\in W_p^{3-2/p}(\Sigma).$$
Concerning compatibility conditions at $t=0$ we have $\div u_0=f_d|_{t=0}$, $$g_v|_{t=0}=-\Jump{\mu \partial_3v_0}-\Jump{\mu\nabla_{x'}w_0},$$
$\Jump{u_0}=u_\Sigma|_{t=0}$, $u_0\cdot \nu_{S_1}=g_2|_{t=0}$, $u_0=g_3|_{t=0}$,
$\partial_{\nu_{\partial G}} h_0=g_4|_{t=0}$ and
$$P_{S_1}(\mu(\nabla u_0+\nabla u_0^{\sf T})\nu_{S_1})=P_{S_1}g_1|_{t=0}.$$
Since $\partial\Sigma\subset S_1\neq\emptyset$ and $\partial S_1\cap\partial S_2\neq\emptyset$, there are additional compatibility conditions which have to be satisfied.

If $(u,\pi,\Jump{\pi},h)$ is a solution of \eqref{eq:NScap3} with the above regularity, then the following compatibility conditions at $\partial\Sigma$ and $\partial S_2$ have necessarily to be satisfied.
\begin{itemize}
\item $\Jump{g_2}=u_\Sigma\cdot \nu_{S_1}$, $\Jump{(g_1\cdot e_3)/\mu-\partial_3 g_2}=\partial_{\nu_{S_1}}(u_\Sigma\cdot e_3),$ at $\partial\Sigma$,
\item $P_{\partial G}[(D'v_\Sigma)\nu']=\Jump{P_{\partial G}g_1'/\mu},$
$\partial_t g_4-m[(g_1\cdot e_3)/\mu-\partial_3 g_2]=\partial_{\nu_{\partial G}}g_h,$ at $\partial\Sigma$,
\item $(g_v|\nu_{\partial G})=-\Jump{g_1\cdot e_3}$ at $\partial \Sigma$, $(g_3|\nu_{S_1})=g_2$ at $\partial S_2$,
\item $P_{\partial G}[\mu(D'g_3')\nu']=(P_{\partial G}{g}_{1}'),$ $\mu\partial_{\nu_{S_1}} (g_3\cdot e_3)+\mu\partial_3 g_2=g_1\cdot e_3$ at $\partial S_2$.
\end{itemize}
Here we use the notation $\nu'=\nu_{\partial G}$, $P_{\partial G}:=I-\nu'\otimes\nu'$, $D'v:=\operatorname{sym}[\nabla_{x'}v]$ and $g':=\sum_{k=1}^{2}(g\cdot e_k)e_k$. These conditions follow easily by comparing the equations $\eqref{eq:NScap3}_{3}$ and $\eqref{eq:NScap3}_{5-10}$ with each other.

There is another compatibility and regularity condition hidden in the system, which stems from the divergence equation. Multiply $\div u=f_d$ by $\phi\in H_{p'}^1(\Omega)$, $p'=p/(p-1)$ and integrate by parts (see Proposition \ref{pro:divthmcyldom}) to the result
\begin{multline}\label{eq:regdiv}
\int_\Omega f_d\phi dx-\int_{S_1} g_2\phi|_{S_1}dS_1-\int_{S_2}(g_3\cdot \nu_{S_1})\phi|_{S_2}dS_2+\int_\Sigma(u_\Sigma\cdot\nu_{\Sigma})\phi|_{\Sigma}d\Sigma\\
=-\int_\Omega u\cdot\nabla\phi dx,
\end{multline}
where $\nu_{S_2}(x',H_2)=e_3$, $\nu_{S_2}(x',H_1)=-e_3$ for $x'\in G$ and $\nu_\Sigma=e_3$. It follows that the functional $[\phi\mapsto\langle(f_d,g_2,g_3,u_\Sigma),\phi\rangle]$ defined by the left side of \eqref{eq:regdiv} is continuous on $H_{p'}^1(\Omega)$ with respect to the semi-norm $\|\nabla\cdot\|_{L_{p'}(\Omega)}$. Since $H_{p'}^1(\Omega)$ is dense in the homogeneous Sobolev space $\dot{H}_{p'}^1(\Omega)$ (the constants are already factorized) with respect to $\|\nabla\cdot\|_{L_{p'}(\Omega)}$ for all domains $\Omega$ which are considered in this article, it follows that $(f_d,g_2,g_3,u_\Sigma)$ determines a functional on $\dot{H}_{p'}^1(\Omega)$, i.e.\ $(f_d,g_2,g_3,u_\Sigma)\in \hat{H}_p^{-1}(\Omega):=(\dot{H}_{p'}^1(\Omega))^*$. The norm of $(f_d,g_2,g_3,u_\Sigma)$ in $\hat{H}_p^{-1}(\Omega)$ is then given by
$$
\|(f_d,g_2,g_3,u_\Sigma)\|_{\hat{H}_p^{-1}}=\sup\{\langle(f_d,g_2,g_3,u_\Sigma),\phi\rangle/
\|\nabla\phi\|_{L_{p'}}:\phi\in H_{p'}^1(\Omega)\}.
$$
Moreover, if $u\in H_p^1(J;L_p(\Omega)^n)$, then $\frac{d}{dt}(f_d,g_2,g_3,u_\Sigma)$ is well defined by the computation above, hence
$$(f_d,g_2,g_3,u_\Sigma)\in H_p^1(J;\hat{H}_p^{-1}(\Omega))$$
is another necessary compatibility and regularity condition on the data. In particular, if $\Omega$ is bounded, then we may choose $\phi=1$ in \eqref{eq:regdiv} to obtain
$$\int_\Omega f_d dx-\int_{S_1} g_2dS_1-\int_{S_2}(g_3\cdot \nu_{S_1})dS_2+\int_\Sigma(u_\Sigma\cdot\nu_{\Sigma})d\Sigma=0.$$

\section{Model problems}

The proof of existence and uniqueness of a solution $(u,\pi,\Jump{\pi},h)$ to \eqref{eq:NScap3} is based on a localization procedure. We will obtain six different types of charts, which yield six different types of model problems. These are
\begin{itemize}
\item the full space Stokes equations (without any boundary- or interface conditions)
\item the two-phase Stokes equations with a flat interface and without any boundary condition
\item the Stokes equations with pure slip boundary conditions in a half-space and no interface
\item the Stokes equations with no-slip boundary conditions in a half-space and no interface
\item the Stokes equations in a quarter space with pure slip boundary conditions on one part of the boundary and no-slip boundary conditions on the other part
\item the two-phase Stokes equations with pure slip boundary conditions in a half-space, a flat interface and a contact angle of 90 degrees.
\end{itemize}
While the first four of these problems are well understood, there seem to be no results on well-posedness of the last two problems.

\subsection{The Stokes equations in quarter-spaces}\label{QS}

Consider the problem
\begin{align}\label{eq:NSquart1}
\begin{split}
\partial_t(\rho u)-\mu\Delta u+\nabla \pi&=f,\quad x_1\in\mathbb{R},\ x_2>0,\ x_3>0,\\
\div u&=f_d,\quad x_1\in\mathbb{R},\ x_2>0,\ x_3>0,\\
\mu[\partial_2 u_1+\partial_1 u_2,\partial_3 u_2+\partial_2 u_3]^{\sf T}&=g_1,\quad x_1\in\mathbb{R},\ x_2=0,\ x_3>0,\\
u_2&=g_2,\quad x_1\in\mathbb{R},\ x_2=0,\ x_3>0,\\
u&=g_3,\quad x_1\in\mathbb{R},\ x_2>0,\ x_3=0,\\
u(0)&=u_0,\quad x_1\in\mathbb{R},\ x_2>0,\ x_3>0.
\end{split}
\end{align}
For convenience, let $\Omega:=\mathbb{R}\times\mathbb{R}_+\times\mathbb{R}_+$, $S_1:=\mathbb{R}\times\{0\}\times\mathbb{R}_+$ and $S_2:=\mathbb{R}\times\mathbb{R}_+\times\{0\}$.

In a first step we extend $u_0\in W_p^{2-2/p}(\Omega)^3$ with respect to $x_2$ via the reflection
$$\tilde{u}_0(x_1,x_2,x_3)=\begin{cases}
u_0(x_1,x_2,x_3),\quad&\text{if}\ x_2>0,\\
-u_0(x_1,-2x_2,x_3)+2u_0(x_1,-x_2/2,x_3),\quad&\text{if}\ x_2<0.
\end{cases}$$
Then $\tilde{u}_0\in W_p^{2-2/p}(\mathbb{R}\times\mathbb{R}\times\mathbb{R}_+)^3$. Applying the same method to
$$g_3 \in W_p^{1-1/2p}(J;L_p(S_2)^3)\cap L_p(J;W_p^{2-1/p}(S_2)^3)$$
yields an extension
$$\tilde{g}_3\in W_p^{1-1/2p}(J;L_p(\mathbb{R}\times\mathbb{R})^3)\cap L_p(J;W_p^{2-1/p}(\mathbb{R}\times\mathbb{R})^3).$$
Furthermore it holds that $\tilde{g}_3|_{t=0}=\tilde{u}_0|_{x_3=0}$, since $g_3|_{t=0}=u_0|_{S_2}$. Then we solve the half-space problem
\begin{align}\label{eq:NSquart2}
\begin{split}
\partial_t\tilde{u}-\Delta \tilde{u}&=0,\quad (x_1,x_2,x_3)\in\mathbb{R}^2\times\mathbb{R}_+,\\
\tilde{u}|_{x_3=0}&=\tilde{g}_3,\quad (x_1,x_2)\in\mathbb{R}^2,\ x_3=0,\\
\tilde{u}(0)&=\tilde{u}_0,\quad (x_1,x_2,x_3)\in\mathbb{R}^2\times\mathbb{R}_+,
\end{split}
\end{align}
to obtain a unique solution $$\tilde{u}\in H_p^1(J;L_p(\mathbb{R}_+^3)^3)\cap L_p(J;H_p^2(\mathbb{R}_+^3)^3).$$
If $(u,\pi)$ is a solution of \eqref{eq:NSquart1}, then the (restricted) function $(u-\tilde{u},\pi)$ solves \eqref{eq:NSquart1} with $u_0=g_3=0$ and some modified data $(f,g_1,g_2)$ (not to be relabeled) in the right regularity classes having a vanishing trace at $t=0$ whenever it exists.

In a next step we extend
$$g_1\in\!_0W_p^{1/2-1/2p}(J;L_p(S_1)^2)\cap L_p(J;W_p^{1-1/p}(S_1)^2),$$
and
$$g_2\in\!_0W_p^{1-1/2p}(J;L_p(S_1))\cap L_p(J;W_p^{2-1/p}(S_1)),$$
w.r.t.\ $x_3$ to some functions
$$\tilde{g}_1\in\!_0W_p^{1/2-1/2p}(J;L_p(\mathbb{R}^2)^2)\cap L_p(J;W_p^{1-1/p}(\mathbb{R}^2)^2),$$
and
$$\tilde{g}_2\in\!_0W_p^{1-1/2p}(J;L_p(\mathbb{R}^2))\cap L_p(J;W_p^{2-1/p}(\mathbb{R}^2)),$$
and solve the half-space problem
\begin{align}\label{eq:NSquart3}
\begin{split}
\partial_t\bar{u}-\Delta \bar{u}&=0,\quad x_1,x_3\in\mathbb{R},\ x_2>0,\\
\mu[\partial_2 \bar{u}_1+\partial_1 \bar{u}_2,\partial_3 \bar{u}_2+\partial_2 \bar{u}_3]^{\sf T}&=\tilde{g}_1,\quad x_1,x_3\in\mathbb{R},\ x_2=0,\\
\bar{u}_2&=\tilde{g}_2,\quad x_1,x_3\in\mathbb{R},\ x_2=0,\\
\bar{u}(0)&=0,\quad x_1,x_3\in\mathbb{R},\ x_2>0,
\end{split}
\end{align}
to obtain a unique solution
$$\bar{u}\in\!_0H_p^1(J;L_p(\mathbb{R}\times\mathbb{R}_+\times\mathbb{R})^3)\cap L_p(J;H_p^2(\mathbb{R}\times\mathbb{R}_+\times\mathbb{R})^3).$$
If $(u,\pi)$ is a solution of \eqref{eq:NSquart1} it follows that the (restricted) function $(u-\tilde{u}-\bar{u},\pi)$ solves the problem
\begin{align}\label{eq:NSquart4}
\begin{split}
\partial_t(\rho u)-\mu\Delta u+\nabla \pi&=f,\quad (x_1,x_2,x_3)\in \Omega,\\
\div u&=f_d,\quad (x_1,x_2,x_3)\in \Omega,\\
\mu[\partial_2 u_1+\partial_1 u_2,\partial_3 u_2+\partial_2 u_3]^{\sf T}&=0,\quad (x_1,x_2,x_3)\in S_1,\\
u_2&=0,\quad (x_1,x_2,x_3)\in S_1,\\
u&=g_3,\quad (x_1,x_2,x_3)\in S_2,\\
u(0)&=0,\quad (x_1,x_2,x_3)\in \Omega,
\end{split}
\end{align}
with some modified data $(f,f_d,g_3)$ in the right regularity classes having a vanishing trace at $t=0$ whenever it exists. Note that $g_3:=\bar{u}|_{x_3=0}$ and the compatibility conditions
$(g_3)_2=\partial_2 (g_3)_{1,3}=0$ hold if $x_1\in\mathbb{R}$, $x_2=0$ and $x_3=0$. We will now extend $(f_1,f_3,f_d,(g_3)_{1,3})$ by even reflection and $(f_2,(g_3)_{2})$ by odd reflection to $\{x_2<0\}$. Then we consider the (reflected) half-space problem
\begin{align}\label{eq:NSquart5}
\begin{split}
\partial_t(\rho \hat{u})-\mu\Delta \hat{u}+\nabla \hat{\pi}&=\tilde{f},\quad x_1,x_2\in\mathbb{R},\ x_3>0,\\
\div \hat{u}&=\tilde{f}_d,\quad x_1,x_2\in\mathbb{R},\ x_3>0,\\
\hat{u}&=\tilde{g}_3,\quad x_1,x_2\in\mathbb{R},\ x_3=0,\\
\hat{u}(0)&=0,\quad x_1,x_2\in\mathbb{R},\ x_3>0,
\end{split}
\end{align}
which has a unique solution
$$\hat{u}\in\!_0H_p^1(J;L_p(\mathbb{R}_+^3)^3)\cap L_p(J;H_p^2(\mathbb{R}_+^3)^3),$$
$$\hat{\pi}\in L_p(J;\dot{H}_p^1(\mathbb{R}_+^3)),$$
by \cite[Theorem 6.1]{BP07}.

The (restricted) pair $(u,\pi):=(\tilde{u}+\bar{u}+\hat{u},\hat{\pi})$ is the desired unique solution to \eqref{eq:NSquart1}. We have thus proven the following
\begin{thm}\label{thm:NSquart}
Let $n=3$, $p>5$, $T>0$, $\rho,\mu>0$ and $J=[0,T]$. Then there exists a unique solution
$$u\in H_p^1(J;L_p(\Omega)^3)\cap L_p(J;H_p^2(\Omega)^3)$$
$$\pi\in L_p(J;\dot{H}_p^1(\Omega))$$
of \eqref{eq:NSquart1} if and only if the data satisfy the following regularity and compatibility conditions.
\begin{enumerate}
\item $f\in L_p(J;L_p(\Omega)^3)$;
\item $f_d\in L_p(J;H_p^1(\Omega))$;
\item $g_1\in  W_p^{1/2-1/2p}(J;L_p(S_1)^2)\cap L_p(J;W_p^{1-1/p}(S_1)^2)$;
\item $g_2\in W_p^{1-1/2p}(J;L_p(S_1))\cap L_p(J;W_p^{2-1/p}(S_1))$;
\item $g_3\in W_p^{1-1/2p}(J;L_p(S_2)^3)\cap L_p(J;W_p^{2-1/p}(S_2)^3)$;
\item $u_0\in W_p^{2-2/p}(\Omega)^3$;
\item $\div u_0=f_d|_{t=0}$, $\mu[\partial_2 (u_0)_1+\partial_1 (u_0)_2,\partial_3 (u_0)_2+\partial_2 (u_0)_3]|_{x_2=0}^{\sf T}=g_1|_{t=0}$;
\item $(u_0)_2|_{x_2=0}=g_2|_{t=0}$, $u_0|_{x_3=0}=g_3|_{t=0}$;
\item $(g_3)_2|_{x_2=0}=g_2|_{x_3=0}$, $\mu[\partial_2 (g_3)_1+\partial_1 (g_3)_2,\partial_3 g_2|_{x_3=0}+\partial_2 (g_3)_3]|_{x_2=0}^{\sf T}=g_1|_{x_3=0}$;
\item $(f_d,g_2,g_3)\in H_p^1(J;\hat{H}_p^{-1}(\Omega))$.
\end{enumerate}
\end{thm}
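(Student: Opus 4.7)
My plan is to prove this by reducing the quarter-space problem to a standard half-space Stokes problem, for which maximal $L_p$-regularity is already known (Bothe-Prüss, \cite{BP07}). Necessity of conditions (1)-(8) is the standard trace theorem applied to the regularity class for $(u,\pi)$: each boundary datum sits in the corresponding anisotropic space of temporal order $1-1/2p$ for Dirichlet-type traces and $1/2-1/2p$ for Neumann-type traces. Condition (9) is obtained by comparing the equations on the edge $\{x_2=x_3=0\}$, and the $\hat{H}_p^{-1}$-condition (10) follows from the argument in equation~\eqref{eq:regdiv}, which is independent of the geometry.

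For sufficiency I would follow the four-step reduction already sketched in the excerpt. \emph{Step 1:} Extend $u_0$ and $g_3$ across $\{x_2=0\}$ by the higher-order reflection $\tilde u_0(\cdot,x_2,\cdot)=-u_0(\cdot,-2x_2,\cdot)+2u_0(\cdot,-x_2/2,\cdot)$ for $x_2<0$. This preserves $W_p^{2-2/p}$ and, more importantly, preserves the trace at $x_2=0$ while producing a function suitable as Dirichlet data for the heat equation on $\mathbb{R}^2\times\mathbb{R}_+$. Solving \eqref{eq:NSquart2} and subtracting kills both $u_0$ and $g_3$. \emph{Step 2:} Extend $g_1,g_2$ in $x_3$ to $\mathbb{R}$ and solve the Navier half-space problem \eqref{eq:NSquart3}; by standard results for the Stokes/Lamé system with Navier conditions on $\{x_2=0\}$ there is a unique solution in the right regularity class, and subtracting its restriction reduces us to \eqref{eq:NSquart4} with homogeneous data on $S_1$. \emph{Step 3:} Extend $(f_1,f_3,f_d,(g_3)_1,(g_3)_3)$ evenly and $(f_2,(g_3)_2)$ oddly across $\{x_2=0\}$; because the remaining tangential datum on $S_1$ vanishes, this reflection produces data of the correct regularity and, crucially, satisfies the Navier pure-slip condition on $\{x_2=0\}$ automatically. \emph{Step 4:} Solve the resulting Dirichlet half-space Stokes problem \eqref{eq:NSquart5} by \cite[Theorem~6.1]{BP07}, and restrict back to $\Omega$. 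The restriction satisfies the pure-slip boundary conditions on $S_1$ precisely because of the parity of the extended data.

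The main obstacle in executing this plan is verifying that the divergence-compatibility condition (10), together with the edge compatibilities in (9), survives each reduction. After Step 1, the modified $f_d$ and $u_\Sigma$-type traces must still define an element of $H_p^1(J;\hat{H}_p^{-1}(\Omega))$; this requires that the temporal derivative of the reflected data, tested against $\dot{H}_{p'}^1$-functions, remains controlled, which ultimately rests on the specific form of the reflection (linear combination of dilations that reproduces the trace). More delicately, for the parity extension in Step 3 the reflected $\tilde f_d$ on $\mathbb{R}^2\times\mathbb{R}_+$ must belong to $H_p^1(J;\hat{H}_p^{-1}(\mathbb{R}_+^3))$: testing with $\phi\in\dot{H}_{p'}^1(\mathbb{R}_+^3)$ and splitting $\phi=\phi_e+\phi_o$ into its even and odd parts in $x_2$ reduces this to the original quarter-space duality pairing, where the edge terms on $\partial S_1\cap\partial S_2$ drop out precisely by the edge compatibility $(g_3)_2|_{x_2=0}=g_2|_{x_3=0}$ from condition (9).

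Finally, edge compatibilities at $\partial S_1\cap\partial S_2$ must be preserved through the procedure so that the auxiliary half-space problems in Steps 1--3 themselves admit maximal-regularity solutions. The condition $\mu[\partial_2(g_3)_1+\partial_1(g_3)_2,\partial_3 g_2+\partial_2(g_3)_3]^{\sf T}|_{x_2=0}=g_1|_{x_3=0}$ is exactly what is required so that the Navier-type half-space solution $\bar u$ in Step 2 produces, upon restriction to $\{x_3=0\}$, a trace compatible with the already-subtracted $\tilde u$ — i.e.\ the residual data in \eqref{eq:NSquart4} is genuinely homogeneous at the edge. Once these compatibilities are carefully tracked, uniqueness follows from uniqueness at each step (heat equation, Navier half-space, Dirichlet half-space Stokes), and the estimate is obtained by summing the estimates of Steps 1--4.
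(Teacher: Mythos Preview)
Your proposal is correct and follows essentially the same four-step reduction as the paper: extend $u_0,g_3$ across $\{x_2=0\}$ and solve \eqref{eq:NSquart2}, then extend $g_1,g_2$ in $x_3$ and solve \eqref{eq:NSquart3}, then reflect by parity across $\{x_2=0\}$ and invoke \cite[Theorem~6.1]{BP07} for \eqref{eq:NSquart5}. One small slip: in Step~2 you refer to ``the Stokes/Lam\'e system with Navier conditions,'' but \eqref{eq:NSquart3} is merely a heat system (no pressure, no divergence constraint), which is all that is needed at that stage; your added discussion of how conditions (9) and (10) propagate through the reductions is useful elaboration that the paper leaves largely implicit.
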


\subsection{The Stokes equations in bent quarter-spaces}\label{bentQS}

Let $\theta\in BC^{3}(\mathbb{R})$ such that
$$G_\theta:=\{(x_1,x_2)\in\mathbb{R}^2:x_2>\theta(x_1)\}\quad \text{and}\quad \Omega_\theta=G_\theta\times\mathbb{R}_+.$$
We assume furthermore that $|\theta'|_\infty\le \eta$ and $|\theta^{(j)}|_\infty\le M$, $j\in\{2,3\}$, where we may choose $\eta>0$ as small as we wish. Let $S_{1,\theta}:=\partial G_\theta\times\mathbb{R}_+$ and $S_{2,\theta}:=G_\theta\times\{0\}$. Furthermore, let $\nu_{S_{1,\theta}}=(\nu_{G_\theta},0)^{\textsf{T}}$ with $\nu_{G_\theta}:=\frac{1}{\sqrt{1+\theta'(x_1)^2}}(\theta'(x_1),-1)^{\textsf{T}}$ denote the outer unit normal to $S_{1,\theta}$ at $(x_1,\theta(x_1),x_3)$, $(x_1,x_3)\in\mathbb{R}\times\mathbb{R}_+$ and let $P_{S_{1,\theta}}$ be the tangential projection to $S_{1,\theta}$. Consider the problem
\begin{align}\label{eq:NSbendquart1}
\begin{split}
\partial_t(\rho u)-\mu\Delta u+\nabla \pi&=f,\quad x\in\Omega_\theta,\\
\div u&=f_d,\quad x\in\Omega_\theta,\\
P_{S_{1,\theta}}[\mu (Du)\nu_{S_{1,\theta}}]&=P_{S_1}g_1,\quad x\in S_{1,\theta},\\
(u|\nu_{S_{1,\theta}})&=g_2,\quad x\in S_{1,\theta},\\
u&=g_3,\quad x\in S_{2,\theta}\\
u(0)&=u_0,\quad x\in\Omega_\theta.
\end{split}
\end{align}
Here $\rho,\mu>0$ are given constants. Note that since $\nu_{S_1}=(\nu_{\partial G},0)^{\sf T}$ it holds that
\begin{equation}\label{eq:NSbendquart1a}
P_{S_{1,\theta}}[\mu (Du)\nu_{S_{1,\theta}}]=
\begin{pmatrix}
P_{\partial G_\theta}[\mu(D'v)\nu_{G_\theta}]\\
\mu\partial_3 (v|\nu_{G_\theta})+\mu\partial_{\nu_{G_\theta}} w,
\end{pmatrix}
\end{equation}
where $D'=D_{(x_1,x_2)}$ and $u=(v,w)$. Therefore, the given data $(f,f_d,g_1,g_2,g_3,u_0)$ is subject to the compatibility conditions $(g_3|\nu_{S_{1,\theta}})=g_2|_{S_{2,\theta}}$,
$$P_{\partial G_\theta}[\mu (D'g_3')\nu_{S_{1,\theta}}]=P_{\partial G_\theta}g_{1}',$$
and $\mu(\partial_3 g_2+\partial_{\nu_{G_\theta}} (g_3\cdot e_3))=g_{1}\cdot e_3$
at the contact line $\{(x_1,\theta(x_1),0):x_1\in\mathbb{R}\}$, where
$$g_{j}':=\sum_{k=1}^2(g_j\cdot e_k) e_k$$
for $j\in\{1,3\}$. Furthermore, at $t=0$ we have $\div u_0=f_d|_{t=0}$, $u_0|_{S_{2,\theta}}=g_3|_{t=0}$, $(u_0|\nu_{S_{1,\theta}})=g_2|_{t=0}$ and $P_{S_{1,\theta}}[\mu (Du_0)\nu_{S_{1,\theta}}]=P_{S_{1,\theta}}g_1|_{t=0}$. Lastly, $(f_d,g_2,g_3)\in H_p^1(J;\hat{H}_p^{-1}(\Omega_\theta))$.

For convenience we shall reduce \eqref{eq:NSbendquart1} to the case $u_0=f=g_3=0$. To this end we first extend $u_0$ and $f$ to some $\tilde{u}_0\in W_p^{2-2/p}(\mathbb{R}^3)^3$ and $f\in L_p(J;L_p(\mathbb{R}^3)^3)$ and solve the full-space problem
\begin{align*}
\partial_t(\rho\tilde{u})-\mu\Delta\tilde{u}&=\tilde{f},\quad\ \text{in}\ \mathbb{R}^3,\\
\tilde{u}(0)&=\tilde{u}_0,\quad\ \text{in}\ \mathbb{R}^3,
\end{align*}
to obtain a unique solution
$$\tilde{u}\in H_p^1(J;L_p(\mathbb{R}^3)^3)\cap L_p(J;H_p^1(\mathbb{R}^3)^3).$$
Let now $\tilde{g}_3:=g_3-\tilde{u}|_{S_2}$. Then $\tilde{g}_3|_{t=0}=0$ by construction and we may extend $\tilde{g}_3$ to some
$$\hat{g}_3\in\! _0W_p^{1-1/2p}(J;L_p(\mathbb{R}^2)^3)\cap L_p(J;W_p^{2-1/p}(\mathbb{R}^2)^3).$$
With $\hat{g}_3$ at hand we solve the half-space problem
\begin{align*}
\partial_t(\rho\hat{u})-\mu\Delta\hat{u}&=0,\quad\ \text{in}\ \mathbb{R}_+^3,\\
\hat{u}&=\hat{g}_3,\quad\text{on}\ \mathbb{R}^2,\\
\hat{u}(0)&=0,\quad\ \text{in}\ \mathbb{R}_+^3,
\end{align*}
to obtain a unique solution
$$\hat{u}\in H_p^1(J;L_p(\mathbb{R}_+^3)^3)\cap L_p(J;H_p^1(\mathbb{R}_+^3)^3).$$
If $u$ is a solution of \eqref{eq:NSbendquart1}, it follows that the (restricted) function $\bar{u}:=u-\tilde{u}-\hat{u}$ solves \eqref{eq:NSbendquart1} with $f=u_0=g_3=0$ and some modified functions $\bar{f}_d,\bar{g}_j$, $j\in\{1,2\}$ in the correct regularity classes satisfying the compatibility conditions $\bar{g}_2|_{S_{2,\theta}}=0$, $P_{\partial G_\theta}\bar{g}_{1}'=0$ and $\bar{g}_{1}\cdot e_3=\partial_3 \bar{g}_2$ at the contact line. Moreover, $(\bar{f}_d,\bar{g}_2,0)\in\!_0H_p^1(J;\hat{H}_p^{-1}(\Omega_\theta))$.

Observe that by the identity $(P_{S_{1,\theta}}w|\nu_{S_{1,\theta}})=0$, $w\in\mathbb{R}^3$, the second component of $P_{S_{1,\theta}}w$ is redundant (it can always be calculated from the first one). Therefore we may replace the term $P_{S_{1,\theta}}[\mu(Du)\nu_{S_{1,\theta}}]$ by its first and last component, i.e.\ we consider the two equations
$$P_{S_{1,\theta}}[\mu(Du)\nu_{S_{1,\theta}}]\cdot e_j=P_{S_{1,\theta}}g_1\cdot e_j$$
for $j\in\{1,3\}$. Observe also that $P_{S_{1,\theta}}g_1\cdot e_3=g_1\cdot e_3$, since the last component of $\nu_{S_{1,\theta}}$ is identically zero..

In what follows we will transform the domain $G_\theta$ to $G:=\mathbb{R}\times\mathbb{R}_+$, the boundaries $S_{1,\theta}$ and $S_{2,\theta}$ to $S_1:=\partial G\times\mathbb{R}_+$ and $S_2:=G\times\{0\}$, respectively, and, hence, $\Omega_\theta$ to $\Omega:=G\times\mathbb{R}_+$.
To this end we introduce the new variables $\bar{x}_1=x_1$, $\bar{x}_2=x_2-\theta(x_1)$ and $\bar{x}_3=x_3$ for $x\in\Omega_\theta=G_\theta\times\mathbb{R}_+$. Suppose that $(u,\pi)$ is a solution of \eqref{eq:NSbendquart1} and define the new functions $$\bar{u}(\bar{x}):=u(\bar{x}_1,\bar{x}_2+\theta(\bar{x}_1),\bar{x}_3)$$
and
$$\bar{\pi}(\bar{x}):=\pi(\bar{x}_1,\bar{x}_2+\theta(\bar{x}_1),\bar{x}_3),$$
where $\bar{x}:=(\bar{x}_1,\bar{x}_2,\bar{x}_3)$.
In the same way we transform the data $(f_d,g_1,g_2)$ to $(\bar{f}_d,\bar{g}_1,\bar{g}_2)$. It holds that $\partial_{\bar{k}}^j\bar{u}=\partial_k^j u$ for $k\in\{2,3\}$, $j\in\{1,2\}$,
$$\partial_{1}u=\partial_{1}\bar{u}-\theta'(\bar{x}_1)\partial_{2}\bar{u}$$
and
$$\partial_{1}^2u=\partial_{1}^2\bar{u}-2\theta'(\bar{x}_1)\partial_{1}
\partial_{2}\bar{u}-\theta''(\bar{x}_1)\partial_{2}\bar{u}
+\theta'(\bar{x}_1)^2\partial_{2}^2\bar{u}.$$
Therefore, the pair $(\bar{u},\bar{\pi})$ solves the following problem
\begin{align}\label{eq:NSbendquart2}
\begin{split}
\partial_t(\rho \bar{u})-\mu{\Delta} \bar{u}+{\nabla} \bar{\pi}&=M_1(\theta,\bar{u},\bar{\pi}),\quad \bar{x}\in\Omega,\\
{\div}\ \bar{u}&=M_2(\theta,\bar{u})+\bar{f}_d,\quad \bar{x}\in\Omega,\\
\mu(\partial_{1} \bar{u}_2+\partial_{2} \bar{u}_1)&=M_3(\theta,\bar{u})-\sqrt{1+\theta'^2}^3[P_{S_{1,\theta}}\bar{g}_1\cdot e_1],\quad \bar{x}\in S_1,\\
\mu(\partial_{2} \bar{u}_3+\partial_{3} \bar{u}_2)
&=M_4(\theta,\bar{u})-\sqrt{1+\theta'^2}[\bar{g}_1\cdot e_3],\quad \bar{x}\in S_1,\\
\bar{u}_2&=M_5(\theta,\bar{u})-\sqrt{1+\theta'^2}\bar{g}_2,\quad \bar{x}\in S_1,\\
\bar{u}&=0,\quad \bar{x}\in S_2,\\
\bar{u}(0)&=0,\quad \bar{x}\in \Omega,
\end{split}
\end{align}
where the functions $M_j$ are given by
$$M_1(\theta,\bar{u},\bar{\pi}):=2\theta'(\bar{x}_1)\partial_{1}
\partial_{2}\bar{u}+\theta''(\bar{x}_1)\partial_{2}\bar{u}
-\theta'(\bar{x}_1)^2\partial_{2}^2\bar{u}
+\theta'(\bar{x}_1)\partial_{2}\bar{\pi}e_1,$$
$$M_2(\theta,\bar{u}):=\theta'(\bar{x}_1)\partial_{2}\bar{u}_1,$$
$$M_3(\theta,\bar{u}):=\mu\theta'(\bar{x}_1)[2\partial_{1}\bar{u}_1+
\theta'(\bar{x}_1)(\partial_{1}\bar{u}_2-\partial_{2}\bar{u}_1)-(1+
\theta'(\bar{x}_1)^2)\partial_{2}\bar{u}_2],$$
$$M_4(\theta,\bar{u}):=\mu\theta'(\bar{x}_1)(\partial_{1} \bar{u}_3-\theta'(\bar{x}_1)\partial_{2}\bar{u}_3+
\partial_{3}\bar{u}_1),$$
$$M_5(\theta,\bar{u}):=\theta'(\bar{x}_1)\bar{u}_1.$$
Now we want to go back from \eqref{eq:NSbendquart2} to \eqref{eq:NSbendquart1}. To this end we consider the functions on the right hand side of \eqref{eq:NSbendquart2} as given data in the right regularity classes. Our aim is the to interpret \eqref{eq:NSbendquart2} as a perturbation of \eqref{eq:NSquart1}, provided $|\theta'|_{\infty}<\eta$ and $\eta>0$ is sufficiently small. It is therefore reasonable to solve \eqref{eq:NSbendquart2} by a Neumann series argument. To this end let
$$_0\mathbb{E}_u(T):=\{u\in\!_0H_p^1(J;L_p(\Omega)^3)\cap L_p(J;H_p^2(\Omega)^3):u|_{S_2}=0\},$$
$$\mathbb{E}_\pi(T):=L_p(J;\dot{H}_p^1(\Omega)),$$
$_0\mathbb{E}(T):=\!_0\mathbb{E}_u(T)\times \mathbb{E}_\pi(T)$,
$$\tilde{\mathbb{F}}(T):=\mathbb{F}_1(T)\times \mathbb{F}_2(T)\times_{j=3}^5\, _0\mathbb{F}_j(T),$$ where
$$\mathbb{F}_1(T):=L_p(J;L_p(\Omega)^3),$$
$$\mathbb{F}_2(T):=L_p(J;H_p^1(\Omega)),$$
$$_0\mathbb{F}_3(T):=\!_0
W_p^{1/2-1/2p}(J;L_p(S_1))\cap
L_p(J;W_p^{1-1/p}(S_1)),$$
$_0\mathbb{F}_4(T):=\!_0\mathbb{F}_3(T)$, and
$$_0\mathbb{F}_5(T):=
\!_0W_p^{1-1/2p}(J;L_p(S_1))\cap
L_p(J;W_p^{2-1/p}(S_1)).$$
Finally, we set
$$_0\mathbb{F}(T):=\{(f_1,\ldots,f_5)\in\tilde{\mathbb{F}}(T):(9)\ \&\ (10)\ \text{in Theorem \ref{thm:NSquart} are satisfied}\}.$$
Define an operator $L:\,_0\mathbb{E}(T)\to\!_0\mathbb{F}(T)$ by
$$L(\bar{u},\bar{\pi}):=\begin{bmatrix}
\partial_t(\rho \bar{u})-\mu\Delta \bar{u}+\nabla \bar{\pi}\\
\div \bar{u}\\
\mu(\partial_2 \bar{u}_1+\partial_1 \bar{u}_2)|_{S_1}\\
\mu(\partial_3 \bar{u}_2+\partial_2 \bar{u}_3)|_{S_1}\\
\bar{u}_2|_{S_1}
\end{bmatrix}
$$
and note that $L:\,_0\mathbb{E}(T)\to\!_0\mathbb{F}(T)$ is an isomorphism by Theorem \ref{thm:NSquart}. Define $$M(\theta,\bar{u},\bar{\pi}):=(M_1(\theta,\bar{u},\bar{\pi}),{M}_2(\theta,\bar{u}),
{M}_3(\theta,\bar{u})
,{M}_4(\theta,\bar{u}),{M}_5(\theta,\bar{u}))^{\sf T}$$
and
$$F:=(0,f_2,f_3,f_4,f_5)^{\sf T},$$
with $f_2:=\bar{f}_d$,
$$f_3:=-\sqrt{1+\theta'^2}^3[P_{S_{1,\theta}}\bar{g}_1\cdot e_1],\ f_4:=-\sqrt{1+\theta'^2}[\bar{g}_1\cdot e_3]$$
and $f_5:=-\sqrt{1+\theta'^2}\bar{g}_2$. By the smoothness of $\theta$, it follows that $F\in\tilde{\mathbb{F}}(T)$. So it remains to check that the compatibility conditions (9) \& 10 in Theorem \ref{thm:NSquart} are satisfied. Since $\bar{g}_2|_{S_2}=0$, $P_{\partial G_\theta}\bar{g}_{1,v}=0$ and $\bar{g}_{1,w}=\partial_3 \bar{g}_2$ at the contact line, the compatibility conditions in Theorem \ref{thm:NSquart} (9) are easily verified. To verify (10) in Theorem \ref{thm:NSquart} we have to show that $(f_2,f_5,0)\in\!_0H_p^1(J;\hat{H}_p^{-1}(\Omega))$. Note that for the reduced data from above we have $(f_d,g_2,0)\in\!_0H_p^1(J;\hat{H}_p^{-1}(\Omega_\theta))$, hence for a.e.\ $t\in J$ the functional $\Psi(t):H_{p'}^1(\Omega_\theta)\to\mathbb{R}$ defined by
$$\langle\Psi(t),\phi\rangle:=\int_{\Omega_\theta} f_d(t)\phi\ dx-\int_{S_{1,\theta}} g_2(t)\phi|_{S_{1,\theta}}\ dS_\theta$$
as well as its derivative with respect to $t$ are continuous with respect to the norm $\|\nabla\cdot\|_{L_{p'}(\Omega_\theta)}$. Transforming $\Omega_\theta$ to the quarter space $\Omega$ and $S_{1,\theta}$ to ${S}_1$ via the above diffeomorphism $\Phi(x_1,x_2,x_3)=(x_1,x_2-\theta(x_1),x_3)$ yields
\begin{multline*}
\int_{\Omega_\theta} f_d(t)\phi\ dx-\int_{S_{1,\theta}} g_2(t)\phi|_{S_{1,\theta}}\ dS_\theta=\\
=\int_\Omega \bar{f}_d(t)\bar{\phi}\ d\bar{x}-\int_{{S}_1} \sqrt{1+\theta'(\bar{x})^2}\bar{g}_2(t)\bar{\phi}|_{S_1}\ d{S},
\end{multline*}
where $\bar{\phi}(\bar{x}_1,\bar{x}_2,\bar{x}_3):=\phi(x_1,x_2-\theta(x_1),x_3)$. This shows that for a.e.\ $t\in J$ the functional $\bar{\Psi}(t):H_{p'}^1(\Omega)\to\mathbb{R}$ given by
$$\langle\bar{\Psi}(t),\bar{\phi}\rangle:=\int_\Omega \bar{f}_d(t)\bar{\phi}\ d\bar{x}-\int_{{S}_1} \sqrt{1+\theta'(\bar{x})^2}\bar{g}_2(t)\bar{\phi}|_{S_{1}}\ d{S}$$
and its derivative with respect to $t$ are continuous with respect to the norm $\|\nabla\cdot\|_{L_{p'}(\Omega)}$, hence $(f_2,f_5,0)\in\!_0H_p^1(J;\hat{H}_p^{-1}(\Omega))$. This implies $F\in\!_0\mathbb{F}(T)$.

Concerning $M(\theta,\bar{u},\bar{\pi})$, we observe that for $\bar{u}\in\!_0\mathbb{E}_u(T)$ we have $\bar{u}=0$ as well as $\partial_j \bar{u}=0$ at $S_2$ for $j\in\{1,2\}$ and therefore also at the line
$$\partial S_1=\partial S_2=\overline{S_1}\cap\overline{S_2}=\mathbb{R}\times\{0\}\times\{0\},$$
by continuity of $\bar{u}$ and $\partial_j \bar{u}$ in $\overline{\Omega}$. Therefore $M_3(\theta,\bar{u})=M_5(\theta,\bar{u})=0$ at $\overline{S_1}\cap\overline{S_2}$. Moreover,
$$M_4(\theta,\bar{u})=\mu\theta'(\bar{x}_1)\partial_3 \bar{u}_1$$
at $\overline{S_1}\cap\overline{S_2}$, hence $\mu\partial_3 M_5(\theta,\bar{u})=M_4(\theta,\bar{u})$. It remains to verify the condition
$$(M_2(\theta,\bar{u}),-M_5(\theta,\bar{u}),0)\in \!_0H_p^1(J;\hat{H}_p^{-1}(\Omega))$$
for $\bar{u}\in \!_0\mathbb{E}_u(T)$. We compute
\begin{align*}
\int_\Omega M_2(\theta,\bar{u})\phi\ d\bar{x}-&\int_{S_1}(-M_5(\theta,\bar{u}))\phi|_{S_1}\ dS\\
&=\int_\Omega \theta'(\bar{x}_1)(\partial_2\bar{u}_1)\phi\ d\bar{x}+\int_{S_1}\theta'(\bar{x}_1)\bar{u}_1\phi|_{S_1}\ dS\\
&=-\int_\Omega\theta'(\bar{x}_1) \bar{u}_1\partial_2\phi\ d\bar{x},
\end{align*}
for each $\phi\in H_{p'}^1(\Omega)$, where we integrated by parts with respect to the variable $\bar{x}_2$. This yields the claim.

It follows that $M(\theta,\bar{u})\in\!_0\mathbb{F}(T)$ for each $(\bar{u},\bar{\pi})\in\!_0\mathbb{E}(T)$ and therefore we may rewrite \eqref{eq:NSbendquart2} shortly as $(\bar{u},\bar{\pi})=L^{-1}M(\theta,\bar{u},\bar{\pi})+L^{-1}F$ in $_0\mathbb{E}(T)$. We intend to show that for each $\varepsilon>0$ there exist $T_0>0$ and $\eta_0>0$ such that
\begin{equation}\label{eq:NSbendquart4}
\|M(\theta,\bar{u},\bar{\pi})\|_{\mathbb{F}(T)}\le\varepsilon \|(\bar{u},\bar{\pi})\|_{\mathbb{E}(T)},
\end{equation}
provided that $T\in (0,T_0)$ and $\eta\in (0,\eta_0)$.

The above computation for $(M_2,M_5,0)$ readily yields that
$$\|(M_2(\theta,\bar{u}),M_5(\theta,\bar{u}),0)\|_{H_p^1(J;\hat{H}_p^{-1}(\Omega))}\le \|\theta'\|_\infty\|\bar{u}\|_{\mathbb{E}_u(T)}.$$
Moreover, it holds that
\begin{align*}
\|M_2(\theta,\bar{u})\|_{L_p(J;H_p^1(\Omega))}&\le \|\theta'\|_\infty\|\bar{u}\|_{\mathbb{E}_u(T)}
+\|\theta''\|_\infty\|\bar{u}\|_{L_p(J;H_p^1(\Omega))}\\
&\le\|\theta'\|_\infty\|\bar{u}\|_{\mathbb{E}_u(T)}
+T^{1/2p}\|\theta''\|_\infty\|\bar{u}\|_{L_{2p}(J;H_p^1(\Omega))}\\
&\le (\|\theta'\|_\infty+T^{1/2p}C\|\theta''\|_\infty)\|\bar{u}\|_{\mathbb{E}_u(T)},
\end{align*}
where the constant $C>0$ stems from the embeddings
$$_0H_p^1(J;L_p(\Omega))\cap L_p(J;H_p^2(\Omega))\hookrightarrow\!_0H_p^{1/2}(J;H_p^1(\Omega))\hookrightarrow L_{2p}(J;H_p^1(\Omega)),$$
valid for each $p>1$. Note that $C>0$ does not depend on $T>0$, since $\bar{u}|_{t=0}=0$. The estimate for $M_1$ is very easy. Indeed, by H\"{o}lder's inequality we obtain
$$\|M_1(\theta,\bar{u},\bar{\pi})\|_{L_p(J;L_p(\Omega))}\le C\left[
\|\theta'\|_\infty(1+\|\theta'\|_\infty)+T^{1/2p}\|\theta''\|_\infty\right]
\|(\bar{u},\bar{\pi})\|_{\mathbb{E}(T)}.$$
Again, $C>0$ does not depend on $T>0$. The estimates for $M_3,M_4$ are nearly the same. So we just concentrate on $M_4$.
\begin{align*}
\|M_4(\theta,\bar{u})\|_{\mathbb{F}_4(T)}&\le
\|M_4(\theta,\bar{u})\|_{W_p^{1/2-1/2p}(J;L_p(S_1))}
+\|M_4(\theta,\bar{u})\|_{L_p(J;W_p^{1-1/p}(S_1))}\\
&\le C\left[\|\theta'\|_\infty\|\bar{u}\|_{\mathbb{E}_u(T)}+
\|M_4(\theta,\bar{u})\|_{L_p(J;W_p^{1-1/p}(S_1))}\right].
\end{align*}
To estimate last term, it suffices to consider a term of the form $\theta'\partial_j \bar{u}$ in $L_p(J;W_p^{1-1/p}(S_1))$ for some $j\in\{1,2,3\}$. Making use of the embedding
$$L_p(J;H_p^1(\Omega))\hookrightarrow L_p(J;W_p^{1-1/p}(S_1))$$
we obtain
$$
\|\theta'\partial_j \bar{u}\|_{L_p(J;W_p^{1-1/p}(S_1))}\le C\|\theta'\partial_j\bar{u}\|_{L_p(J;H_p^1(\Omega))}\le C\left[\|\theta'\|_\infty+T^{1/2p}\|\theta''\|_\infty\right]\|\bar{u}\|_{\mathbb{E }_u(T)},
$$
with $C>0$ being independent of $T>0$. Finally, it remains to estimate $M_5$ in $\mathbb{F}_5(T)$. We employ the embedding
$$L_p(J;H_p^2(\Omega))\hookrightarrow L_p(J;W_p^{2-1/p}(S_1))$$
to the result
\begin{align*}
\|\theta' \bar{u}_1\|_{L_p(J;W_p^{2-1/p}(S_1))}&\le C\|\theta' \bar{u}_1\|_{L_p(J;H_p^{2}(\Omega))}\\
&\le C\left[\|\theta'\|_\infty+T^{1/2p}(\|\theta''\|_\infty
+\|\theta'''\|_\infty)\right]\|\bar{u}\|_{\mathbb{E}_u(T)}.
\end{align*}
Collecting everything together, we have shown that
$$\|M(\theta,\bar{u},\bar{\pi})\|_{\mathbb{F}(T)}\lesssim \left[\|\theta'\|_\infty+T^{1/2p}(\|\theta''\|_\infty+\|\theta'''\|_\infty)\right]
\|(\bar{u},\bar{\pi})\|_{\mathbb{E}(T)}.$$
Recall that $\|\theta'\|_\infty<\eta$. Therefore, choosing first $\eta>0$, then $T>0$ small enough, we obtain the desired estimate \eqref{eq:NSbendquart4}. A Neumann series argument in $_0\mathbb{E}(T)$ finally implies that there exists a unique solution $(\bar{u},\bar{\pi})\in\!_0\mathbb{E}(T)$ of the equation $L(\bar{u},\bar{\pi})=M(\theta,\bar{u},\bar{\pi})+F$ or equivalently a solution $(u,\pi)$ of \eqref{eq:NSbendquart1}, provided that the data satisfy all relevant compatibility conditions at the contact line $\overline{S_1}\cap\overline{S_2}$.

This in turn yields a solution operator $S_{QS}:\mathbb{F}_{QS}\to\mathbb{E}_{QS}$ for \eqref{eq:NSbendquart1}, where $\mathbb{E}_{QS}$ and $\mathbb{F}_{QS}$ are the solution space and data space, respectively, for the bent quarter-space and the data in $\mathbb{F}_{QS}$ satisfy all relevant compatibility conditions at the contact line $\{(x_1,\theta(x_1),0):x_1\in\mathbb{R}\}$.

\subsection{The two-phase Stokes equations in half-spaces}\label{HS}

Consider the problem
\begin{align}\label{eq:NScap4}
\begin{split}
\partial_t(\rho u)-\mu\Delta u+\nabla \pi&=f,\quad x_1\in\mathbb{R},\ x_2>0,\ x_3\in\dot{\mathbb{R}},\\
\div u&=f_d,\quad x_1\in\mathbb{R},\ x_2>0,\ x_3\in\dot{\mathbb{R}},\\
-\Jump{\mu \partial_3 v}-\Jump{\mu\nabla_{x'} u_3}&=g_v,\quad x_1\in\mathbb{R},\ x_2>0,\ x_3=0,\\
-2\Jump{\mu \partial_3 u_3}+\Jump{\pi}-\sigma\Delta_{x'} h&=g_w,\quad x_1\in\mathbb{R},\ x_2>0,\ x_3=0,\\
\Jump{u}&=u_\Sigma,\quad x_1\in\mathbb{R},\ x_2>0,\ x_3=0,\\
\partial_t h-m[u_3]&=g_h,\quad x_1\in\mathbb{R},\ x_2>0,\ x_3=0,\\
\mu[\partial_2 u_1+\partial_1 u_2,\partial_3 u_2+\partial_2 u_3]^{\sf T}&=g_1,\quad x_1\in\mathbb{R},\ x_2=0,\ x_3\in\dot{\mathbb{R}},\\
u_2&=g_2,\quad x_1\in\mathbb{R},\ x_2=0,\ x_3\in\dot{\mathbb{R}},\\
\partial_2 h&=g_3,\quad x_1\in\mathbb{R},\ x_2=0,\ x_3=0,\\
u(0)&=u_0,\quad x_1\in\mathbb{R},\ x_2>0,\ x_3\in\dot{\mathbb{R}},\\
h(0)&=h_0\quad x_1\in\mathbb{R},\ x_2>0,\ x_3=0.
\end{split}
\end{align}
Here $m[w]:=(w_+ + w_-)/2$, where $w_\pm$ denote the traces of $w$ at $x_3=0$ from above and below, respectively. Note that $m[w]=w|_{x_3=0}$ if $w$ is continuous at $x_3=0$, that is, if $\Jump{w}=0$. Furthermore $x':=(x_1,x_2)$.

For convenience we set $\Omega:=\mathbb{R}\times\mathbb{R}_+\times\mathbb{R}$, $S_1:=\mathbb{R}\times\{0\}\times\mathbb{R}$, $\Sigma:=\mathbb{R}\times\mathbb{R}_+\times\{0\}$ and $\partial\Sigma:=\mathbb{R}\times\{0\}\times\{0\}$.
We will prove the following existence and uniqueness result.
\begin{thm}\label{thm:linWP1}
Let $n=3$, $p>5$, $T>0$, $\rho_j,\mu_j>0$, $j=1,2$, $J=[0,T]$. The problem \eqref{eq:NScap4} has a unique solution $(u,\pi,h)$
with regularity
$$u\in H_p^1(J;L_p(\Omega)^3)\cap L_p(J;H_p^2(\Omega\backslash\Sigma)^3),\quad \pi\in L_p(J;\dot{H}_p^1(\Omega\backslash\Sigma)),$$
$$\Jump{\pi}\in W_p^{1/2-1/2p}(J;L_p(\Sigma))\cap L_p(J;W_p^{1-1/p}(\Sigma)),$$
$$h\in W_p^{2-1/2p}(J;L_p(\Sigma))\cap H_p^1(J;W_p^{2-1/p}(\Sigma))\cap L_p(J;W_p^{3-1/p}(\Sigma)),$$
if and only if the data satisfy the following regularity and compatibility conditions.
\begin{enumerate}
\item $f\in L_p(J;L_p(\Omega)^3)$,
\item $f_d\in L_p(J;H_p^1(\Omega\backslash\Sigma))$,
\item $g=(g_v,g_w)\in W_p^{1/2-1/2p}(J;L_p(\Sigma)^3)\cap L_p(J;W_p^{1-1/p}(\Sigma))^3$,
\item $u_\Sigma\in W_p^{1-1/2p}(J;L_p(\Sigma)^3)\cap L_p(J;W_p^{2-1/p}(\Sigma)^3)$;
\item $g_h\in W_p^{1-1/2p}(J;L_p(\Sigma))\cap L_p(J;W_p^{2-1/p}(\Sigma))$,
\item $g_1\in W_p^{1/2-1/2p}(J;L_p(S_1))^2\cap L_p(J;W_p^{1-1/p}(S_1\backslash\partial\Sigma))^2$,
\item $g_2\in W_p^{1-1/2p}(J;L_p(S_1))\cap L_p(J;W_p^{2-1/p}(S_1\backslash\partial\Sigma))$,
\item $g_3\in W_p^{3/2-1/p}(J;L_p(\partial\Sigma))\cap H_p^1(J;W_p^{1-2/p}(\partial\Sigma))\cap L_p(J;W_p^{2-2/p}(\partial\Sigma))$;
\item $u_0=(v_0,w_0)\in W_p^{2-2/p}(\Omega)^3,\ h_0\in W_p^{3-2/p}(\Sigma)$
\item $\div u_0=f_d|_{t=0}$, $\Jump{u_0}=u_\Sigma|_{t=0}$,
\item $\mu[\partial_2 (u_0)_1+\partial_1 (u_0)_2,\partial_3 (u_0)_2+\partial_2 (u_0)_3]|_{x_2=0}^{\sf T}=g_1|_{t=0}$,
\item $(u_{0})_2|_{x_2=0}=g_2|_{t=0}$, $\partial_2 h_0|_{x_2=0}=g_3|_{t=0}$, $-\Jump{\mu \partial_3 v_0}-\Jump{\mu\nabla_{x'} (u_0)_3}=g_v|_{t=0}$,
\item $(g_v)_2+\Jump{(g_1)_2}=0,\ \Jump{(g_1)_1/\mu}=\partial_2(u_\Sigma)_1+\partial_1(u_\Sigma)_2$ at $\partial\Sigma$;
\item $\Jump{(g_1)_2/\mu-\partial_3 g_2}=\partial_2(u_\Sigma)_3$, $\Jump{g_2}=(u_\Sigma)_2$ at $\partial\Sigma$,
\item $\partial_t g_3-m[(g_1)_2/\mu-\partial_3 g_2]=\partial_2 g_h$ at $\partial\Sigma$,
\item $(f_d,u_\Sigma\cdot e_3,g_2)\in H_p^1(J;\hat{H}_p^{-1}(\Omega))$.
\end{enumerate}
\end{thm}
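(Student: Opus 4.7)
My plan is to prove Theorem \ref{thm:linWP1} by the reflection technique sketched in the introduction: reduce \eqref{eq:NScap4} to a two-phase Stokes system on $\mathbb{R}^2 \times \dot{\mathbb{R}}$ with flat interface at $x_3=0$ and no side boundary, and then invoke the known maximal $L_p$-regularity result for that problem (see e.g.\ \cite{PrSi09b,KPW10}). Necessity of the conditions (1)--(16) is the easy direction and follows from the trace theorems for anisotropic Sobolev-Slobodeckii spaces applied to each equation, together with the duality computation for the divergence constraint analogous to \eqref{eq:regdiv}.

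For sufficiency I would first reduce to homogeneous side, contact-line, and initial data. Extend $u_0, h_0$ appropriately to $\mathbb{R}^3, \mathbb{R}^2$ and solve an auxiliary two-phase full-space Stokes problem together with an auxiliary parabolic problem for $h$ to absorb them; this reduces to $u_0=0$, $h_0=0$ at the price of modifying $f, f_d$ and all interfacial and boundary data so that they lie in the corresponding spaces with vanishing trace at $t=0$. Next, extend $g_1, g_2$ in the $x_3$-variable as in Section \ref{QS} and solve two one-phase slip half-space problems (one in each phase, without interface coupling) to reduce to $g_1=g_2=0$; likewise, extend $g_3$ on $\partial\Sigma$ to $\Sigma$ and solve an auxiliary parabolic problem for $h$ to reduce to $g_3=0$. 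The contact-line conditions (13)--(15) are exactly what is needed for these successive reductions to be consistent, so that the reduced data $g_v, g_w, u_\Sigma, g_h$ acquire the vanishing traces at $\partial\Sigma$ required for the reflection step.

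After this reduction all side-boundary data vanish, so in particular $u_2|_{x_2=0}=0$, $\partial_2 u_1|_{x_2=0}=0=\partial_2 u_3|_{x_2=0}$ and $\partial_2 h|_{x_2=0}=0$. I then extend $u_1, u_3, \pi$ evenly in $x_2$, $u_2$ oddly in $x_2$, and $h$ evenly in $x_2$; correspondingly, $f_1, f_3, f_d, (g_v)_1, g_w, g_h$ and the first and third components of $u_\Sigma$ are extended evenly while $f_2, (g_v)_2$ and the second component of $u_\Sigma$ are extended oddly. Under these parities the Stokes equations, the divergence constraint, the interfacial transmission conditions and the kinematic equation are invariant, and the vanishing trace conditions just listed guarantee that the reflections preserve the full $H_p^2$-regularity of $u$ and the asserted higher regularity of $h$ across $\{x_2=0\}$. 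The resulting problem is the classical two-phase Stokes problem with flat interface on $\mathbb{R}^2 \times \dot{\mathbb{R}}$, whose unique maximal $L_p$-regularity solution is furnished by \cite{PrSi09b}; restriction to $\{x_2>0\}$ yields a solution of \eqref{eq:NScap4}, and uniqueness for \eqref{eq:NScap4} follows by reflecting any solution and invoking uniqueness of the full-space problem.

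The main technical hurdle will be the $\hat H_p^{-1}$-compatibility in condition (16). After each reduction step and after the final reflection one must check that the triple encoding the divergence constraint still lies in $H_p^1(J;\hat H_p^{-1})$ of the enlarged domain. As in the bent quarter-space analysis of Section \ref{bentQS}, this reduces to testing against $\phi\in H_{p'}^1$ and integrating by parts in $x_2$: after the reduction $g_2=0$ on $S_1$, the boundary contribution on $\{x_2=0\}$ vanishes, and the parities chosen above make the extended functional $\bar\Psi(t)$ on $\dot H_{p'}^1$ of the enlarged domain continuous, just as in the integration-by-parts computation used there to verify $F\in\!_0\mathbb{F}(T)$.
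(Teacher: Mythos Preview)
Your proposal is correct and follows essentially the same reflection strategy as the paper: reduce to homogeneous initial and side-boundary data, then extend by even/odd reflection in $x_2$ and invoke the full-space two-phase Stokes result \cite[Theorem~5.1]{PrSi09a}. The only minor difference is that the paper, after reducing to $g_1=g_2=0$, explicitly removes the jump $\Jump{u}$ by solving a Dirichlet half-space heat problem (so that the reflected problem has $\Jump{\tilde u}=0$ and $m[w]=w$), whereas you reflect $u_\Sigma$ directly with the indicated parities; both variants work since the full-space theorem accommodates nonzero $u_\Sigma$ and the compatibility conditions (13)--(14) guarantee the required traces vanish at $\partial\Sigma$.
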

\begin{proof}
In a first step we will show that without loss of generality we may assume $u_0=0$ and $h_0=0$. We start with $h_0$. For that purpose we extend $h_0$ and $g_h$ with respect to $x_2$ to some functions $\tilde{h}_0\in W_p^{3-2/p}(\mathbb{R}^2)$ and
$$\tilde{g}_h\in W_p^{1-1/2p}(J;L_p(\mathbb{R}^2))\cap L_p(J;W_p^{2-1/p}(\mathbb{R}^2)),$$
respectively. Furthermore, we extend $u_0$ with respect to $x_2$ to some function $\tilde{u}_0\in W_p^{2-2/p}(\mathbb{R}^2\times\dot{\mathbb{R}})^3$, where $\dot{\mathbb{R}}:=\mathbb{R}\backslash\{0\}$. The extensions for $u_0$ and $g_h$ can be achieved by applying a higher order reflection method as in Section \ref{QS}. In general, for the extension of $h_0$, one cannot apply the reflection technique from Section \ref{QS}, since for large $p$ one has $W_p^{3-2/p}\hookrightarrow C^2$. However, the extension for $\tilde{h}_0$ exists due to the results in \cite{Tri83,Tri95}.
Let now
\begin{multline*}
\tilde{h}(t) = [2e^{-(I-\Delta_{x'})^{1/2} t}-e^{-2(I-\Delta_{x'})^{1/2} t}]\tilde{h}_0 +\\           [e^{-(I-\Delta_{x'}) t}-e^{-2(I-\Delta_{x'}) t}](I-\Delta_{x'})^{-1}\{m[\tilde{u}_0\cdot e_3]+\tilde{g}_h|_{t=0}\},
    \quad t\geq0,
\end{multline*}
where $\Delta_{x'}$ denotes the Laplace operator with respect to the variables $x'=(x_1,x_2)\in \mathbb{R}^2$. Since $\tilde{h}_0\in W_p^{3-2/p}(\mathbb{R}^2)$ and
$m[\tilde{u}_0\cdot e_3],\tilde{g}_h|_{t=0}\in W_p^{2-3/p}(\mathbb{R}^2)$, it follows from elementary semigroup theory that
$$\tilde{h}\in W_p^{2-1/2p}(J;L_p(\mathbb{R}^2))\cap H_p^1(J;W_p^{2-1/p}(\mathbb{R}^2))\cap L_p(J;W_p^{3-1/p}(\mathbb{R}^2))$$
with $\tilde{h}(0)=\tilde{h}_0$ and $\partial_t\tilde{h}(0)=m[\tilde{u}_0\cdot e_3]+\tilde{g}_h|_{t=0}$.

Let us turn to $u_0$. Consider the extension $\tilde{u}_0\in W_p^{2-2/p}(\mathbb{R}^2\times\dot{\mathbb{R}})^3$ from above and let $\tilde{u}_0^\pm:=\tilde{u}_0|_{x_3\gtrless0}\in W_p^{2-2/p}(\mathbb{R}^2\times{\mathbb{R}_\pm})^3$. Extend $\tilde{u}_0^+$ with respect to the variable $x_3$ to $\hat{u}_0^+\in W_p^{2-2/p}(\mathbb{R}^3)^3$. Then we solve the full space problem
\begin{align*}
\partial_t \hat{u}^+-\Delta\hat{u}^+&=0,\quad x\in\mathbb{R}^3,\\
\hat{u}^+(0)&=\hat{u}_0^+,\quad x\in\mathbb{R}^3,
\end{align*}
to obtain a unique solution
$$\hat{u}^+\in H_p^1(J;L_p(\mathbb{R}^3)^3)\cap L_p(J;H_p^2(\mathbb{R}^3)^3).$$
Extending $\tilde{u}_0^-$ with respect to $x_3$ to some $\hat{u}_0^-\in W_p^{2-2/p}(\mathbb{R}^3)^3$ and solving the latter full space problem with $\hat{u}_0^+$ being replaced by $\hat{u}_0^-$ yields a unique solution
$$\hat{u}^+\in H_p^1(J;L_p(\mathbb{R}^3)^3)\cap L_p(J;H_p^2(\mathbb{R}^3)^3).$$
Then we define
$$\hat{u}:=\begin{cases}\hat{u}^+|_{\Omega},&\ x_3>0,\\ \hat{u}_-|_{\Omega},&\ x_3<0.\end{cases}$$
Then $\hat{u}\in H_p^1(J;L_p(\Omega)^3)\cap L_p(J;H_p^2(\Omega)^3)$ and $\hat{u}|_{t=0}=u_0$ in $\Omega\backslash\Sigma$. If $(u,\pi,\Jump{\pi},h)$ is a solution of \eqref{eq:NScap4}, then $(u-\hat{u},\pi,\Jump{\pi},h-\tilde{h})$ solves \eqref{eq:NScap4} with $u_0=0$, $h_0=0$ and some modified data (not to be relabeled) $(f,f_d,g_v,g_w,u_\Sigma,g_h,g_1,g_2,g_3)$ in the right regularity classes, having  vanishing traces at $t=0$ and satisfying the compatibility conditions at $\partial\Sigma$ stated in Theorem \ref{thm:linWP1}. Note also that by construction $\partial_t(h-\tilde{h})|_{t=0}=0$.

By Proposition \ref{prop:app2} we may also assume that $g_3=0$. Indeed, there exists $$h_*\in\!_0W_p^{2-1/2p}(J;L_p(\Sigma))\cap\!_0H_p^1(J;W_p^{2-1/p}(\Sigma))\cap L_p(J;W_p^{3-1/p}(\Sigma))$$
such that $\partial_2 h_{*}|_{x_2=0}=g_3$. Replacing $h$ by $h-h_*$ it follows that $\partial_2 (h-h_*)|_{x_2=0}=0$. The functions $g_h$ and $g_w$ have to be replaced by $g_h-\partial_t h_*$ and $g_w+\sigma\Delta_{x'}h_*$, respectively.

Next we extend
$$g_1^+:=g_1|_{x_3>0}\in \,_0W_p^{1/2-1/2p}(J;L_p(\mathbb{R}_+^2)^2)\cap L_p(J;W_p^{1-1/p}(\mathbb{R}_+^2)^2)$$
by even reflection and
$$g_2^+:=g_2|_{x_3>0}\in \,_0W_p^{1-1/2p}(J;L_p(\mathbb{R}_+^2))\cap L_p(J;W_p^{2-1/p}(\mathbb{R}_+^2))$$
by means of the reflection
$$\tilde{g}_2^+(t,x_1,x_3)=\begin{cases}
g_2^+(t,x_1,x_3),\quad&\text{if}\ x_3>0,\\
-g_2^+(t,x_1,-2x_3)+2g_2^+(t,x_1,-x_3/2),\quad&\text{if}\ x_3<0.
\end{cases}$$
to functions
$$\tilde{g}_1^+\in \,_0W_p^{1/2-1/2p}(J;L_p(\mathbb{R}^2)^2)\cap L_p(J;W_p^{1-1/p}(\mathbb{R}^2)^2)$$
and
$$\tilde{g}_2^+\in \,_0W_p^{1-1/2p}(J;L_p(\mathbb{R}^2))\cap L_p(J;W_p^{2-1/p}(\mathbb{R}^2)).$$
Let $\mu^+:=\mu|_{x_3>0}$ and solve the parabolic system
\begin{equation}\label{eq:NShalfsp1}
\begin{array}{rclll}
\partial_t u_{*}-\Delta u_{*}&=&0,&\quad (x_1,x_3)\in\mathbb{R}^2,\ x_2>0,\\
\mu^+[\partial_2 (u_{*})_1+\partial_1 (u_{*})_2,\partial_3 (u_{*})_2+\partial_2 (u_{*})_3]^{\sf T} & =& \tilde{g}_1^+,&\quad (x_1,x_3)\in\mathbb{R}^2,\ x_2=0,\\
(u_{*})_2&=&\tilde{g}_2^+,&\quad (x_1,x_3)\in\mathbb{R}^2,\ x_2=0,\\
u_{*}(0)&=&0,&\quad (x_1,x_3)\in\mathbb{R}^2,\ x_2>0,
\end{array}
\end{equation}
by \cite{DHP07}, to obtain a solution
$$u_{*}\in\,_0H_p^1(J;L_p(\mathbb{R}_+^2\times\mathbb{R}))^3\cap L_p(J;H_p^2(\mathbb{R}_+^2\times\mathbb{R}))^3.$$
Then we repeat the same procedure for $g_j^-:=g_j|_{x_3<0}$ to obtain a function
$$u_{**}\in\,_0H_p^1(J;L_p(\mathbb{R}_+^2\times\mathbb{R}))^3\cap L_p(J;H_p^2(\mathbb{R}_+^2\times\mathbb{R}))^3$$
as a solution of \eqref{eq:NShalfsp1} with $\tilde{g}_j^+$ being replaced by the extensions $\tilde{g}_j^-$ of $g_j^-$ and $\mu^+$ being replaced by $\mu^-:=\mu|_{x_3<0}$.

Define
$$v:=\begin{cases}u_*,&\ x_3>0,\\ u_{**},&\ x_3<0.\end{cases}$$
It follows that the function $\bar{u}:=u-v$ satisfies $\bar{u}|_{t=0}=0$, $\Jump{\bar{u}}=u_\Sigma-\Jump{v}:=k$ and
$$\mu[\partial_2 \bar{u}_1+\partial_1 \bar{u}_2,\partial_3 \bar{u}_2+\partial_2 \bar{u}_3]=0,\quad \bar{u}_2=0$$
at $S_1\backslash\overline{\Sigma}$. In order to remove the jump of $\bar{u}$, we note that by the compatibility conditions it holds that $k_2=0$ and $\partial_2 k_1=\partial_2 k_3=0$ on at $\partial\Sigma$. Therefore it is possible to extend
$$k\in \,_0 W_p^{1-1/2p}(J;L_p(\mathbb{R}^2_+))^3\cap L_p(J;W_p^{2-1/p}(\mathbb{R}^2_+))^3$$
to a function
$$\tilde{k}\in\,_0 W_p^{1-1/2p}(J;L_p(\mathbb{R}^2))^3\cap L_p(J;W_p^{2-1/p}(\mathbb{R}^2))^3$$
by even reflection of $k_1,k_3$ and odd reflection of $k_2$. Then we solve the Dirichlet problem
\begin{equation}\label{eq:NScap5a}
\begin{array}{rclll}
\partial_t w-\Delta w&=&0,&\quad (x_1,x_2)\in\mathbb{R}^2,\ x_3>0,\\
\tr_{x_3=0} w&=&\tilde{k},&\quad (x_1,x_2)\in\mathbb{R}^2,\ x_3=0,\\
w(0)&=&0,&\quad (x_1,x_2)\in\mathbb{R}^2,\ x_3>0,
\end{array}
\end{equation}
to obtain a unique solution
$$w\in \,_0H_p^1(J;L_p(\mathbb{R}^3_+))\cap L_p(J;H_p^2(\mathbb{R}_+^3)).$$
Note that by symmetry the function
$$\bar{w}(t,x)=
\begin{bmatrix}
w_1(t,x_1,-x_2,x_3),\\ -w_2(t,x_1,-x_2,x_3),\\w_3(t,x_1,-x_2,x_3)
\end{bmatrix}
$$
is a solution of \eqref{eq:NScap5a} too, hence $w=\bar{w}$ and therefore it holds that $w_2=0$ as well as $\partial_2 w_1+\partial_1 w_2=\partial_3 w_2+\partial_2 w_3=0$ at $S_1\backslash\overline{\Sigma}$. Let $\bar{u}_{\pm}:=\bar{u}|_{x_3\gtrless0}$ and define $$u^*:=
\begin{cases}
\bar{u}_+ -w,\quad&\text{if}\ x_3>0,\\
\bar{u}_-,\quad&\text{if}\ x_3<0.
\end{cases}$$
Then $\Jump{u^*}=0$ and
$$\mu[\partial_2 u^*_1+\partial_1 u^*_2,\partial_3 u^*_2+\partial_2 u^*_3]=0,\quad u^*_2=0$$
on $S_1\backslash\overline{\Sigma}$. We arrive at the problem
\begin{align}\label{eq:NScap6}
\begin{split}
\partial_t(\rho u)-\mu\Delta u+\nabla \pi&=f,\quad x\in\Omega\backslash\Sigma,\\
\div u&=f_d,\quad x\in\Omega\backslash\Sigma,\\
-\Jump{\mu \partial_3 v}-\Jump{\mu\nabla_{x'} w}&=g_v,\quad x\in\Sigma,\\
-2\Jump{\mu \partial_3 u_3}+\Jump{\pi}-\sigma\Delta_{x'} h&=g_w,\quad x\in\Sigma,\\
\Jump{u}&=0,\quad x\in\Sigma,\\
\partial_t h-u_3&=g_h,\quad x\in\Sigma,\\
\mu[\partial_2 u_1+\partial_1 u_2,\partial_3 u_2+\partial_2 u_3]^{\sf T}&=0,\quad x\in S_1\backslash\partial\Sigma,\\
u_2&=0,\quad x\in S_1\backslash\partial\Sigma,\\
\partial_2 h&=0,\quad x\in\partial\Sigma,\\
u(0)&=0,\quad x\in\Omega\backslash\Sigma,\\
h(0)&=0\quad x\in\Sigma,
\end{split}
\end{align}
with modified data $f\in L_p(J;L_p(\Omega))^3$,
$$f_d\in L_p(J;H_p^1(\Omega\backslash\Sigma)),$$
$$(g_v,g_w)\in\,_0W_p^{1/2-1/2p}(J;L_p(\Sigma)^3)\cap L_p(J;W_p^{1-1/p}(\Sigma)^3),$$
and
$$g_h\in\,_0 W_p^{1-1/2p}(J;L_p(\Sigma))\cap L_p(J;W_p^{2-1/p}(\Sigma)),$$
satisfying the compatibility conditions $(g_v)_2=\partial_2 g_h=0$ at $\partial\Sigma$ and $(f_d,0,0)\in\!_0H_p^1(J;\hat{H}_p^{-1}(\Omega))$.

Therefore it is possible to extend $(f_1,f_3,f_d,(g_v)_1,g_w,g_h)$ by even reflection to $\{x_2<0\}$. On the other side we may extend $(f_2,(g_v)_2)$ by odd reflection to $\{x_2<0\}$. In a next step we consider the (reflected) problem
\begin{align}\label{eq:NScap7}
\begin{split}
\partial_t(\rho \tilde{u})-\mu\Delta \tilde{u}+\nabla \tilde{\pi}&=\tilde{f},\quad (x_1,x_2)\in\mathbb{R}^2,\ x_3\in\dot{\mathbb{R}},\\
\div \tilde{u}&=\tilde{f}_d,\quad (x_1,x_2)\in\mathbb{R}^2,\ x_3\in\dot{\mathbb{R}},\\
-\Jump{\mu \partial_3 \tilde{v}}-\Jump{\mu\nabla_{x'} \tilde{u}_3}&=\tilde{g}_v,\quad (x_1,x_2)\in\mathbb{R}^2,\ x_3=0,\\
-2\Jump{\mu \partial_3 \tilde{u}_3}+\Jump{\tilde{\pi}}-\sigma\Delta_{x'} \tilde{h}&=\tilde{g}_w,\quad (x_1,x_2)\in\mathbb{R}^2,\ x_3=0,\\
\Jump{\tilde{u}}&=0,\quad (x_1,x_2)\in\mathbb{R}^2,\ x_3=0,\\
\partial_t \tilde{h}-\tilde{u}_3&=\tilde{g}_h,\quad (x_1,x_2)\in\mathbb{R}^2,\ x_3=0,\\
\tilde{u}(0)&=0,\quad (x_1,x_2)\in\mathbb{R}^2,\ x_3\in\dot{\mathbb{R}},\\
\tilde{h}(0)&=0,\quad (x_1,x_2)\in\mathbb{R}^2,\ x_3=0,
\end{split}
\end{align}
with given reflected data $\tilde{f}\in L_p(J;L_p(\mathbb{R}^2\times\mathbb{R}))^3$,
$$\tilde{f}_d\in L_p(J;H_p^1(\mathbb{R}^2\times\dot{\mathbb{R}})),$$
$$(\tilde{g}_v,\tilde{g}_w)\in\,_0W_p^{1/2-1/2p}(J;L_p(\mathbb{R}^2)^3)\cap L_p(J;W_p^{1-1/p}(\mathbb{R}^2)^3),$$
and
$$\tilde{g}_h\in\,_0 W_p^{1-1/2p}(J;L_p(\mathbb{R}^2))\cap L_p(J;W_p^{2-1/p}(\mathbb{R}^2)),$$
where $(\tilde{f}_d,0)\in\!_0H_p^1(J;\hat{H}_p^{-1}(\mathbb{R}^2\times\mathbb{R}))$.

By \cite[Theorem 5.1]{PrSi09a} there exists a unique solution $(\tilde{u},\tilde{\pi},\Jump{\tilde{\pi}},\tilde{h})$ of \eqref{eq:NScap7} with regularity
$$\tilde{u}\in\!_0H_p^1(J;L_p(\mathbb{R}^3))^3\cap L_p(J;H_p^2(\dot{\mathbb{R}}^3))^3,$$
$$\tilde{\pi}\in L_p(J;\dot{H}_p^1(\dot{\mathbb{R}}^3)),$$
$$\Jump{\tilde{\pi}}\in\!_0W_p^{1/2-1/2p}(J;L_p(\mathbb{R}^2))\cap L_p(J;W_p^{1-1/p}(\mathbb{R}^2)),$$
and
$$\tilde{h}\in\!_0W_p^{2-1/2p}(J;L_p(\mathbb{R}^2))\cap\!_0H_p^1(J;W_p^{2-1/p}(\mathbb{R}^2))\cap L_p(J;W_p^{3-1/p}(\mathbb{R}^2)).$$

Note that by symmetry the function $(\bar{u},\bar{\pi},\bar{h})$ with $\bar{u}_j(x):=\tilde{u}_j(x_1,-x_2,x_3)$, $j\in\{1,3\}$, $\bar{u}_2(x):=-\tilde{u}_2(x_1,-x_2,x_3)$, $\bar{\pi}(x):=\tilde{\pi}(x_1,-x_2,x_3)$ and $\bar{h}(x'):=\tilde{h}(x_1,-x_2)$ is a solution of \eqref{eq:NScap7} too. Therefore, by uniqueness, it follows that
$$\tilde{u}_j(x_1,-x_2,x_3)=\tilde{u}_j(x_1,x_2,x_3),\ j\in\{1,3\},$$ $$\tilde{u}_2(x_1,x_2,x_3)=-\tilde{u}_2(x_1,-x_2,x_3),\ \tilde{\pi}(x_1,x_2,x_3)=\tilde{\pi}(x_1,-x_2,x_3)$$
and $\tilde{h}(x_1,x_2)=\tilde{h}(x_1,-x_2)$. This necessarily yields
$$\tilde{u}_2=(\partial_2 \tilde{u}_1+\partial_1\tilde{u}_2)=(\partial_3 \tilde{u}_2+\partial_2\tilde{u}_3)=0,$$
as well as $\partial_2\tilde{h}=0$ at $S_1\backslash\overline{\Sigma}$. Hence the restriction $(\tilde{u},\tilde{\pi},\Jump{\tilde{\pi}},\tilde{h})|_{\Omega}$ is the unique strong solution of \eqref{eq:NScap6}.
\end{proof}

\subsection{The two-phase Stokes equations in bent half-spaces}\label{bentHS}

Let $\theta\in BC^{3}(\mathbb{R})$ such that
$$G_\theta:=\{(x_1,x_2)\in\mathbb{R}^2:x_2>\theta(x_1)\}\quad \text{and}\quad \Omega_\theta=G_\theta\times\mathbb{R}.$$
We assume furthermore that $|\theta'|_\infty\le \eta$ and $|\partial_x^j\theta|_\infty\le M$, $j\in\{2,3\}$, where we may choose $\eta>0$ as small as we wish. Let $S_{1,\theta}:=\partial G_\theta\times\mathbb{R}$. Furthermore, let $\nu_{S_{1,\theta}}=(\nu_{G_\theta},0)^{\textsf{T}}$ with $\nu_{G_\theta}:=\frac{1}{\sqrt{1+\theta'(x_1)^2}}(\theta'(x_1),-1)^{\textsf{T}}$ denote the outer unit normal to $S_{1,\theta}$ at $(x_1,\theta(x_1),x_3)$, $(x_1,x_3)\in\mathbb{R}\times\mathbb{R}$ and let $P_{S_{1,\theta}}$ be the tangential projection to $S_{1,\theta}$. Furthermore, let $\Sigma_\theta:=G_\theta\times\{0\}$ and $\partial\Sigma_\theta:=\partial G_{\theta}\times\{0\}$.

Consider the problem
\begin{align}\label{eq:NScap8}
\begin{split}
\partial_t(\rho u)-\mu\Delta u+\nabla \pi&=f,\quad x\in\Omega_\theta\backslash\Sigma_\theta,\\
\div u&=f_d,\quad x\in\Omega_\theta\backslash\Sigma_\theta,\\
-\Jump{\mu \partial_3 v}-\Jump{\mu\nabla_{x'} w}&=g_v,\quad x\in\Sigma_\theta,\\
-2\Jump{\mu \partial_3 w}+\Jump{\pi}-\sigma\Delta_{x'} h&=g_w,\quad x\in\Sigma_\theta,\\
\Jump{u}&=u_\Sigma,\quad x\in\Sigma_\theta,\\
\partial_t h-m[w]&=g_h,\quad x\in\Sigma_\theta,\\
P_{S_{1,\theta}}\left(\mu(\nabla u+\nabla u^{\sf T})\nu_{S_{1,\theta}}\right)&=P_{S_{1,\theta}}g_1,\quad x\in S_{1,\theta}\backslash\partial\Sigma_\theta,\\
u\cdot\nu_{S_{1,\theta}}&=g_2,\quad x\in S_{1,\theta}\backslash\partial\Sigma_\theta,\\
\partial_{\nu_{G_\theta}} h&=g_3,\quad x\in\partial\Sigma_\theta\\
u(0)&=u_0,\quad x\in\Omega_\theta\backslash\Sigma_\theta,\\
h(0)&=h_0,\quad x\in\Sigma_\theta,
\end{split}
\end{align}
where $u=(v,w)$ and $v=(u_1,u_2)$, $w=u_3$. Without loss of generality we may consider $u_0=0$ and $h_0=0$ in \eqref{eq:NScap8}. Literally, this can be seen as in Section \ref{HS}, we will not go into the details. The remaining modified data (not to be relabeled) belong to the right regularity classes and they have vanishing traces at $t=0$.

Next, we will show that we may assume $u_\Sigma=0$. For that purpose, extend $u_\Sigma$ with respect to $x_2$ to some function
$$\tilde{u}_\Sigma\in\!_0W_p^{1-1/2p}(J;L_p(\mathbb{R}^2)^3)\cap L_p(J;W_p^{2-1/p}(\mathbb{R}^2)^3),$$
and solve the half space problem
\begin{align*}
\partial_t u_*-\Delta u_*&=0,\quad x\in\mathbb{R}^2\times\mathbb{R}_+,\\
u_*&=\tilde{u}_\Sigma,\quad x\in\mathbb{R}^2\times\{0\},\\
u_*(0)&=0,\quad x\in\mathbb{R}^2\times\mathbb{R}_+,
\end{align*}
by \cite{DHP07} to obtain a unique solution
$$u_*\in\!_0H_p^1(J;L_p(\mathbb{R}^2\times\mathbb{R}_+)^3)\cap L_p(J;H_p^2(\mathbb{R}^2\times\mathbb{R}_+)^3).$$
If $(u,\pi,\Jump{\pi},h)$ is a solution of \eqref{eq:NScap8} with $u_0=0$ and $h_0=0$, and $$u_{**}:=\begin{cases}
\bar{u}_+ -u_*,\quad&\text{if}\ x_3>0,\\
\bar{u}_-,\quad&\text{if}\ x_3<0.
\end{cases}$$
where $u^\pm:=u|_{x_3\gtrless 0}$, then $\Jump{u_{**}}=0$. Again the remaining modified data have the correct regularity and vanishing traces at $t=0$. Note that in this case $m[w_{**}]=w_{**}$, where $u_{**}=(v_{**},w_{**})$.

Let us show that we may reduce \eqref{eq:NScap8} with $u_0=0$, $h_0=0$ and $u_\Sigma=0$ to the case $g_v=0$, $g_w=0$ and $g_h=0$. To this end we extend the data $(g_v,g_w,g_h)$ with respect to $x_2$ to some functions
$$(\tilde{g}_v,\tilde{g}_w)\in\!_0W_p^{1/2-1/2p}(J;L_p(\mathbb{R}^2)^3)\cap L_p(J;W_p^{1-1/p}(\mathbb{R}^2)^3),$$
and
$$\tilde{g}_h\in\!_0W_p^{1-1/2p}(J;L_p(\mathbb{R}^2))\cap L_p(J;W_p^{2-1/p}(\mathbb{R}^2)).$$
Then we consider the two-phase problem
\begin{align}\label{eq:NScap8a}
\begin{split}
\partial_t\tilde{u}-\Delta \tilde{u}&=0,\quad x\in\mathbb{R}^2\times\dot{\mathbb{R}},\\
-\Jump{\mu \partial_3 \tilde{v}}-\Jump{\mu\nabla_{x'} \tilde{w}}&=\tilde{g}_v,\quad x\in\mathbb{R}^2\times\{0\},\\
-2\Jump{\mu \partial_3 \tilde{w}}-\sigma\Delta_{x'} \tilde{h}&=\tilde{g}_w,\quad x\in\mathbb{R}^2\times\{0\},\\
\Jump{\tilde{u}}&=0,\quad x\in\mathbb{R}^2\times\{0\},\\
\partial_t \tilde{h}-\tilde{w}&=\tilde{g}_h,\quad x\in\mathbb{R}^2\times\{0\},\\
\tilde{u}(0)&=0,\quad x\in\mathbb{R}^2\times\dot{\mathbb{R}},\\
\tilde{h}(0)&=0,\quad x\in\mathbb{R}^2\times\{0\},
\end{split}
\end{align}
for the unknowns $(\tilde{u},\tilde{h})$. Interestingly, the equations for $\tilde{v}$ and $\tilde{w}$ decouple. Therefore we study for the moment the problem
\begin{align}\label{eq:NScap8b}
\begin{split}
\partial_t\tilde{w}-\Delta \tilde{w}&=0,\quad x\in\mathbb{R}^2\times\dot{\mathbb{R}},\\
-2\Jump{\mu \partial_3 \tilde{w}}-\sigma\Delta_{x'} \tilde{h}&=\tilde{g}_w,\quad x\in\mathbb{R}^2\times\{0\},\\
\Jump{\tilde{w}}&=0,\quad x\in\mathbb{R}^2\times\{0\},\\
\partial_t \tilde{h}-\tilde{w}&=\tilde{g}_h,\quad x\in\mathbb{R}^2\times\{0\},\\
\tilde{w}(0)&=0,\quad x\in\mathbb{R}^2\times\dot{\mathbb{R}},\\
\tilde{h}(0)&=0,\quad x\in\mathbb{R}^2\times\{0\},
\end{split}
\end{align}
for the unknowns $(\tilde{w},\tilde{h})$. Assume that $(\tilde{w},\tilde{h})$ are already known. Then, $\tilde{w}$ is explicitly given by
\begin{equation}\label{eq:NScap8c}
\tilde{w}(x_3)=\frac{1}{2(\mu_++\mu_-)}\begin{cases}
e^{-L x_3}L^{-1}(\sigma\Delta_{x'}\tilde{h}+\tilde{g}_w),\quad&\text{if}\ x_3>0,\\
e^{-L (-x_3)}L^{-1}(\sigma\Delta_{x'}\tilde{h}+\tilde{g}_w),\quad&\text{if}\ x_3<0,
\end{cases}
\end{equation}
where $L:=(\partial_t-\Delta_{x'})^{1/2}$. Therefore,
$$\tilde{w}|_{x_3=0}=\frac{1}{2(\mu_++\mu_-)}L^{-1}(\sigma\Delta_{x'}\tilde{h}+\tilde{g}_w)$$
and it follows that we may reduce \eqref{eq:NScap8b} to a single equation for $\tilde{h}$ which reads
\begin{equation}\label{eq:NScap8ca}
\partial_t\tilde{h}-\frac{\sigma}{2(\mu_++\mu_-)}L^{-1}\Delta_{x'}\tilde{h}=
\frac{1}{2(\mu_++\mu_-)}L^{-1}\tilde{g}_w+\tilde{g}_h,
\end{equation}
and which is subject to the initial condition $\tilde{h}(0)=0$. Making use of the $\mathcal{R}$-boundedness of the operator $\Delta_{x'}$ in $K_p^s(\mathbb{R}^2)$, $K\in\{W,H\}$, the operator-valued $H^\infty$-calculus for $\partial_t$ in $_0H_p^r(J;K_p^s(\mathbb{R}^2))$ and real interpolation one can show as in \cite[Section 5]{PrSi09a} that the operator $\partial_t-\frac{\sigma}{2(\mu_++\mu_-)}L^{-1}\Delta_{x'}$ is invertible in $_0W_p^{1-1/2p}(J;L_p(\mathbb{R}^2))\cap L_p(J;W_p^{2-1/p}(\mathbb{R}^2))$ with domain
$$_0W_p^{2-1/2p}(J;L_p(\mathbb{R}^2))\cap\!_0H_p^1(J;W_p^{2-1/p}(\mathbb{R}^2))\cap L_p(J;W_p^{3-1/p}(\mathbb{R}^2)).$$
Hence there exists a unique solution
$$\tilde{h}\in\!_0W_p^{2-1/2p}(J;L_p(\mathbb{R}^2))\cap \!_0H_p^1(J;W_p^{2-1/p}(\mathbb{R}^2))\cap L_p(J;W_p^{3-1/p}(\mathbb{R}^2))$$
of \eqref{eq:NScap8ca}. Then $\tilde{w}$ is given by \eqref{eq:NScap8c} and, finally, $\tilde{v}$ is the unique solution of the two-phase problem
\begin{align*}
\partial_t\tilde{v}-\Delta \tilde{v}&=0,\quad x\in\mathbb{R}^2\times\dot{\mathbb{R}},\\
-\Jump{\mu \partial_3 \tilde{v}}&=\Jump{\mu\nabla_{x'} \tilde{w}}+\tilde{g}_v,\quad x\in\mathbb{R}^2\times\{0\},\\
\Jump{\tilde{v}}&=0,\quad x\in\mathbb{R}^2\times\{0\},\\
\tilde{v}(0)&=0,\quad x\in\mathbb{R}^2\times\dot{\mathbb{R}}.
\end{align*}
In summary, we have shown that we may reduce \eqref{eq:NScap8} to the problem
\begin{align}\label{eq:NScap8d}
\begin{split}
\partial_t(\rho u)-\mu\Delta u+\nabla \pi&=f,\quad x\in\Omega_\theta\backslash\Sigma_\theta,\\
\div u&=f_d,\quad x\in\Omega_\theta\backslash\Sigma_\theta,\\
-\Jump{\mu \partial_3 v}-\Jump{\mu\nabla_{x'} w}&=0,\quad x\in\Sigma_\theta,\\
-2\Jump{\mu \partial_3 w}+\Jump{\pi}-\sigma\Delta_{x'} h&=0,\quad x\in\Sigma_\theta,\\
\Jump{u}&=0,\quad x\in\Sigma_\theta,\\
\partial_t h-w&=0,\quad x\in\Sigma_\theta,\\
P_{S_{1,\theta}}\left(\mu(\nabla u+\nabla u^{\sf T})\nu_{S_{1,\theta}}\right)&=P_{S_{1,\theta}}g_1,\quad x\in S_{1,\theta}\backslash\partial\Sigma_\theta,\\
u\cdot\nu_{S_{1,\theta}}&=g_2,\quad x\in S_{1,\theta}\backslash\partial\Sigma_\theta,\\
\partial_{\nu_{G_\theta}} h&=g_3,\quad x\in\partial\Sigma_\theta\\
u(0)&=0,\quad x\in\Omega_\theta\backslash\Sigma_\theta,\\
h(0)&=0,\quad x\in\Sigma_\theta,
\end{split}
\end{align}
with given data $(f,g_1,g_2,g_3)$ having vanishing traces at $t=0$ and which satisfy the compatibility conditions
$$\Jump{g_2}=0,\ \Jump{g_1\cdot e_3}=0,\ \Jump{P_{S_{1,\theta}}g_1\cdot e_1/\mu}=0,\
\Jump{\partial_3 g_2-g_1\cdot e_3/\mu}=0,$$
and
$$\partial_t g_3+\partial_3 g_2-g_1\cdot e_3/\mu=0$$
at the contact line $\{(x_1,\theta(x_1),0):x_1\in\mathbb{R}\}$. To see this, one can apply the representation \eqref{eq:NSbendquart1a} from Section \ref{bentQS}. Note also that the second component of $P_{S_{1,\theta}}w$ is redundant, as it can always be reproduced from the first component. Finally, it holds that $(f_d,0,g_2)\in\!_0H_p^1(J;\hat{H}_p^{-1}(\Omega_\theta))$.

We will now transform $\Omega_\theta$ to $\Omega:=\mathbb{R}\times\mathbb{R}_+\times\mathbb{R}$, $S_{1,\theta}$ to $S_1:=\mathbb{R}\times\{0\}\times\mathbb{R}$, $\Sigma_\theta$ to $\Sigma:=\mathbb{R}\times\mathbb{R}_+\times\{0\}$ and $\partial\Sigma_\theta$ to $\partial\Sigma:=\mathbb{R}_+\times\{0\}\times\{0\}$. To this end we introduce the new variables $\bar{x}_1=x_1$, $\bar{x}_2=x_2-\theta(x_1)$ and $\bar{x}_3=x_3$ for $x\in\Omega_\theta$. Suppose that $(u,\pi,h)$ is a solution of \eqref{eq:NScap8} and define the new functions
$$\bar{u}(\bar{x}):=u(\bar{x}_1,\bar{x}_2+\theta(\bar{x}_1),\bar{x}_3)$$
$$\bar{\pi}(\bar{x}):=\pi(\bar{x}_1,\bar{x}_2+\theta(\bar{x}_1),\bar{x}_3)$$
and
$$\bar{h}(\bar{x}'):=h(\bar{x}_1,\bar{x}_2+\theta(\bar{x}_1)),$$
where $\bar{x}':=(\bar{x}_1,\bar{x}_2)$. In the same way we transform all of the data. Then, as in Section \ref{bentQS}, $(\bar{u},\bar{\pi},\bar{h})$ satisfies the problem
\begin{align}\label{eq:NScap9}
\begin{split}
\partial_t(\rho \bar{u})-\mu\Delta \bar{u}+\nabla \bar{\pi}&=M_1(\theta,\bar{u},\bar{\pi})+\bar{f},\quad \bar{x}\in\Omega\backslash\Sigma,\\
\div \bar{u}&=M_2(\theta,\bar{u})+\bar{f}_d,\quad \bar{x}\in\Omega\backslash\Sigma,\\
-\Jump{\mu \partial_{3} \bar{v}}-\Jump{\mu\nabla_{\bar{x}'} \bar{w}}&=M_3(\theta,\bar{u}),\quad \bar{x}\in\Sigma\\
-2\Jump{\mu \partial_{3} \bar{w}}+\Jump{\bar{\pi}}-\sigma\Delta_{\bar{x}'} \bar{h}&=M_4(\theta,\bar{h}),\quad \bar{x}\in\Sigma,\\
\Jump{\bar{u}}&=0,\quad \bar{x}\in\Sigma\\
\partial_t \bar{h}-\bar{w}&=0,\quad \bar{x}\in\Sigma,\\
\mu(\partial_{1} \bar{u}_2+\partial_{2} \bar{u}_1)&=M_5(\theta,\bar{u})-\sqrt{1+\theta'^2}^3[P_{S_{1,\theta}}\bar{g}_1\cdot e_1],\quad \bar{x}\in S_1\backslash\partial\Sigma,\\
\mu(\partial_{2} \bar{u}_3+\partial_{3} \bar{u}_2)
&=M_6(\theta,\bar{u})-\sqrt{1+\theta'^2}[\bar{g}_1\cdot e_3],\quad \bar{x}\in S_1\backslash\partial\Sigma,\\
\bar{u}_2&=M_7(\theta,\bar{u})-\sqrt{1+\theta'^2}\bar{g}_2,\quad \bar{x}\in S_1\backslash\partial\Sigma,\\
\partial_{2} \bar{h}&=M_8(\theta,\bar{h})-\sqrt{1+\theta'^2}\bar{g}_3,\quad \bar{x}\in\partial\Sigma,\\
\bar{u}(0)&=0,\quad \bar{x}\in\Omega\backslash\Sigma\\
\bar{h}(0)&=0,\quad \bar{x}\in\Sigma,
\end{split}
\end{align}
where $\bar{u}=(\bar{v},\bar{v})$.
The functions $M_j$ are given by
$$M_1(\theta,\bar{u},\bar{\pi}):=2\theta'(\bar{x}_1)\partial_{1}
\partial_{2}\bar{u}+\theta''(\bar{x}_1)\partial_{2}\bar{u}
-\theta'(\bar{x}_1)^2\partial_{2}^2\bar{u}
+\theta'(\bar{x}_1)\partial_{2}\bar{\pi}e_1,$$
$$M_2(\theta,\bar{u}):=\theta'(\bar{x}_1)\partial_{2}\bar{u}_1,$$
$$M_3(\theta,\bar{u})=[-\theta'(\bar{x}_1)\Jump{\mu\partial_{2}\bar{w}},0]^{\sf T},$$
$$M_4(\theta,\bar{h})=\sigma\left(-2\theta'(\bar{x}_1)\partial_{1}
\partial_{2}\bar{h}-\theta''(\bar{x}_1)\partial_{2}\bar{h}
+\theta'(\bar{x}_1)^2\partial_{2}^2\bar{h}\right),$$
$$M_5(\theta,\bar{u}):=\mu\theta'(\bar{x}_1)[2\partial_{1}\bar{u}_1+
\theta'(\bar{x}_1)(\partial_{1}\bar{u}_2-\partial_{2}\bar{u}_1)-(1+
\theta'(\bar{x}_1)^2)\partial_{2}\bar{u}_2],$$
$$M_6(\theta,\bar{u}):=\mu\theta'(\bar{x}_1)(\partial_{1} \bar{u}_3-\theta'(\bar{x}_1)\partial_{2}\bar{u}_3+
\partial_{3}\bar{u}_1),$$
$$M_7(\theta,\bar{u}):=\theta'(\bar{x}_1)\bar{u}_1.$$
and
$$M_8(\theta,\bar{h})=\theta'(\bar{x}_1)\left(\partial_{1}\bar{h}
-\theta'(\bar{x}_1)\partial_{2}\bar{h}\right).$$
Let us define the function spaces
$$_0\mathbb{E}_u(T):=\{u\in\!_0H_p^1(J;L_p(\Omega)^3)\cap L_p(J;H_p^2(\Omega\backslash\Sigma)^3):\Jump{u}=0,\ \text{on}\ \Sigma\},$$
$$\mathbb{E}_\pi(T):=L_p(J;\dot{H}_p^1(\Omega\backslash\Sigma)),$$
$$_0\mathbb{E}_q(T):=\!_0W_p^{1/2-1/2p}(J;L_p(\Sigma))\cap L_p(J;W_p^{1-1/p}(\Sigma)),$$
$$_0\mathbb{E}_h(T):=\!_0W_p^{2-1/2p}(J;L_p(\Sigma))\cap \!_0H_p^1(J;W_p^{2-1/p}(\Sigma))\cap L_p(J;W_p^{3-1/p}(\Sigma))$$
\begin{multline*}
_0\mathbb{E}(T):=\{(u,\pi,q,h)\in\!_0\mathbb{E}_u(T)\times \mathbb{E}_\pi(T)\times
\!_0\mathbb{E}_q(T)\times\!_0\mathbb{E}_h(T):\\
q=\Jump{\pi},\ \partial_t h-u\cdot e_3=0\ \text{on}\ \Sigma\},
\end{multline*}
$$\tilde{\mathbb{F}}(T):=\mathbb{F}_1(T)\times \mathbb{F}_2(T)\times_{j=3}^8\, _0\mathbb{F}_j(T),$$ where
$$\mathbb{F}_1(T):=L_p(J;L_p(\Omega)^3),$$
$$\mathbb{F}_2(T):=L_p(J;H_p^1(\Omega\backslash\Sigma)),$$
$$_0\mathbb{F}_3(T):=\!_0
W_p^{1/2-1/2p}(J;L_p(\Sigma)^2)\cap
L_p(J;W_p^{1-1/p}(\Sigma)^2),$$
$$_0\mathbb{F}_4(T):=\!_0
W_p^{1/2-1/2p}(J;L_p(\Sigma))\cap
L_p(J;W_p^{1-1/p}(\Sigma)),$$
$$_0\mathbb{F}_5(T):=\!_0
W_p^{1/2-1/2p}(J;L_p(S_1))\cap
L_p(J;W_p^{1-1/p}(S_1\backslash\partial\Sigma)),$$
$_0\mathbb{F}_6(T):=\!_0\mathbb{F}_5(T)$,
$$_0\mathbb{F}_7(T):=
\!_0W_p^{1-1/2p}(J;L_p(S_1))\cap
L_p(J;W_p^{2-1/p}(S_1)),$$
and
$$_0\mathbb{F}_8(T):=\!_0W_p^{3/2-1/p}(J;L_p(\Sigma))\cap \!_0H_p^1(J;W_p^{1-2/p}(\Sigma))\cap L_p(J;W_p^{2-2/p}(\Sigma)).$$
Finally, we set
$$_0\mathbb{F}(T):=\{(f_1,\ldots,f_8)\in\tilde{\mathbb{F}}(T):(13)\ \&\ (16)\ \text{in Theorem \ref{thm:linWP1} are satisfied}\}.$$
Define an operator $L:\,_0\mathbb{E}(T)\to\!_0\mathbb{F}(T)$ by
$$L(\bar{u},\bar{\pi},\bar{q},\bar{h}):=\begin{bmatrix}
\partial_t(\rho \bar{u})-\mu\Delta \bar{u}+\nabla \bar{\pi}\\
\div \bar{u}\\
-\Jump{\mu\partial_3\bar{v}}-\Jump{\mu\nabla_{\bar{x}'}\bar{w}}\\
-2\Jump{\mu\partial_3\bar{w}}+\bar{q}-\sigma\Delta_{\bar{x}'}\bar{h}\\
\mu(\partial_2 \bar{u}_1+\partial_1 \bar{u}_2)|_{S_1}\\
\mu(\partial_3 \bar{u}_2+\partial_2 \bar{u}_3)|_{S_1}\\
\bar{u}_2|_{S_1}\\
\partial_2\bar{h}|_{\partial\Sigma}
\end{bmatrix}
$$
and note that $L:\,_0\mathbb{E}(T)\to\!_0\mathbb{F}(T)$ is an isomorphism by Theorem \ref{thm:linWP1}. Define
$$M(\theta,\bar{u},\bar{\pi},\bar{h}):=(M_1,{M}_2,
{M}_3,{M}_4,{M}_5,M_6,M_7,M_8)^{\sf T}(\theta,\bar{u},\bar{\pi},\bar{h})$$
and
$$F:=(f_1,f_2,0,0,f_5,f_6,f_7,f_8)^{\sf T},$$
with $f_1:=\bar{f}$, $f_2:=\bar{f}_d$,
$$f_5:=-\sqrt{1+\theta'^2}^3[P_{S_{1,\theta}}\bar{g}_1\cdot e_1],\ f_6:=-\sqrt{1+\theta'^2}[\bar{g}_1\cdot e_3],$$
$f_7:=-\sqrt{1+\theta'^2}\bar{g}_2$ and $f_8:=-\sqrt{1+\theta'^2}\bar{g}_3$.
It can be readily checked that the components of $F$ satisfy the compatibility conditions (13)-(16) in Theorem \ref{thm:linWP1}. In fact, this can be seen as in Section \ref{bentQS}. Since $\theta\in BC^3(\mathbb{R})$ this implies that $F\in\!_0\mathbb{F}(T)$. In the same way one can show that the components of $M(\theta,\bar{u},\bar{\pi},\bar{h})$ satisfy the compatibility conditions (14)-(16) as well as the second compatibility condition in (13) in Theorem \ref{thm:linWP1}. Unfortunately the first condition in Theorem \ref{thm:linWP1} (13) for $M_6$, which reads
$$\Jump{M_6(\theta,\bar{u})}=0\ \text{on}\ \Sigma,$$
is in general not satisfied. To circumvent this problem, we modify $M_3(\theta,\bar{u})$ as follows
$$\bar{M}_3(\theta,\bar{u}):=\theta'(\bar{x}_1)\left[\Jump{\mu\partial_2\bar{w}},-
\operatorname{ext}_{\Sigma}\Big(\Jump{\mu(\partial_{1} \bar{w}-\theta'(\bar{x}_1)\partial_{2}\bar{w}+
\partial_3\bar{u}_1)|_{S_1\backslash\partial\Sigma}}\Big)\right]^{\sf T}.$$
Here $\operatorname{ext}_\Sigma$ is a suitable bounded and linear extension operator from $$_0W_p^{1/2-1/p}(J;L_p(\partial\Sigma))\cap L_p(J;W_p^{1-2/p}(\partial\Sigma))$$
to
$$_0W_p^{1/2-1/2p}(J;L_p(\Sigma))\cap L_p(J;W_p^{1-1/p}(\Sigma)),$$
such that $[\operatorname{ext}_\Sigma z]|_{\partial\Sigma}=z$ for all $z\in\!_0W_p^{1/2-1/p}(J;L_p(\partial\Sigma))\cap L_p(J;W_p^{1-2/p}(\partial\Sigma))$, which exists due to Proposition \ref{app:propext}.
Note that if we have a solution $(u,\pi,q,h)\in\!_0\mathbb{E}(T)$ of \eqref{eq:NScap9} with $M_3(\theta,\bar{u})$ replaced by $\bar{M}_3(\theta,\bar{u})$, then, by the first component of the third line in \eqref{eq:NScap9}, we obtain that
$$\Jump{\mu(\partial_1 \bar{w}-\theta'(\bar{x}_1)\partial_2\bar{w}+\partial_3 \bar{u}_1)}=0$$
on $\Sigma$, hence $\bar{M}_3(\theta,\bar{u})=M_3(\theta,\bar{u})$ in this case.

Let us define
$$\bar{M}(\theta,\bar{u},\bar{\pi},\bar{h}):=(M_1,{M}_2,
\bar{M}_3,{M}_4,{M}_5,M_6,M_7,M_8)^{\sf T}(\theta,\bar{u},\bar{\pi},\bar{h}).$$
Since the modification in $M_3$ does not affect the other compatibility conditions in Theorem \ref{thm:linWP1}, it follows readily that $\bar{M}(\theta,\bar{u},\bar{\pi},\bar{h})\in\!_0\mathbb{F}(T)$ for each $(\bar{u},\bar{\pi},\bar{q},\bar{h})\in\!_0\mathbb{E}(T)$.
Therefore, we may rewrite \eqref{eq:NScap9}, with $M_3$ replaced by $\bar{M}_3$, in the more condensed form
\begin{equation}\label{eq:NScap9a}
(\bar{u},\bar{\pi},\bar{q},\bar{h})=L^{-1}\bar{M}(\theta,\bar{u},\bar{\pi},\bar{h})
+L^{-1}F
\end{equation}
in the space $_0\mathbb{E}(T)$. As in Section \ref{bentQS} we will apply a Neumann series argument to show that \eqref{eq:NScap9a} has a unique solution $(\bar{u},\bar{\pi},\bar{q},\bar{h})\in\!_0\mathbb{E}(T)$. For that purpose we need to show the following property for $\bar{M}$. For each $\varepsilon>0$ there exist $T_0>0$ and $\eta_0>0$ such that
\begin{equation*}
\|\bar{M}(\theta,\bar{u},\bar{\pi},\bar{h})\|_{\mathbb{F}(T)}\le\varepsilon \|(\bar{u},\bar{\pi},\bar{q},\bar{h})\|_{\mathbb{E}(T)},
\end{equation*}
provided that $T\in (0,T_0)$ and $\eta\in (0,\eta_0)$. Mimicking the estimates of Section \ref{bentQS} for the components of $\bar{M}$ and taking into account that the operator $\operatorname{ext}_\Sigma$ is linear and bounded, one obtains an estimate of the form
$$\|\bar{M}(\theta,\bar{u},\bar{\pi},\bar{h})\|_{\mathbb{F}(T)}\le C \left[\|\theta'\|_\infty+T^{1/2p}(\|\theta''\|_\infty+\|\theta'''\|_\infty)\right]
\|(\bar{u},\bar{\pi},\bar{q},\bar{h})\|_{\mathbb{E}(T)},$$
with a uniform constant $C>0$. Since $\|\theta'\|_\infty\le\eta$, we may first choose $\eta>0$ sufficiently small and then $T>0$ sufficiently small, to obtain the desired estimate for the function $\bar{M}$.

Then we may apply a Neumann series argument in $_0\mathbb{E}(T)$ to conclude that there exists a unique solution $(\bar{u},\bar{h},\bar{\pi})\in\!_0\mathbb{E}(T)$ of the equation $$L(\bar{u},\bar{\pi},\bar{q},\bar{h})=\bar{M}(\theta,\bar{u},\bar{\pi},\bar{h})+F$$
or equivalently a unique solution $(u,\pi,q,h)$ of \eqref{eq:NScap8} as explained above.

This in turn yields a solution operator $S_{HS}:\mathbb{F}_{HS}\to\mathbb{E}_{HS}$ for \eqref{eq:NSbendquart1}, where $\mathbb{E}_{HS}$ and $\mathbb{F}_{HS}$ are the solution space and data space, respectively, for the bent half-space and the data in $\mathbb{F}_{HS}$ satisfy all relevant compatibility conditions at the contact line $\partial\Sigma_\theta$.


%
%
%

\chapter{General bounded cylindrical domains}\label{chptr:localization}

Let $n=3$ and $p>5$. In this section we will prove that system \eqref{eq:NScap3} admits a unique solution. To this end we apply the method of localization. We want to emphasize that this localization procedure cannot be simply carried over from standard parabolic systems. This is due to the divergence equation and the presence of the pressure in \eqref{eq:NScap3}. Let
$$\mathbb{E}_u(J):=H_p^1(J;L_p(\Omega)^3)\cap L_p(J;H_p^2(\Omega\backslash\Sigma)^3),\quad \mathbb{E}_\pi(J):=L_p(J;\dot{H}_p^1(\Omega)),$$
$$\mathbb{E}_q(J):=W_p^{1/2-1/2p}(J;L_p(\Sigma))\cap L_p(J;W_p^{1-1/p}(\Sigma)).$$
$$\mathbb{E}_h(J):=W_p^{2-1/2p}(J;L_p(\Sigma))\cap H_p^1(J;W_p^{2-1/p}(\Sigma))\cap L_p(J;W_p^{3-1/p}(\Sigma)),$$
and
$$\mathbb{E}(J):=\{(u,\pi,q,h)\in \mathbb{E}_u(J)\times\mathbb{E}_\pi(J)\times\mathbb{E}_q(J)\times \mathbb{E}_h(J):q=\Jump{\pi}\}.$$

\section{Regularity of the pressure}

Let $(u)_\Omega:=u-\frac{1}{|\Omega|}\int_\Omega u dx$ denote the part of $u\in L_1(\Omega)$ with mean value zero. We start with an auxiliary lemma which provides some additional regularity for the pressure.
\begin{lem}\label{lem:RegPressure}
Let $(u,\pi,\Jump{\pi},h)\in\mathbb{E}(J)$ be a solution of \eqref{eq:NScap3} with
$$f_d=u_0=h_0=g_2=u_\Sigma\cdot\nu_\Sigma=g_3\cdot\nu_{\partial\Omega}=0,$$
and $f\in\!_0W_p^{\alpha}(J;L_p(\Omega)^3)$ for some $\alpha\in (0,1/2-1/2p)$.
Then the following assertions hold.
\begin{enumerate}
\item If $\Omega$ is bounded, then $(\pi)_\Omega\in\!_0W_p^{\alpha}(J;L_p(\Omega))$ and the estimate
$$\|(\pi)_\Omega\|_{W_p^{\alpha}(L_p)}\le C\left(\|u\|_{\mathbb{E}_u}+\|\Jump{\pi}\|_{\mathbb{E}_q}+\|f\|_{W_p^\alpha(L_p)}\right)$$
is valid, where $C>0$ does not depend on the length of the interval $J$.
\item If $\Omega$ is a full space, a (bent) quarter space or a (bent) half space, then $(\pi)_K\in\!_0W_p^{\alpha}(J;L_p(K))$ for each bounded set $K\subset\overline{\Omega}$. Furthermore there exists a constant $C_K>0$ which does not depend on the length of the interval $J$ such that the estimate
    $$\|(\pi)_K\|_{W_p^{\alpha}(L_p(K))}\le C_K\left(\|u\|_{\mathbb{E}_u}+\|\Jump{\pi}\|_{\mathbb{E}_q}+\|f\|_{W_p^\alpha(L_p)}\right)$$
    is valid.
\end{enumerate}
\end{lem}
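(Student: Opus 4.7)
The idea is to represent the mean-zero part $(\pi)_\Omega$ via a duality formula obtained by testing the momentum equation against gradients of solutions to an auxiliary Neumann-Laplace problem, and then track the time regularity of each ingredient. Given $\psi\in L_{p'}(\Omega)$ with $\int_\Omega\psi\, dx=0$, let $\phi=\phi_\psi$ be the unique mean-zero solution of $\Delta\phi=\psi$ in $\Omega$ with $\partial_{\nu_{\partial\Omega}}\phi=0$; since $\Omega$ is a smooth cylinder, standard elliptic theory gives $\phi\in H_{p'}^2(\Omega)$ with $\|\phi\|_{H_{p'}^2}\le C\|\psi\|_{L_{p'}}$. Multiplying $\nabla\pi=f+\mu\Delta u-\partial_t(\rho u)$ by $\nabla\phi$ and integrating by parts phase by phase (using $\partial_{\nu_{\partial\Omega}}\phi=0$ and continuity of $\partial_{\nu_\Sigma}\phi$ across $\Sigma$, both consequences of $\phi\in H_{p'}^2$) yields
\begin{equation*}
\int_\Omega (\pi)_\Omega\psi\, dx = \int_\Omega f\cdot\nabla\phi\, dx + \mu\int_\Omega \Delta u\cdot\nabla\phi\, dx - \int_\Omega \partial_t(\rho u)\cdot\nabla\phi\, dx + \int_\Sigma\Jump{\pi}\,\partial_{\nu_\Sigma}\phi\, d\Sigma.
\end{equation*}

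The plan is then to bound each of the four terms as a $W_p^\alpha(J)$-valued functional of $\psi$. The $f$-contribution is immediate from $f\in\,_0W_p^\alpha(L_p)$ and the time-independence of $\nabla\phi$. The $\Jump{\pi}$-contribution uses the embedding $W_p^{1/2-1/2p}(L_p(\Sigma))\hookrightarrow W_p^\alpha(L_p(\Sigma))$, valid since $\alpha<1/2-1/2p$, paired with the trace $\partial_{\nu_\Sigma}\phi\in W_{p'}^{1-1/p}(\Sigma)$. For the $\Delta u$-term I would integrate by parts once more (phase by phase) to transfer a derivative onto $\phi$, leaving $-\int\nabla u:\nabla^2\phi$ plus boundary/interface integrals of $\partial_{\nu}u\cdot\nabla\phi$; these are controlled via the mixed-derivative embedding $u\in H_p^1(L_p)\cap L_p(H_p^2)\hookrightarrow W_p^\alpha(H_p^{2(1-\alpha)})$, which yields $\nabla u\in W_p^\alpha(L_p)$ pairing with $\nabla^2\phi\in L_{p'}$. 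The precise range $\alpha<1/2-1/2p$ enters exactly where it is needed: it is the sharp condition for the trace of $\nabla u$ on $S_1\cup\Sigma$ to lie in $W_p^\alpha(W_p^{1-2\alpha-1/p})$ and thus pair inside $W_p^\alpha(J)$ with $\nabla\phi$ restricted to $\partial\Omega\cup\Sigma$.

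The main obstacle is the time-derivative term, which a priori has only $L_p(J)$ regularity. The key observation is that incompressibility and the boundary conditions collapse the bulk integral to an interface integral: using $\div u=f_d=0$, $u\cdot\nu_{\partial\Omega}=g_2=0$, $u|_{S_2}=g_3=0$, $\Jump{u}\cdot\nu_\Sigma=u_\Sigma\cdot\nu_\Sigma=0$, and the piecewise constancy of $\rho$, spatial integration by parts gives
\begin{equation*}
\int_\Omega \rho u\cdot\nabla\phi\, dx = -\int_\Sigma \Jump{\rho}\,w|_\Sigma\,\phi\, d\Sigma.
\end{equation*}
By the kinematic equation (noting $\Jump{w}=0$, whence $m[w]=w|_\Sigma$), the trace equals $w|_\Sigma=\partial_t h - g_h$, which enjoys the higher temporal regularity inherited from $h\in W_p^{2-1/2p}(L_p)$ and $g_h\in W_p^{1-1/2p}(L_p)$. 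Differentiating once more in time, justified via duality in $t$ against a smooth test function and using $u_0=0$, $h_0=0$ to eliminate boundary contributions at $t=0$, produces the required $W_p^\alpha(J)$-control of $\int_\Omega\partial_t(\rho u)\cdot\nabla\phi\, dx$ in the range $\alpha<1/2-1/2p$. Assembling the four estimates and taking the supremum over mean-zero $\psi\in L_{p'}(\Omega)$ gives the bound claimed in (1), with $C$ independent of $|J|$ precisely because all relevant traces at $t=0$ vanish.

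For assertion (2) the same argument applies locally. Given a bounded $K\subset\overline{\Omega}$, I would multiply $\phi_\psi$ by a smooth cutoff supported in a slightly enlarged $K'\subset\overline{\Omega}$ and repeat the chain of estimates; the constant $C_K$ then absorbs the derivatives of the cutoff and depends on $K$ through $K'$, but the time regularity argument itself is unaffected since it only uses the structure of the equation and the data, not the global nature of $\Omega$.
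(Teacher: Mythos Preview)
Your handling of the time-derivative term has a genuine gap. You reduce $\int_\Omega \partial_t(\rho u)\cdot\nabla\phi\,dx$ to the surface quantity $-\Jump{\rho}\,\partial_t\!\int_\Sigma w|_\Sigma\,\phi\,d\Sigma$ and then invoke the kinematic equation $w|_\Sigma=\partial_t h-g_h$. But $\partial_t h$ and $g_h$ lie only in $W_p^{1-1/2p}(J;L_p(\Sigma))$, so $w|_\Sigma$ has no more than $W_p^{1-1/2p}$ time regularity; differentiating once more does \emph{not} land you in $W_p^\alpha(J)$ for any $\alpha>0$. The duality-in-$t$ argument you allude to cannot manufacture the missing $\alpha+1/2p$ derivatives, and nothing in the statement gives extra regularity for $h$ or $g_h$ beyond $\mathbb{E}_h$ and $\mathbb{F}_6$. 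This term is precisely the obstruction, and your approach does not remove it.

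The paper's proof eliminates the time-derivative term \emph{exactly}. Instead of the plain Neumann problem, one solves the two-phase transmission problem $\Delta\psi=(g)_\Omega$ in $\Omega\setminus\Sigma$ with $\Jump{\rho\psi}=0$, $\Jump{\partial_{\nu_\Sigma}\psi}=0$, $\partial_{\nu_{\partial\Omega}}\psi=0$, and sets $\phi:=\rho\psi$. One then pairs $(\pi)_\Omega/\rho$ with $\Delta\phi$ and uses the momentum equation in the form $\partial_t u=(\mu/\rho)\Delta u-(1/\rho)\nabla\pi+f/\rho$. The point of the weights is that $\phi=\rho\psi$ is continuous across $\Sigma$, and together with $\div u=0$ and the homogeneous normal boundary data this forces $(u\,|\,\nabla\phi)_{L_2(\Omega)}=0$ for every $t$; hence $(\partial_t u\,|\,\nabla\phi)=0$ identically. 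What remains is a pointwise-in-$t$ representation of $((\pi)_\Omega|g)$ in terms of $\nabla u$, the one-sided Neumann traces of $u$ on $\Sigma$ and $\partial\Omega$, $\Jump{\pi}$, and $f$ only. All of these have $W_p^{1/2-1/2p}$ (or better) time regularity by the mixed-derivative theorem and trace theory, which gives the claimed $W_p^\alpha$ bound directly---with no reference to $h$, $g_h$, or any second time derivative.

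For part (2) the paper does not cut off $\phi$; rather, it extends $(g)_K$ by zero to $\Omega$ and solves the same weighted transmission problem globally, obtaining $\nabla\psi,\nabla^2\psi\in L_{p'}(\Omega)$ with norms controlled by $\|g\|_{L_{p'}(K)}$. Your cutoff idea would spoil both $\Delta(\chi\phi)=\psi$ and the transmission/boundary conditions, re-introducing exactly the kind of commutator terms you need the lemma to absorb.
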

\begin{proof}
1. Let $g\in L_{p^\prime}(\Omega)$ be given and solve the problem
\begin{align}\label{reducedf2}
\Delta\psi&=g-(g|\mathds{1})\quad \mbox{ in } \; \Omega\backslash\Sigma,\nonumber\\
[\![\rho\psi]\!]&=0\quad \mbox{ on } \;\Sigma\nonumber,\\
[\![\partial_{\nu_\Sigma}\psi]\!]&=0\quad \mbox{ on } \;\Sigma,\\
\partial_{\nu_{\partial\Omega}}\psi&=0\quad \mbox{ on }\; \partial\Omega\backslash\partial\Sigma=(S_1\backslash\partial\Sigma)\cup S_2,\nonumber
\end{align}
by Lemma \ref{lem:appaux0} and define $\phi:=\rho\psi$.
Since $((\pi)_\Omega|\mathds{1})=(u|\nabla\phi)=0$ we obtain by integration by parts
\begin{align*}
((\pi)_\Omega|g)&=((\pi)_\Omega|(g)_\Omega) \\
&=\left(\frac{(\pi)_\Omega}{\rho}|\Delta \phi\right) = -\int_\Sigma[\![\frac{(\pi)_\Omega}{\rho}\partial_{\nu_{\Sigma}} \phi]\!]d\Sigma - \left(\frac{\nabla\pi}{\rho}|\nabla \phi\right)\\
&= -\int_\Sigma[\![\pi]\!]\frac{\partial_{\nu_{\Sigma}} \phi}{\rho}d\Sigma - \left(\frac{\mu}{\rho}\Delta u|\nabla \phi\right)-(f|\nabla\phi)\\
&= \int_\Omega \frac{\mu}{\rho}\nabla u:\nabla^2 \phi dx -\int_{\partial\Omega}\frac{\mu\partial_{\nu_{\partial\Omega}} u}{\rho}\nabla\phi d\sigma+\int_\Sigma \{[\![\frac{\mu\partial_{\nu_{\Sigma}} u}{\rho}\nabla\phi]\!]-[\![\pi]\!]\frac{\partial_{\nu_{\Sigma}} \phi}{\rho}\}d\Sigma\\
&\hspace{1cm}-(f|\nabla\phi).
\end{align*}
Note that there exists a constant $C>0$ such that $\|\phi\|_{W_{p'}^2}\le C\|g\|_{L_{p'}}$. Hence, taking the supremum of the left hand side over all functions $g\in L_{p'}(\Omega)$ with norm less or equal to one, we obtain
\begin{multline*}
\|(\pi)_\Omega(t)\|_{L_p(\Omega)}\le C\Big(\|\nabla u(t)\|_{L_p(\Omega)}+\|\partial_{\nu_{\partial\Omega}}u(t)\|_{L_p(\partial\Omega)}\\
+\|\left(\partial_{\nu_{\Sigma}}u(t)\right)_\pm\|_{L_p(\Sigma)}+
\|\Jump{\pi(t)}\|_{L_p(\Sigma)}+\|f(t)\|_{L_p(\Omega)}\Big),
\end{multline*}
for almost all $t\in J$. The same strategy yields the estimate
\begin{multline*}
\|(\pi)_\Omega(t)-(\pi)_\Omega(s)\|_{L_p(\Omega)}\le C\Big(\|\nabla (u(t)-u(s))\|_{L_p(\Omega)}
+\|\partial_{\nu_{\partial\Omega}}(u(t)-u(s))\|_{L_p(\partial\Omega)}\\
+\|\left(\partial_{\nu_{\Sigma}}(u(t)-u(s))\right)_\pm\|_{L_p(\Sigma)}+
\|\Jump{\pi(t)}-\Jump{\pi(s)}\|_{L_p(\Sigma)}+\|f(t)-f(s)\|_{L_p(\Omega)}\Big),
\end{multline*}
for almost all $s,t\in J$.

By the mixed derivative theorem and trace theory it holds that $\partial_k u_l\in\!_0H_p^{1/2}(J;L_p(\Omega))$,
$$\left(\partial_k u_l\right)_\pm|_{\Sigma}\in\!_0W_p^{1/2-1/2p}(J;L_p(\Sigma))$$
and
$$\partial_k u_l|_{\partial\Omega}\in\!_0W_p^{1/2-1/2p}(J;L_p(\partial\Omega)),$$
for $k,l\in \{1,2,3\}$. Moreover, $\Jump{\pi}\in\!_0W_p^{1/2-1/2p}(J;L_p(\Sigma))$. Since $H_p^s\hookrightarrow W_p^{s-\varepsilon}$ for each $s>0$, $\varepsilon\in (0,s)$, the claim follows.

2. The proof of the second assertion follows essentially the lines of the proof of the first assertion. We fix a bounded set $K\subset\overline{\Omega}$. Let $g\in L_p(K)$ and define $(g)_K:=g-\frac{1}{|K|}(g|1)_K$, where $(u|v)_K:=\int_K uv dx$. Extend $(g)_K$ by zero to $\tilde{g}\in L_p(\Omega)$. Then $\tilde{g}\in \hat{W}_p^{-1}(\Omega)\cap L_p(\Omega)$ and we may solve the elliptic problem \eqref{reducedf2} with $\tilde{g}$ as an inhomogeneity in the first equation by Lemma \ref{lem:appaux0}. This yields a solution $\psi\in \dot{H}^1_p(\Omega\backslash\Sigma)\cap \dot{H}_p^2(\Omega\backslash\Sigma)$ satisfying the estimate
$$\|\nabla\psi\|_{L_p(\Omega)}+\|\nabla^2\psi\|_{L_p(\Omega)}\le C\|\tilde{g}\|_{L_p(\Omega)}\le C_K\|g\|_{L_p(K)}.$$
We have $((\pi)_K|g)_K=((\pi)_K|(g)_K)_K=((\pi)_K|\tilde{g})_\Omega:=\int_\Omega(\pi)_K\tilde{g} dx$. We are now in a position to imitate the steps in the proof of the first assertion. This yields the validity of the second assertion.
\end{proof}

\section{Reduction of the data}\label{sec:RedofData}

It is convenient to reduce the data in \eqref{eq:NScap3} to the special case
$$f=f_d=u_0=h_0=g_2=u_\Sigma\cdot\nu_\Sigma=g_3\cdot\nu_{\partial\Omega}=0.$$
Extend $h_0\in W_p^{3-2/p}(\Sigma)$ and $g_h|_{t=0},m[u_0\cdot e_3]\in W_p^{2-3/p}(\Sigma)$ to some functions $\tilde{h}_0\in W_p^{3-2/p}(\mathbb{R}^2)$ and $\tilde{g}_h^0,\tilde{m}_0\in W_p^{2-3/p}(\mathbb{R}^2)$, respectively, and define
\begin{multline*}
    \tilde{h}_*(t) = [2e^{-(I-\Delta_{x'})^{1/2} t}-e^{-2(I-\Delta_{x'})^{1/2} t}]\tilde{h}_0 +\\
    [e^{-(I-\Delta_{x'}) t}-e^{-2(I-\Delta_{x'}) t}](I-\Delta_{x'})^{-1}\left(\tilde{m}_0+\tilde{g}_h^0\right),
    \quad t\geq0.
\end{multline*}
Then
$$\tilde{h}_*\in W^{2-1/2p}_p(J;L_p(\mathbb{R}^2))\cap H^1_p(J;W^{2-1/p}_p(\mathbb{R}^2))
\cap L_p(J;W^{3-1/p}_p(\mathbb{R}^2))$$
and it holds that $\tilde{h}_*(0)=\tilde{h}_0$ as well as $\partial_t\tilde{h}_*(0)=\tilde{m}_0+\tilde{g}_h^0$. Defining $h_*:=\tilde{h}_*|_{\Sigma}$ it follows that $h_*(0)=h_0$ and $\partial_t h_*(0)=m[u_0]+g_h|_{t=0}$. Setting $h_1:=h-h_*$ we have $h_1|_{t=0}=\partial_t h_1|_{t=0}=0$.

Next, let $u_0=(v_0,w_0)$ and $q_0:=2\Jump{\mu\partial_3 w_0}+\sigma\Delta_{x'}h_0+g_w|_{t=0}\in W_p^{1-3/p}(\Sigma)$. Extend $q_0$ to some
$\tilde{q}_0\in W_p^{1-3/p}(\mathbb{R}^2)$ and define $\tilde{q}_*(t):=e^{\Delta_{x'}t}\tilde{q}_0$. Then
$$\tilde{q}_*\in W_p^{1/2-1/2p}(J;L_p(\mathbb{R}^2))\cap L_p(J;W_p^{1-1/p}(\mathbb{R}^2)).$$
Setting $q_*:=\tilde{q}_*|_{\Sigma}$ it follows that
$$q_*\in W_p^{1/2-1/2p}(J;L_p(\Sigma))\cap L_p(J;W_p^{1-1/p}(\Sigma))$$
and $q_*|_{t=0}=q_0$. Given $q_*$, we solve the weak elliptic transmission problem
\begin{align*}
(\nabla\pi_*|\nabla\phi)&=0,\quad \phi\in H_{p'}^1(\Omega) ,\\
[\![\pi_*]\!]&=q_*,\quad \mbox{ on }\;\Sigma
\end{align*}
to obtain a unique solution $\pi_*\in L_p(J;\dot{H}_p^1(\Omega\backslash\Sigma))$ by Lemma \ref{lem:appauxlemweak}.

Next we solve the parabolic transmission problem
\begin{align}
\begin{split}\label{auxprbl1}
\partial_t(\rho u_*)-\mu\Delta u_*&=-\nabla\pi_*+\rho f,\quad \text{in}\ \Omega\backslash\Sigma,\\
-\Jump{\mu \partial_3 v_*}-\Jump{\mu\nabla_{x'} w_*}&=g_v,\quad \text{on}\ \Sigma,\\
-2\Jump{\mu \partial_3 w_*}&=g_w-q_*+\sigma\Delta_{x'}h_*,\quad \text{on}\ \Sigma,\\
\Jump{u_*}&=u_\Sigma,\quad \text{on}\ \Sigma,\\
P_{S_1}\left(\mu(\nabla u_*+\nabla u_*^{\sf T})\nu_{S_1}\right)&=P_{S_1}g_1,\quad \text{on}\ S_1\backslash\partial\Sigma,\\
u_*\cdot\nu_{S_1}&=g_2,\quad \text{on}\ S_1\backslash\partial\Sigma,\\
u_*&=g_3,\quad \text{on}\ S_2,\\
u_*(0)&=u_0,\quad \text{in}\ \Omega\backslash\Sigma.
\end{split}
\end{align}
to obtain a solution $u_*\in H_p^1(J;L_p(\Omega)^3)\cap L_p(J;H_p^2(\Omega\backslash\Sigma)^3)$ by Lemma \ref{lem:appaux2}. Note that all relevant compatibility conditions of the data are satisfied by assumption. Setting $u_1=u-u_*$ and $\pi_1=\pi-\pi_*$ we see that w.l.o.g. we may assume that $u_0=h_0=f=0$. To remove $f_d$ we solve the transmission problem
\begin{equation}\label{auxprbl2}
\begin{aligned}
\Delta \psi &= f_d-\div u_* &\ \mbox{in}\quad &\Omega\backslash\Sigma,\\
\mbox{}[\![\rho\psi]\!] &= 0 &\ \mbox{on}\quad &\Sigma,\\
[\![\partial_{e_3}\psi]\!] &= 0 &\ \mbox{on}\quad &\Sigma,\\
\partial_{\nu_{\partial\Omega}}\psi &= 0 &\ \mbox{on}\quad &\partial\Omega\backslash\partial\Sigma=(S_1\backslash\partial\Sigma)\cup S_2,
\end{aligned}
\end{equation}
by Lemma \ref{lem:appauxhighreg}. We remark that $\int_\Omega(f_d-\div u_*)dx=0$ by the compatibility conditions on $(f_d,u_\Sigma,g_2,g_3)$ and
$$f_d-\div u_*\in\!_0H_p^1(J;\hat{H}_p^{-1}(\Omega))\cap L_p(J;H_p^1(\Omega\backslash\Sigma)).$$
Therefore we obtain a solution $\nabla\psi\in\!_0\mathbb{E}_u(J)$. Setting $u_2:=u_1-\nabla\psi$, $\pi_2:=\pi_1+\rho\partial_t\psi-\mu\Delta\psi$ and $h_2:=h_1$ we see that we may assume that $f_d=g_2=u_\Sigma\cdot e_3=g_3\cdot e_3=0$. The time trace of all the remaining data at $t=0$ vanishes.

\section{Localization procedure}

Before we can state the main result of this section, we introduce some function spaces. Let
$$\mathbb{F}_1(J):=L_p(J;L_p(\Omega)^3),\quad \mathbb{F}_2(J):=L_p(J;H_p^1(\Omega\backslash\Sigma)).$$
$$\mathbb{F}_3(J):=W_p^{1/2-1/2p}(J;L_p(\Sigma)^2)\cap L_p(J;W_p^{1-1/p}(\Sigma)^2),$$
$$\mathbb{F}_4(J):=W_p^{1/2-1/2p}(J;L_p(\Sigma))\cap L_p(J;W_p^{1-1/p}(\Sigma)),$$
$$\mathbb{F}_5(J):=W_p^{1-1/2p}(J;L_p(\Sigma)^3)\cap L_p(J;W_p^{2-1/p}(\Sigma)^3),$$
$$\mathbb{F}_6(J):=W_p^{1-1/2p}(J;L_p(\Sigma))\cap L_p(J;W_p^{2-1/p}(\Sigma)),$$
$$\mathbb{F}_7(J):=W_p^{1/2-1/2p}(J;L_p(S_1)^3)\cap L_p(J;W_p^{1-1/p}(S_1\backslash\partial\Sigma)^3),$$
$$\mathbb{F}_8(J):=W_p^{1-1/2p}(J;L_p(S_1))\cap L_p(J;W_p^{2-1/p}(S_1\backslash\partial\Sigma)),$$
$$\mathbb{F}_9(J):=W_p^{1-1/2p}(J;L_p(S_2))\cap L_p(J;W_p^{2-1/p}(S_2)),$$
$$\mathbb{F}_{10}(J):=W_p^{3/2-1/p}(J;L_p(\partial\Sigma))\cap H_p^1(J;W_p^{1-2/p}(\partial\Sigma))\cap L_p(J;W_p^{2-2/p}(\partial\Sigma)),$$
and $\tilde{\mathbb{F}}(J):=\times_{j=1}^{10}\mathbb{F}_j(J)$ as well as
$$\mathbb{F}(J):=\{(f_1,\ldots,f_{10})\in \tilde{\mathbb{F}}(J):(f_2,f_5,f_8,f_{9})\in H_p^1(J;\hat{H}_p^{-1}(\Omega))\}.$$
Furthermore, we set $X_{\gamma}:=X_{\gamma,u}\times X_{\gamma,h}$, where $X_{\gamma,u}:=W_p^{2-2/p}(\Omega\backslash\Sigma)^3$ and $X_{\gamma,h}:=W_p^{3-2/p}(\Sigma)$.

The main result of this section reads as follows.
\begin{thm}\label{thm:linmaxreg}
Let $\mu_j,\rho_j,H_j,\sigma>0$, $n=3$, $p>5$ and let $G\in\mathbb{R}^{n-1}$ be open and bounded with $\partial G\in C^4$. Define $\Omega:=G\times (H_1,H_2)$ and let $\Sigma:=G\times\{0\}$. Let $S_1:=\partial G\times (H_1,H_2)$ and $S_2:=(G\times\{H_1\})\cup (G\times\{H_2\})$ be the vertical and horizontal parts of the boundary of $\Omega$, respectively. Then there exists a unique solution
$$u\in H_p^1(J;L_p(\Omega)^3)\cap L_p(J;H_p^2(\Omega\backslash\Sigma)^3),\quad \pi\in L_p(J;\dot{H}_p^1(\Omega\backslash\Sigma)),$$
$$\Jump{\pi}\in W_p^{1/2-1/2p}(J;L_p(\Sigma))\cap L_p(J;W_p^{1-1/p}(\Sigma))$$
$$h\in W_p^{2-1/2p}(J;L_p(\Sigma))\cap H_p^1(J;W_p^{2-1/p}(\Sigma))\cap L_p(J;W_p^{3-1/p}(\Sigma)),$$
of \eqref{eq:NScap3} if and only if the data are subject to the following regularity and compatibility conditions.
\begin{enumerate}
\item $(f,f_d,g_v,g_w,u_\Sigma,g_h,g_1,g_2,g_3,g_4)\in\mathbb{F}(J)$,
\item $(u_0,h_0)\in X_\gamma$,
\item $\div u_0=f_d|_{t=0}$, $-\Jump{\mu\nabla_{x'} w_0}-\Jump{\mu\partial_3 v_0}=g_v|_{t=0}$, $\Jump{u_0}=u_\Sigma|_{t=0}$,
\item $P_{S_1}(\mu(\nabla u_0+\nabla u_0^{\sf T})\nu_{S_1})=P_{S_1}g_1|_{t=0}$, $u_0\cdot \nu_{S_1}=g_2|_{t=0}$, $u_0=g_3|_{t=0}$,
\item $\partial_{\nu_{\partial G}}h_0=g_4|_{t=0}$,
\item $\Jump{g_2}=u_\Sigma\cdot \nu_{S_1}$,
\item $\Jump{(g_1\cdot e_3)/\mu-\partial_3 g_2}=\partial_{\nu_{S_1}}(u_\Sigma\cdot e_3),$
\item $P_{\partial G}[(D'v_\Sigma)\nu']=\Jump{P_{\partial G}g_1'/\mu},$
\item $\partial_t g_4-m[(g_1\cdot e_3)/\mu-\partial_3 g_2]=\partial_{\nu_{S_1}}g_h,$
\item $(g_v|\nu_{S_1})=-\Jump{g_1\cdot e_3}$, $(g_3|\nu_{S_1})=g_2$,
\item $P_{\partial G}[\mu(D'g_3')\nu']=(P_{\partial G}{g}_{1}'),$
\item $\mu\partial_{\nu_{S_1}} (g_3\cdot e_3)+\mu\partial_3 g_2=g_1\cdot e_3$,
\end{enumerate}
where $\nu':=\nu_{\partial G}$.
\end{thm}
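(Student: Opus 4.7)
The overall strategy is a standard localization argument, but executed carefully to accommodate the divergence constraint, the pressure non-locality, and the contact line $\partial\Sigma$. First, by the reduction carried out in Section~\ref{sec:RedofData}, it is enough to prove existence of a solution when
$$f=f_d=u_0=h_0=g_2=u_\Sigma\cdot\nu_\Sigma=g_3\cdot\nu_{\partial\Omega}=0,$$
so all remaining data have vanishing traces at $t=0$ and lie in the zero-time-trace subspaces. This lets me exploit the standard fact that on such spaces the $L_p$-norm of an $H_p^\alpha$ function on $J=[0,T]$ goes to zero as $T\to 0$, which is the device that drives the Neumann series argument below.

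Next I would cover the closure $\overline{\Omega}$ by finitely many open sets $\{U_k\}_{k=1}^N$ with subordinate smooth partition of unity $\{\varphi_k\}$, where each $U_k\cap\Omega$ (after a $C^4$ chart straightening the relevant pieces of $\partial G$, $\Sigma$, $S_1$, $S_2$, $\partial\Sigma$) is one of the six model configurations treated in Chapter~\ref{chptr:redmodprbl}: full space, two-phase flat half-space, half-space with pure slip, half-space with no-slip, quarter space with mixed slip/no-slip boundary, and two-phase bent half-space with 90-degree contact angle. The diffeomorphisms may be chosen so that in each chart the boundary/interface fragments $\partial G$, $\Sigma$, $\partial\Sigma$ are realized as small perturbations of the flat/bent models already handled in Sections~\ref{QS}--\ref{bentHS}. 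Applying each of the six solution operators $S_{FS}, S_{2\Phi}, S_{\mathrm{slip}}, S_{\mathrm{no\text{-}slip}}, S_{QS}, S_{HS}$ and summing the local solutions weighted by $\varphi_k$ yields an \emph{approximate} right inverse $R:\mathbb{F}(J)\cap\{\text{compatibility}\}\to\mathbb{E}(J)$ to the full operator $L$ of \eqref{eq:NScap3}.

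The key estimate is that $LR={\rm Id}+K$ where $K$ collects the commutator terms that arise when $\varphi_k$ is pulled through $\partial_t$, $\Delta$, $\nabla$, $\div$, $P_{S_1}$, and the jump $\Jump{\cdot}$. These commutators are of strictly lower differential order in $(u,\pi,q,h)$ (typically one derivative less), so on the zero-time-trace spaces they satisfy an estimate $\|K\|_{\mathcal{L}(\mathbb{F}(J))}\le C(T)$ with $C(T)\to 0$ as $T\to 0$. The customary mixed-derivative and trace embeddings handle the commutators for $u$ and $h$; the serious obstacle is the commutator for the pressure, which involves the \emph{undifferentiated} factor $\pi\,\nabla\varphi_k$, and the natural regularity $\pi\in L_p(J;\dot H_p^1(\Omega\backslash\Sigma))$ by itself contains no time regularity for $\pi$ and hence no control in $W_p^{1/2-1/2p}(J;L_p(\mathrm{supp}\,\varphi_k))$ that is needed on the right-hand side. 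This is exactly the point at which Lemma~\ref{lem:RegPressure} is invoked: applied with $\alpha\in(0,1/2-1/2p)$, it upgrades $(\pi)_K$ to $W_p^\alpha(J;L_p(K))$ on bounded subsets $K$, at the price of an additional $W_p^\alpha(J;L_p)$-regularity assumption on the right-hand side $f$ that is preserved through the commutator bookkeeping (and trivially holds after the reduction above, where $f=0$).

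With $\|K\|<1$ on a sufficiently short interval $J_0=[0,T_0]$, a Neumann series in $\mathcal{L}(\mathbb{F}(J_0))$ inverts ${\rm Id}+K$, producing the true solution operator $R({\rm Id}+K)^{-1}$ on $J_0$ and hence a unique solution of \eqref{eq:NScap3} on $J_0$. Uniqueness on $J_0$ follows from the same Neumann estimate applied to the homogeneous problem. Finally, to reach an arbitrary interval $J=[0,T]$, I concatenate: solve on $[0,T_0]$, use the terminal values $u(T_0)\in X_{\gamma,u}$ and $h(T_0)\in X_{\gamma,h}$ (which lie in the correct trace spaces by the embedding $\mathbb{E}(J_0)\hookrightarrow C(J_0;X_\gamma)$) as new initial data, re-apply the local existence theorem on $[T_0,2T_0]$, and iterate finitely many times; compatibility of the data at the restart times is automatic from $(u(T_0),h(T_0))$ being a trace of a solution. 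The necessity of the regularity and compatibility conditions listed in the theorem was established already in Section~2 as part of the discussion of the linearization, so the proof is complete.
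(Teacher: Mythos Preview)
Your outline is broadly on the right track---reduction of the data, localization to the six model problems, use of Lemma~\ref{lem:RegPressure} for the pressure commutator, Neumann series on a short interval, concatenation---and this is indeed the skeleton of the paper's proof. However, you miss the one technical point the paper explicitly flags as the reason the localization ``cannot be simply carried over from standard parabolic systems'': the divergence commutator. After multiplying by $\varphi_k$ the localized divergence equation reads $\div\tilde u_k=F_{dk}(\tilde u)=\tilde u\cdot\nabla\varphi_k$, and this term does \emph{not} carry any extra time regularity beyond $\mathbb F_2(J)$; the paper states this verbatim (``with exception of $F_{dk}$''). Consequently it cannot be made small by shrinking $T$ and your $K$ will not have operator norm $<1$. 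The paper's remedy is to solve, chart by chart, an elliptic transmission problem $\Delta\phi_k=F_{dk}(\tilde u)$ with the appropriate jump and Neumann conditions (Lemma~\ref{lem:appauxhighreg}), then shift $\hat u_k:=\tilde u_k-\nabla\phi_k$, $\hat\pi_k:=\tilde\pi_k+\rho\partial_t\phi_k-\mu\Delta\phi_k$. The new right-hand sides involve $\nabla\phi_k$, which lies in $_0H_p^1(J;H_p^1)\cap L_p(J;H_p^3)$, and $\theta_k\partial_t\phi_k$, whose smallness requires a nontrivial interpolation argument combining Lemma~\ref{lem:RegPressure} (applied to both $\tilde\pi_k$ and $\hat\pi_k$) with the mixed derivative theorem. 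An analogous correction via potentials $\psi_k$ (equation~\eqref{eq:loc6}) is needed on the right-inverse side to keep the localized solutions divergence-free.

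Two secondary points you also gloss over. First, an arbitrary smooth partition of unity will not do: one needs $\partial_{\nu_{S_1}}\varphi_k=\partial_{e_3}\varphi_k=0$ at the contact lines $\partial\Sigma\cup\partial S_2$ (Proposition~\ref{partitionone}), otherwise the localized data $G_j^k(\psi)$, $g_m^k$ fail the compatibility conditions required by the bent half-space and bent quarter-space solution operators; the paper verifies these compatibilities explicitly. Second, the paper does not deduce uniqueness from the right-inverse Neumann series; it argues injectivity separately in Part~(I) by deriving the a~priori estimate $\|(\tilde u,\tilde\pi,\tilde h)\|_{\mathbb E(J)}\le C_N\|\tilde H\|_{\mathbb F(J)}$ directly from the localized representation~\eqref{eq:loc3}--\eqref{eq:loc4}, and only then constructs the right inverse in Part~(II). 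Your sentence ``uniqueness follows from the same Neumann estimate applied to the homogeneous problem'' is not a substitute for this left-inverse step.
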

\begin{proof} We will split the proof in two parts.

\noindent\textbf{(I) Existence of a left inverse}

Let $(u,\pi,\Jump{\pi},h)$ be a solution of \eqref{eq:NScap3}. By the results of the last subsection there exists $(\bar{u},\bar{\pi},\Jump{\bar{\pi}},\bar{h})$ such that $(\tilde{u},\tilde{\pi},\Jump{\tilde{\pi}},\tilde{h}):=(u,\pi,\Jump{\pi},h)-(\bar{u},\bar{\pi},\Jump{\bar{\pi}},\bar{h})$ solves the problem
\begin{align}\label{eq:loc1}
\begin{split}
\partial_t(\rho \tilde{u})-\mu\Delta \tilde{u}+\nabla \tilde{\pi}&=0,\quad \text{in}\ \Omega\backslash\Sigma,\\
\div \tilde{u}&=0,\quad \text{in}\ \Omega\backslash\Sigma,\\
-\Jump{\mu \partial_3 \tilde{v}}-\Jump{\mu\nabla_{x'} \tilde{w}}&=\tilde{g}_v,\quad \text{on}\ \Sigma,\\
-2\Jump{\mu \partial_3 \tilde{w}}+\Jump{\tilde{\pi}}-\sigma\Delta_{x'} \tilde{h}&=\tilde{g}_w,\quad \text{on}\ \Sigma,\\
\Jump{\tilde{u}}&=\tilde{u}_\Sigma,\quad \text{on}\ \Sigma,\\
\partial_t \tilde{h}-m[\tilde{w}]&=\tilde{g}_h,\quad \text{on}\ \Sigma,\\
P_{S_1}\left(\mu(\nabla \tilde{u}+\nabla \tilde{u}^{\sf T})\nu_{S_1}\right)&=P_{S_1}\tilde{g}_1,\quad \text{on}\ S_1\backslash\partial\Sigma,\\
\tilde{u}\cdot\nu_{S_1}&=0,\quad \text{on}\ S_1\backslash\partial\Sigma,\\
\tilde{u}&=\tilde{g}_3,\quad \text{on}\ S_2,\\
\partial_{\nu_{\partial G}}\tilde{h}&=\tilde{g}_4,\quad \text{on}\ \partial\Sigma,\\
\tilde{u}(0)&=0,\quad \text{in}\ \Omega\backslash\Sigma\\
\tilde{h}(0)&=0,\quad \text{on}\ \Sigma,
\end{split}
\end{align}
and $(\tilde{g}_3|e_3)=(\tilde{u}_\Sigma|e_3)=0$. Choose open sets $U_k=B_{r}(x_k)$ with
\begin{itemize}
\item $\partial\Sigma \subset\bigcup_{k=7}^{N_1}U_k$,
\item $\partial S_2\subset\bigcup_{k=N_1+1}^{N}U_k$,
\end{itemize}
and choose $r>0$ sufficiently small such that the corresponding solution operators from Sections \ref{bentQS} \& \ref{bentHS} are well-defined. According to Proposition \ref{partitionone} there exist open and connected sets
\begin{itemize}
\item $U_0\cap\Sigma\neq\emptyset$, $U_0\cap \partial\Omega=\emptyset$;
\item $U_k\subset\Omega_k$, $k=1,2$;
\item $U_k\cap S_1\neq\emptyset$, $U_k\cap (\Sigma\cup S_2)=\emptyset$, $k=3,4$;
\item $U_k\cap S_2\neq\emptyset$, $U_k\cap (\Sigma\cup S_1)=\emptyset$, $k=5,6$,
\end{itemize}
and a family of functions $\{\varphi\}_{k=0}^N\subset C_c^3(\mathbb{R}^3;[0,1])$ such that $\overline{\Omega}\subset\bigcup_{k=0}^N U_k$, $\operatorname{supp}\varphi_k\subset U_k$, $\sum_{k=0}^N\varphi_k=1$ and $\partial_{\nu_{S_1}}\varphi_k(x)=\partial_{e_3}\varphi_k(x)=0$ for $x\in U_k\cap(\partial\Sigma\cup\partial S_2)$, $k\ge 7$.

Multiplying each equation in \eqref{eq:loc1} by $\varphi_k$ we obtain the following local problems
\begin{align}\label{eq:loc2}
\begin{split}
\partial_t(\rho \tilde{u}_k)-\mu\Delta \tilde{u}_k+\nabla \tilde{\pi}_k&=F_k(\tilde{u},\tilde{\pi}),\quad \text{in}\ \Omega^k\backslash\Sigma^k,\\
\div \tilde{u}_k&=F_{dk}(\tilde{u}),\quad \text{in}\ \Omega^k\backslash\Sigma^k,\\
-\Jump{\mu \partial_3 \tilde{v}_k}-\Jump{\mu\nabla_{x'} \tilde{w}_k}&=\tilde{g}_{vk}+G_{vk}(\tilde{u}),\quad \text{on}\ \Sigma^k,\\
-2\Jump{\mu \partial_3 \tilde{w}_k}+\Jump{\tilde{\pi}_k}-\sigma\Delta_{x'} \tilde{h}_k&=\tilde{g}_{wk}+G_{wk}(\tilde{u},\tilde{h}),\quad \text{on}\ \Sigma^k,\\
\Jump{\tilde{u}_k}&=\tilde{u}_{\Sigma k},\quad \text{on}\ \Sigma^k,\\
\partial_t \tilde{h}_k-m[\tilde{w}_k]&=\tilde{g}_{hk},\quad \text{on}\ \Sigma^k,\\
P_{S_1^k}\left(\mu(\nabla \tilde{u}_k+\nabla \tilde{u}_k^{\sf T})\nu_k\right)&=P_{S_1^k}\tilde{g}_{1k}+G_{1k}(\tilde{u}),\quad \text{on}\ S_1^k\backslash\partial\Sigma^k,\\
\tilde{u}_k\cdot\nu_{k}&=0,\quad \text{on}\ S_1^k\backslash\partial\Sigma^k,\\
\tilde{u}_k&=\tilde{g}_{3k},\quad \text{on}\ S_2^k,\\
\partial_{\nu_k}\tilde{h}_k&=\tilde{g}_{4k},\quad \text{on}\ \partial\Sigma^k,\\
\tilde{u}_k(0)&=0,\quad \text{in}\ \Omega^k\backslash\Sigma^k\\
\tilde{h}_k(0)&=0,\quad \text{on}\ \Sigma^k,
\end{split}
\end{align}
where
$$F_k(\tilde{u},\tilde{\pi}):=[\nabla,\varphi_k]\tilde{\pi}
-\mu[\Delta,\varphi_k]\tilde{u},$$
$$F_{dk}(\tilde{u}):=\tilde{u}\cdot\nabla\varphi_k,$$
$$G_{vk}(\tilde{u}):=(I-e_3\otimes e_3)G_k(\tilde{u},\tilde{h}),$$
$$G_{wk}(\tilde{u},\tilde{h}):=G_k(\tilde{u},\tilde{h})e_3,$$
$$G_k(\tilde{u},\tilde{h}):=[\![-\mu(\nabla \varphi_k\otimes \tilde{u}+ \tilde{u}\otimes\nabla\varphi_k)]\!] e_3
-\sigma[\Delta_\Sigma,\varphi_k]\tilde{h}e_3,$$
and
$$G_{1k}(\tilde{u}):=(I-\nu_k\otimes\nu_k)(\mu(\nabla \varphi_k\otimes \tilde{u}+ \tilde{u}\otimes\nabla\varphi_k)) \nu_k.$$
Furthermore we have set $P_{S_1^k}:= I-\nu_k\otimes\nu_k$.

For $k=0$ we obtain a pure two-phase problem with a flat interface in $\mathbb{R}^n$. This case has been treated in \cite{PrSi09a}. If $k\in\{1,2\}$ then we are lead to one-phase Stokes equations in $\mathbb{R}^n$. An analysis of these problems can be found in \cite{BP07}. If $k\in\{7,\ldots,N_1\}$ and $k\in\{N_1+1,\ldots,N\}$ then we rotate the coordinate system (with respect to the $x_3$ axis) and translate it to obtain two-phase Stokes equations in bent half-spaces and one-phase Stokes equations in bent quarter-spaces, respectively. These problems have been treated in Sections \ref{bentQS} and \ref{bentHS}. Hence, the solution operators for the charts $U_k$, $k\ge 7$ are well defined by the results in \ref{bentQS} and \ref{bentHS}. Finally, if $k\in\{3,4\}$ then we obtain the Stokes equations in bent half-spaces with pure-slip conditions, while for $k\in\{5,6\}$ we are lead to the Stokes equations in half-spaces with no-slip boundary condition, see e.g.\ \cite{BP07} for the theory of the last two type of problems. We denote the corresponding solution operators for each chart $U_k$ by $\mathcal{S}_k$.

Note that all functions $F_j$, $G_j$ carry additional time regularity (take into account Lemma \ref{lem:RegPressure}) with exception of $F_{dk}$. To circumvent this problem we will reduce \eqref{eq:loc2} to the case $F_{dk}=0$. For this purpose we apply Lemma \ref{lem:appauxhighreg} and solve the transmission problem
\begin{equation*}
\begin{aligned}
\Delta \phi_k &= F_{dk}(\tilde{u}) &\ \mbox{in}\quad &\Omega^k\backslash\Sigma^k,\\
\mbox{}[\![\rho\phi_k]\!] &= 0 &\ \mbox{on}\quad &\Sigma^k,\\
[\![\partial_{e_3}\phi_k]\!] &= 0 &\ \mbox{on}\quad &\Sigma^k,\\
\partial_{\nu_{k}}\phi_k &= 0 &\ \mbox{on}\quad &\partial\Omega^k\backslash\partial\Sigma^k.
\end{aligned}
\end{equation*}
This yields a solution
$$\nabla\phi_k\in\!_0H_p^1(J;H_p^1(\Omega^k\backslash\Sigma^k)^3)\cap L_p(J;H_p^3(\Omega^k\backslash\Sigma^k)^3)=:\!_0Z(J)$$
satisfying the estimate
\begin{equation}\label{eq:locphik}\|\nabla\phi_k\|_{Z(J)}\le C_N\|\tilde{u}\|_{\mathbb{E}_u(J)}.
\end{equation}
The constant $C_N>0$ depends on $N$ but not on the length of $J$. We define $\hat{u}_k:=\tilde{u}_k-\nabla\phi_k$ and $\hat{\pi}_k:=\tilde{\pi}_k+\rho\partial_t\phi_k-\mu\Delta\phi_k$. With $\hat{h}=\tilde{h}$ we obtain the system
\begin{align}\label{eq:loc2a}
\begin{split}
\partial_t(\rho \hat{u}_k)-\mu\Delta \hat{u}_k+\nabla \hat{\pi}_k&=F_k(\tilde{u},\tilde{\pi}),\quad \text{in}\ \Omega^k\backslash\Sigma^k,\\
\div \hat{u}_k&=0,\quad \text{in}\ \Omega^k\backslash\Sigma^k,\\
-\Jump{\mu \partial_3 \hat{v}_k}-\Jump{\mu\nabla_{x'} \hat{w}_k}&=\tilde{g}_{vk}+\hat{G}_{vk}(\tilde{u}),\quad \text{on}\ \Sigma^k,\\
-2\Jump{\mu \partial_3 \hat{w}_k}+\Jump{\hat{\pi}_k}-\sigma\Delta_{x'} \hat{h}_k&=\tilde{g}_{wk}+\hat{G}_{wk}(\tilde{u},\tilde{h}),\quad \text{on}\ \Sigma^k,\\
\Jump{\hat{u}_k}&=\tilde{u}_{\Sigma k}-\Jump{\nabla\phi_k},\quad \text{on}\ \Sigma^k,\\
\partial_t \hat{h}_k-m[\hat{w}_k]&=\tilde{g}_{hk}+m[\partial_3\phi_k],\quad \text{on}\ \Sigma^k,\\
P_{S_1^k}\left(\mu(\nabla \hat{u}_k+\nabla \hat{u}_k^{\sf T})\nu_k\right)&=P_{S_1^k}\tilde{g}_{1k}+\hat{G}_{1k}(\tilde{u}),\quad \text{on}\ S_1^k\backslash\partial\Sigma^k,\\
\hat{u}_k\cdot\nu_{k}&=0,\quad \text{on}\ S_1^k\backslash\partial\Sigma^k,\\
\hat{u}_k&=\tilde{g}_{3k}-\nabla\phi_k,\quad \text{on}\ S_2^k,\\
\partial_{\nu_k}\hat{h}_k&=\tilde{g}_{4k},\quad \text{on}\ S_1^k\cap\Sigma^k,\\
\hat{u}_k(0)&=0,\quad \text{in}\ \Omega^k\backslash\Sigma^k\\
\hat{h}_k(0)&=0,\quad \text{on}\ \Sigma^k,
\end{split}
\end{align}
where
$$\hat{G}_k(\tilde{u},\tilde{h}):=G_k(\tilde{u},\tilde{h})+2\Jump{\mu\nabla^2\phi_k}e_3-\Jump{\mu\Delta\phi_k}e_3,$$
$\hat{G}_{kv}$, $\hat{G}_{kw}$ defined as above and
$$\hat{G}_{1k}(\tilde{u}):=G_{1k}(\tilde{u})-2\mu(I-\nu_k\otimes\nu_k)\nabla^2\phi_k\nu_k.$$
With the help of the solution operators $\mathcal{S}_k$, we may rewrite \eqref{eq:loc2a} as
\begin{equation}\label{eq:loc3}
(\hat{u}_k,\hat{\pi}_k,\hat{h}_k)
=\mathcal{S}_k\left(\tilde{H}_k+H_k(\tilde{u},\tilde{\pi},\tilde{h})\right),
\end{equation}
where $\tilde{H}_k$ stands for the set of given data and $H_k(\tilde{u},\tilde{\pi},\tilde{h})$ denotes the remaining part on the right hand side of \eqref{eq:loc2a}. Let $\{\theta_k\}_{k=0}^N\subset C_c^\infty(U_k)$ such that $\theta_k|_{\operatorname{supp}\varphi_k}=1$ and multiply \eqref{eq:loc3} by $\theta_k$.
By Lemma \ref{lem:RegPressure} it holds that $(\tilde{\pi}_k\nabla^j\theta_k),(\hat{\pi}_k\nabla^j\theta_k)
\in\!_0W_p^\alpha(J;L_p(\Omega^k))$ for each $j\in\{0,1,2\}$ and $k\in\{0,\ldots,N\}$, since $\operatorname{supp}\theta_k\subset U_k$ is bounded. In addition, the estimate
\begin{multline*}
\|\tilde{\pi}_k\nabla^j\theta_k\|_{W_p^\alpha(J;L_p(\Omega^k))}
+\|\hat{\pi}_k\nabla^j\theta_k\|_{W_p^\alpha(J;L_p(\Omega^k))}\\
\le C\left(\|\tilde{u}\|_{\mathbb{E}_u(J)}+\|\tilde{h}\|_{\mathbb{E}_u(J)}+
\|\tilde{H}\|_{\mathbb{F}(J)}\right)
\end{multline*}
is valid, where $C>0$ does not depend on $T>0$. This implies
\begin{align*}
\|(\nabla^j\theta_k)(\rho\partial_t\phi_k-\mu\Delta\phi_k)\|_{W_p^\alpha(J;L_p(\Omega^k))}&=
\|(\nabla^j\theta_k)(\hat{\pi}_k-\tilde{\pi}_k)\|_{W_p^\alpha(J;L_p(\Omega^k))}\\
&\le C\left(\|\tilde{u}\|_{\mathbb{E}_u(J)}+\|\tilde{h}\|_{\mathbb{E}_u(J)}+
\|\tilde{H}\|_{\mathbb{F}(J)}\right)
\end{align*}
and since $\Delta\phi_k=F_{dk}(\tilde{u})\in\!_0\mathbb{E}_u(J)$, it follows that
$$\|(\nabla^j\theta_k)\partial_t\phi_k\|_{_0W_p^\alpha(J;L_p(\Omega^k))}\le C\left(\|\tilde{u}\|_{\mathbb{E}_u(J)}+\|\tilde{h}\|_{\mathbb{E}_u(J)}+
\|\tilde{H}\|_{\mathbb{F}(J)}\right)$$
for each $j\in\{0,1,2\}$ and $k\in\{0,\ldots,N\}$. Hence, by H\"{o}lder's inequality and Sobolev embedding
$$\|(\nabla^j\theta_k)\partial_t\phi_k\|_{L_p(J;L_p(\Omega^k))}\le
T^{1/2p}\|(\nabla^j\theta_k)\partial_t\phi_k\|_{_0W_p^\alpha(J;L_p(\Omega^k))}.$$
Next, we apply H\"{o}lder's inequality, Sobolev embeddings and the mixed derivative theorem to obtain
\begin{align*}
\|\theta_k\partial_t\phi_k\|_{L_p(J;H_p^1(\Omega^k))}&\le T^{1/2p}\|\theta_k\partial_t\phi_k\|_{L_{2p}(J;H_p^1(\Omega^k))}\\
&\le CT^{1/2p}\|\theta_k\partial_t\phi_k\|_{W_p^{\alpha/2-\varepsilon}(J;H_p^1(\Omega^k))}\\
&\le CT^{1/2p}\|\theta_k\partial_t\phi_k\|_{H_p^{\alpha/2-\varepsilon/2}(J;H_p^1(\Omega^k))}\\
&\le CT^{1/2p}\|\theta_k\partial_t\phi_k\|_{H_p^{\alpha-\varepsilon}(J;L_p(\Omega^k))\cap L_p(J;H_p^2(\Omega^k))}\\
&\le CT^{1/2p}\|\theta_k\partial_t\phi_k\|_{W_p^{\alpha}(J;L_p(\Omega^k))\cap L_p(J;H_p^2(\Omega^k))}
\end{align*}
for some $\alpha\in (0,1/2-1/2p)$ and a sufficiently small $\varepsilon>0$. Note that $$\|\nabla\partial_t\phi_k\|_{L_p(J;L_p(\Omega^k))}+\|\nabla^2\partial_t\phi_k\|_{L_p(J;L_p(\Omega^k))}\le C\|\tilde{u}\|_{\mathbb{E}_u(J)},$$
by \eqref{eq:locphik}, hence
$$\|\theta_k\partial_t\phi_k\|_{L_p(J;H_p^1(\Omega^k))}\le CT^{1/2p}\left(\|\tilde{u}\|_{\mathbb{E}_u(J)}+\|\tilde{h}\|_{\mathbb{E}_u(J)}+
\|\tilde{H}\|_{\mathbb{F}(J)}\right).$$
In particular, this implies
\begin{align*}
\|\theta_k\partial_t\nabla\phi_k\|_{L_p(J;L_p(\Omega^k))}&\le
\|\theta_k\partial_t\phi_k\|_{L_p(J;H_p^1(\Omega^k))}
+\|(\nabla\theta_k)\partial_t\phi_k\|_{L_p(J;L_p(\Omega^k))}\\
&\le CT^{1/2p}\left(\|\tilde{u}\|_{\mathbb{E}_u(J)}+\|\tilde{h}\|_{\mathbb{E}_u(J)}+
\|\tilde{H}\|_{\mathbb{F}(J)}\right).
\end{align*}
Moreover, by Sobolev embedding and the mixed derivative theorem, we obtain
$$\|\theta_k\nabla\phi_k\|_{L_p(J;H_p^2(\Omega^k))}\le CT^{1/2p}\|\nabla\phi_k\|_{_0H_p^{1/2}(J;H_p^2(\Omega^k))}\le CT^{1/2p}\|\tilde{u}\|_{\mathbb{E}_u(J)}.$$
Since all terms in $H_k(\tilde{u},\tilde{\pi},\tilde{h})$ carry additional time regularity, there exists some $\gamma>0$ such that
$$\|H_k(\tilde{u},\tilde{\pi},\tilde{h})\|_{\mathbb{F}(J)}\le CT^\gamma\|(\tilde{u},\tilde{\pi},\tilde{h})\|_{\mathbb{E}(J)}.$$
We may now replace $\theta_k\hat{u}_k$ by $\theta_k(\tilde{u}_k-\nabla\phi_k)$ and $\theta_k\hat{\pi}_k$ by $\theta_k(\tilde{\pi}_k+\rho\partial_t\phi_k-\mu\Delta\phi_k)$ in \eqref{eq:loc3} to obtain the estimate
\begin{equation}\label{eq:loc4}
\|\theta_k(\tilde{u}_k,\tilde{\pi}_k,\tilde{h}_k)\|_{\mathbb{E}(J)}\le C\left(\|\theta_k\tilde{H}_k\|_{\mathbb{F}(J)}+
T^{\tilde{\gamma}}\|(\tilde{u},\tilde{\pi},\tilde{h})\|_{\mathbb{E}(J)}\right),
\end{equation}
with a constant $C>0$ being independent of $T>0$. Here $\tilde{\gamma}:=\max\{1/2p,\gamma\}$. Since $\theta_k(\tilde{u}_k,\tilde{\pi}_k,\tilde{h}_k)=(\tilde{u}_k,\tilde{\pi}_k,\tilde{h}_k)$ we may take the sum over all charts to obtain
$$\|(\tilde{u},\tilde{\pi},\tilde{h})\|_{\mathbb{E}(J)}\le C_N\left(\|\tilde{H}\|_{\mathbb{F}(J)}+
T^{\tilde{\gamma}}\|(\tilde{u},\tilde{\pi},\tilde{h})\|_{\mathbb{E}(J)}\right).
$$
Therefore, choosing $T>0$ sufficiently small, we obtain the a priori estimate
$$\|(\tilde{u},\tilde{\pi},\tilde{h})\|_{\mathbb{E}(J)}
\le C_N\|\tilde{H}\|_{\mathbb{F}(J)}$$
for the solution of \eqref{eq:loc1}. A successive application of the above argument yields the estimate on each finite interval $J=[0,T]$. It follows that the solution-to-data operator $L:\!_0\mathbb{E}(J)\to\!_0\mathbb{F}(J)$, defined by the left hand side of \eqref{eq:loc1} is injective with closed range. In particular, there exists a left inverse $S$ for $L$, that is $SLz=z$ for all $z\in\!_0\mathbb{E}(J)$.

\medskip

\noindent\textbf{(II) Existence of a right inverse}

It remains to prove the existence of a right inverse for $L$. To this end, let the data $F:=(f,f_d,g_v,g_w,g_1,g_2,g_3,g_4,u_\Sigma,g_h)\in\mathbb{F}(J)$, $(u_0,h_0)\in X_\gamma$, subject to the conditions in Theorem \ref{thm:linmaxreg} be given.
By the results in Section \ref{sec:RedofData}, we may assume without loss of generality  that $u_0=h_0=0$. In particular this means that the time traces of all inhomogeneities at $t=0$ vanish if they exist.

Let $u_*,\nabla\psi\in\!_0\mathbb{E}_u(J)$ denote the unique solutions of \eqref{auxprbl1} and \eqref{auxprbl2}, respectively, where now $q_*=\pi_*=h_*=0$. Set $\bar{u}:=u_*-\nabla\psi$, $\bar{\pi}:=\mu\Delta\psi-\rho\partial_t\psi$ and $\bar{h}=0$. Defining
$$\bar{S}F:=(\bar{u},\bar{\pi},\Jump{\bar{\pi}},\bar{h})$$
it holds that
$$L\bar{S}F=L(\bar{u},\bar{\pi},\Jump{\bar{\pi}},\bar{h})=
\begin{pmatrix}
f\\f_d\\g_v+G_v(\psi)\\g_w+G_w(\psi)\\u_\Sigma+G_\Sigma(\psi)\\G_h(u_*,\psi)\\
g_1+G_1(\psi)\\g_2\\g_3+G_3(\psi)\\0
\end{pmatrix},$$
where
$$G_v(\psi):=2\Jump{\mu (I-e_3\otimes e_3)(\nabla^2\psi e_3)},$$ $$G_w(\psi):=2\Jump{\mu(\nabla^2\psi e_3)\cdot e_3}+\Jump{\mu\Delta\psi},$$ $G_\Sigma(\psi):=-\Jump{\nabla\psi}$, $G_h(u_*,\psi):=-m[u_*\cdot e_3-\partial_3\psi]$, $$G_1(\psi):=-2\mu(I-\nu_{S_1}\otimes\nu_{S_1})(\nabla^2\psi\nu_{S_1}),$$
and $G_3(\psi):=-\nabla\psi|_{S_2}$.

In a next step we consider the problems
\begin{align}\label{eq:loc5}
\begin{split}
\partial_t(\rho \tilde{u}_k)-\mu\Delta \tilde{u}_k+\nabla \tilde{\pi}_k&=0,\quad \text{in}\ \Omega^k\backslash\Sigma^k,\\
\div \tilde{u}_k&=0,\quad \text{in}\ \Omega^k\backslash\Sigma^k,\\
-\Jump{\mu \partial_3 \tilde{v}_k}-\Jump{\mu\nabla_{x'} \tilde{w}_k}&=G_v^k(\psi),\quad \text{on}\ \Sigma^k,\\
-2\Jump{\mu \partial_3 \tilde{w}_k}+\Jump{\tilde{\pi}_k}-\sigma\Delta_{x'} \tilde{h}_k&=G_w^k(\psi),\quad \text{on}\ \Sigma^k,\\
\Jump{\tilde{u}_k}&=G_\Sigma^k(\psi),\quad \text{on}\ \Sigma^k,\\
\partial_t \tilde{h}_k-m[\tilde{w}_k]&=G_h^k(u_*,\psi)-g_h^k,\quad \text{on}\ \Sigma^k,\\
P_{S_1^k}\left(\mu(\nabla \tilde{u}_k+\nabla \tilde{u}_k^{\sf T})\nu_{k}\right)&=G_1^k(\psi),\quad \text{on}\ S_1^k\backslash\partial\Sigma^k,\\
\tilde{u}_k\cdot\nu_{k}&=0,\quad \text{on}\ S_1^k\backslash\partial\Sigma^k,\\
\tilde{u}_k&=G_3^k(\psi),\quad \text{on}\ S_2^k,\\
\partial_{\nu_{k}}\tilde{h}_k&=-g_4^k,\quad \text{on}\ \partial\Sigma^k,\\
\tilde{u}(0)&=0,\quad \text{in}\ \Omega^k\backslash\Sigma^k\\
\tilde{h}(0)&=0,\quad \text{on}\ \Sigma^k,
\end{split}
\end{align}
where
$$G_j^k(\psi):=G_j(\psi)\varphi_k,\ j\in\{v,w,\Sigma,1,3\},\ G_h^k(u_*,\psi):=G_h(u_*,\psi)\varphi_k,$$
and $g_m^k:=g_m\varphi_k$, $m\in\{h,4\}$.
Let us check whether the right hand side in \eqref{eq:loc5} satisfies all relevant compatibility conditions at $\partial\Sigma^k$ and $\partial S_2^k$, $k\ge 7$. Consider first the case $x\in \partial S_2^k$, $k\in\{7,\ldots,N_1\}$.

We have to show that the relations $G_3^k(\psi)\cdot\nu_k=0$, $\mu\partial_{\nu_k}(G_3^k(\psi)\cdot e_3)=G_1^k(\psi)\cdot e_3$ and
$$P_{\partial G^k}[\mu (D'G_3^{k'}(\psi))\nu_k ']=-P_{\partial G^k}[\mu (D'\psi)\nu_k ']\varphi_k$$
hold at $\partial S_2^k$, where
$$G_3^{k'}(\psi):=\begin{pmatrix}
G_3^k(\psi)\cdot e_1\\
G_3^k(\psi)\cdot e_2
\end{pmatrix}
.$$
The first condition is equivalent to $\varphi_k(\nabla\psi\cdot \nu_k)=0$ at $\partial S_2^k$. Since $\nu_k=\nu_{S_1}=(\nu',0)$ on $\operatorname{supp}\varphi_k$, the claim follows from the fact that $\partial_{\nu_k}\psi=\nabla\psi\cdot \nu_k=\nabla_{x'}\psi\cdot\nu'=0$ at $x\in \partial S_2\cap\operatorname{supp}\varphi_k$, by construction of $\psi$. Next, we compute $$\partial_{\nu_k}(G_3^k(\psi)\cdot e_3)=-\partial_{\nu_k}(\varphi_k\partial_3\psi)=-
\partial_3\psi\partial_{\nu_k}\varphi_k-\varphi_k\partial_{\nu_k}\partial_3\psi=0,$$
since $\partial_{\nu_k}\varphi_k=0$ and
$$\partial_{\nu_k}\partial_3\psi=\partial_{\nu'}\partial_3\psi=
\partial_3\partial_{\nu'}\psi=0$$
at $\supp\varphi_k\cap\partial S_2$, since $\nu'$ does not depend on $x_3$ and $\partial_{\nu'}\psi(x_3)=0$ for all $x_3\in[H_1,H_2]\backslash\{0\}$ by construction of $\psi$. Furthermore we have
$$G_1^k\psi\cdot e_3=\nu_1\partial_1\partial_3\psi+\nu_2\partial_2\partial_3\psi=
\partial_3\partial_{\nu'}\psi=0$$
at $\supp\varphi_k\cap\partial S_2$. Therefore, the second compatibility condition holds. Concerning the last compatibility condition, note that
$$D'G_3^{k'}(\psi)=-D'(\varphi_k\nabla_{x'}\psi)=-2\varphi_k\nabla^2\psi-
\nabla_{x'}\varphi_k\otimes\nabla_{x'}\psi-\nabla_{x'}\psi\otimes\nabla_{x'}\varphi_k.$$
From this identity we obtain
\begin{align*}
(D'G_3^{k'}(\psi))\nu_k'&=-2\varphi_k\nabla^2\psi\nu_k'-
\nabla_{x'}\varphi_k\partial_{\nu_k'}\psi-\nabla_{x'}\psi\partial_{\nu_k'}\varphi_k\\
&=-P_{\partial G^k}[\mu (D'\psi)\nu_k '],
\end{align*}
since $\nu_k'=\nu'$ on $\supp\varphi_k$ and therefore $\partial_{\nu_k'}\varphi_k=\partial_{\nu_k'}\psi=0$ at $\partial S_2\cap\supp\varphi_k$. It follows that all compatibility conditions at $\partial S_2^k$ are satisfied.

The validity of the compatibility conditions at $\partial\Sigma^k$, $k\in\{N_1+1,\ldots,N\}$, can be checked in a very similar way, taking into account the properties of $\psi$ and the fact that $\partial_{\nu_k'}\varphi_k=0$ at $\partial\Sigma\cap\supp\varphi_k$, $k\in\{N_1+1,\ldots,N\}$.

Therefore, for each $k\in\{0,\ldots,N\}$, there exists a unique solution $(\tilde{u}_k,\tilde{\pi}_k,\tilde{h}_k)$ of \eqref{eq:loc5}. Let $\{\theta_k\}_{k=0}^N\subset C_c^\infty(U_k)$ such that $\theta_k|_{\operatorname{supp}\varphi_k}=1$. Note that the function $(\nabla\theta_k\cdot\tilde{u}_k)|_\Omega$ is mean value free, since $\tilde{u}_k$ is a divergence free vector field and $\Jump{\tilde{u}_k}\cdot e_3=0$ on $\Sigma\cap U_k$, $\tilde{u}_k\cdot \nu_k=0$ at $(S_1\backslash\partial\Sigma)\cap U_k$ as well as $\tilde{u}_k\cdot e_3=0$ at $S_2\cap U_k$. Therefore, we may solve the problems
\begin{equation}\label{eq:loc6}
\begin{aligned}
\Delta \psi_k &= (\nabla\theta_k\cdot \tilde{u}_k)|_{\Omega}&\ \mbox{in}\quad &\Omega\backslash\Sigma,\\
\mbox{}[\![\rho\psi_k]\!] &= 0 &\ \mbox{on}\quad &\Sigma,\\
[\![\partial_{e_3}\psi_k]\!] &= 0 &\ \mbox{on}\quad &\Sigma,\\
\partial_{\nu_{\partial\Omega}}\psi_k &= 0 &\ \mbox{on}\quad &\partial\Omega\backslash\partial\Sigma=(S_1\backslash\partial\Sigma)\cup S_2,
\end{aligned}
\end{equation}
by Lemma \ref{lem:appauxhighreg}. This yields unique solutions
$$\nabla\psi_k\in\!_0H_p^1(J;H_p^1(\Omega\backslash\Sigma)^3)\cap L_p(J;H_p^3(\Omega\backslash\Sigma)^3).$$
Finally, we define
$$\tilde{S}F:=\sum_{k=0}^N(\theta_k\tilde{u}_k-\nabla\psi_k,\theta_k\tilde{\pi}_k
+\rho\partial_t\psi_k-\mu\Delta\psi_k,\theta_k\tilde{h}_k),$$
and we observe that
$$L\tilde{S}F=\sum_{k=0}^N
\begin{pmatrix}
-\mu[\Delta,\theta_k]\tilde{u}_k+[\nabla,\theta_k]\tilde{\pi}_k\\
0\\ \theta_kG_v^k(\psi)+(I-e_3\otimes e_3)G(\tilde{u}_k,\tilde{h}_k)+G_v(\psi_k)\\ \theta_kG_w^k(\psi)+G(\tilde{u}_k,\tilde{h}_k)e_3+G_w(\psi_k)\\
\theta_k G_\Sigma^k(\psi)+G_\Sigma(\psi_k)\\
\theta_k(G_h^k(u_*,\psi)-g_h^k)+m[\partial_3\psi_k]\\
\theta_k G_1^k(\psi)+P_{S_1^k}[\mu(\nabla \theta_k\otimes \tilde{u}_k+ \tilde{u}_k\otimes\nabla\theta_k)\nu_k]+G_1(\psi_k)\\0\\ \theta_kG_3^k(\psi)+G_3(\psi_k)\\ \tilde{h}_k\partial_{\nu_k}\theta_k-\theta_k g_4^k
\end{pmatrix},$$
where
$$G(\tilde{u}_k,\tilde{h}_k):=[\![-\mu(\nabla \theta_k\otimes \tilde{u}_k+ \tilde{u}_k\otimes\nabla\theta_k)]\!] e_3
-\sigma[\Delta_{x'},\theta_k]\tilde{h}_ke_3.$$
Since $\theta_k|_{\operatorname{supp}\varphi_k}=1$ it follows that $\theta_kG_j^k(\psi)=G_j^k(\psi)$, $\theta_kg_m^k=g_m^k$ and $\theta_k G_h^k(u_*,\psi)=G_h^k(u_*,\psi)$ for $j\in\{v,w,\Sigma,1,3\}$, $m\in\{h,4\}$. Therefore we have
$$\sum_{k=0}^N\theta_k G_j^k(\psi)=G_j(\psi)$$
as well as $\sum_{k=0}^N \theta_k g_m^k=g_m$ and $\sum_{k=0}^N\theta_kG_h^k(u_*,\psi)=G_h(u_*,\psi)$ since $\sum_{k=0}^N\varphi_k=1$. Setting $\hat{S}F:=\bar{S}F-\tilde{S}F$, we obtain the identity
$$L\hat{S}F=L\bar{S}F-L\tilde{S}F=F-RF$$
where
$$RF:=\sum_{k=0}^N
\begin{pmatrix}
-\mu[\Delta,\theta_k]\tilde{u}_k+[\nabla,\theta_k]\tilde{\pi}_k\\
0\\ (I-e_3\otimes e_3)G(\tilde{u}_k,\tilde{h}_k)+G_v(\psi_k)\\ G(\tilde{u}_k,\tilde{h}_k)e_3+G_w(\psi_k)\\
G_\Sigma(\psi_k)\\0
\\
P_{S_1^k}[\mu(\nabla \theta_k\otimes \tilde{u}_k+ \tilde{u}_k\otimes\nabla\theta_k)\nu_k]+G_1(\psi_k)\\0\\G_3(\psi_k)\\ \tilde{h}_k\partial_{\nu_k}\theta_k
\end{pmatrix}.$$
If we can show that there exists a constant $C>0$ being independent of $T>0$ such that the estimate
$$\|RF\|_{\mathbb{F}(J)}\le CT^\gamma\|F\|_{\mathbb{F}(J)}$$
for some $\gamma>0$ holds, then, if $T>0$ is sufficiently small, the operator $(I-R)$ is invertible and the right inverse $S$ for $L$ is given by $S:=\hat{S}(I-R)^{-1}$.

We remark that all terms which involve $\tilde{u}_k$ and $\tilde{h}_k$ are of lower order and therefore these terms carry additional (time-) regularity. Furthermore the terms involving $\psi_k$ carry additional (time-) regularity as well, since $\nabla\psi_k$ is regular enough. The only difficulty that arises is the estimate of $\sum_{k=0}^N[\nabla,\theta_k]\tilde{\pi}_k$ in $L_p(J;L_p(\Omega)^3)$. However, by Lemma \ref{lem:RegPressure} we know that $\tilde{\pi}_k\in\!_0W_p^\alpha(0,T;L_{p,loc}(\Omega^k))$ for some $\alpha\in (0,1/2-1/2p)$. Since $\theta_k$ has compact support, this yields the estimate
$$\|[\nabla,\theta_k]\tilde{\pi}_k\|_{W_p^\alpha(L_p)}\le C\left(\|\tilde{u}_k\|_{\mathbb{E}_u}+\|\tilde{h}_k\|_{\mathbb{E}_h}
+\|\nabla\psi\|_{\mathbb{E}_u}\right)$$
for some constant $C>0$ which does not depend on $T>0$. In particular this implies
\begin{multline*}
\|\sum_{k=0}^N[\nabla,\theta_k]\tilde{\pi}_k\|_{L_p(J;L_p(\Omega))}
\le C_NT^\gamma\Big(\|u_*\|_{\mathbb{E}_u(J)}
+\|\nabla\psi\|_{\mathbb{E}_u(J)}\\
+\|g_h\|_{\mathbb{F}_{6}(J)}
+\|g_4\|_{\mathbb{F}_{10}(J)}\Big)
\le C_NT^\gamma\|F\|_{\mathbb{F}(J)},
\end{multline*}
for some $\gamma>0$.
\end{proof}

We shall also prove a result on well-posedness for the linear system
\begin{align}\label{eq:NScap5}
\begin{split}
\partial_t(\rho u)-\mu\Delta u+\nabla \pi&=f,\quad \text{in}\ \Omega\backslash\Sigma,\\
\div u&=f_d,\quad \text{in}\ \Omega\backslash\Sigma,\\
-\Jump{\mu \partial_3 v}-\Jump{\mu\nabla_{x'} w}&=g_v,\quad \text{on}\ \Sigma,\\
-2\Jump{\mu \partial_3 w}+\Jump{\pi}-\sigma\Delta_{x'} h-\gamma_a\Jump{\rho} h&=g_w,\quad \text{on}\ \Sigma,\\
\Jump{u}&=u_\Sigma,\quad \text{on}\ \Sigma,\\
\partial_t h-m[w]&=g_h,\quad \text{on}\ \Sigma,\\
P_{S_1}\left(\mu(\nabla u+\nabla u^{\sf T})\nu_{S_1}\right)&=P_{S_1}g_1,\quad \text{on}\ S_1\backslash\partial\Sigma,\\
u\cdot\nu_{S_1}&=g_2,\quad \text{on}\ S_1\backslash\partial\Sigma,\\
u&=g_3,\quad \text{on}\ S_2,\\
\partial_{\nu_{\partial G}}h&=g_4,\quad \text{on}\ \partial\Sigma,\\
u(0)&=u_0,\quad \text{in}\ \Omega\backslash\Sigma\\
h(0)&=h_0,\quad \text{on}\ \Sigma.
\end{split}
\end{align}
\begin{cor}\label{cor:linmaxreg}
Let $\gamma_a>0$. Under the assumptions of Theorem \ref{thm:linmaxreg}, there exists a unique solution
$$u\in H_p^1(J;L_p(\Omega)^3)\cap L_p(J;H_p^2(\Omega\backslash\Sigma)^3),\quad \pi\in L_p(J;\dot{H}_p^1(\Omega\backslash\Sigma)),$$
$$\Jump{\pi}\in W_p^{1/2-1/2p}(J;L_p(\Sigma))\cap L_p(J;W_p^{1-1/p}(\Sigma))$$
$$h\in W_p^{2-1/2p}(J;L_p(\Sigma))\cap H_p^1(J;W_p^{2-1/p}(\Sigma))\cap L_p(J;W_p^{3-1/p}(\Sigma)),$$
of \eqref{eq:NScap5} if and only if the data are subject to the conditions (1)-(12) in Theorem \ref{thm:linmaxreg}.
\end{cor}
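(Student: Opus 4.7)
The strategy is to view \eqref{eq:NScap5} as a lower-order perturbation of \eqref{eq:NScap3}. Moving $-\gamma_a\Jump{\rho}h$ to the right-hand side of the normal stress balance and denoting by $L:\,_0\mathbb{E}(J)\to\,_0\mathbb{F}(J)$ the isomorphism supplied by Theorem \ref{thm:linmaxreg} (after reducing to vanishing initial data as in Section \ref{sec:RedofData}), the system becomes
\begin{equation*}
Lz = F + Bz,\qquad B(u,\pi,q,h) := (0,0,0,\gamma_a\Jump{\rho}h,0,\ldots,0),
\end{equation*}
where the only nonzero component of $Bz$ sits in the $g_w$-slot $\mathbb{F}_4(J)$. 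First I would verify that $B:\,_0\mathbb{E}(J)\to\,_0\mathbb{F}(J)$ is well-defined. The conditions (3)--(12) of Theorem \ref{thm:linmaxreg} and the additional requirement $(f_d,u_\Sigma,g_2,g_3)\in H_p^1(J;\hat H_p^{-1}(\Omega))$ never involve $g_w$, so it suffices to check $Bz\in\,_0\mathbb{F}_4(J)$. This follows from the embeddings $\mathbb{E}_h(J)\hookrightarrow W_p^{2-1/2p}(J;L_p(\Sigma))\hookrightarrow W_p^{1/2-1/2p}(J;L_p(\Sigma))$ and $\mathbb{E}_h(J)\hookrightarrow L_p(J;W_p^{3-1/p}(\Sigma))\hookrightarrow L_p(J;W_p^{1-1/p}(\Sigma))$.

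Next I would adapt the reduction of Section \ref{sec:RedofData} so that after removing the initial data one is left with homogeneous initial conditions. The only modification needed is to replace $q_0$ by $q_0:=2\Jump{\mu\partial_3 w_0}+\sigma\Delta_{x'}h_0+\gamma_a\Jump{\rho}h_0+g_w|_{t=0}$, which still lies in $W_p^{1-3/p}(\Sigma)$ since $h_0\in W_p^{3-2/p}(\Sigma)\hookrightarrow W_p^{1-3/p}(\Sigma)$; the contribution $\gamma_a\Jump{\rho}h_*$ from the lifted height function is absorbed into the modified $g_w$-datum and retains $\mathbb{F}_4(J)$-regularity by the embeddings above. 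After this reduction, the solution $(u,\pi,\Jump{\pi},h)$ is sought in $_0\mathbb{E}(J)$.

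For elements of $_0\mathbb{E}_h(J)$ the standard smallness estimates for functions with vanishing trace at $t=0$ give
\begin{equation*}
\|h\|_{W_p^{1/2-1/2p}(J;L_p(\Sigma))}\le C\,T^{3/2}\,\|h\|_{_0W_p^{2-1/2p}(J;L_p(\Sigma))}
\end{equation*}
and
\begin{equation*}
\|h\|_{L_p(J;W_p^{1-1/p}(\Sigma))}\le C\,T\,\|h\|_{_0H_p^1(J;W_p^{2-1/p}(\Sigma))},
\end{equation*}
with $C$ independent of $T$, hence $\|Bz\|_{_0\mathbb{F}(J)}\le C\,T\,\|z\|_{_0\mathbb{E}(J)}$ for $T\le 1$. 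A Neumann series argument then shows that $L-B$ is invertible on $_0\mathbb{E}(J)$ whenever $T$ is sufficiently small, yielding a unique solution on a short interval. Concatenating solutions on finitely many subintervals---at each junction the final state $(u(T_k),h(T_k))$ lies in $X_\gamma$ by the time-trace embeddings for $\mathbb{E}(J)$, and the equations' being satisfied at $t=T_k$ automatically transfers all compatibility conditions to the next subinterval---extends the result to an arbitrary bounded interval. The main technical point to watch is this bookkeeping of compatibilities under the reduction and concatenation steps; once it is checked that the new term $\gamma_a\Jump{\rho}h$ never touches any of the contact-line conditions (6)--(12), the proof reduces to standard perturbation theory.
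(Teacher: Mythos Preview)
Your proposal is correct and follows essentially the same approach as the paper: treat the extra term $-\gamma_a\Jump{\rho}h$ as a lower-order perturbation of the operator from Theorem~\ref{thm:linmaxreg}, use smallness in $T$ (via the gain in time regularity of $h$ in $_0\mathbb{E}_h(J)$ versus $\mathbb{F}_4(J)$) to run a Neumann series on short intervals, and then concatenate. The paper streamlines the reduction step slightly by first solving \eqref{eq:NScap3} with the full data to obtain $(u_*,\pi_*,q_*,h_*)$ and then treating only the residual problem with right-hand side $\gamma_a\Jump{\rho}h_*$, but this is organizationally equivalent to your modification of $q_0$.
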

\begin{proof}
Necessity of the conditions follows from trace theory. To prove the sufficiency part, let
$$\mathbb{E}_1(J):=H_p^1(J;L_p(\Omega)^3)\cap L_p(J;H_p^2(\Omega\backslash\Sigma)^3),\quad \mathbb{E}_2(J):=L_p(J;\dot{H}_p^1(\Omega\backslash\Sigma)),$$
$$\mathbb{E}_3(J):=W_p^{1/2-1/2p}(J;L_p(\Sigma))\cap L_p(J;W_p^{1-1/p}(\Sigma))$$
$$\mathbb{E}_4(J):=W_p^{2-1/2p}(J;L_p(\Sigma))\cap H_p^1(J;W_p^{2-1/p}(\Sigma))\cap L_p(J;W_p^{3-1/p}(\Sigma)),$$
and $\mathbb{E}(J):=\{(u,\pi,q,h)\in \times_{j=1}^4\mathbb{E}_j(J):q=\Jump{\pi}\}$. We first solve \eqref{eq:NScap3} for the given data, to obtain a unique solution $(u_*,\pi_*,q_*,h_*)\in\mathbb{E}(J)$. Then we consider the problem
\begin{align}\label{eq:NScap10}
\begin{split}
\partial_t(\rho u)-\mu\Delta u+\nabla \pi&=0,\quad \text{in}\ \Omega\backslash\Sigma,\\
\div u&=0,\quad \text{in}\ \Omega\backslash\Sigma,\\
-\Jump{\mu \partial_3 v}-\Jump{\mu\nabla_{x'} w}&=0,\quad \text{on}\ \Sigma,\\
-2\Jump{\mu \partial_3 w}+\Jump{\pi}-\sigma\Delta_{x'} h-\gamma_a\Jump{\rho} h&=\gamma_a\Jump{\rho}h_*,\quad \text{on}\ \Sigma,\\
\Jump{u}&=0,\quad \text{on}\ \Sigma,\\
\partial_t h-m[w]&=0,\quad \text{on}\ \Sigma,\\
P_{S_1}\left(\mu(\nabla u+\nabla u^{\sf T})\nu_{S_1}\right)&=0,\quad \text{on}\ S_1\backslash\partial\Sigma,\\
u\cdot\nu_{S_1}&=0,\quad \text{on}\ S_1\backslash\partial\Sigma,\\
u&=0,\quad \text{on}\ S_2,\\
\partial_{\nu_{\partial G}}h&=0,\quad \text{on}\ \partial\Sigma,\\
u(0)&=0,\quad \text{in}\ \Omega\backslash\Sigma\\
h(0)&=0,\quad \text{on}\ \Sigma.
\end{split}
\end{align}
Define $L:\!_0\mathbb{E}(J)\to\!_0\mathbb{F}(J)$ by the left side of \eqref{eq:NScap10} and $L_0:\!_0\mathbb{E}(J)\to\!_0\mathbb{F}(J)$ by the left side of \eqref{eq:NScap3} without the initial conditions. We already know that $L_0:\!_0\mathbb{E}(J)\to\!_0\mathbb{F}(J)$ is boundedly invertible, hence
$$L=L_0+(L-L_0)=L_0(I+L_0^{-1}(L-L_0)).$$
This in turn yields that $L:\!_0\mathbb{E}(J)\to\!_0\mathbb{F}(J)$ is boundedly invertible, provided that $((I+L_0^{-1}(L-L_0)):\!_0\mathbb{E}(J)\to\!_0\mathbb{E}(J)$ has this property. To this end it suffices to show that the norm of $L_0^{-1}(L-L_0)$ in $\mathbb{E}(J)$ is less than one. For $z\in\!_0\mathbb{E}(J)$ we obtain the estimate
$$\|L_0^{-1}(L-L_0)z\|_{\mathbb{E}(J)}\le M\gamma_a\Jump{\rho}\|h\|_{\mathbb{F}_4(J)}\le T^\alpha M\gamma_a\Jump{\rho}\|h\|_{\mathbb{E}_4(J)}\le T^\alpha M\gamma_a\Jump{\rho}\|z\|_{\mathbb{E}(J)},$$
for some $\alpha>0$. Here $M:=\|L_0^{-1}\|_{\mathcal{B}(_0\mathbb{F}(J_0);_0\mathbb{E}(J_0))}$ and $J=[0,T]\subset[0,T_0]=:J_0$. It follows that if $T>0$ is sufficiently small, then $L:\!_0\mathbb{E}(J)\to\!_0\mathbb{F}(J)$ is boundedly invertible. The result extends to all $T>0$ by a successive application of this argument.
\end{proof}


%
%
%

\chapter{Nonlinear well-posedness}\label{chptr:LWP}

It is the aim of this section to establish an existence and uniqueness result for the nonlinear problem \eqref{eq:NScap2}.

\section{Function spaces and regularity}

Before we go into the details, there is a remark concerning the nonlinearity
$$H_2(u,h)=P_{S_1}\left(\mu(M_0(h)\nabla u+\nabla u^{\sf T}M_0(h)^{\sf T})\nu_{S_1}\right)$$
in \eqref{eq:NScap2} in order. One readily computes
$$(M_0(h)\nabla u+\nabla u^{\sf T}M_0(h)^{\sf T})\nu_{S_1}=\frac{1}{1+h\varphi'}
\begin{pmatrix}
\varphi\partial_3 u_1\partial_{\nu_{\partial G}}h+\varphi\partial_1 h\partial_3(u\cdot\nu_{S_1})\\
\varphi\partial_3 u_2\partial_{\nu_{\partial G}}h+\varphi\partial_2 h\partial_3(u\cdot\nu_{S_1})\\
\varphi\partial_3 u_3\partial_{\nu_{\partial G}}h+\varphi' h\partial_3 (u\cdot\nu_{S_1})
\end{pmatrix},
$$
where $\nu_{S_1}=(\nu_1,\nu_2,0)^{\sf T}$. Therefore, if $u\cdot\nu_{S_1}=0$ on $S_1\backslash\partial\Sigma$ and $\partial_{\nu_{\partial G}}h=0$ on ${\partial G}$, it follows that $H_2(u,h)=0$ at $S_1\backslash\partial\Sigma$ (note that the function $h$ depends only on $x'=(x_1,x_2)$, wherefore it is constant with respect to $x_3$).

Define the solution spaces
\begin{multline*}\mathbb{E}_u(T):=\{u\in H_p^1(J;L_p(\Omega)^3)\cap L_p(J;H_p^2(\Omega\backslash\Sigma)^3):\\
\Jump{u}=0,\ u\cdot\nu_{S_1}=0,\ P_{S_1}(\mu(\nabla u+\nabla u^{\sf T})\nu_{S_1})=0,\ u|_{S_2}=0\},
\end{multline*}
$$\mathbb{E}_\pi(T):=L_p(J;\dot{H}_p^1(\Omega\backslash\Sigma)),$$
$$\mathbb{E}_{q}(T):=W_p^{1/2-1/2p}(J;L_p(\Sigma))\cap L_p(J;W_p^{1-1/p}(\Sigma)),$$
\begin{multline*}
\mathbb{E}_h(T):=\{h\in W_p^{2-1/2p}(J;L_p(\Sigma))\cap H_p^1(J;W_p^{2-1/p}(\Sigma))\cap L_p(J;W_p^{3-1/p}(\Sigma)):\\
\partial_{\nu_{\partial G}}h=0\},
\end{multline*}
and
$$\mathbb{E}(T):=\{(u,\pi,q,h)
\in\mathbb{E}_u(T)\times\mathbb{E}_\pi(T)\times\mathbb{E}_{q}(T)\times\mathbb{E}_h(T)
:q=\Jump{\pi}\}.
$$
Moreover, we define the data spaces as follows.
$$\mathbb{F}_1(T):=L_p(J;L_p(\Omega)^3),$$
$$\mathbb{F}_2(T):=H_p^1(J;\hat{H}_p^{-1}(\Omega))\cap L_p(J;H_p^1(\Omega\backslash\Sigma)),$$
$$\mathbb{F}_3(T):=\{f_3\in W_p^{1/2-1/2p}(J;L_p(\Sigma)^3)\cap L_p(J;W_p^{1-1/p}(\Sigma)^3):P_{\Sigma}(f_3)\cdot\nu_{S_1}=0\},$$
$$\mathbb{F}_4(T):=\{f_4\in W_p^{1-1/2p}(J;L_p(\Sigma))\cap L_p(J;W_p^{2-1/p}(\Sigma)):\partial_{\nu_{\partial G}}f_4=0\},$$
and
$\mathbb{F}(T):=\times_{j=1}^4\mathbb{F}_j(T)$.

Define an operator $L=(L_1,L_2,L_3,L_4)$ on $\mathbb{E}(T)$ by
\begin{align*}
L_1(u,\pi)&:=\rho\partial_t u-\mu\Delta u+\nabla\pi\\
L_2(u)&:=\div u\\
L_3(u,q,h)&:=\Jump{-\mu(\nabla u+\nabla u^{\sf T})}e_3+q e_3-(\Delta_{x'}h)e_3-\gamma_a\Jump{\rho} h e_3\\
L_4(u,h)&:=\partial_t h-(u|e_3)
\end{align*}
and a nonlinear mapping $N=(N_1,N_2,N_3,N_4)$ on $\mathbb{E}(T)$ by
\begin{align*}
N_1(u,\pi,h)&:=F(u,\pi,h)\\
N_2(u,h)&:=F_d(u,h)-\frac{1}{|\Omega|}\int_\Omega F_d(u,h)\ dx\\
N_3(u,h)&:=(G_v(u,h),0)^{\sf T}+G_w(u,h)e_3\\
N_4(u,h)&:=H_1(u,h).
\end{align*}
It follows from Corollary \ref{cor:linmaxreg} that for each fixed $T>0$ the mapping $L:\!_0\mathbb{E}(T)\to\!_0\mathbb{F}(J)$ is an isomorphism, since all compatibility conditions at the contact line $\partial \Sigma$ are satisfied by construction.

Let $U_T:=\{z=(u,\pi,q,h)\in\mathbb{E}(T):\|h\|_{L_\infty(L_\infty)}<\eta\}$, where $\eta>0$ is sufficiently small.
Concerning the nonlinearity $N(z)$ we have the following result
\begin{prop}\label{prop:regnonlin}
Let $p>n+2$. Then
\begin{enumerate}
\item $N\in C^2(U_T;\mathbb{F}(T))$ and $N(0)=0$ as well as $DN(0)=0$.
\item $DN(w)\in\mathcal{B}(U_T;\mathbb{F}(T))$ for each $w\in \mathbb{E}(T)$.
\end{enumerate}
\end{prop}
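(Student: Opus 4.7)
The plan is a routine but careful verification that exploits the critical Sobolev embeddings available for $p>n+2=5$, namely
\begin{align*}
\mathbb{E}_u(T) &\hookrightarrow BUC(J;C^1(\overline{\Omega\setminus\Sigma})^3),\\
\mathbb{E}_h(T) &\hookrightarrow BUC(J;C^2(\overline\Sigma))\cap C^{\alpha}(J;C^1(\overline\Sigma))
\end{align*}
for some $\alpha>0$, together with the fact that the trace spaces appearing in $\mathbb{E}(T)$ and $\mathbb{F}(T)$ are closed under multiplication by $BUC(J)$-valued functions of appropriate spatial smoothness (via the mixed-derivative theorem and the algebra property of $W_p^s$ for $s>(n-1)/p$). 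First I would observe that $N(0)=0$ and $DN(0)=0$ are immediate from the explicit form of $F,F_d,G_v,G_w,H_1$: every summand is a product of at least two factors that each vanish at $(u,\pi,h)=0$, and $G_\kappa(h)=O(|\nabla h|^2|\nabla^2 h|)$ near $h=0$.

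Next I would write each summand of $N$ as a product of a factor linear in $u$, $\pi$, $h$ or a derivative thereof times a smooth function of $h,\nabla h$ obtained by composition with the real-analytic scalar maps $r\mapsto(1+\varphi'(x_3)r)^{-1}$ and $r\mapsto(1+|r|^2)^{-1/2}$. The embeddings above make these Nemytskii operators smooth on $U_T$ (here $\|h\|_{L_\infty L_\infty}<\eta$ enters to keep the denominators bounded away from $0$), and then the multilinear estimates into $\mathbb{F}_1(T)$, $\mathbb{F}_3(T)$, $\mathbb{F}_4(T)$ and the $L_p(J;H_p^1(\Omega\setminus\Sigma))$-part of $\mathbb{F}_2(T)$ follow from the usual pointwise product rules. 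This gives $N\in C^2(U_T;\mathbb{F}(T))$ together with the boundedness of $DN(w)$ for each $w\in U_T$, since the composition of smooth Nemytskii maps with bounded multilinear ones is smooth and has bounded derivatives on bounded sets of the Banach-algebra-type ambient spaces.

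The main obstacle is the $\hat H_p^{-1}$-component of $\mathbb{F}_2(T)$: one must show that $N_2=F_d(u,h)-\frac{1}{|\Omega|}\int_\Omega F_d(u,h)\,dx$ lies in $H_p^1(J;\hat H_p^{-1}(\Omega))$. The idea is to integrate by parts: for $\phi\in H_{p'}^1(\Omega)$,
$$\langle N_2(u,h),\phi\rangle=\int_\Omega(M_0(h)\nabla|u)\phi\,dx=-\int_\Omega M_0(h)^{\sf T}u\cdot\nabla\phi\,dx-\int_\Omega(\div M_0(h)^{\sf T})\cdot u\,\phi\,dx+(\text{boundary terms}),$$
and the boundary terms vanish thanks to the conditions built into $\mathbb{E}_u(T)$ ($u\cdot\nu_{S_1}=0$ on $S_1$, $u|_{S_2}=0$) together with $M_0(h)\nu_{S_1}\parallel\nu_{S_1}$, which is in turn a consequence of $\partial_{\nu_{\partial G}}h=0$ built into $\mathbb{E}_h(T)$. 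Since the right-hand side is continuous in $\phi$ with respect to $\|\nabla\phi\|_{L_{p'}}$ with a constant involving only $\|u\|_{L_p}$ and $\|h\|_{C^2}$, time differentiation (using $u\in H_p^1(J;L_p)$ and the smoothness of $h\mapsto M_0(h)$) yields the required $\hat H_p^{-1}$-bound. The remaining compatibility conditions $P_\Sigma N_3\cdot\nu_{S_1}=0$ and $\partial_{\nu_{\partial G}}N_4=0$ at $\partial\Sigma$ follow in the same spirit as the argument already given in the text for $H_2(u,h)$: every tangential/normal factor of $\nabla h$ or $v$ that would obstruct these identities at the contact line is killed either by $\partial_{\nu_{\partial G}}h=0$ or by $v\cdot\nu_{\partial G}=0$. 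All the remaining estimates are standard multiplier bounds, so the negative-norm step and the contact-line compatibility are where the geometry really enters.
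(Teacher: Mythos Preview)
Your overall strategy matches the paper's, and the smoothness/multiplier part is fine. There are, however, two genuine gaps in the parts you single out as ``where the geometry really enters''.

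\textbf{The $\hat H_p^{-1}$-estimate for $N_2$.} Your integration by parts produces the term $\int_\Omega(\div M_0(h)^{\sf T})\cdot u\,\phi\,dx$, which involves $\phi$ itself rather than $\nabla\phi$; as written, this is \emph{not} continuous with respect to $\|\nabla\phi\|_{L_{p'}}$. You need to first use that $N_2$ has vanishing mean to replace $\phi$ by $\phi-\bar\phi$, and then invoke Poincar\'e--Wirtinger on the bounded domain $\Omega$. The paper does exactly this. It also exploits the special structure of $M_0(h)$ (only the third column is nonzero, and $h$ is $x_3$-independent) to integrate by parts in $x_3$ alone, which makes the boundary discussion trivial: $\varphi,\varphi'$ vanish near $S_2$, and there is no $S_1$-boundary term since $e_3\cdot\nu_{S_1}=0$. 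Your claim that $M_0(h)\nu_{S_1}\parallel\nu_{S_1}$ follows from $\partial_{\nu_{\partial G}}h=0$ is a misattribution --- in fact $M_0(h)\nu_{S_1}=0$ simply because the first two columns of $M_0(h)$ vanish.

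\textbf{The compatibility $\partial_{\nu_{\partial G}}N_4=0$.} This is the real gap. You assert that $\partial_{\nu_{\partial G}}(v\,|\,\nabla_{x'}h)=0$ follows from $\partial_{\nu_{\partial G}}h=0$ and $v\cdot\nu_{\partial G}=0$ alone. It does not. Writing $\nabla h=\tau\,\partial_\tau h$ and expanding $\partial_\nu(v\,|\,\nabla h)=(\partial_\nu v\,|\,\nabla h)+(v\,|\,\nabla^2 h\,\nu)$, one is left with two curvature-type terms of the form $\partial_\tau h\,(v\,|\,\partial_\tau\nu)$ that must cancel, and the cancellation requires the pure-slip condition $P_{\partial G}[(D'v)\nu_{\partial G}]=0$ built into $\mathbb{E}_u(T)$. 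The paper isolates this computation as a separate result (Proposition~\ref{prop:appaux} in the appendix); the same condition is also needed for $P_\Sigma N_3\cdot\nu_{S_1}=0$, where the term $(\Jump{\mu(\nabla_{x'}v+\nabla_{x'}v^{\sf T})}\nabla_{x'}h\,|\,\nu_{\partial G})$ only vanishes after invoking pure-slip. The argument ``given in the text for $H_2(u,h)$'' that you reference uses only $u\cdot\nu_{S_1}=0$ and $\partial_{\nu_{\partial G}}h=0$, so it is not a template for these two compatibilities.
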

\begin{proof}
We shall show that $N(z)\in\mathbb{F}(T)$ for each $z\in U_T$. Let $z=(u,\pi,q,h)\in U_T$. Then it is easily seen that $N_1(z)=F(u,\pi,h)\in\mathbb{F}_1(T)$. Concerning $N_2(z)$, we have
$$\|N_2(z)\|_{L_p(H_p^1)}\le C(\|h\|_{L_\infty(W_\infty^2)}\|u\|_{L_p(H_p^1)}
+\|h\|_{L_\infty(W_\infty^1)}\|u\|_{L_p(H_p^2)}),$$
since $\mathbb{E}_h(T)\hookrightarrow BUC([0,T];C^2(\Sigma))$ for $p>n+2$. Furthermore, for $\phi\in \dot{H}_p^1(\Omega)$ we obtain after integration by parts ($h$ does not depend on $x_3$)
\begin{multline*}
(N_2(z)|\phi)_2=(N_2(z)|\phi-\bar{\phi})_2=-\int_\Omega\Big[(u_1\partial_1 h+u_2\partial_2 h)\partial_3\Big((\phi-\bar{\phi})\frac{\varphi}{1+h\varphi'}\Big)+\\
+u_3 h\partial_3\Big((\phi-\bar{\phi})\frac{\varphi'}{1+h\varphi'}\Big)\Big]dx,
\end{multline*}
where $\bar{\phi}:=\frac{1}{|\Omega|}\int_\Omega\phi dx$. Since $\mathbb{E}_h(T)\hookrightarrow BUC^1([0,T];C^1(\Sigma))$ for $p>n+2$, it follows from Poincar\'{e}'s inequality for functions with mean value zero that $N_2(z)\in\mathbb{F}_2(T)$.

The desired regularity property of $N_3(z)$ can be readily checked. It remains to show that $$P_\Sigma N_3(z)\cdot\nu_{S_1}=(G_v(u,h),0)^{\sf T}\cdot\nu_{S_1}=0.$$
Inserting the expression for $G_v(u,h)$ yields
\begin{multline*}
P_\Sigma N_3(z)\cdot\nu_{S_1}=-\left(\Jump{\mu(\nabla_{x'} v+\nabla_{x'} v^{\sf T})}\nabla_{x'} h|\nu_{\partial G}\right)\\
+|\nabla_{x'} h|^2\Jump{\mu\partial_3 (u|\nu_{S_1})}+\left((1+|\nabla_{x'} h|^2)\Jump{\mu\partial_3 w}-(\nabla_{x'} h|\Jump{\mu\nabla w})\right)\partial_{\nu_{\partial G}}h,
\end{multline*}
where $\nu_{S_1}=(\nu_{\partial G},0)^{\sf T}$. The last term in this equation vanishes, since $\partial_{\nu_{\partial G}}h=0$. Moreover, since $\mu(u\cdot\nu_{S_1})(x_3)=0$ for each $x_3\in (H_1,0)\cup(0,H_2)$, the second term vanishes as well. Finally, since $P_{S_1}(\mu(\nabla u+\nabla u^{\sf T})\nu_{S_1})=0$, it holds that
$$\mu(\nabla u+\nabla u^{\sf T})\nu_{S_1}=\left(\mu(\nabla u+\nabla u^{\sf T})\nu_{S_1}|\nu_{S_1}\right)\nu_{S_1}$$
on $S_1\backslash\partial\Sigma$, hence also
$$\Jump{\mu(\nabla u+\nabla u^{\sf T})}\nu_{S_1}=\left(\Jump{\mu(\nabla u+\nabla u^{\sf T}})\nu_{S_1}|\nu_{S_1}\right)\nu_{S_1}$$
at the contact line, since $\nu_{S_1}$ does not depend on $x_3$. Taking the inner product with $(\nabla_{x'} h,0)^{\sf T}$ yields
$$(\Jump{\mu(\nabla u+\nabla u^{\sf T})}\nu_{S_1}|(\nabla_{x'} h,0)^{\sf T})=\left(\mu(\nabla u+\nabla u^{\sf T})\nu_{S_1}|\nu_{S_1}\right)\partial_{\nu_{\partial G}}h=0,$$
since $\partial_{\nu_{\partial G}}h=0$. But by symmetry of the stress tensor we also have
$$(\Jump{\mu(\nabla u+\nabla u^{\sf T})}\nu_{S_1}|(\nabla h,0)^{\sf T})=(\nu_{\partial G}|\Jump{\mu(\nabla_{x'} v+\nabla_{x'} v^{\sf T})}\nabla h),$$
where $u=(v,w)$, hence $N_3(z)\in\mathbb{F}_3(T)$.

Finally, concerning $N_4(z)$, one has to observe that $(u|\nu_{S_1})=0$ and $P_{S_1}((\nabla u+\nabla u^{\sf T})\nu_{S_1})=0$ on $S_1\backslash\partial\Sigma$ if $u\in\mathbb{E}_u(T)$. For $\nu_{S_1}=(\nu_{\partial G},0)^{\sf T}$, this implies in particular that $(v|\nu_{\partial G})=0$ and $P_{\partial G}((\nabla_{x'} v+\nabla_{x'} v^{\sf T})\nu_{\partial G})=0$ on $S_1\backslash\partial\Sigma$. Since $\Jump{v}=0$ on $\overline{\Sigma}$, by continuity of $v$, we clearly have $\Jump{\nabla_{x'} v}=0$ on $\Sigma$, since the jump acts into the direction of $x_3$ which is perpendicular to both $e_1$ and $e_2$. In particular we have $(v|\nu_{\partial G})=0$ and $P_{\partial G}((\nabla_{x'} v+\nabla_{x'} v^{\sf T})\nu_{\partial G})=0$ at the contact line $\partial\Sigma$. Since in addition we know that $\partial_{\nu_{\partial G}}h=0$ at $\partial\Sigma$, it follows from Proposition \ref{prop:appaux} that $\partial_{\nu_{\partial G}} (v|\nabla_{x'} h)=0$ at $\partial\Sigma$.

The remaining assertions can be proven as in \cite[Proposition 6.2]{PrSi09a}.
\end{proof}

\section{Reduction to time trace zero}\label{sec:Redzerotrace}

Let $(u_0,h_0)\in W_p^{2-2/p}(\Omega\backslash\Sigma)^3\times W_p^{3-2/p}(\Sigma)$ such that
$$\div u_0=F_d(u_0,h_0),\quad -\Jump{\mu\partial_3 v_0}-\Jump{\mu\nabla_{x'} w_0}=G_v(v_0,h_0),$$
$\Jump{u_0}=0$ on $\Sigma$, $u_0\cdot\nu_{S_1}=0$, $P_{S_1}(\mu(\nabla u_0+\nabla u_0^{\sf T})\nu_{S_1})=0$ on $S_1\backslash\partial\Sigma$, $u_0|_{S_2}=0$ and $\partial_{\nu_{\partial G}}h_0=0$ on $\partial\Sigma$.

Let $H:=\max\{H_1,-H_2\}<0$ and $u_0^+:=u_0|_{x_3\in[0,H_2]}$. Define
$$\tilde{u}_0^+(x):=
\begin{cases}
u_0^+(x_1,x_2,x_3),\quad&\text{if}\ x_3\in[0,H_2),\\
-u_0^+(x_1,x_2,-2x_3)+2u_0^+(x_1,x_2,-x_3/2),\quad&\text{if}\ x_3\in (H/2,0)
\end{cases}
$$
as well as
$$\bar{u}_0^+(x):=
\begin{cases}
\tilde{u}_0^+(x_1,x_2,x_3),\quad&\text{if}\ x_3\in[0,H_2),\\
\tilde{u}_0^+(x_1,x_2,x_3)\psi(x_3),\quad&\text{if}\ x_3\in (H/2,0),\\
0,\quad&\text{if}\ x_3\in (H_1,H/2],
\end{cases}
$$
where $\psi\in C_c^\infty(\mathbb{R};[0,1])$ such that $\psi(s)=1$ if $|s|<-H/6$ and $\psi(s)=0$ if $|s|>-H/3$. It follows by construction that $\bar{u}_0^+\in W_p^{2-2/p}(\Omega)^3\hookrightarrow C^1(\overline{\Omega})^3$, if $p>n+2$. We then solve the parabolic problem
\begin{align}
\begin{split}\label{auxprblnonlin1}
\partial_t(u^+)-\mu^+\Delta u^+&=0,\quad \text{in}\ \Omega,\\
P_{S_1}\left(\mu^+(\nabla u^+ +\nabla (u^+)^{\sf T})\nu_{S_1}\right)&=0,\quad \text{on}\ S_1,\\
u^+\cdot\nu_{S_1}&=0,\quad \text{on}\ S_1,\\
u^+&=0,\quad \text{on}\ S_2,\\
u^+(0)&=\bar{u}_0^+,\quad \text{in}\ \Omega,
\end{split}
\end{align}
by Lemma \ref{lem:appaux1}, where $\mu^+:=\mu|_{x_3\in (0,H_2)}>0$ is constant.

Let us check whether $\bar{u}_0^+$ satisfies the relevant compatibility conditions at $S_1$ and $S_2$. It is easy to see that $\bar{u}_0^+=0$ at $S_2$. Furthermore we have ${u}_0^+\cdot\nu_{S_1}=0$ for all $x_3\in (0,H_2)$ by the assumption on $u_0$. From the definition of $\tilde{u}_0^+$ we obtain that $\tilde{u}_0^+\cdot \nu_{S_1}=0$ for all $x_3\in (H/2,0)$, hence also $\bar{u}_0^+\cdot\nu_{S_1}=0$ for $x_3\in (H_1,0)$ by the definition of $\bar{u}_0^+$. Since $\bar{u}_0^+\in C^1(\overline{\Omega})^3$ we also have $\bar{u}_0^+\cdot\nu_{S_1}=0$ for $x_3=0$. It remains to prove that
\begin{equation}\label{eq:LWP1}
P_{S_1}\left(\mu^+(\nabla \bar{u}_0^+ +\nabla (\bar{u}_0^+)^{\sf T})\nu_{S_1}\right)=0
\end{equation}
on $S_1$. Again, this is true for $x_3\in (0,H_2)$, by the assumption on $u_0$. Since the first two components of this tangential projection do only contain derivatives with respect to the $(x_1,x_2)$-variables, it follows from the definition of $\bar{u}_0^+$ that
$$P_{S_1}\left(\mu^+(\nabla \bar{u}_0^+ +\nabla (\bar{u}_0^+)^{\sf T})\nu_{S_1}\right)\cdot e_j=0$$
for $j\in\{1,2\}$ and $x_3\in (H_1,0)$. The third component of the projection is given by
$$\partial_{\nu_{S_1}}(\bar{u}_0^+\cdot e_3)+\partial_3(\bar{u}_0^+\cdot\nu_{S_1}).$$
Evidently, it holds that $\partial_{\nu_{S_1}}(\bar{u}_0^+\cdot e_3)=0$ by the same reasons as above, since the last component of $\nu_{S_1}$ vanishes. Furthermore, we have
$$\partial_3(\bar{u}_0^+\cdot\nu_{S_1})=
\begin{cases}
\psi\partial_3(\tilde{u}_0^+\cdot\nu_{S_1})+\psi'(\tilde{u}_0^+\cdot\nu_{S_1}),
\quad&\text{if}\ x_3\in (H/2,0),\\
0,\quad&\text{if}\ x_3\in (H_1,H/2].
\end{cases}
$$
Since $u_0^+\cdot \nu_{S_1}=0$ for all $x_3\in (0,H_2)$ it follows that $\partial_3 (u_0^+\cdot \nu_{S_1})=0$ for $x_3\in (0,H_2)$. From the identity
$$\partial_3(\tilde{u}_0^+\cdot \nu_{S_1})=-\partial_3[u_0^+(x_1,x_2,-2x_3)\cdot\nu_{S_1}]+2\partial_3 [u_0^+(x_1,x_2,-x_3/2)\cdot\nu_{S_1}]$$
for $x_3\in (H/2,0)$, we readily obtain that $\partial_3(\bar{u}_0^+\cdot\nu_{S_1})=0$ for $x_3\in (H_1,0)$. Finally, since $\bar{u}_0^+\in C^1(\overline{\Omega})^3$, it follows that \eqref{eq:LWP1} holds on all of $S_1$.

Solving \eqref{auxprblnonlin1} by Lemma \ref{lem:appaux1} yields a unique solution
$$u^+\in H_p^1(J;L_p(\Omega)^3)\cap L_p(J;H_p^2(\Omega)^3)$$
satisfying the estimate
$$\|u^+\|_{H_p^1(L_p)\cap L_p(H_p^2)}\le M\|\bar{u}_0^+\|_{W_p^{2-2/p}},$$
where $M>0$ does not depend on $u_0^+$.

Applying the same procedure to $u_0^-:=u_0|_{x_3\in [H_1,0]}$ (with a suitable cut-off function $\psi$) yields a $C^1$-extension $\bar{u}_0^-$ of $u_0^-$. Therefore, we obtain a unique solution
$$u^-\in H_p^1(J;L_p(\Omega)^3)\cap L_p(J;H_p^2(\Omega)^3)$$
of \eqref{auxprblnonlin1} with $\mu^+$ and $\bar{u}_0^+$ replaced by $\mu^-$ and $\bar{u}_0^-$, respectively, satisfying the estimate
$$\|u^-\|_{H_p^1(L_p)\cap L_p(H_p^2)}\le M\|\bar{u}_0^-\|_{W_p^{2-2/p}},$$
where $M>0$ does not depend on $u_0^-$. We then define
$$\bar{u}:=
\begin{cases}
u^+,\quad&\text{if}\ x_3\in(0,H_2),\\
u^-,\quad&\text{if}\ x_3\in (H_1,0).
\end{cases}
$$
Note that in general $\bar{u}\in H_p^1(J;L_p(\Omega)^3)\cap L_p(J;H_p^2(\Omega\backslash\Sigma)^3)$, since $\Jump{\bar{u}}$ is not necessarily zero.

In a next step we solve the two phase problem
\begin{align}
\begin{split}\label{auxprblnonlin2}
\partial_t(\rho \tilde{u})-\mu\Delta \tilde{u}&=0,\quad \text{in}\ \Omega\backslash\Sigma,\\
\Jump{\mu \partial_3 \tilde{v}}+\Jump{\mu\nabla_{x'} \tilde{w}}&=\Jump{\mu \partial_3 \bar{v}}+\Jump{\mu\nabla_{x'} \bar{w}},\quad \text{on}\ \Sigma,\\
\Jump{\mu \partial_3 \tilde{w}}&=\Jump{\mu \partial_3 \bar{w}},\quad \text{on}\ \Sigma,\\
\Jump{\tilde{u}}&=0,\quad \text{on}\ \Sigma,\\
P_{S_1}\left(\mu(\nabla \tilde{u}+\nabla \tilde{u}^{\sf T})\nu_{S_1}\right)&=0,\quad \text{on}\ S_1\backslash\partial\Sigma,\\
\tilde{u}\cdot\nu_{S_1}&=0,\quad \text{on}\ S_1\backslash\partial\Sigma,\\
\tilde{u}&=0,\quad \text{on}\ S_2,\\
\tilde{u}(0)&=u_0,\quad \text{in}\ \Omega\backslash\Sigma,
\end{split}
\end{align}
by Lemma \ref{lem:appaux2}, where $\tilde{u}=(\tilde{v},\tilde{w})$ and $\bar{u}=(\bar{v},\bar{w})$. The compatibility conditions at $t=0$ are satisfied, since $\bar{u}(0)=u_0$. Let us check that the compatibility condition
$$\Jump{\mu \partial_3 (\bar{u}|\nu_{S_1})}+\Jump{\mu\partial_{\nu_{S_1}}\bar{w}}=0$$ holds at the contact line $\partial\Sigma$. Since by construction of $\bar{u}$ we have
$$P_{S_1}\left(\mu(\nabla \bar{u} +\nabla \bar{u}^{\sf T})\nu_{S_1}\right)=0,$$
at $S_1\backslash\partial\Sigma$, the third component yields $\mu\left(\partial_{\nu_{S_1}}\bar{w}+\partial_3(\bar{u}\cdot\nu_{S_1})\right)=0$ at $S_1\backslash\partial\Sigma$. This in turn implies that $\Jump{\mu \partial_3 (\bar{u}|\nu_{S_1})}+\Jump{\mu\partial_{\nu_{S_1}}\bar{w}}=0$. Note that for the third equation in \eqref{auxprblnonlin2} there has no compatibility condition at $\partial\Sigma$ to be satisfied. Therefore we obtain a unique solution $\tilde{u}\in\mathbb{E}_u(T)$ by Lemma \ref{lem:appaux2}.

Define $f_d^*:=\div \tilde{u}\in\mathbb{F}_2(T)$, $g^*:=\Jump{-\mu(\nabla \tilde{u}+\nabla \tilde{u}^{\sf T})e_3}\in\mathbb{F}_3(T)$ and $g_h^*:=e^{-At}(v_0|_{\Sigma}\cdot\nabla h_0)$, with $A:=(I-\Delta_N)$, where $\Delta_N$ is the Neumann-Laplacian and $e^{-At}$ denotes the $C_0$-semigroup, generated by $-A$ in $L_p(\Sigma)$. Then, since  $(v_0|_{\Sigma}\cdot\nabla h_0)\in W_p^{2-3/p}(\Sigma)$ with $\partial_{\nu_{\partial G}} (v_0|_{\Sigma}\cdot\nabla h_0)=0$ by Proposition \ref{prop:appaux} at $\partial\Sigma$, it follows that $e^{-At}g_h\in \mathbb{F}_4(T)$. The fact that $P_\Sigma(\Jump{-\mu(\nabla \tilde{u}+\nabla \tilde{u}^{\sf T})e_3})\cdot\nu_{S_1}=0$ holds by construction of $\tilde{u}$.

By Corollary \ref{cor:linmaxreg} there exists a unique solution $z_*=(u_*,\pi_*,q_*,h_*)\in\mathbb{E}(T)$ of the initial value problem $Lz_*=(0,f_d^*,g^*,g_h^*)$, $(u_*,h_*)|_{t=0}=(u_0,h_0)$, since the compatibility conditions at $t=0$ in the second and third component are satisfied by construction. We remark that $z_*$ satisfies the estimate
$$\|z_*\|_{\mathbb{E}(T)}\le C_0\|(u_0,h_0)\|_{X_\gamma},$$
and $C_0>0$ does not depend on $(u_0,h_0)$.

\section{Nonlinear well-posedness}

Define the mapping $K(z):=N(z+z_*)-Lz_*$, where $z\in\!_0\mathbb{E}(T)$. By Proposition \ref{prop:regnonlin} it holds that $K(z)\in\!_0\mathbb{F}(T)$ for each $z\in\!_0\mathbb{E}(T)$, wherefore, we may consider the mapping $\mathcal{K}(z):=L^{-1}K(z)$. We intend to show that this mapping has a fixed point in $_0\mathbb{E}(T)$.

The main result of this section reads as follows.
\begin{thm}\label{nonlinwellposedness}
Let $n=3$, $p>5$. For each given $T>0$ there exists a number $\eta=\eta(T)>0$ such that for all initial values $(u_0,h_0)\in W_p^{2-2/p}(\Omega\backslash\Sigma)^3\times W_p^{3-2/p}(\Sigma)$ satisfying the compatibility conditions
$$\div u_0=F_d(u_0,h_0),\quad -\Jump{\mu\partial_3 v_0}-\Jump{\mu\nabla_{x'} w_0}=G_v(v_0,h_0),$$
$\Jump{u_0}=0$, $u_0\cdot\nu_{S_1}=0$, $P_{S_1}(\mu(\nabla u_0+\nabla u_0^{\sf T})\nu_{S_1})=0$, $u_0|_{S_2}=0$ and $\partial_{\nu_{\partial G}} h_0=0$ as well as the smallness condition
$$\|u_0\|_{W_p^{2-2/p}(\Omega\backslash\Sigma)}+\|h_0\|_{W_p^{3-2/p}(\Sigma)}\le\eta,$$
there exists a unique solution $(u,\pi,q,h)\in\mathbb{E}(T)$ of \eqref{eq:NScap2}.
\end{thm}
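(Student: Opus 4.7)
The plan is to reformulate \eqref{eq:NScap2} as a fixed-point problem on the space $_0\mathbb{E}(T)$ of functions with vanishing time trace at $t=0$, and then apply the Banach contraction principle. Writing a prospective solution as $z = \tilde{z} + z_*$, where $z_* = (u_*, \pi_*, q_*, h_*) \in \mathbb{E}(T)$ is the reference function constructed in Section \ref{sec:Redzerotrace} from the initial data (so that $(u_*, h_*)|_{t=0} = (u_0, h_0)$, $Lz_* = (0, f_d^*, g^*, g_h^*)$, and $\|z_*\|_{\mathbb{E}(T)} \le C_0 \eta$), the original nonlinear system becomes equivalent to $L\tilde{z} = N(\tilde{z}+z_*) - Lz_*$ for $\tilde z \in {}_0\mathbb{E}(T)$. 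Since $L: {}_0\mathbb{E}(T) \to {}_0\mathbb{F}(T)$ is a topological isomorphism by Corollary \ref{cor:linmaxreg}, this is equivalent to the fixed-point equation $\tilde{z} = \mathcal{K}(\tilde{z})$ with $\mathcal{K}(\tilde z) := L^{-1}\bigl(N(\tilde z + z_*) - Lz_*\bigr)$.

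The first step is to verify that $K(\tilde z) := N(\tilde z + z_*) - Lz_*$ actually takes values in $_0\mathbb{F}(T)$. The regularity statement together with the constraints $P_\Sigma N_3 \cdot \nu_{S_1} = 0$ and the Neumann-type conditions built into $\mathbb{F}_3, \mathbb{F}_4$ is exactly Proposition \ref{prop:regnonlin}. The vanishing of time traces at $t=0$ in the components $\mathbb{F}_2, \mathbb{F}_3, \mathbb{F}_4$ that carry one is what the auxiliary data $f_d^*, g^*, g_h^*$ were engineered for in Section \ref{sec:Redzerotrace}: by the list of compatibility hypotheses on $(u_0, h_0)$ in the theorem, $F_d(u_0, h_0) = \div u_0 = f_d^*|_{t=0}$, the jump expression $G_v(u_0, h_0)$ matches $g^*|_{t=0}$ componentwise (with an analogous computation for $G_w$ using that $h_0|_{t=0}$ contributes zero to $L_3$ via the construction), and $H_1(u_0, h_0) = -v_0|_\Sigma \cdot \nabla h_0 = g_h^*|_{t=0}$. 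Hence $K(\tilde z)|_{t=0} = N(z_*)|_{t=0} - Lz_*|_{t=0} = 0$ on every component carrying a trace.

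With $\mathcal{K}$ well defined on an open neighborhood of $0$ in ${}_0\mathbb{E}(T)$, the contraction argument is standard. By Proposition \ref{prop:regnonlin}, $N \in C^2(U_T; \mathbb{F}(T))$ with $N(0) = 0$ and $DN(0) = 0$, so for $\tilde z_1, \tilde z_2$ in a ball $B_r \subset {}_0\mathbb{E}(T)$ chosen small enough that $\tilde z_j + z_* \in U_T$, a first-order Taylor expansion yields
\begin{equation*}
\|\mathcal{K}(\tilde z_1) - \mathcal{K}(\tilde z_2)\|_{\mathbb{E}(T)} \le \|L^{-1}\|\, C\bigl(\|\tilde z_1\|_{\mathbb{E}(T)} + \|\tilde z_2\|_{\mathbb{E}(T)} + \|z_*\|_{\mathbb{E}(T)}\bigr)\|\tilde z_1 - \tilde z_2\|_{\mathbb{E}(T)},
\end{equation*}
and similarly $\|\mathcal{K}(0)\|_{\mathbb{E}(T)} \le \|L^{-1}\|\bigl(\|N(z_*)\|_{\mathbb{F}(T)} + \|Lz_*\|_{\mathbb{F}(T)}\bigr) \le C'(\|z_*\|_{\mathbb{E}(T)}^2 + \|z_*\|_{\mathbb{E}(T)}) \le C''\eta$. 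Choosing first $r = r(T)$ small so that the contraction constant is at most $1/2$ whenever $\|z_*\|_{\mathbb{E}(T)} \le r$, and then $\eta = \eta(T)$ small enough that both $C_0\eta \le r$ and $\|\mathcal{K}(0)\|_{\mathbb{E}(T)} \le r/2$ hold, makes $\mathcal{K}$ a self-map and a $\tfrac12$-contraction on $B_r$. Banach's theorem then produces a unique fixed point $\tilde z \in B_r$, and $z := \tilde z + z_*$ is the desired unique solution in $\mathbb{E}(T)$.

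The only step requiring genuine bookkeeping is the trace-vanishing check for $K(\tilde z)$, since one must verify simultaneously that the $\nu_{S_1}$-tangential constraint in $\mathbb{F}_3$ and the $\partial_{\nu_{\partial G}}$-Neumann constraint in $\mathbb{F}_4$ are preserved by $N$ on the affine slice $\tilde z + z_*$ (so that $K(\tilde z)$ actually lies in $_0\mathbb{F}(T)$ rather than merely in a larger space of functions with vanishing trace); this is precisely the content of Proposition \ref{prop:regnonlin} applied to the perturbed argument, using that $u_* + \tilde u$ still satisfies $u \cdot \nu_{S_1} = 0$ and $P_{S_1}(\mu Du)\nu_{S_1} = 0$ on $S_1\setminus\partial\Sigma$, and $\partial_{\nu_{\partial G}}(h_* + \tilde h) = 0$ on $\partial\Sigma$, both of which are encoded in the definitions of $\mathbb{E}_u(T)$ and $\mathbb{E}_h(T)$ together with the compatibility on $z_*$.
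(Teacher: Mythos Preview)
Your approach is essentially the same as the paper's: set up the fixed-point map $\mathcal{K}(\tilde z)=L^{-1}\bigl(N(\tilde z+z_*)-Lz_*\bigr)$ on ${}_0\mathbb{E}(T)$, invoke Proposition~\ref{prop:regnonlin} for well-definedness, and contract. The bookkeeping you flag about compatibility and constraint preservation is handled correctly.

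There is, however, one genuine step missing. Recall that the second component of $N$ is defined with a mean-value projection,
\[
N_2(u,h)=F_d(u,h)-\frac{1}{|\Omega|}\int_\Omega F_d(u,h)\,dx,
\]
precisely so that $N_2\in\mathbb{F}_2(T)$ (the space $\hat H_p^{-1}(\Omega)$ requires the functional to annihilate constants). Consequently your fixed point $\bar z=\tilde z+z_*$ satisfies $\div\bar u=P_0F_d(\bar u,\bar h)$, not the original divergence equation $\div\bar u=F_d(\bar u,\bar h)$ in \eqref{eq:NScap2}. To close the argument you must show a posteriori that $\int_\Omega F_d(\bar u,\bar h)\,dx=0$. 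The paper does this by pulling back through $\Theta_{\bar h}$: with $\hat u:=\bar u\circ\Theta_{\bar h}^{-1}$ one has $\div\hat u=(\div\bar u-F_d(\bar u,\bar h))\circ\Theta_{\bar h}^{-1}$, and the divergence theorem on $\Omega\backslash\Gamma(t)$ together with the boundary conditions $(\hat u|\nu_{S_1})=0$, $\hat u|_{S_2}=0$, $\Jump{\hat u}=0$ forces $\int_{\Omega\backslash\Gamma(t)}\div\hat u\,dx=0$; the transformation formula then yields
\[
0=-\frac{1}{|\Omega|}\int_\Omega F_d(\bar u,\bar h)\,d\bar x\cdot\int_\Omega\det\Theta_{\bar h}'\,d\bar x,
\]
and since $\det\Theta_{\bar h}'>0$ the mean value vanishes. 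Without this step you have solved $L\bar z=N(\bar z)$ but not \eqref{eq:NScap2}.
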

\begin{proof}
For a given Banach space $Z$, let
$$\mathbb{B}_Z:=\{z\in Z:\|z\|_Z\le 1\}.$$
Based on Proposition \ref{prop:regnonlin}, for each $\varepsilon\in (0,1)$ there exists $\delta(\varepsilon)>0$ such that
$$\|DN(z+z_*)\|_{\mathcal{B}(\mathbb{E}(T),\mathbb{F}(T))}\le\varepsilon$$
whenever $(z+z_*)\in \delta\mathbb{B}_{\mathbb{E}(T)}\subset U_T$. Let $M:=\|L^{-1}\|_{\mathcal{B}(_0\mathbb{F}(T);_0\mathbb{E}(T))}>0$ and $C:=\|L\|_{\mathcal{B}(\mathbb{E}(T);\mathbb{F}(T))}>0$.
We assume that $\varepsilon>0$ from above is chosen sufficiently small, such that $\varepsilon\in (0,1/(2M))$. Suppose furthermore that $z\in\frac{\delta}{2}\mathbb{B}_{_0\mathbb{E}(T)}$ and $(u_0,h_0)\in \frac{\delta}{4MC_0(1+C)}\mathbb{B}_{X_\gamma}$. This yields
$$\|z+z_*\|_{\mathbb{E}(T)}\le \delta/2+\delta/(4M(1+C))<\delta$$
and therefore
\begin{align*}
\|\mathcal{K}(z)\|_{\mathbb{E}(T)}&\le M\|K(z)\|_{\mathbb{F}(T)}\le M(\|N(z+z_*)\|_{\mathbb{F}(T)}+\|Lz_*\|_{\mathbb{F}(T)})\\
&\le M[\varepsilon(\|z\|_{\mathbb{E}(T)}+\|z_*\|_{\mathbb{E}(T)})+C\|z_*\|_{\mathbb{E}(T)}]\\
&\le M(\varepsilon \|z\|_{\mathbb{E}(T)}+C_0(1+C)\|(u_0,h_0)\|_{X_\gamma})\\
&\le M\varepsilon \frac{\delta}{2}+\frac{\delta}{4}\le\delta/2
\end{align*}
hence $\mathcal{K}:\frac{\delta}{2}\mathbb{B}_{_0\mathbb{E}(T)}\to \frac{\delta}{2}\mathbb{B}_{_0\mathbb{E}(T)}$ is a self-mapping.
Furthermore we obtain
$$\|\mathcal{K}(z_1)-\mathcal{K}(z_2)\|_{\mathbb{E}(T)}\le M\varepsilon\|z_1-z_2\|_{\mathbb{E}(T)}\le \frac{1}{2}\|z_1-z_2\|_{\mathbb{E}(T)},$$
valid for all $z_1,z_2\in\frac{\delta}{2}\mathbb{B}_{_0\mathbb{E}(T)}$ and all initial values $(u_0,h_0)\in \frac{\delta}{4MC_0(1+C)}\mathbb{B}_{X_\gamma}$. The contraction mapping principle yields a unique fixed point $\tilde{z}\in\frac{\delta}{2}\mathbb{B}_{_0\mathbb{E}(T)}$ of $\mathcal{K}(z)$, i.e. $\tilde{z}=\mathcal{K}(\tilde{z})$. Equivalently this means $L\tilde{z}=N(\tilde{z}+z_*)-Lz_*$, hence $\bar{z}:=\tilde{z}+z_*$ solves $L\bar{z}=N(\bar{z})$. To show that $\bar{z}=(\bar{u},\bar{\pi},\bar{q},\bar{h})$ is a solution of \eqref{eq:NScap2}, it remains to prove that $F_d(\bar{u},\bar{h})$ is mean value free. Indeed, let $t\in[0,T]$ be fixed and set $\hat{u}(t,x):=\bar{u}(t,\Theta^{-1}_{\bar{h}}(t,x))$ it follows that $\hat{u}\in H_p^1(\Omega)$ with $(\hat{u}|\nu_{S_1})=0$ at $S_1\backslash\partial\Gamma(t)$, $\hat{u}=0$ at $S_2$ and
$$\div\hat{u}=(\div\bar{u}-F_d(\bar{u},\bar{h}))\circ\Theta_{\bar{h}}^{-1}.$$
The divergence theorem and the transformation formula yield
\begin{align*}
0&=\int_{\Omega\backslash\Gamma(t)}\div\hat{u}\ dx\\
&=\int_{\Omega\backslash\Sigma}
\left(\div\bar{u}-F_d(\bar{u},\bar{h})\right)\det\Theta_{\bar{h}}'\ d\bar{x}\\
&=-\frac{1}{|\Omega|}\int_{\Omega\backslash\Sigma}F_d(\bar{u},\bar{h})\ d\bar{x}
\int_{\Omega\backslash\Sigma}\det\Theta_{\bar{h}}'\ d\bar{x},
\end{align*}
where $\bar{x}:=\Theta_{\bar{h}}^{-1}(x)$. Since $\det\Theta_{\bar{h}}'>0$, the claim follows.
\end{proof}


%
%
%

\chapter{Rayleigh-Taylor instability}\label{QualBeh}

\section{Equilibria and spectrum of the linearization}\label{EquilLin}

In this subsection we compute the equilibria of \eqref{eq:NScap1mod} as well as the spectrum of the linearization of \eqref{eq:NScap1mod} around the trivial equilibrium.

Assume that we have a time independent solution of \eqref{eq:NScap1mod}. Then multiplying $\eqref{eq:NScap1mod}_1$ by $u$ and integrating by parts yields the identity
$$\|\mu^{1/2} Du\|_{L_2(\Omega)}^2=0,$$
hence $u=0$ on $\partial\Omega$ and therefore $u=0$ in all of $\Omega$, by Korn's inequality. If $u=0$, then $\pi$ must be constant, with possibly different values in different phases. Hence, $\eqref{eq:NScap1mod}_3$ yields that
$$\sigma H_\Gamma+\Jump{\rho}\gamma_a x_3=const,$$
on $\Gamma$. In particular, if $H_\Gamma=0$ then $x_3$ must be constant, hence flat interfaces belong to the set of equilibria. Assume that $\Gamma$ is given by the graph of a height function $h$, that is
$$\Gamma=\{x\in\Omega:x_3=h(x_1,x_2),\ (x_1,x_2)\in G\}.$$
Then the normal $\nu_\Gamma$ on $\Gamma$, pointing from $\Omega_1$ $(x_3<h(x_1,x_2))$ into $\Omega_2$ $(x_3>h(x_1,x_2))$ is given by
$$\nu_\Gamma(x',h(x'))=\frac{1}{\sqrt{1+|\nabla_{x'} h(x')|^2}}[-\nabla_{x'} h(x'),1]^{\sf T},\quad x'=(x_1,x_2)\in B_R(0).$$
Since $H_\Gamma=-\div_\Gamma\nu_\Gamma$, we obtain the quasilinear elliptic problem
\begin{align}\label{eq:stab1}
\begin{split}
\sigma \div_{x'}\left(\frac{\nabla_{x'} {h}}{\sqrt{1+|\nabla_{x'} {h}|^2}}\right)+\Jump{\rho}\gamma_a {h}&=c,\quad x'\in G,\\
\partial_{\nu_{\partial G}}{h}&=0,\quad x'\in\partial G,
\end{split}
\end{align}
where $c:=\frac{1}{|G|}\int_{G}h dx'$. All admissible height functions which solve \eqref{eq:stab1} belong to the set of equilibria,

We are interested in the stability properties of the flat interface $\Sigma=G\times\{0\}$ in $\Omega:=G\times(H_1,H_2)$. After transformation of \eqref{eq:NScap1mod} to the fixed domain $\Omega\backslash\Sigma$ and linearization around the equilibrium $(0,\Sigma)$, we obtain the linear problem
\begin{align}\label{eq:stab10}
\begin{split}
\partial_t(\rho u)-\mu\Delta u+\nabla p&=0,\quad \text{in}\ \Omega\backslash\Sigma,\\
\div u&=0,\quad \text{in}\ \Omega\backslash\Sigma,\\
-\Jump{\mu(\nabla u+\nabla u^{\sf T})}e_3+\Jump{p}e_3&=\sigma(\Delta_{x'} h) e_3+\Jump{\rho}\gamma_a h e_3,\quad \text{on}\ \Sigma,\\
\Jump{u}&=0,\quad \text{on}\ \Sigma,\\
\partial_t h-u_3&=0,\quad \text{on}\ \Sigma,\\
P_{S_1}\left(\mu(\nabla u+\nabla u^{\sf T})\nu_{S_1}\right)&=0,\quad \text{on}\ S_1\backslash\partial\Sigma,\\
(u|\nu_{S_1})&=0,\quad \text{on}\ S_1\backslash\partial\Sigma,\\
u&=0,\quad \text{on}\ S_2,\\
\partial_{\nu_{\partial G}}h&=0,\quad \text{on}\ \partial\Sigma,\\
u(0)&=u_0,\quad \text{in}\ \Omega\backslash\Sigma,\\
h(0)&=h_0,\quad \text{on}\ \Sigma.
\end{split}
\end{align}
Define a linear operator $L:X_1\to X_0$ by
$$L(u,h):=[(\mu/\rho)\Delta u-(1/\rho)\nabla p,u\cdot e_3],$$
where $X_0:=L_{p,\sigma}(\Omega)\times \{h\in W_p^{2-1/p}(\Sigma):\int_G h\ dx'=0,\ \partial_{\nu_{\partial G}}h=0\}$,
$$L_{p,\sigma}(\Omega):=\overline{\{u\in C_c^\infty(\Omega)^3:\div u=0\}}^{\|\cdot\|_{L_p}},\ \bar{X}_1=H_p^2(\Omega\backslash \Sigma)^3\times W_p^{3-1/p}(\Sigma)$$
and
\begin{multline}\label{X1}
X_1:=D(L)=\{(u,h)\in X_0\cap \bar{X}_1:P_{\Sigma}(\Jump{\mu(\nabla u+\nabla u^{\sf T})}e_3)=0,\ \Jump{u}=0,\\
P_{S_1}\left(\mu(\nabla u+\nabla u^{\sf T})\nu_{S_1}\right)=0,\ (u|\nu_{S_1})=0,\ \partial_{\nu_{\partial G}}h=0\}.
\end{multline}
The function $p\in \dot{H}_p^1(\Omega\backslash \Sigma)$ in the definition of $L$ is determined as the solution of the weak transmission problem
\begin{align*}
\left(\frac{1}{\rho}\nabla p|\nabla\phi\right)_{L_2(\Omega)}&=\left(\frac{\mu}{\rho}\Delta u|\nabla\phi\right)_{L_2(\Omega)}\\
\Jump{p}&=\sigma\Delta_{x'} h+\Jump{\rho}\gamma_a h+(\Jump{\mu(\nabla u+\nabla u^{\sf T})}e_3|e_3),\quad \text{on}\ \Sigma,
\end{align*}
where $\phi\in H_{p'}^1(\Omega)$ and $p'=p/(p-1)$, which is well-defined thanks to Lemma \ref{lem:appauxlemweak}. We will sometimes make use of the notation via solution operators, i.e.
\begin{equation}\label{formulaPi}
\frac{1}{\rho}\nabla p=T_1[(\mu/\rho)\Delta u]+T_2[\sigma\Delta_{x'} h+\Jump{\rho}\gamma_a h+(\Jump{\mu(\nabla u+\nabla u^{\sf T})}e_3|e_3)],
\end{equation}
where $T_1:L_p(\Omega)^3\to L_p(\Omega)^3$ and $T_2:W_p^{1-1/p}(\Sigma)\to L_p(\Omega)^3$ are bounded linear operators.

In what follows we will analyze the spectrum of the operator $L$. Note that $L$ has a compact resolvent. This implies that the spectrum of $L$ is discrete and it consists solely of eigenvalues with finite multiplicity. Consider the eigenvalue problem $\lambda (u,h)=L(u,h)$, that is
\begin{align}\label{eq:stab11}
\begin{split}
\lambda\rho u-\mu\Delta u+\nabla p&=0,\quad \text{in}\ \Omega\backslash\Sigma,\\
\div u&=0,\quad \text{in}\ \Omega\backslash\Sigma,\\
-\Jump{\mu(\nabla u+\nabla u^{\sf T})}e_3+\Jump{p}e_3&=\sigma(\Delta_{x'} h) e_3+\Jump{\rho}\gamma_a h e_3,\quad \text{on}\ \Sigma,\\
\Jump{u}&=0,\quad \text{on}\ \Sigma,\\
\lambda h-u_3&=0,\quad \text{on}\ \Sigma,\\
P_{S_1}\left(\mu(\nabla u+\nabla u^{\sf T})\nu_{\partial\Omega}\right)&=0,\quad \text{on}\ S_1\backslash\partial\Sigma,\\
(u|\nu_{S_1})&=0,\quad \text{on}\ S_1\backslash\partial\Sigma,\\
u&=0,\quad \text{on}\ S_2,\\
\partial_{\nu_{\partial G}}h&=0,\quad \text{on}\ \partial\Sigma,\\
\end{split}
\end{align}
We test the first equation with $u$ and integrate by parts to obtain
\begin{equation}\label{L2spectrum}
\lambda|\rho^{1/2}u|_{L_2(\Omega)}^2+\frac{1}{2}|\mu^{1/2} Du|_{L_2(\Omega)}^2
+\bar{\lambda}\left[\sigma|\nabla_{x'} {h}|_{L_2(G)}^2-\Jump{\rho}\gamma_a|{h}|_{L_2(G)}^2\right]=0,
\end{equation}
The above identity for $\lambda=0$ implies $u=0$, by Korn's inequality, hence $p$ as well as $\Jump{p}$ are constant. Therefore $h$ is a solution of the linear elliptic problem
\begin{align}\label{eq:stab12}
\begin{split}
\Delta_{x'} h+\frac{\Jump{\rho}\gamma_a}{\sigma} h&=0,\quad x'\in G,\\
\partial_{\nu_{\partial G}}h&=0,\quad x'\in\partial G,
\end{split}
\end{align}
since $h$ is mean value free. Let $\sigma(-\Delta_N)\subset (0,\infty)$ denote the spectrum of the negative Neumann-Laplacian in the space
$$X:=\left\{h\in W_p^{1-1/p}(G):\int_G h\ dx'=0\right\}$$
and let $E(\eta)$ denote the eigenspace corresponding to the eigenvalue $\eta\in\sigma(-\Delta_N)$. It follows that $h=0$ is the unique solution of \eqref{eq:stab12} if and only if
$$\frac{\Jump{\rho}\gamma_a}{\sigma} \notin\sigma(-\Delta_N)$$
and there exists $0\neq h\in E(\eta)$ if and only if
$$\eta:=\frac{\Jump{\rho}\gamma_a}{\sigma} \in\sigma(-\Delta_N).$$
This shows that
$$0\in\sigma(L)\quad \text{if and only if}\quad \frac{\Jump{\rho}\gamma_a}{\sigma} \in\sigma(-\Delta_N).$$
Suppose that $0\neq \lambda\in\sigma(L)$ with $\Re\lambda=0$. Taking real parts in \eqref{L2spectrum} it follows that $u=0$ by Korn's inequality, hence $h$ must be nontrivial. By equation $\eqref{eq:stab11}_5$ it follows that $\lambda=0$. This shows that $\lambda=0$ is the only eigenvalue of $L$ on the imaginary axis.

In particular, if
$$\frac{\Jump{\rho}\gamma_a}{\sigma}<\lambda_1,$$
$\lambda_1>0$ being the first nontrivial eigenvalue of $-\Delta_N$ in $X$, then $$\sigma(L)\subset\{\lambda\in \mathbb{C}:\Re\lambda\le-\omega<0\},$$
for some $\omega>0$, since
$$|\nabla_{x'} h|_{L_2(G)}^2-
\frac{\Jump{\rho}\gamma_a}{\sigma}|h|_{L_2(G)}^2\ge 0,$$
by the Poincar\'{e} inequality for functions $h$ with mean value zero. Note that there exists $\kappa>0$ such that $\kappa-L$ is a sectorial operator, since $L$ has maximal $L_p$-regularity. In particular, it holds that $\sigma(L-\kappa)\subset\Sigma_{\pi/2+\delta}$ or equivalently $\sigma(L)\subset\Sigma_{\pi/2+\delta}+\kappa$ for some $\delta\in (0,\pi/2)$. This concludes the proof of existence of the number $\omega>0$ above.

We aim to show that $\sigma(L)\cap\mathbb{C}_+\neq \emptyset$ whenever $\frac{\Jump{\rho}\gamma_a}{\sigma}>\lambda_1$. To this end, for $\lambda\ge0$ and given $g\in W_p^{1-1/p}(G)$, $p>2$, we solve the elliptic two-phase Stokes problem
\begin{align}\label{N2D}
\begin{split}
\lambda\rho u-\mu\Delta u +\nabla p&=0,\quad \text{in}\ \Omega\backslash\Sigma,\\
\div u&=0,\quad \text{in}\ \Omega\backslash\Sigma,\\
-\Jump{\mu(\nabla u+\nabla u^{\sf T})}e_3+\Jump{p}e_3&=ge_3,\quad \text{on}\ \Sigma,\\
\Jump{u}&=0,\quad \text{on}\ \Sigma,\\
P_{S_1}\left(\mu(\nabla u+\nabla u^{\sf T})\nu_{S_1}\right)&=0,\quad \text{on}\ S_1\backslash\partial\Sigma,\\
(u|\nu_{S_1})&=0,\quad \text{on}\ S_1\backslash\partial\Sigma,\\
u&=0,\quad \text{on}\ S_2,\\
\end{split}
\end{align}
by Theorem \ref{thm:AppEllStokes} to obtain a unique solution $u\in H_p^2(\Omega\backslash\Sigma)\cap H_p^1(\Omega)$. Define the (reduced) \emph{Neumann-to-Dirichlet operator} $N_\lambda:W_p^{1-1/p}(G)\to W_p^{2-1/p}(G)$ by $N_\lambda g:=(u|e_3)$. With the compact operator $N_\lambda$ at hand we may rewrite the eigenvalue problem \eqref{eq:stab11} as follows
\begin{equation}\label{eq:h}
\lambda h+N_\lambda (A_*h)=0,
\end{equation}
where $A_*h:= -\sigma\Delta_N h-\Jump{\rho}\gamma_a h$ is the shifted Neumann Laplacian with domain
$$D(A_*)=\left\{h\in W_p^{3-1/p}(G):\int_{G}h\ dx'=0,\ \partial_{\nu_{\partial G}} h=0\ \text{on}\ \partial G\right\}.$$
We remark that for $\lambda\ge0$ problems \eqref{eq:stab11} and \eqref{eq:h} are equivalent. Therefore it suffices to show that for $\frac{\Jump{\rho}\gamma_a}{\sigma}>\lambda_1$ there exists $\lambda>0$ such that equation \eqref{eq:h} has a nontrivial solution $h\in D(A_*)$.

Concerning $N_\lambda$ we have the following result.
\begin{prop}\label{prop:Dir2NeuProp}
The Neumann-to-Dirichlet operator $N_\lambda$ of the Stokes-problem \eqref{N2D} admits a compact self-adjoint extension to $L_2(G)$ which has the following properties.
\begin{enumerate}
\item If $u$ denotes the solution of \eqref{N2D}, then
$$(N_\lambda g|g)_2=\lambda|\rho^{1/2}u|_{L_2(\Omega)}^2+\frac{1}{2}|\mu^{1/2} Du|_{L_2(\Omega)}^2$$ for all $g\in W_p^{1-1/p}(G)$ and $\lambda\ge 0$.
\item For each $\alpha\in(0,1/2)$ there is a constant $C>0$ such that
$$(N_\lambda g|g)_2\geq \frac{(1+\lambda)^\alpha}{C}|N_\lambda g|_{L_2(G)}^2,$$
for all $g\in L_2(G)$ and $\lambda\ge 0$. In particular,
$$ |N_\lambda|_{\mathcal{B}(L_2(G))}\leq \frac{C}{(1+\lambda)^\alpha}$$ for all $\lambda\geq0$.
\item $N_\lambda g$ has mean value zero for all $\lambda\ge 0$ and each $g\in L_2(G)$.
\end{enumerate}
\end{prop}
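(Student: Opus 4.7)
The plan is to extend $N_\lambda$ to $L_2(G)$ via a weak formulation of \eqref{N2D} and then read the three assertions off the corresponding energy identity. Introduce the closed subspace
$$V:=\{v\in H^1(\Omega)^3:\div v=0,\ v|_{S_2}=0,\ (v|\nu_{S_1})=0\text{ on }S_1\};$$
for $g\in L_2(G)$ and $\lambda\ge 0$, seek $u\in V$ with
$$\lambda(\rho u|v)_{L_2(\Omega)}+\tfrac{1}{2}(\mu Du|Dv)_{L_2(\Omega)}=(g|v\cdot e_3)_{L_2(\Sigma)}\qquad\text{for all }v\in V.$$
Coercivity follows from Korn's inequality (Theorem \ref{thm:korn}, applicable because $u$ vanishes on the non-empty set $S_2$), boundedness of the right-hand side is the trace theorem, and Lax--Milgram supplies a unique $u\in V$. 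Define $N_\lambda g:=u\cdot e_3|_\Sigma$; for smoother data $g\in W_p^{1-1/p}(G)$, elliptic regularity (Theorem \ref{thm:AppEllStokes}) identifies this weak $u$ with the strong solution of \eqref{N2D}, so the extension is consistent with the original definition.

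Testing the variational identity with $v=u$ immediately yields assertion (1); polarization then gives $(N_\lambda g_1|g_2)_{L_2(G)}=(g_1|N_\lambda g_2)_{L_2(G)}$, so $N_\lambda$ is symmetric on $L_2(G)$. The uniform bound $\|u\|_{H^1(\Omega)}\le C\|g\|_{L_2(G)}$ (coercivity plus Korn), composed with the continuous trace $H^1(\Omega)\to H^{1/2}(\Sigma)$ and the compact embedding $H^{1/2}(\Sigma)\hookrightarrow L_2(\Sigma)$ (Rellich, valid since $\Sigma$ is bounded), yields compactness of $N_\lambda$ on $L_2(G)$. For assertion (3), the divergence theorem applied to $u$ on $\Omega_2$ kills the contributions on $S_2$ (where $u=0$) and on $S_1\cap\overline{\Omega_2}$ (where $u\cdot\nu_{S_1}=0$), leaving only the interface term $-\int_G(u\cdot e_3)|_\Sigma\,dx'$, which must equal zero.

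The crux, and the main obstacle, is assertion (2). Fix $\alpha\in(0,1/2)$ and put $s:=1-\alpha\in(1/2,1)$. Since $s>1/2$, the fractional trace theorem gives $|u\cdot e_3|_{L_2(\Sigma)}\le C\|u\|_{H^s(\Omega)}$, and the interpolation inequality $\|u\|_{H^s(\Omega)}\le C\|u\|_{L_2(\Omega)}^{1-s}\|u\|_{H^1(\Omega)}^{s}$ combined with Korn produces $|N_\lambda g|_{L_2(G)}^{2}\le C\|u\|_{L_2(\Omega)}^{2(1-s)}\|Du\|_{L_2(\Omega)}^{2s}$. For $\lambda\ge 1$, writing this as $C\lambda^{-(1-s)}(\lambda\|u\|_{L_2}^{2})^{1-s}(\|Du\|_{L_2}^{2})^{s}$ and applying Young's inequality with exponents $1/(1-s)$ and $1/s$ gives
$$|N_\lambda g|_{L_2(G)}^{2}\le C\lambda^{-\alpha}\bigl(\lambda\|\rho^{1/2}u\|_{L_2}^{2}+\|\mu^{1/2}Du\|_{L_2}^{2}\bigr)=C\lambda^{-\alpha}(N_\lambda g|g)_{L_2(G)}.$$
The regime $\lambda\in[0,1]$ follows from the same trace-plus-Korn bound since $(1+\lambda)^{-\alpha}$ is then bounded below. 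The operator-norm estimate is an immediate consequence of Cauchy--Schwarz $(N_\lambda g|g)_{L_2}\le |N_\lambda g|_{L_2}|g|_{L_2}$ and one cancellation. The delicate point is that one needs the sharp trace $H^s(\Omega)\hookrightarrow L_2(\Sigma)$ for \emph{every} $s>1/2$ paired with interpolation between $L_2$ and $H^1$, so that the Young exponent can be tuned to produce exactly $\lambda^{-\alpha}$; a cruder interpolation (for instance between $H^{-1/2}$ and $H^{1/2}$) would restrict $\alpha$ to a smaller range such as $(0,1/4)$ and fail to cover the full statement.
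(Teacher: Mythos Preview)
Your proposal is correct and follows essentially the same approach as the paper, which itself only sketches the argument (``integration by parts'' for (1), ``trace theory, interpolation theory and Korn's inequality'' for (2), and the divergence-theorem computation for (3)). You have filled in all the details the paper omits, including the Lax--Milgram extension of $N_\lambda$ to $L_2(G)$ and the precise interpolation/Young argument producing the exponent $\alpha$; the only cosmetic difference is that the paper computes the mean of $N_\lambda g$ over $\Omega_1$ rather than $\Omega_2$, which is immaterial.
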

\begin{proof}
The first assertion follows from integration by parts, while for the proof of the second assertion one uses trace theory, interpolation theory and Korn's inequality. To show the third assertion, observe that for each $\lambda\ge 0$ we have
$$\int_G N_\lambda g\ dx'=\int_G (u|e_3)\ dx'=\int_{\Omega_1} \div u_1\ dx=0,$$
by the divergence theorem, where $u_1:=u|_{\Omega_1}$.
\end{proof}
Proposition \ref{prop:Dir2NeuProp} combined with Korn's inequality imply that whenever $N_\lambda g=0$, then $u=0$, hence $g$ must be constant. Therefore, the restriction of $N_\lambda$ to functions with mean value zero is injective. Therefore we may rewrite equation \eqref{eq:h} as
\begin{equation}\label{eq:h2}
\lambda N_\lambda^{-1}h+A_* h=0,
\end{equation}
for each $h\in D(A_*)$. Let us consider \eqref{eq:h2} in $L_2^{(0)}(G)$, the subspace of $L_2(G)$ consisting of functions with vanishing mean value. Define $B_\lambda:=\lambda N_\lambda^{-1}+A_*$ with
$$D(B_\lambda)=D(A_*)=\left\{h\in W_2^{2}(G)\cap L_2^{(0)}(G) :\partial_{\nu_{\partial G}} h=0\ \text{on}\ \partial G\right\},$$
since $N_\lambda^{-1}$ is a relatively compact perturbation of $A_*$. We will show that the operator $B_\lambda$ is positive definite provided $\lambda>0$ is large enough. Let $\mu_j>0$ be an eigenvalue of $N_\lambda^{-1}$ in $L_2^{(0)}(G)$ with corresponding eigenfunction $e_j$. Then
$$\frac{1}{\mu_j} |e_j|_2=|N_\lambda e_j|_2\le \frac{C}{(1+\lambda)^\alpha}|e_j|_2,$$
hence $\mu_j\ge \frac{1}{C}>0$ for each $\lambda\ge 0$. It follows that
$$(B_\lambda h|h)_2=\lambda (N_\lambda^{-1} h|h)_2+(A_*h|h)_2\ge \left(\lambda/C-\Jump{\rho}\gamma_a\right) |h|_2^2>0$$
for each $h\in D(A_*)$, if $\lambda>0$ is sufficiently large.

On the other hand, let $0\neq h_*\in D(A_*)$ be an eigenfunction of $-\Delta_N$ to the first nontrivial eigenvalue $\lambda_1>0$ of $-\Delta_N$, hence $-\Delta_N h_*=\lambda_1 h_*$. This yields
$$(B_\lambda h_*|h_*)_2=\lambda (N_\lambda^{-1} h_*|h_*)_2-\sigma\left(\frac{\Jump{\rho}\gamma_a}{\sigma}-\lambda_1\right)|h_*|_2^2.$$
Since $\lim_{\lambda\to 0_+}\lambda (N_\lambda^{-1} h_*|h_*)=0$ it follows that $(B_\lambda h_*|h_*)_2<0$ provided $\lambda>0$ is sufficiently small and $\frac{\Jump{\rho}\gamma_a}{\sigma}>\lambda_1$. Let $\frac{\Jump{\rho}\gamma_a}{\sigma}>\lambda_1$ and define
$$\lambda_*:=\sup\{\lambda>0:B_\mu\ \text{is not positive semi-definite for each}\ \mu\in(0,\lambda] \}.$$
Then $\lambda_*>0$ by what we have shown above and $B_{\lambda}$ has a negative eigenvalue for each $\lambda<\lambda_*$, since the resolvent of $B_\lambda$ is compact. It follows that $0\in\sigma(B_{\lambda_*})$, hence there exists a solution $0\neq h\in D(A_*)$ in $L_2^{(0)}(G)$ of \eqref{eq:h2}. A bootstrap argument finally shows that $h\in D(A_*)\cap W_p^{3-1/p}(G)$. This in turn yields that $\sigma(L)\cap\mathbb{C}_+\neq\emptyset$ whenever $\frac{\Jump{\rho}\gamma_a}{\sigma}>\lambda_1$. We have proven the following result.
\begin{prop}\label{spectrumLR}
The operator $L$ defined above has the following spectral properties.
\begin{enumerate}
\item $\sigma(L)\cap i\mathbb{R}\subset\{0\}$ and $0\in\sigma(L)$ if and only if $\Jump{\rho}\gamma_a/\sigma\in \sigma(-\Delta_N)$.
\item If $\Jump{\rho}\le 0$ then $\sigma(L)\subset\mathbb{C}_-$.
\item If $\Jump{\rho}>0$ and $\frac{\Jump{\rho}\gamma_a}{\sigma}<\lambda_1$, then $\sigma(L)\subset\mathbb{C}_-$.
\item If $\Jump{\rho}>0$ and $\frac{\Jump{\rho}\gamma_a}{\sigma}>\lambda_1$, then $\sigma(L)\cap\mathbb{C}_+\neq\emptyset$.
\end{enumerate}
\end{prop}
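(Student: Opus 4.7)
The plan is to assemble the ingredients developed in Section~\ref{EquilLin} into four statements. Since $L$ has compact resolvent (the domain $X_1$ embeds compactly into $X_0$ by Rellich), the spectrum is a discrete set of eigenvalues of finite multiplicity. Moreover, $L$ generates an analytic semigroup since the linear problem \eqref{eq:stab10} has maximal $L_p$--regularity by Theorem~\ref{thm:linmaxreg}; in particular there exists $\kappa>0$ with $\sigma(L)\subset\kappa+\Sigma_{\pi/2+\delta}$ for some $\delta\in(0,\pi/2)$. This sectoriality will be used to upgrade ``$\Re\lambda\le 0$'' to ``$\Re\lambda\le -\omega<0$''.

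For (1), I take the $L_2$--inner product of the first equation of \eqref{eq:stab11} with $u$ and use $\eqref{eq:stab11}_5$ to eliminate the interface contribution, obtaining the fundamental identity \eqref{L2spectrum}. If $\lambda\in i\mathbb{R}$ and $\lambda\neq 0$, taking real parts in \eqref{L2spectrum} forces $|\mu^{1/2}Du|_{L_2}=0$, hence $u\equiv 0$ by Korn's inequality (Theorem~\ref{thm:korn}, using the Dirichlet trace on $S_2$); then $\eqref{eq:stab11}_5$ yields $h=0$, a contradiction. If $\lambda=0$, the same identity gives $u=0$ and the Young--Laplace jump condition reduces to the Neumann eigenvalue problem \eqref{eq:stab12} for $h$ in the subspace of mean--value zero functions; this has a nontrivial solution exactly when $\Jump{\rho}\gamma_a/\sigma\in\sigma(-\Delta_N)$ in $X$.

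For (2) and (3), the Poincar\'e inequality for mean--value zero functions on $G$ gives $|\nabla_{x'}h|_{L_2}^2\geq\lambda_1|h|_{L_2}^2$, so the bracketed term in \eqref{L2spectrum} is non-negative whenever $\Jump{\rho}\le 0$ or $\Jump{\rho}\gamma_a/\sigma<\lambda_1$. Taking real parts of \eqref{L2spectrum} then forces $\Re\lambda\cdot|\rho^{1/2}u|_{L_2}^2\le 0$, so either $\Re\lambda\le 0$ or $u=0$; the latter again yields $h=0$ via \eqref{eq:stab12}. Combining with the sectoriality mentioned above and the fact that $0\notin\sigma(L)$ in this regime (by (1)) gives $\sigma(L)\subset\mathbb{C}_-$ with a positive spectral gap.

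For (4), the instability part, I follow the Neumann-to-Dirichlet reformulation. Using Proposition~\ref{prop:Dir2NeuProp}, the eigenvalue problem for $\lambda\ge 0$ is equivalent to $\lambda h+N_\lambda(A_* h)=0$, which on the mean--value--zero subspace $L_2^{(0)}(G)$ becomes $B_\lambda h:=\lambda N_\lambda^{-1}h+A_* h=0$ with $A_* h=-\sigma\Delta_N h-\Jump{\rho}\gamma_a h$. The self-adjoint operator $B_\lambda$ is lower semibounded and has compact resolvent, hence its spectrum is a discrete real sequence depending continuously on $\lambda$. The key observations are: (i) by part (2) of Proposition~\ref{prop:Dir2NeuProp}, every eigenvalue of $N_\lambda^{-1}$ is bounded below by $1/C$, uniformly in $\lambda\ge 0$, so $B_\lambda$ is positive definite for all sufficiently large $\lambda$; (ii) evaluating on a first Neumann eigenfunction $h_*\neq 0$ of $-\Delta_N$ (so $-\Delta_N h_*=\lambda_1 h_*$) gives
\[
(B_\lambda h_*|h_*)_2=\lambda(N_\lambda^{-1}h_*|h_*)_2-\sigma\bigl(\tfrac{\Jump{\rho}\gamma_a}{\sigma}-\lambda_1\bigr)|h_*|_{L_2}^2,
\]
and the first term tends to $0$ as $\lambda\to 0_+$ because $N_\lambda^{-1}$ remains bounded below (or, equivalently, $\lambda N_\lambda^{-1}\to 0$ strongly on the finite-dimensional span of $h_*$), so under the assumption $\Jump{\rho}\gamma_a/\sigma>\lambda_1$ the quadratic form $(B_\lambda h_*|h_*)_2$ is negative for small $\lambda>0$. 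Setting
\[
\lambda_*:=\sup\{\lambda>0:B_\mu \text{ is not positive semi-definite for all } \mu\in(0,\lambda]\},
\]
one has $0<\lambda_*<\infty$, and by continuity of the discrete spectrum of $B_\lambda$ in $\lambda$, $0\in\sigma(B_{\lambda_*})$. This yields a nontrivial $h\in L_2^{(0)}(G)$ solving $B_{\lambda_*}h=0$; elliptic regularity (bootstrap through $A_*$ and $N_{\lambda_*}^{-1}$) places $h$ in $D(A_*)\cap W_p^{3-1/p}(G)$, so $\lambda_*\in\sigma(L)\cap\mathbb{R}_+$. The main obstacle is the continuous dependence of the spectrum of $B_\lambda$ on $\lambda$ (so that the intermediate value argument defining $\lambda_*$ actually produces an eigenvalue at $0$); this follows from the fact that $N_\lambda^{-1}$ is a holomorphic family of type (A) in $\lambda$ on $L_2^{(0)}(G)$ with compact resolvent, so eigenvalues move continuously.
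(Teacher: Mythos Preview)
Your proposal is correct and follows the paper's argument essentially line by line: the energy identity \eqref{L2spectrum} combined with Korn's inequality handles (1)--(3), and the Neumann-to-Dirichlet reformulation via $B_\lambda=\lambda N_\lambda^{-1}+A_*$, with the positive-definiteness for large $\lambda$ and the negativity on a first Neumann eigenfunction for small $\lambda$, yields (4) exactly as in the text. Two cosmetic remarks: in (2)--(3) the real part of \eqref{L2spectrum} gives $\Re\lambda\cdot\bigl(|\rho^{1/2}u|_{L_2}^2+[\ldots]\bigr)\le 0$, not just $\Re\lambda\cdot|\rho^{1/2}u|_{L_2}^2\le 0$; and when $u=0$ with $\lambda\neq 0$, the conclusion $h=0$ comes directly from $\eqref{eq:stab11}_5$ rather than from \eqref{eq:stab12}.
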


\section{Parametrization of the nonlinear phase manifold}\label{paraphase}

We have already seen that after a Hanzawa transformation, the transformed velocity field is no longer divergence free. Moreover, the jump condition of the stress tensor as well as the divergence condition are transformed into some nonlinear terms. It is the aim of this subsection, to parameterize the nonlinear phase manifold
\begin{multline*}
\mathcal{P}\mathcal{M}:=\{(u,h)\in W_p^{2-2/p}(\Omega\backslash\Sigma)^3\times [W_p^{3-2/p}(\Sigma)\cap X]:\\
u|_{S_2}=0,\ u|_{S_1}\cdot\nu_{S_1}=0,\ P_{S_1}(\mu(\nabla u+\nabla u^{\sf T})\nu_{S_1})=0,\ \Jump{u}=0,\\
P_\Sigma(\mu(\nabla u+\nabla u^{\sf T})e_3)=(G_v(u,h),0),\ \partial_{\nu_{\partial G}}h=0,\
\div u=F_d(u,h)\},
\end{multline*}
as a subset of $X_\gamma:=W_p^{2-2/p}(\Omega\backslash\Sigma)^3\times W_p^{3-2/p}(\Sigma)$, near the trivial equilibrium $(u_*,h_*)=(0,0)$ over the linear phase manifold
\begin{multline*}
X_\gamma^0:=\{(u,h)\in [W_p^{2-2/p}(\Omega\backslash\Sigma)^3\times W_p^{3-2/p}(\Sigma)]\cap X_0:u|_{S_2}=0,\ u|_{S_1}\cdot\nu_{S_1}=0,\\
P_{S_1}(\mu(\nabla u+\nabla u^{\sf T})\nu_{S_1})=0,\ \Jump{u}=0,\ P_\Sigma(\mu(\nabla u+\nabla u^{\sf T})e_3)=0,\ \partial_{\nu_{\partial G}}h=0\}.
\end{multline*}
Let $\mathbb{E}_\pi:=\dot{W}_p^{1-2/p}(\Omega\backslash\Sigma)$,
$\mathbb{E}_q:=W_p^{1-3/p}(\Sigma)$,
\begin{multline*}
\mathbb{E}_u:=\{u\in W_p^{2-2/p}(\Omega\backslash\Sigma)^3:\\
\Jump{u}=0,\ u|_{S_1}\cdot\nu_{S_1}=0,\
u|_{S_2}=0,\ P_{S_1}(\mu(\nabla {u}+\nabla {u}^{\sf T})\nu_{S_1})=0\},
\end{multline*}
$\mathbb{E}:=\{(u,\pi,q)\in \mathbb{E}_u\times\mathbb{E}_\pi\times\mathbb{E}_q:q=\Jump{\pi}\},$
and
\begin{multline*}\mathbb{F}:=\{(f_1,f_2)\in
[W_p^{1-2/p}(\Omega\backslash\Sigma)\cap \hat{H}_p^{-1}(\Omega)]\times\\
\times W_p^{1-3/p}(\Sigma)^3:
(P_\Sigma f_2)\cdot\nu_{S_1}=0\ \text{at}\ \partial\Sigma\}.
\end{multline*}
We will need the following auxilliary result for the Stokes problem
\begin{align}
\begin{split}\label{para0}
\rho\omega {u}-\mu\Delta{u}+\nabla{\pi}&=0,\quad \text{in}\ \Omega\backslash\Sigma,\\
\div {u}&=f_d,\quad\ \text{in}\ \Omega\backslash\Sigma,\\
-\Jump{\mu\partial_3 {v}}-\Jump{\mu\nabla_{x'}{w}}&=g_v,\quad \text{on}\ \Sigma,\\
-2\Jump{\mu\partial_3 {w}}+\Jump{{\pi}}&=g_w,\quad \text{on}\ \Sigma,\\
\Jump{{u}}&=0,\quad \text{on}\ \Sigma,\\
P_{S_1}(\mu(\nabla {u}+\nabla {u}^{\sf T})\nu_{S_1})&=0,\quad \text{on}\ S_1\backslash\partial\Sigma,\\
{u}\cdot\nu_{S_1}&=0,\quad \text{on}\ S_1\backslash\partial\Sigma,\\
{u}&=0,\quad \text{on}\ S_2.
\end{split}
\end{align}
\begin{prop}\label{weakstokes}
Let $n=3$, $p>5$ and $\rho_j,\mu_j>0$. If $\omega>0$ is sufficiently large, then there exists a unique solution
$(u,\pi,q)\in\mathbb{E}$ of \eqref{para0} if and only if $(f_d,g_v,g_w)\in\mathbb{F}$.
Moreover, there exists a constant $M_\omega>0$ such that
$$\|(u,\pi,q)\|_{\mathbb{E}}\le M_\omega\|(f_d,g_v,g_w)\|_{\mathbb{F}}.$$
\end{prop}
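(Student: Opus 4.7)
The plan is to derive Proposition \ref{weakstokes} by paralleling the localization argument of Theorem \ref{thm:linmaxreg}, adapted to the stationary (resolvent) setting with a large shift $\omega$. Necessity of the data conditions is immediate from trace theory applied to $(u,\pi,q)\in\mathbb{E}$: one reads off $f_d=\div u\in W_p^{1-2/p}(\Omega\backslash\Sigma)$ and $(g_v,g_w)\in W_p^{1-3/p}(\Sigma)^3$ from the spatial regularity of $u$ and $\mu\nabla u|_\Sigma$. Integration by parts against $\phi\in H_{p'}^1(\Omega)$ gives $f_d\in \hat H_p^{-1}(\Omega)$, since the boundary contributions along $S_1$ (from $u\cdot\nu_{S_1}=0$), along $S_2$ (from $u=0$) and across $\Sigma$ (from $\Jump{u}=0$) all vanish. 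The compatibility $(P_\Sigma g_v)\cdot\nu_{S_1}=0$ at $\partial\Sigma$ follows by restricting the jump condition $-\Jump{\mu\partial_3 v}-\Jump{\mu\nabla_{x'}w}=g_v$ to the contact line and combining it with the pure-slip identity $P_{S_1}(\mu Du\,\nu_{S_1})=0$ evaluated there, exactly as in the discussion following \eqref{eq:NSbendquart1a}.

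For sufficiency I would perform the reductions of Section \ref{sec:RedofData} verbatim in the stationary setting. An elementary lifting yields some $\pi_*\in\dot{H}_p^1(\Omega\backslash\Sigma)$ with $\Jump{\pi_*}=q_*$ and $\nabla\pi_*$ harmonic in the weak sense (Lemma \ref{lem:appauxlemweak}), allowing one to assume $g_w=0$. The transmission problem \eqref{auxprbl2} (Lemma \ref{lem:appauxhighreg}) absorbs $f_d$ into a corrector $\nabla\psi\in H_p^2(\Omega\backslash\Sigma)^3$ with the right boundary conditions, so that we may assume $f_d=0$; and the auxiliary two-phase parabolic problem \eqref{auxprbl1}, replaced by its elliptic resolvent analogue and handled by Lemma \ref{lem:appaux2}, reduces the problem to $g_v=0$ as well. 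What remains is the homogeneous Stokes resolvent $\rho\omega u-\mu\Delta u+\nabla\pi=0$, $\div u=0$ with zero jumps on $\Sigma$ and zero mixed boundary data on $S_1\cup S_2$, which I would solve by the localization procedure from Chapter \ref{chptr:localization}: a partition of unity $\{\varphi_k\}$ reduces the problem to the six model problems catalogued in Section 1.3, each now with resolvent parameter $\omega$. Interior and two-phase flat-interface charts are handled by \cite{BP07, PrSi09a}, smooth boundary charts (pure-slip and no-slip) by \cite{BP07}, and the bent quarter-space and two-phase bent half-space charts by adapting Sections \ref{bentQS} and \ref{bentHS} via the same reflection techniques, since the reflection argument there is insensitive to replacing $\partial_t$ by $\omega$.

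The main obstacle, as in the parabolic localization, is controlling the commutator terms $[\Delta,\varphi_k]u$ and $[\nabla,\varphi_k]\pi$ produced by the partition of unity; in the time-dependent proof of Theorem \ref{thm:linmaxreg} these are absorbed via a small-$T$ Neumann-series argument. In the stationary setting one instead exploits that each local resolvent estimate carries a factor $O(\omega^{-\alpha})$ for some $\alpha>0$ (by the sectoriality of the Stokes operator in each model geometry), so that for $\omega$ sufficiently large the commutator contributions, being of strictly lower order, are absorbed by a Neumann series on the global solution space $\mathbb{E}$. The pressure-regularity mechanism of Lemma \ref{lem:RegPressure} carries over verbatim, giving $(\pi)_\Omega\in L_p(\Omega)$ in each chart and thereby closing the commutator estimate $\|[\nabla,\varphi_k]\pi\|_{\mathbb{F}_2}\lesssim\|(u,\pi,q)\|_{\mathbb{E}}$. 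The resulting constant $M_\omega$ depends on $N$, on $\|\varphi_k\|_{C^3}$ and on $\omega$, but is finite for all $\omega\geq\omega_0$ with $\omega_0>0$ chosen sufficiently large.
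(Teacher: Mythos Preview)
Your overall strategy—localization with a partition of unity, reduction to model problems, and absorption of commutators via a Neumann series using the $\omega^{-\alpha}$ decay of the local resolvent estimates—is exactly what the paper intends by ``the same strategy as Theorem~\ref{thm:AppEllStokes}'' (whose proof in turn rests on Theorem~\ref{thm:AppStokes}, where $\omega$ replaces small $T$). So the approach is correct and matches the paper.

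There is, however, a systematic regularity mismatch in your outline that you should fix before it becomes a genuine gap. Proposition~\ref{weakstokes} is posed at the \emph{trace-space} scale: $u\in W_p^{2-2/p}(\Omega\backslash\Sigma)$, $\pi\in\dot W_p^{1-2/p}(\Omega\backslash\Sigma)$, $q\in W_p^{1-3/p}(\Sigma)$, with data $f_d\in W_p^{1-2/p}\cap\hat H_p^{-1}$ and $(g_v,g_w)\in W_p^{1-3/p}$. You instead argue as if you were in the $H_p^2$ setting of Theorem~\ref{thm:AppEllStokes}: you claim $\nabla\psi\in H_p^2$, cite Lemma~\ref{lem:appauxhighreg} (which is parabolic and gives $H_p^3$-type regularity), invoke Lemma~\ref{lem:appaux2} (parabolic, full regularity), and appeal to Lemma~\ref{lem:RegPressure} (a time-regularity statement). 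None of these applies as stated. For instance, from $\Delta\psi=f_d$ with $f_d\in W_p^{1-2/p}$ you get at most $\nabla\psi\in W_p^{2-2/p}$ via fractional elliptic regularity—not $H_p^2$—and this requires an analogue of Lemma~\ref{lem:appaux0} on the $W_p^{s}$ scale rather than the lemma you cite. Similarly, the model problems in Sections~\ref{bentQS}--\ref{bentHS} and the pressure lemma must be re-established (or interpolated) at the fractional order $2-2/p$. This is extra work, not a conceptual obstacle, but as written your references do not deliver the objects you need in $\mathbb{E}$.
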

\begin{proof}
For the proof of this result one may apply the same strategy which was used in the proof of Theorem \ref{thm:AppEllStokes}. We omit the details.
\end{proof}
Let us consider the elliptic problem
\begin{align}
\begin{split}\label{para1}
\rho\omega \bar{u}-\mu\Delta\bar{u}+\nabla\bar{\pi}&=0,\quad \text{in}\ \Omega\backslash\Sigma,\\
\div \bar{u}&=P_0F_d(\bar{u}+\tilde{u},\tilde{h}),\quad\ \text{in}\ \Omega\backslash\Sigma,\\
-\Jump{\mu\partial_3 \bar{v}}-\Jump{\mu\nabla_{x'}\bar{w}}&=G_v(\bar{u}+\tilde{u},\tilde{h}),\quad \text{on}\ \Sigma,\\
-2\Jump{\mu\partial_3 \bar{w}}+\Jump{\bar{\pi}}&=G_w(\bar{u}+\tilde{u},\tilde{h}),\quad \text{on}\ \Sigma,\\
\Jump{\bar{u}}&=0,\quad \text{on}\ \Sigma,\\
P_{S_1}(\mu(\nabla \bar{u}+\nabla \bar{u}^{\sf T})\nu_{S_1})&=0,\quad \text{on}\ S_1\backslash\partial\Sigma,\\
\bar{u}\cdot\nu_{S_1}&=0,\quad \text{on}\ S_1\backslash\partial\Sigma,\\
\bar{u}&=0,\quad \text{on}\ S_2,
\end{split}
\end{align}
for $(\bar{u},\bar{\pi},\Jump{\bar{\pi}})$, where $\omega>0$ and $(\tilde{u},\tilde{h})\in rB_{X_\gamma^0}(0)$ are given. Here we have set
$$P_0f:=f-\frac{1}{|\Omega|}\int_\Omega f\ dx,$$
for $f\in L_1(\Omega)$.

Define a nonlinear mapping $N:\mathbb{E}_u\times X_\gamma^0\to \mathbb{F}$ via
$$N(\bar{u},\tilde{u},\tilde{h}):=
\begin{pmatrix}
P_0F_d(\bar{u}+\tilde{u},\tilde{h})\\
G_v(\bar{u}+\tilde{u},\tilde{h})\\
G_w(\bar{u}+\tilde{u},\tilde{h})
\end{pmatrix}.
$$
Let $S_\omega$ denote the solution operator which is induced by
Proposition \ref{weakstokes} and define a mapping $H:=\mathbb{E}\times X_\gamma^0\to\mathbb{E}$ by
$$H((\bar{u},\bar{\pi},\bar{q}),(\tilde{u},\tilde{h})):=(\bar{u},\bar{\pi},\bar{q})
-S_\omega N(\bar{u},\tilde{u},\tilde{h}).$$
Since $N(0)=0$ it follows that $H(0)=0$. Since $N\in C^2$ it holds that $H\in C^2$, too. Differentiating $H$ with respect to $(\bar{u},\bar{\pi},\bar{q})$ we obtain
$$D_{(\bar{u},\bar{\pi},\bar{q})}H(0)=I_{\mathbb{E}},$$
where we used the fact that $D_{\bar{u}}N(0)=0$. The implicit function theorem implies the existence of a $C^2$-function $\phi_0:rB_{X_\gamma^0}\to\mathbb{E}$ with $\phi_0(0)=0$ and $\phi_0'(0)=0$, such that $H(\phi_0(\tilde{u},\tilde{h}),(\tilde{u},\tilde{h}))=0$ whenever $(\tilde{u},\tilde{h})\in rB_{X_\gamma^0}(0)$. In other words, this means that $(\bar{u},\bar{\pi},\bar{q})=\phi_0(\tilde{u},\tilde{h})$ is the unique solution of \eqref{para1} for a given $(\tilde{u},\tilde{h})\in rB_{X_\gamma^0}(0)$. It can furthermore be shown that $P_0F_d(\bar{u}+\tilde{u},\tilde{h})=F_d(\bar{u}+\tilde{u},\tilde{h})$ (see proof of Theorem \ref{nonlinwellposedness}).

Let $P(\bar{u},\bar{\pi},\bar{q}):=\bar{u}$ and define $\phi(\tilde{u},\tilde{h}):=P\phi_0(\tilde{u},\tilde{h})$ as well as
$$\Phi(\tilde{u},\tilde{h}):=(\tilde{u},\tilde{h})+(\phi(\tilde{u},\tilde{h}),0).$$
It follows that $\Phi(rB_{X_\gamma^0}(0))\subset \mathcal{P}\mathcal{M}$ and that $\Phi$ is injective. We will now show that $\Phi$ is locally surjective near 0. To this end we assume that $(u,h)\in \mathcal{P}\mathcal{M}$ is given and close to 0 in $X_\gamma$. Then we solve the linear problem
\begin{align}
\begin{split}\label{para2}
\rho\omega \bar{u}-\mu\Delta\bar{u}+\nabla\bar{\pi}&=0,\quad \text{in}\ \Omega\backslash\Sigma,\\
\div \bar{u}&=P_0F_d({u},{h}),\quad\ \text{in}\ \Omega\backslash\Sigma,\\
-\Jump{\mu\partial_3 \bar{v}}-\Jump{\mu\nabla_{x'}\bar{w}}&=G_v({u},{h}),\quad \text{on}\ \Sigma,\\
-2\Jump{\mu\partial_3 \bar{w}}+\Jump{\bar{\pi}}&=G_w({u},{h}),\quad \text{on}\ \Sigma,\\
\Jump{\bar{u}}&=0,\quad \text{on}\ \Sigma,\\
P_{S_1}(\mu(\nabla \bar{u}+\nabla \bar{u}^{\sf T})\nu_{S_1})&=0,\quad \text{on}\ S_1\backslash\partial\Sigma,\\
\bar{u}\cdot\nu_{S_1}&=0,\quad \text{on}\ S_1\backslash\partial\Sigma,\\
\bar{u}&=0,\quad \text{on}\ S_2,
\end{split}
\end{align}
by Proposition \ref{weakstokes} to obtain $\bar{u}\in \mathbb{E}_u$. Define $(\tilde{u},\tilde{h}):=(u-\bar{u},h)$ and observe that
$$\div \tilde{u}=F_d(u,h)-P_0F_d(u,h)=\frac{1}{|\Omega|}\int_{\Omega}F_d(u,h)\ dx.$$
Since $\tilde{u}\in H_p^1(\Omega)^3$ with $\tilde{u}|_{S_1}\cdot\nu_{S_1}=0$, $\tilde{u}|_{S_2}=0$ and $\Jump{\tilde{u}}=0$, it follows that $P_0F_d(u,h)=F_d(u,h)$, hence $\div\tilde{u}=0$.

This in turn yields $(\tilde{u},\tilde{h})\in X_\gamma^0$ and $\phi(\tilde{u},\tilde{h})=\bar{u}$, showing that $\Phi$ is locally surjective near $0$.

\section{Main result on Rayleigh-Taylor instability}

In this subsection we are goint to prove the following main result.
\begin{thm}\label{mainresultstab}
Let $n=3$, $p>5$ and $\rho_j,\mu_j,\gamma_j,\sigma>0$. Denote by $(u_*,h_*)=(0,0)$ the trivial equilibrium and let $s(L)<0$ denote the spectral bound of $L$. Then the following assertions hold.
\begin{enumerate}
\item If $\Jump{\rho}\gamma_a/\sigma<\lambda_1$, then $(u_*,h_*)$ is exponentially stable in the following sense. There exist constants $\eta\in [0,-s(L))$ and $\delta>0$ such that whenever $(u_0,h_0)\in \mathcal{P}\mathcal{M}$ with
    $$\|(u_0,h_0)\|_{X_\gamma}\le\delta,$$
    then the estimate
    $$\|(u(t),h(t))\|_{X_\gamma}\le e^{-\eta t}\|(u_0,h_0)\|_{X_\gamma}$$
    is valid for all $t\ge 0$.
\item If $\Jump{\rho}>0$ and $\Jump{\rho}\gamma_a/\sigma>\lambda_1$, then $(u_*,h_*)$ is unstable in the following sense. There is a constant $\varepsilon_0>0$ such that for each $\delta>0$ there are initial values $(u_0,h_0)\in \mathcal{P}\mathcal{M}$ with
    $$\|(u_0,h_0)\|_{X_\gamma}\le\delta$$
    such that the solution $(u,h)$ of \eqref{eq:NScap2} satisfies
    $$\|(u(t_0),h(t_0))\|_{X_\gamma}\ge\varepsilon_0$$
    for some $t_0>0$.
\end{enumerate}
\end{thm}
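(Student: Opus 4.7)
My plan is to reduce the analysis of \eqref{eq:NScap2} around $(u_*,h_*)=(0,0)$ to an abstract semilinear evolution equation on the linear phase manifold $X_\gamma^0$, exploiting the local $C^2$-diffeomorphism $\Phi : r B_{X_\gamma^0}(0) \to \mathcal{P}\mathcal{M}$ constructed in Section~\ref{paraphase}. If $(u,h)\in \mathcal{P}\mathcal{M}$ is close to $0$ in $X_\gamma$, write $(u,h)=\Phi(z)$ with $z=(\tilde u,\tilde h)\in X_\gamma^0$. A direct computation using $\Phi(0)=0$, $D\Phi(0)=I$ together with the explicit form of $F, F_d, G_v, G_w, H_1, H_2$ in \eqref{eq:NScap2} shows that $z$ satisfies
\[
\partial_t z = L z + R(z), \qquad z(0)=z_0,
\]
with the operator $L$ of Section~\ref{EquilLin} and a nonlinearity $R\in C^1$ which is at least quadratic, so $R(0)=0$, $DR(0)=0$. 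Combining this reformulation with the maximal $L_p$-regularity of $L$ from Corollary~\ref{cor:linmaxreg} (which, thanks to Proposition~\ref{prop:regnonlin} and the construction of $\Phi$, transfers from the flat-divergence linear problem to the reduced equation on $X_\gamma^0$) puts us in a classical principle-of-linearized-stability framework.

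For part (1), the hypothesis $\Jump{\rho}\gamma_a/\sigma<\lambda_1$ gives $s(L)<0$ by Proposition~\ref{spectrumLR}, and since $L$ has compact resolvent and generates an analytic semigroup we obtain $\|e^{tL}\|_{\mathcal{B}(X_\gamma^0)}\le M e^{-\eta t}$ for every $\eta\in(0,-s(L))$. The strategy is to work in the exponentially weighted time-trace spaces $\mathbb{E}_\eta:=\{z\,:\,e^{\eta\cdot}z\in\mathbb{E}(\mathbb{R}_+)\}$ and similarly for $\mathbb{F}_\eta$; the substitution $z\mapsto e^{\eta t}z$ reduces the weighted linear problem to the shifted operator $L+\eta$, which still has spectrum in $\mathbb{C}_-$, so Corollary~\ref{cor:linmaxreg} yields maximal regularity on the half-line with a uniform constant. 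The contraction-mapping argument of Theorem~\ref{nonlinwellposedness}, now performed on $\mathbb{R}_+$ in $\mathbb{E}_\eta$ and using the quadratic estimates $\|R(z)\|_{\mathbb{F}(\mathbb{R}_+)}\le C\|z\|_{\mathbb{E}(\mathbb{R}_+)}^2$ from Proposition~\ref{prop:regnonlin}, produces a unique global solution satisfying $\|z(t)\|_{X_\gamma^0}\le Ce^{-\eta t}\|z_0\|_{X_\gamma^0}$. Composing with $\Phi$, which is locally bi-Lipschitz from $X_\gamma^0$ into $X_\gamma$, yields the claimed bound on $(u(t),h(t))$.

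For part (2), Proposition~\ref{spectrumLR} supplies an eigenvalue $\lambda_+\in\sigma(L)\cap\mathbb{C}_+$; compactness of the resolvent makes $\lambda_+$ an isolated eigenvalue of finite multiplicity, so the spectral projector $P_+$ onto the finite-dimensional unstable subspace $X_+:=P_+X_0$ is well defined and commutes with $L$. Decompose $X_0=X_+\oplus X_-$ with $L$-invariant summands, $s(L|_{X_-})<\Re\lambda_+$, and pick a real unit vector $\psi_+\in X_+\cap X_1$. For small $\varepsilon>0$ set $z_0^\varepsilon:=\varepsilon\psi_+\in X_\gamma^0$ and $(u_0^\varepsilon,h_0^\varepsilon):=\Phi(z_0^\varepsilon)\in\mathcal{P}\mathcal{M}$; Theorem~\ref{nonlinwellposedness} provides a local solution $z^\varepsilon$, and applying $P_+$ to Duhamel's formula yields
\[
P_+ z^\varepsilon(t)=\varepsilon\, e^{t L|_{X_+}}\psi_+ + \int_0^t e^{(t-s)L|_{X_+}} P_+ R(z^\varepsilon(s))\,ds.
\]
The usual bootstrap (as long as $\|z^\varepsilon\|_{X_\gamma^0}$ stays below some $\delta_0$, the quadratic error $\|P_+ R(z^\varepsilon)\|$ is controlled by $\|P_+ z^\varepsilon\|$ plus a small perturbation on the stable part) forces an exit time $t_\varepsilon\sim\log(1/\varepsilon)/\Re\lambda_+$ at which $\|P_+ z^\varepsilon(t_\varepsilon)\|_{X_\gamma^0}\ge\varepsilon_0$ uniformly in $\varepsilon$; pulling this through $\Phi$ gives the desired instability bound.

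The main obstacle is not the abstract Lyapunov--Perron/bootstrap mechanism, which is standard once $L$ has compact resolvent and a finite-dimensional unstable part, but the rigorous transfer of Corollary~\ref{cor:linmaxreg} to the \emph{exponentially weighted} maximal-regularity spaces on $\mathbb{R}_+$, together with the verification that the nonlinear remainder $R$ induced by $\Phi$ inherits the quadratic bound $\|R(z)\|_{\mathbb{F}_\eta}\lesssim\|z\|_{\mathbb{E}_\eta}^2$ on small balls. This amounts to revisiting the estimates of Proposition~\ref{prop:regnonlin} with an extra exponential weight; the bounded cylindrical geometry is essential here, as it keeps all embeddings $\mathbb{E}_h\hookrightarrow BUC(\mathbb{R}_+;C^1(\Sigma))$ etc.\ globally valid in time.
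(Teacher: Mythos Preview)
Your outline has the right skeleton (maximal regularity, compact resolvent, spectral projection), but the sentence ``a direct computation \ldots shows that $z$ satisfies $\partial_t z=Lz+R(z)$'' is precisely where the proof of this theorem lives, and it is not a direct computation. The map $\Phi$ of Section~\ref{paraphase} parametrizes the \emph{phase manifold of initial data}; it is built by solving, for each fixed $(\tilde u,\tilde h)$, an elliptic Stokes problem \eqref{para1} with its own pressure $\bar\pi$. If you apply $\Phi$ pointwise in time to a trajectory, you must differentiate through this elliptic solution operator and then eliminate the pressure of \eqref{eq:NScap2} against the pressure hidden in $\Phi$. The obstruction is that the transformed velocity satisfies the \emph{nonlinear} divergence constraint $\div u=F_d(u,h)$, so $u(t)\notin L_{p,\sigma}(\Omega)$ and $\partial_t u(t)$ does not lie in the space $X_0$ on which $L$ acts. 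Getting from the quasilinear system \eqref{eq:NScap2} with its Lagrange-multiplier pressure to a genuine semilinear equation on $X_0$ is exactly the content of the proof, not a preliminary.

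The paper does not attempt this conjugation. Instead it splits the unknown additively as $(u,\pi,q,h)=(\tilde u,\tilde\pi,\tilde q,\tilde h)+(\bar u,\bar\pi,\bar q,\bar h)$, where the tilde-part is divergence free and solves the linear equation $\partial_t\tilde z-L\tilde z=\omega((I-T_1)\bar u,P_0^\Sigma\bar h)$ in $X_0$ (equation \eqref{eq:tildeuh}), while the bar-part absorbs the entire nonlinear divergence and stress-jump defects and solves the shifted problem \eqref{eq:barh}. The bar-problem is handled by the implicit function theorem in exponentially weighted spaces (not a contraction on $\mathbb R_+$), using the half-line maximal regularity of Corollary~\ref{cor:AppStokes} for $\mathbb L_\omega$ with $\omega$ large; one then checks a posteriori that $\bar h$ is mean-value free so that the coupling term $P_0^\Sigma\bar h=\bar h$ is consistent. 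The stability estimate follows by composing the two solution maps.

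For part (2) the paper argues by contradiction rather than by the forward Duhamel bootstrap you sketch. Assuming stability, one shows that $e^{-\kappa t}\tilde z,e^{-\kappa t}\bar z\in\tilde{\mathbb E}(\mathbb R_+)$ for some $\kappa\in(0,\Re\lambda_*)$, which forces the representation \eqref{eq:inst3} for $P^+\tilde z$ and ultimately the inequality $\|P^+\tilde z_0\|_{X_\gamma^0}\le C\|P^-\tilde z_0\|_{X_\gamma^0}$; choosing $\tilde z_0\in X_0^+\setminus\{0\}$ gives the contradiction. This route avoids having to control the stable component along the trajectory up to an exit time, which in your sketch is hidden in the phrase ``the usual bootstrap'' and would require its own argument here because the equation for $z$ is not available in clean semilinear form.
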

\begin{proof}
1. Let $(u_0,h_0)\in X_\gamma$ be fixed such that $\|u_0\|_{W_p^{2-2/p}}+\|h_0\|_{W_p^{3-2/p}}\le\delta$ for some sufficiently small $\delta>0$ to be determined later. It follows from the results of the last subsection that $(u_0,h_0)=(\tilde{u}_0,\tilde{h}_0)+(\phi(\tilde{u}_0,\tilde{h}_0),0)$, i.e.\ $\tilde{h}_0=h_0$, where $(\tilde{u}_0,\tilde{h}_0)\in rB_{X_\gamma^0}(0)$. For $h\in L_1(\Sigma)$, we define
$$P_0^\Sigma h:=\frac{1}{|\Sigma|}\int_\Sigma h\ dx',$$
and consider the linear evolution equation
\begin{equation}\label{eq:tildeuh}
\partial_t (\tilde{u},\tilde{h})-L(\tilde{u},\tilde{h})=\omega\left((I-T_1)\bar{u},P_0^\Sigma \bar{h}\right),\quad (\tilde{u},\tilde{h})|_{t=0}=(\tilde{u}_0,\tilde{h}_0),
\end{equation}
in the space
$$X_0:=L_{p,\sigma}(\Omega)\times \left\{h\in W_p^{2-1/p}(\Sigma):\int_G h\ dx'=0,\ \partial_{\nu_{\partial G}}h=0\right\},$$
where $L$ has been defined in Subsection \ref{EquilLin} and $(\bar{u},\bar{h})\in e^{-\eta}[\mathbb{E}_u(\mathbb{R}_+)\times\mathbb{E}_h(\mathbb{R}_+)]$ are given functions. Here $\eta\in[0,-s(L))$, where $s(L)<0$ denotes the spectral bound of $L$.

By Corollary \ref{cor:linmaxreg} \& Proposition \ref{spectrumLR} it follows that the operator $L$ has the property of $L_p$-maximal regularity on $\mathbb{R}_+$ provided that $\Jump{\rho}\gamma_a/\sigma<\lambda_1$. Since
$(f,g):=\omega\left((I-T_1)\bar{u},P_0^\Sigma \bar{h}\right)\in e^{-\eta}L_p(\mathbb{R}_+;X_0)$
and $(\tilde{u}_0,\tilde{h}_0)\in X_\gamma^0$ we obtain a unique solution
$$(\tilde{u},\tilde{h})\in e^{-\eta }[H_p^1(\mathbb{R}_+;X_0)\cap L_p(\mathbb{R}_+;X_1)]=:e^{-\eta}\tilde{\mathbb{E}}(\mathbb{R}_+)$$
for each $\eta\in [0,-s(L))$, where $X_1=D(L)$ is given by \eqref{X1} . We denote by $$\Xi:=(\partial_t-L,\operatorname{tr}|_{t=0})^{-1}:e^{-\eta}L_p(\mathbb{R}_+;X_0)\times X_\gamma^0\to e^{-\eta}\tilde{\mathbb{E}}(\mathbb{R}_+)$$ the corresponding solution operator which satisfies the estimate
$$\|\Xi((f,g),(\tilde{u}_0,\tilde{h}_0))\|_{e^{-\eta}\tilde{\mathbb{E}}(\mathbb{R}_+)}\le M\|((f,g),(\tilde{u}_0,\tilde{h}_0))\|_{e^{-\eta}L_p(\mathbb{R}_+;X_0)\times X_\gamma^0}.$$
In particular, by \eqref{formulaPi} we obtain on the one hand that $\nabla\tilde{\pi}$ is given in terms of $(\bar{u},\bar{h})$ and
$$\|\nabla\tilde{\pi}\|_{e^{-\eta}L_p(\mathbb{R}_+;L_p(\Omega))}\le CM\|((f,g),(\tilde{u}_0,\tilde{h}_0))\|_{e^{-\eta}L_p(\mathbb{R}_+;X_0)\times X_\gamma^0}.$$
At this point we remark that the function $\tilde{h}$ possesses some more regularity. Indeed, it holds that
$$\partial_t \tilde{h}=\tilde{u}_3|_{\Sigma}+\omega P_0^\Sigma\bar{h}\in e^{-\eta }W_p^{1-1/2p}(\mathbb{R}_+;L_p(\Sigma)),$$
hence $\tilde{h}\in e^{-\eta }W_p^{2-1/2p}(\mathbb{R}_+;L_p(\Sigma))$ holds in addition.

Next, we consider the problem
\begin{align}\label{eq:barh}
\begin{split}
\omega\rho \bar{u}+\partial_t\rho \bar{u}-\mu\Delta \bar{u}+\nabla \bar{\pi}&=F(\tilde{u}+\bar{u},\tilde{\pi}+\bar{\pi},\tilde{h}+\bar{h}),\quad \text{in}\ \Omega\backslash\Sigma,\\
\div \bar{u}&=P_0F_d(\tilde{u}+\bar{u},\tilde{h}+\bar{h}),\quad \text{in}\ \Omega\backslash\Sigma,\\
-\Jump{\mu \partial_3 \bar{v}}-\Jump{\mu\nabla_{x'} \bar{w}}&=G_v(\tilde{u}+\bar{u},\tilde{h}+\bar{h}),\quad \text{on}\ \Sigma,\\
-2\Jump{\mu \partial_3 \bar{w}}+\Jump{\bar{\pi}}-\sigma\Delta_{x'} \bar{h}-\Jump{\rho}\gamma_a \bar{h}&=G_w(\tilde{u}+\bar{u},\tilde{h}+\bar{h}),\quad \text{on}\ \Sigma,\\
\Jump{\bar{u}}&=0,\quad \text{on}\ \Sigma,\\
\omega \bar{h}+\partial_t \bar{h}-(u|e_3)&=H_1(\tilde{u}+\bar{u},\tilde{h}+\bar{h}),\quad \text{on}\ \Sigma,\\
P_{S_1}\left(\mu(\nabla \bar{u}+\nabla \bar{u}^{\sf T})\nu_{S_1}\right)&=0,\quad \text{on}\ S_1\backslash\partial\Sigma,\\
\bar{u}\cdot\nu_{S_1}&=0,\quad \text{on}\ S_1\backslash\partial\Sigma,\\
\bar{u}&=0,\quad \text{on}\ S_2,\\
\partial_{\nu_{\partial G}}\bar{h}&=0,\quad \text{on}\ \partial\Sigma,\\
\bar{u}(0)&=\phi(\tilde{u}_0,\tilde{h}_0),\quad \text{in}\ \Omega\backslash\Sigma\\
\bar{h}(0)&=0,\quad \text{on}\ \Sigma,
\end{split}
\end{align}
where $(\tilde{u},\tilde{h})=\Xi\left((I-T_1)\bar{u},P_0^\Sigma \bar{h}\right)$ and $\nabla\tilde{\pi}$ is given by \eqref{formulaPi}, with $(u,h)$ being replaced by $(\tilde{u},\tilde{h})$.

Let
\begin{multline*}
e^{-\eta}\mathbb{E}_u(\mathbb{R}_+):=\{u\in e^{-\eta}[H_p^1(\mathbb{R}_+;L_p(\Omega)^3)\cap L_p(\mathbb{R}_+;H_p^2(\Omega\backslash\Sigma)^3)]:\\
\Jump{u}=0,\ u\cdot\nu_{S_1}=0,\ P_{S_1}(\mu(\nabla u+\nabla u^{\sf T})\nu_{S_1})=0,\ u|_{S_2}=0\},
\end{multline*}
$$e^{-\eta}\mathbb{E}_\pi(\mathbb{R}_+):=e^{-\eta}L_p(\mathbb{R}_+;\dot{H}_p^1(\Omega\backslash\Sigma)),$$
$$e^{-\eta}\mathbb{E}_{q}(\mathbb{R}_+):=e^{-\eta}[W_p^{1/2-1/2p}(\mathbb{R}_+;L_p(\Sigma))\cap L_p(\mathbb{R}_+;W_p^{1-1/p}(\Sigma))],$$
\begin{multline*}
e^{-\eta}\mathbb{E}_h(\mathbb{R}_+):=\{h\in e^{-\eta}[W_p^{2-1/2p}(\mathbb{R}_+;L_p(\Sigma))\cap\\
\cap H_p^1(\mathbb{R}_+;W_p^{2-1/p}(\Sigma))\cap L_p(\mathbb{R}_+;W_p^{3-1/p}(\Sigma))]:\partial_{\nu_{\partial G}}h=0\},
\end{multline*}
and
$$e^{-\eta}\mathbb{E}(\mathbb{R}_+):=\{(u,\pi,q,h)
\in e^{-\eta}[\mathbb{E}_u(\mathbb{R}_+)\times \mathbb{E}_\pi(\mathbb{R}_+)
\times \mathbb{E}_{q}(\mathbb{R}_+)\times \mathbb{E}_h(\mathbb{R}_+)]
:q=\Jump{\pi}\}.
$$
Moreover, we define the data spaces as follows.
$$e^{-\eta}\mathbb{F}_1(\mathbb{R}_+):=e^{-\eta}L_p(\mathbb{R}_+;L_p(\Omega)^3),$$
$$e^{-\eta}\mathbb{F}_2(\mathbb{R}_+):=e^{-\eta}[H_p^1(\mathbb{R}_+;\hat{H}_p^{-1}(\Omega))\cap L_p(\mathbb{R}_+;H_p^1(\Omega\backslash\Sigma))],$$
\begin{multline*}
e^{-\eta}\mathbb{F}_3(\mathbb{R}_+):=\{f_3\in e^{-\eta}[W_p^{1/2-1/2p}(\mathbb{R}_+;L_p(\Sigma)^3)\cap L_p(\mathbb{R}_+;W_p^{1-1/p}(\Sigma)^3)]:\\
P_{\Sigma}(f_3)\cdot\nu_{S_1}=0\},
\end{multline*}
$$e^{-\eta}\mathbb{F}_4(\mathbb{R}_+):=\{f_4\in e^{-\eta}[W_p^{1-1/2p}(\mathbb{R}_+;L_p(\Sigma))\cap L_p(\mathbb{R}_+;W_p^{2-1/p}(\Sigma))]:\partial_{\nu_{\partial G}}f_4=0\},$$
and
$e^{-\eta}\mathbb{F}(\mathbb{R}_+):=\times_{j=1}^4e^{-\eta}\mathbb{F}_j(\mathbb{R}_+)$.

Define an operator $\mathbb{L}_\omega:e^{-\eta}\mathbb{E}(\mathbb{R}_+)\to e^{-\eta}\mathbb{F}(\mathbb{R}_+)$ by
$$\mathbb{L}_\omega(\bar{u},\bar{\pi},\bar{q},\bar{h}):=
\begin{pmatrix}
\omega\rho \bar{u}+\partial_t\rho \bar{u}-\mu\Delta \bar{u}+\nabla \bar{\pi}\\
\div \bar{u}\\
-\Jump{\mu(\nabla \bar{u}+\nabla \bar{u}^{\sf T})}e_3+\bar{q}e_3-\sigma\Delta_{x'}\bar{h}e_3-\Jump{\rho}\gamma_a \bar{h} e_3\\
\omega \bar{h}+\partial_t \bar{h}-\bar{u}\cdot e_3
\end{pmatrix},
$$
where $\bar{u}=(\bar{v},\bar{w})$ and set
\begin{multline*}
\bar{X}_\gamma:=\{(u,h)\in W_p^{2-2/p}(\Omega\backslash\Sigma)^3\times W_p^{3-2/p}(\Sigma):\\
u|_{S_2}=0,\ u|_{S_1}\cdot\nu_{S_1}=0,\ P_{S_1}(\mu(\nabla u+\nabla u^{\sf T})\nu_{S_1})=0,\ \Jump{u}=0,\ \partial_{\nu_{\partial G}}h=0\}
\end{multline*}
Denote by
$$\operatorname{ext}_\eta:\bar{X}_\gamma\to e^{-\eta}[\mathbb{E}_u(\mathbb{R}_+)\times\mathbb{E}_h(\mathbb{R}_+)]$$
a linear extension operator, such that $\operatorname{ext}_\eta (\hat{u},\hat{h})|_{t=0}=(\hat{u},\hat{h})$. The existence of such an extension operator can be seen as in Section \ref{sec:Redzerotrace}, by solving the corresponding auxiliary problems in exponentially weighted spaces.

Furthermore, we define a nonlinear mapping $N:e^{-\eta}[\mathbb{E}_u(\mathbb{R}_+)\times\mathbb{E}_\pi(\mathbb{R}_+)
\times\mathbb{E}_h(\mathbb{R}_+)]\times X_\gamma^0\to e^{-\eta}\mathbb{F}(\mathbb{R}_+)$ by
$$N((\bar{u},\bar{\pi},\bar{h}),(\tilde{u}_0,\tilde{h}_0)):=
\begin{pmatrix}
\bar{F}(\bar{u},\bar{\pi},\bar{h})\\
\bar{F}_d\Big((\bar{u},\bar{h})+
\operatorname{ext}_\eta[(\phi(\tilde{u}_0,\tilde{h}_0),0)-(\bar{u}(0),\bar{h}(0))]\Big)\\
\bar{G}_v\Big((\bar{u},\bar{h})+
\operatorname{ext}_\eta[(\phi(\tilde{u}_0,\tilde{h}_0),0)-(\bar{u}(0),\bar{h}(0))]\Big)\\
\bar{G}_w(\bar{u},\bar{h})\\
\bar{H}_1(\bar{u},\bar{h})
\end{pmatrix}.
$$
Here the functions $(\bar{F},\bar{F}_d,\bar{G}_j,\bar{H}_1)$ result from $(F,F_d,G_j,H_1)$ by replacing $(\tilde{u},\tilde{h})$ and $\nabla\tilde{\pi}$ by $\Xi\left((I-T_1)\bar{u},P_0^\Sigma \bar{h}\right)$ and \eqref{formulaPi}, respectively.

Consider the equation
$$\mathbb{L}_\omega(\bar{u},\bar{\pi},\bar{q},\bar{h})=
N((\bar{u},\bar{\pi},\bar{h}),(\tilde{u}_0,\tilde{h}_0)).$$
subject to the initial condition $(\bar{u},\bar{h})|_{t=0}=(\phi(\tilde{u}_0,\tilde{h}_0),0)$.
If we can show that this problem has a unique solution $(\bar{u},\bar{\pi},\bar{q},\bar{h})\in e^{-\eta}\mathbb{E}(\mathbb{R}_+)$, then, by construction, $(\bar{u},\bar{\pi},\bar{q},\bar{h})$ is a solution of \eqref{eq:barh}.

Let $(f,f_d,g_v,g_w,g_h)\in e^{-\eta}\mathbb{F}(\mathbb{R}_+)$ and $(u_0,h_0)\in \bar{X}_\gamma$ be given such that $\div u_0=f_d|_{t=0}$ and $-\Jump{\mu\nabla_{x'}w_0}-\Jump{\mu\partial_3 v_0}=g_v|_{t=0}$, where $u_0=(v_0,w_0)$. Consider the linear problem to find a unique $w=(u,\pi,q,h)\in e^{-\eta}\mathbb{E}(\mathbb{R}_+)$ such that
$$\mathbb{L}_\omega w=F,\quad z(0)=z_0=(u_0,h_0),$$
for a sufficiently large $\omega>0$, where $F:=(f,f_d,g_v,g_w,g_h)$ and $z:=(u,h)$. By Corollary \ref{cor:AppStokes} we may assume without loss of generality that $f=u_0=0$, $f_d=g_w=0$ and $g_v=0$. The remaining problem with $\tilde{F}=(0,0,0,0,g_h)$ and $\tilde{z}_0=(0,h_0)$ can be written in the abstract form
$$\omega z+\dot{z}+Lz=(0,g_h),\ t>0,\quad z(0)=\tilde{z}_0,$$
where the operator $L$ has been defined in Section \ref{EquilLin}. If $\omega>0$ is chosen sufficiently large, then there exists a unique solution $z\in e^{-\omega}[\mathbb{E}_u(\mathbb{R}_+)\times\mathbb{E}_h(\mathbb{R}_+)]$, since $L$ has the property of maximal regularity of type $L_p$ in
$$L_{p,\sigma}(\Omega)\times \left\{h\in W_p^{2-1/p}(\Sigma):\partial_{\nu_{\partial G}}h=0\right\},$$
by Corollary \ref{cor:linmaxreg}.

Therefore it makes sense to define a function $H:e^{-\eta}[\mathbb{E}_u(\mathbb{R}_+)\times\mathbb{E}_\pi(\mathbb{R}_+)
\times\mathbb{E}_h(\mathbb{R}_+)]\times X_\gamma^0\to e^{-\eta}\mathbb{E}(\mathbb{R}_+)$ by
\begin{multline*}
H((\bar{u},\bar{\pi},\bar{h}),(\tilde{u}_0,\tilde{h}_0))\\
:=(\bar{u},\bar{\pi},\bar{q},\bar{h})-(\mathbb{L}_\omega,\operatorname{tr}|_{t=0})^{-1}
[N((\bar{u},\bar{\pi},\bar{h}),(\tilde{u}_0,\tilde{h}_0)),(\phi(\tilde{u}_0,\tilde{h}_0),0)].
\end{multline*}
Note that $H$ is well defined, since all compatibility conditions at $t=0$ as well as at $\partial\Sigma$ and $\partial S_2$ are satisfied by construction.
It follows from Proposition \ref{prop:regnonlin} and the results in Section \ref{paraphase} that $H$ is a $C^2$-mapping with $H(0)=0$ and $$D_{(\bar{u},\bar{\pi},\bar{q},\bar{h})}H(0)=I_{e^{-\eta}\mathbb{E}(\mathbb{R}_+)}.$$
Therefore, the implicit function theorem yields the existence of a $C^2$-function $\psi:X_\gamma^0\to e^{-\eta}\mathbb{E}(\mathbb{R}_+)$ with $\psi(0)=0$ and $\psi'(0)=0$ such that $H(\psi(\tilde{u}_0,\tilde{h}_0),(\tilde{u}_0,\tilde{h}_0))=0$, whenever $(\tilde{u}_0,\tilde{h}_0)\in rB_{X_\gamma^0}(0)$ for some sufficiently small $r>0$.

Let
$$(u,\pi,q,h):=(\tilde{u},\tilde{\pi},\tilde{q},\tilde{h})
+(\bar{u},\bar{\pi},\bar{q},\bar{h}).$$
As in the proof of Theorem \ref{nonlinwellposedness} one can show that $P_0F_d(u,h)=F_d(u,h)$, since $\div u=\div(\tilde{u}+\bar{u})=\div\bar{u}$.
Integrating $\tilde{w}=\tilde{u}\cdot e_3$ over $\Sigma$ yields
$$\int_\Sigma \tilde{w}\ dx'=\int_{\Omega_1}\div\tilde{u}_1\ dx=0.$$
This in turn implies that
\begin{align*}
(\omega+\frac{d}{dt})\int_\Sigma \bar{h}\ dx'&=\int_\Sigma[\bar{w}-(v|\nabla h)]\ dx'\\
&=\int_\Sigma[w-(v|\nabla h)]\ dx'\\
&=\int_\Sigma(u|\nu_{\Gamma(t)})\sqrt{1+|\nabla h|^2}\ dx'\\
&=\int_{\Gamma(t)}\left((u\circ{\Theta_h^{-1}})|\nu_{\Gamma(t)}\right)\ d\Gamma(t)\\
&=\int_{\Omega_1(t)}\div (u\circ\Theta_h^{-1})\ d\Omega_1(t)\\
&=0,
\end{align*}
since
$$\div (u\circ\Theta_h^{-1})=(\div u-F_d(u,h))\circ\Theta_h^{-1}=(\div\bar{u}-F_d(u,h))\circ\Theta_h^{-1}=0.$$
Since $\bar{h}|_{t=0}=0$, this readily yields that $\bar{h}$ is mean value free, hence $P_0^\Sigma\bar{h}=\bar{h}$ and therefore $(u,\pi,q,h)$ is a solution of \eqref{eq:NScap2} which is unique by Theorem \ref{nonlinwellposedness}. The component $(u,h)$ of the solution has the representation
$$(u,h)=\bar{\psi}(\tilde{u}_0,\tilde{h}_0)+\bar{\Xi}(\tilde{u}_0,\tilde{h}_0),$$
where $\bar{\psi}(\tilde{u}_0,\tilde{h}_0):=(\bar{u},\bar{h})$ and $\bar{\Xi}$ results by replacing $(\bar{u},\bar{h})$ by $\bar{\psi}(\tilde{u}_0,\tilde{h}_0)$ in the definition of $\Xi$. This yields the estimate
$$\|(u,h)\|_{e^{-\eta}[\mathbb{E}_u\times\mathbb{E}_h]}\le M\|(\tilde{u}_0,\tilde{h}_0)\|_{X_\gamma^0},$$
where $M>0$ does not depend on $(\tilde{u}_0,\tilde{h}_0)\in rB_{X_\gamma^0}(0)$ as long as $r>0$ is sufficiently small. This follows from the smoothness of the function $\psi$. Since $(\tilde{u}_0,\tilde{h}_0)=({u}_0,{h}_0)-\phi(\tilde{u}_0,\tilde{h}_0)$ and $\phi(0)=0$ as well as $\phi'(0)=0$, we find for each $\varepsilon>0$ a number $r(\varepsilon)>0$ such that the estimate
\begin{align*}
\|(\tilde{u}_0,\tilde{h}_0)\|_{X_\gamma}&\le\|({u}_0,{h}_0)\|_{X_\gamma}
+\|\phi(\tilde{u}_0,\tilde{h}_0)\|_{X_\gamma}\\
&\le \|({u}_0,{h}_0)\|_{X_\gamma}+\varepsilon\|(\tilde{u}_0,\tilde{h}_0)\|_{X_\gamma}
\end{align*}
is valid. This implies the final estimate
$$\|(u,h)\|_{e^{-\eta}[\mathbb{E}_u\times\mathbb{E}_h]}\le M_\varepsilon\|({u}_0,{h}_0)\|_{X_\gamma},$$
proving the first assertion.

2. Denote by $\sigma^+$ the collection of the eigenvalues of $L$ with positive real part and let $P^+$ be the spectral projection related to $\sigma^+$. Define $P^-:=I-P^+$ and $X_0^\pm:=P^\pm X_0$. Since $\sigma^+$ is finite, it follows that $X_0^+$ is finite dimensional and the decompositions
$$X_0=X_0^+\oplus X_0^-,\quad L=L^+\oplus L^-$$
hold, where $L^+$ is a bounded linear operator from $X_0^+$ to $X_0^+$. Note further that the spaces $D(L^+)$ and $X_0^+$ coincide and that
$$\|z\|:=\|P^+ z\|_{X_0}+\|P^- z\|_{X_0}$$
defines an equivalent norm in $X_0$, since $P^\pm$ are bounded and linear operators.
By spectral theory, it holds that $\sigma^\pm=\sigma(L^\pm)$ and $\sigma^-\subset\overline{\mathbb{C}_-}$. Let $\lambda_*\in\sigma^+$ denote the eigenvalue with the smallest real part and choose numbers $\kappa,\eta>0$ such that $[\kappa-\eta,\kappa+\eta]\subset (0,\operatorname{Re}\lambda_*)$. It follows that the strip
$$\{\lambda\in\mathbb{C}:\operatorname{Re}\lambda\in[\kappa-\eta,\kappa+\eta]\}$$
does not contain any eigenvalues of $L$. Therefore the restricted semigoups $e^{\mp L^\pm t}$ satisfy the estimates
\begin{equation}\label{eq:inst1}
\|e^{L^- t}\|\le M e^{(\kappa-\eta)t},\quad \|e^{-L^+ t}\|\le M e^{-(\kappa+\eta)t},\quad t\ge 0,
\end{equation}
for some constant $M>0$.

Our aim is to prove the second assertion by a contradiction argument. To this end we assume that $(u_*,h_*)=(0,0)$ is stable. Then there exists a global solution $(u(t),\pi(t),q(t),h(t))$ of \eqref{eq:NScap2} such that $(u,\pi,q,h)\in\mathbb{E}(T)$ for each finite interval $J=[0,T]\subset [0,\infty)$. Moreover, for each $\varepsilon>0$ there exists $\delta(\varepsilon)>0$ such that whenever $\|(u_0,h_0)\|_{X_\gamma}\le\delta$ then $\|(u(t),h(t))\|_{X_\gamma}\le\varepsilon$ for all $t\ge 0$. Note that the solution admits the decomposition
$$(u,\pi,q,h)=(\tilde{u},\tilde{\pi},\bar{q},\tilde{h})+(\bar{u},\bar{\pi},\bar{q},\bar{h}),$$
where $(\tilde{u},\tilde{h})$ solves \eqref{eq:tildeuh} with $\tilde{\pi}$, $\tilde{q}=\Jump{\tilde{\pi}}$ given in terms of $(\tilde{u},\tilde{h})$ (see \eqref{formulaPi}) and $(\bar{u},\bar{\pi},\bar{q},\bar{h})$ solves \eqref{eq:barh} with a given right hand side $(u,\pi,q,h)$. Observe that in this case $P_0^\Sigma\bar{h}=\bar{h}$, by integration of $\eqref{eq:barh}_6$ over $\Sigma$, since
$$\int_{\Sigma}(\bar{u}|e_n) d\Sigma=\int_{\Omega_1}\div\bar{u}^1 dx=\int_{\Omega_1}F_d(u^1,h) dx=\int_{\Omega_1}\div u^1 dx$$
and
$$\int_\Sigma H_1(u,h) d\Sigma=\int_\Sigma(\partial_t h-(u|e_3)) d\Sigma=-\int_{\Omega_1}
\div u^1 dx,$$
where $u^1:=u|_{\Omega_1}$ and where we made use of the fact that $P_0^\Sigma h=h$.

To shorten the notation we introduce the new functions $\tilde{z}:=(\tilde{u},\tilde{h})$, $\bar{z}=(\bar{u},\bar{h})$, $\tilde{w}=(\tilde{u},\tilde{\pi},\bar{q},\tilde{h})$ and $\bar{w}=(\bar{u},\bar{\pi},\bar{q},\bar{h})$. The functions $P^\pm\tilde{z}$ solve the evolutionary problem
\begin{equation}\label{eq:inst2}
\frac{d}{dt}P^\pm\tilde{z}-L^\pm P^\pm\tilde{z}=\omega P^\pm Q\bar{z},\quad P^\pm\tilde{z}|_{t=0}=P^\pm\tilde{z}_0,
\end{equation}
where $Q\bar{z}:=((I-T_1)\bar{u},\bar{h})$ and $\tilde{z}_0:=(\tilde{u}_0,\tilde{h}_0)$. In a first step we show that $P^+\tilde{z}$ is given by the formula
\begin{equation}\label{eq:inst3}
P^+\tilde{z}(t)=-\int_t^\infty e^{L^+(t-s)}\omega P^+Q\bar{z}(s)\ ds.
\end{equation}
Since $P^+$ is bounded and $X_\gamma^0\hookrightarrow X_0$, it follows from the assumption that
$$\|P^+\tilde{z}(t)\|_{X_0^+}\le \|P^+{z}(t)\|_{X_0^+}+\|P^+\bar{z}(t)\|_{X_0^+}\le C(\varepsilon+\|\bar{z}(t)\|_{X_0})$$
for all $t\ge 0$. This implies the estimate
\begin{align}\label{eq:inst4}
\begin{split}
\|e^{-\kappa t}P^+\tilde{z}\|_{L_p(0,T;X_0^+)}&\le C\left(\varepsilon\left(\int_0^Te^{-\kappa p t}\ dt\right)^{1/p}+\|e^{-\kappa t}\bar{z}\|_{L_p(0,T;X_0)}\right)\\
&\le C(\kappa,p)\left(\varepsilon+\|e^{-\kappa t}\bar{z}\|_{\tilde{\mathbb{E}}(T)}\right),
\end{split}
\end{align}
where
$$\mathbb{\tilde{E}}(T):=\mathbb{E}_u(T)\times\mathbb{E}_h(T),$$
and $\mathbb{\tilde{E}}(T)\hookrightarrow L_p(0,T;X_0)$, with an embedding constant being independent of $T>0$. Employing the relation
\begin{equation}\label{eq:inst5}
\frac{d}{dt}(e^{-\kappa t}P^+\tilde{z}(t))=(-\kappa I+L^+)e^{-\kappa t}P^+\tilde{z}(t)+e^{-\kappa t}P^+Q\bar{z}(t),
\end{equation}
we obtain that
\begin{equation}\label{eq:inst6}
\|e^{-\kappa t} P^+\tilde{z}\|_{\mathbb{Z}(T)}\le C_1(\varepsilon+\|e^{-\kappa t}\bar{z}\|_{\mathbb{\tilde{E}}(T)}),
\end{equation}
where the constant $C_1>0$ does not depend on $T>0$. Here we have set
$$\mathbb{Z}(T):=H_p^1(0,T;X_0)\cap L_p(0,T;D(L)).$$
For the function $e^{-\kappa t}P^-\tilde{z}(t)$ there holds the identity
\begin{equation}\label{eq:inst6.1}
\frac{d}{dt}(e^{-\kappa t}P^-\tilde{z}(t))=(-\kappa I+L^-)e^{-\kappa t}P^-\tilde{z}(t)+e^{-\kappa t}P^-Q\bar{z}(t).
\end{equation}
Since by \eqref{eq:inst1} the semigroup generated by $(-\kappa I+L^-)$ is exponentially stable in $X_0^-$, we obtain from $L_p$-maximal regularity theory that the estimate
\begin{align}\label{eq:inst7}
\begin{split}
\|e^{-\kappa t}P^-\tilde{z}\|_{\mathbb{Z}(T)}&\le M\left(\|P^-\tilde{z}_0\|_{X_\gamma^0}+\|e^{-\kappa t}P^-Q\bar{z}\|_{L_p(0,T;X_0)}\right)\\
&\le M \left(\|P^-\tilde{z}_0\|_{X_\gamma^0}+\|e^{-\kappa t}\bar{z}\|_{\mathbb{\tilde{E}}(T)}\right)
\end{split}
\end{align}
is valid, with some constant $M>0$ that does not depend on $T>0$. A combination of \eqref{eq:inst6} and \eqref{eq:inst7} implies
\begin{equation}\label{eq:inst8}
\|e^{-\kappa t}\tilde{z}\|_{\mathbb{Z}(T)}\le C_2\left(\varepsilon+\|P^-\tilde{z}_0\|_{X_\gamma^0}+\|e^{-\kappa t}\bar{z}\|_{\mathbb{\tilde{E}}(T)}\right),
\end{equation}
with $C_2>0$ being independent of $T>0$. In what follows, we want to reproduce the norm of $e^{-\kappa t}\tilde{z}$ in $\tilde{\mathbb{E}}(T)$ on the left hand side of \eqref{eq:inst8}. To this end we have to estimate $e^{-\kappa t}\tilde{h},e^{-\kappa t}\partial_t\tilde{h}$ in $W_p^{1-1/2p}(0,T;L_p(\Sigma))$.

To estimate $e^{-\kappa t}\tilde{h}$ in $W_p^{1-1/2p}(0,T;L_p(\Sigma))$ we cannot simply use interpolation of $H_p^1(0,T;L_p(\Sigma))$ with $L_p(0,T;L_p(\Sigma))$, since the interpolation constant would depend on $T>0$. The following proposition takes care about this problem.
\begin{prop}\label{prop:auxinst1}
Let $T\in (0,\infty)$, $\kappa>0$ and let $\tilde{z}\in\mathbb{Z}(T)$ be the unique solution to \eqref{eq:tildeuh}. Then there exists $\hat{z}\in\mathbb{Z}(\mathbb{R}_+)$ with $\hat{z}|_{[0,T]}=\tilde{z}$ such that the estimate
$$\|e^{-\kappa t}\hat{z}\|_{\mathbb{Z}(\mathbb{R}_+)}\le M\left(\|\tilde{z}_0\|_{X_\gamma^0}+\|e^{-\kappa t}\bar{z}\|_{L_p(0,T;X_0)}+\|e^{-\kappa t}\tilde{z}\|_{L_p(0,T;X_0)}\right)$$
is valid, with a constant $M>0$ being independent of $T>0$.
\end{prop}
\begin{proof}
We fix $a>0$ large enough such that the operator $L-aI$ has the property of $L_p$-maximal regularity on $\mathbb{R}_+$. Define a function $f:\mathbb{R}_+\to X_0$ by
$$f(t):=
\begin{cases}
\omega Q\bar{z}(t)+a\tilde{z}(t),&\ \text{if}\ t\in[0,T],\\
0,&\ \text{if}\ t>T.
\end{cases}
$$
Then $f\in L_p(\mathbb{R}_+;X_0)$ and we may solve the problem
\begin{equation}\label{eq:inst9}
\partial_t \hat{z}-(L-aI)\hat{z}=f,\quad \hat{z}|_{t=0}=\tilde{z}_0,
\end{equation}
to obtain a unique solution $\hat{z}\in\mathbb{Z}(\mathbb{R}_+)$. Observe that by the uniqueness of the solution of \eqref{eq:tildeuh} it holds that $\hat{z}|_{[0,T]}=\tilde{z}$.

Multiplying \eqref{eq:inst9} by $e^{-\kappa t}$, it follows that the function $e^{-\kappa t}\hat{z}(t)$ solves the initial value problem
$$
\partial_t (e^{-\kappa t}\hat{z})-(L-(a+\kappa)I)e^{-\kappa t}\hat{z}=e^{-\kappa t}f,\quad \hat{z}|_{t=0}=\tilde{z}_0.
$$
Since the operator $L-(a+\kappa)I$ has $L_p$-maximal regularity on $\mathbb{R}_+$ as well, we obtain the desired estimate. The independence of the constant $M>0$ with respect to $t$ follows from the exponential stability of the analytic semigroup which is generated by $L-(a+\kappa)I$.
\end{proof}
Since $\|e^{-\kappa t}\tilde{z}\|_{W_p^{1-1/2p}(0,T;X_0)}\le \|e^{-\kappa t}\hat{z}\|_{W_p^{1-1/2p}(\mathbb{R}_+;X_0)}$ (here we use the intrinsic norm in $W_p^{1-1/2p}$) it follows by the real interpolation method and Proposition \ref{prop:auxinst1} that the estimate
\begin{align}\label{eq:inst10}
\begin{split}
\|e^{-\kappa t}\tilde{z}\|_{W_p^{1-1/2p}(0,T;X_0)}&\le M\left(\|\tilde{z}_0\|_{X_{\gamma}^0}
+\|e^{-\kappa t}\bar{z}\|_{L_p(0,T;X_0)}+\|e^{-\kappa t}\tilde{z}\|_{L_p(0,T;X_0)}\right)\\
&\le M\left(\|\tilde{z}_0\|_{X_{\gamma}^0}+\|e^{-\kappa t}\bar{z}\|_{\tilde{\mathbb{E}}(T)}+\|e^{-\kappa t}\tilde{z}\|_{\mathbb{Z}(T)}\right)
\end{split}
\end{align}
is valid. The second equation in \eqref{eq:tildeuh} and Proposition \ref{prop:auxinst1} together with trace theory imply
\begin{align}
\begin{split}\label{eq:inst11}
\|e^{-\kappa t}\partial_t&\tilde{h}\|_{W_p^{1-1/2p}(0,T;L_p(\Sigma))}\\
&\le C_3\left(\|e^{-\kappa t}\tilde{u}\|_{W_p^{1-1/2p}(0,T;L_p(\Sigma))}+\|e^{-\kappa t}\bar{h}\|_{W_p^{1-1/2p}(0,T;L_p(\Sigma))}\right)\\
&\le C_4\left(\|\tilde{z}_0\|_{X_{\gamma}^0}+\|e^{-\kappa t}\bar{z}\|_{\tilde{\mathbb{E}}(T)}+\|e^{-\kappa t}\tilde{z}\|_{\mathbb{Z}(T)}\right).
\end{split}
\end{align}
Observe that for the estimate of $e^{-\kappa t}\bar{h}$ we have used the fact that $$\tilde{\mathbb{E}}(T)\hookrightarrow W_p^{1-1/2p}(0,T;L_p(\Sigma))$$
with an embedding constant being independent of $T>0$, since the (instrinsic) norm in the last space is a part of the norm in $\tilde{\mathbb{E}}(T)$. Combining \eqref{eq:inst8} with \eqref{eq:inst10} \& \eqref{eq:inst11} we obtain
\begin{equation}\label{eq:inst12}
\|e^{-\kappa t}\tilde{z}\|_{\tilde{\mathbb{E}}(T)}\le C_5\left(\varepsilon+\|\tilde{z}_0\|_{X_\gamma^0}+\|P^-\tilde{z}_0\|_{X_\gamma^0}
+\|e^{-\kappa t}\bar{z}\|_{\mathbb{\tilde{E}}(T)}\right),
\end{equation}
with a constant $C_5>0$ being independent of $T>0$.

We are now turning our attention to the system \eqref{eq:barh} for $\bar{w}=(\bar{u},\bar{\pi},\bar{q},\bar{h})$ which we write shortly as $\mathbb{L}_\omega\bar{w}=N(\tilde{w}+\bar{w})$ with initial condition $\bar{z}|_{t=0}=(\phi(\tilde{z}_0),0)$. It will be convenient to write $N(w)=N_1(z)+N_2(z,\pi)$, where all components of $N_2(z,\pi)$ are zero except for the first one, which is given by $M_0(h)\nabla\pi$.
\begin{prop}\label{prop:estN}
Let $\kappa\ge0$. There exists a nondecreasing function $\alpha:\mathbb{R}_+\to\mathbb{R}_+$ with $\alpha(\varepsilon)\to 0$ as $\varepsilon\to 0$ such that
\begin{enumerate}
\item if $z\in{\mathbb{Z}}(\mathbb{R}_+)$, then
$$\|e^{-\kappa t}N_1(z)\|_{\mathbb{F}(\mathbb{R}_+)}\le \alpha (\varepsilon)\|e^{-\kappa t}z\|_{\mathbb{Z}(\mathbb{R}_+)},$$
whenever $\|z(t)\|_{X_\gamma}\le \varepsilon$ for all $t\ge 0$;
\item if $\hat{z}\in\!_0{\mathbb{Z}}(T)$ and $z_*\in{\mathbb{Z}}(\mathbb{R}_+)$, then
$$\|e^{-\kappa t}N_1(\hat{z}+z_*)\|_{\mathbb{F}(T)}\le \alpha (\varepsilon)C\left(\|e^{-\kappa t}\hat{z}\|_{\mathbb{Z}(T)}+\|e^{-\kappa t}z_*\|_{\mathbb{Z}(\mathbb{R}_+)}\right),$$
whenever
$$\|\hat{z}(t)\|_{X_\gamma}\le C\varepsilon$$
for all $t\in[0,T]$
and
$$\|z_*(t)\|_{X_\gamma}\le C\varepsilon$$
for all $t\ge 0$. The constant $C>0$ does not depend on $T>0$.
\end{enumerate}
\end{prop}
\begin{proof}
The proof of the first assertion follows by similar arguments as in \cite[Proposition 9]{LPS06}.

Therefore we concentrate on the proof of the second assertion. For $\hat{z}\in \!_0\tilde{\mathbb{E}}(T)$ we define a bounded linear extension operator $E:\!_0{\mathbb{Z}}(T)\to \!_0{\mathbb{Z}}(\mathbb{R}_+)$ by
$$(E\hat{z})(t):=
\begin{cases}
\hat{z}(t),&\quad t\in[0,T],\\
\hat{z}(2T-t),&\quad t\in [T,2T],\\
0,&\quad t\ge 2T.
\end{cases}
$$
For the norm of $e^{-\kappa t}(E\hat{z})$ in $\mathbb{Z}(\mathbb{R}_+)$ we then obtain
\begin{align*}
\|e^{-\kappa t}E\hat{z}\|_{\mathbb{Z}(\mathbb{R}_+)}^p&=\int_0^T e^{-\kappa tp}\|\hat{z}(t)\|_{X_1}^p dt+\int_T^{2T}e^{-\kappa tp}\|\hat{z}(2T-t)\|_{X_1}^p dt\\
&+\int_0^T e^{-\kappa tp}\|\dot{\hat{z}}(t)\|_{X_0}^p dt+\int_T^{2T}e^{-\kappa tp}\|\dot{\hat{z}}(2T-t)\|_{X_0}^p dt\\
&=\int_0^T e^{-\kappa tp}\|\hat{z}(t)\|_{X_1}^p dt+\int_0^{T}e^{-\kappa (2T-\tau)p}\|\hat{z}(\tau)\|_{X_1}^p d\tau\\
&+\int_0^T e^{-\kappa tp}\|\dot{\hat{z}}(t)\|_{X_0}^p dt+\int_0^{T}e^{-\kappa (2T-\tau)p}\|\dot{\hat{z}}(\tau)\|_{X_0}^p d\tau\\
&\le \|e^{-\kappa t}\hat{z}\|_{\mathbb{Z}(T)},
\end{align*}
since $2T-\tau\ge\tau$ for $\tau\in [0,T]$.

In addition there holds $\|(E\hat{z})(t)\|_{W_p^{2-2/p}\times W_p^{3-2/p}}\le C\varepsilon$ for all $t\ge 0$. Then the first assertion yields
\begin{align*}
\|e^{-\kappa t}N_1(\hat{z}+z_*)\|_{\mathbb{F}(T)}&\le\|e^{-\kappa t}N_1(E\hat{z}+z_*)\|_{\mathbb{F}(\mathbb{R}_+)}\\
&\le \alpha(\varepsilon)C\|e^{-\kappa t}(E\hat{z}+z_*)\|_{\mathbb{Z}(\mathbb{R}_+)}\\
&\le \alpha(\varepsilon)C\left(\|e^{-\kappa t}\hat{z}\|_{\mathbb{Z}(T)}+\|e^{-\kappa t}z_*\|_{\mathbb{Z}(\mathbb{R}_+)}\right).
\end{align*}
\end{proof}
In order to apply this proposition to the system $\mathbb{L}_\omega\bar{w}=N(\bar{w}+\tilde{w})$, let $z_*$ be an extension of $z_0$ such that $e^{-\kappa t}z_*\in\tilde{\mathbb{E}}(\mathbb{R}_+)$ and $\|z_*\|_{\mathbb{Z}(\mathbb{R}_+)}\le C\|z_0\|_{X_\gamma}$. The existence of such an extension can be seen as in Step 1 of the proof. Then we use the representation $N(w)=N_1(z)+N_2(z,\pi)$ and the identity $N_1(z)=N_1(z-z_*+z_*)=N_1(\hat{z}+z_*)$, where $\hat{z}:=(z-z_*)\in\!_0\mathbb{Z}(T)$. Finally, note that
$$\|e^{-\kappa t}N_2(z,\pi)\|_{L_p(0,T;L_p(\Omega))}\le C\varepsilon\|e^{-\kappa t}\pi\|_{\mathbb{E}_\pi(T)}.$$
Therefore, the second assertion of Proposition \ref{prop:estN} implies the estimate
\begin{align*}
\|e^{-\kappa t}N(\bar{w}+\tilde{w})\|_{\mathbb{F}(T)}&\le \alpha(\varepsilon)C\left(\|e^{-\kappa t}\tilde{z}\|_{\mathbb{Z}(T)}+\|e^{-\kappa t}\bar{z}\|_{\mathbb{Z}(T)}+\|e^{-\kappa t}z_*\|_{\mathbb{Z}(\mathbb{R}_+)}\right)\\
&\hspace{1cm}+\varepsilon C\left(\|e^{-\kappa t}\tilde{\pi}\|_{\mathbb{E}_\pi(T)}
+\|e^{-\kappa t}\bar{\pi}\|_{\mathbb{E}_\pi(T)}\right)\\
&\le\alpha_1(\varepsilon)\left(\|e^{-\kappa t}\tilde{z}\|_{\tilde{\mathbb{E}}(T)}+\|e^{-\kappa t}\bar{z}\|_{\tilde{\mathbb{E}}(T)}+\|e^{-\kappa t}\bar{\pi}\|_{\mathbb{E}_\pi(T)}+\|z_0\|_{X_\gamma}\right),
\end{align*}
where $\alpha_1(\varepsilon):=\alpha(\varepsilon)+\varepsilon\to 0$ as $\varepsilon\to 0$. Here we have used the estimates $\|e^{-\kappa t}z_*\|_{\mathbb{Z}(\mathbb{R}_+)}\le C\|z_0\|_{X_\gamma}$ and
$$\|e^{-\kappa t}\tilde{\pi}\|_{\mathbb{E}_\pi(T)}\le C\left(\|e^{-\kappa t}\tilde{z}\|_{\tilde{\mathbb{E}}(T)}+\|e^{-\kappa t}\bar{z}\|_{\tilde{\mathbb{E}}(T)}\right),$$
which hold for some constant $C>0$ that does not depend on $T>0$. Note also that $\tilde{\mathbb{E}}(T)\hookrightarrow\mathbb{Z}(T)$ with a universal embedding constant being independent of $T>0$ and $\|\hat{z}(t)\|_{X_\gamma}\le (1+C)\varepsilon$ for all $t\in [0,T]$, $\|z_*(t)\|_{X_\gamma}\le C\varepsilon$ for all $t\ge 0$.

By the invertibility of $\mathbb{L}_\omega$ we obtain
\begin{align}
\begin{split}\label{eq:inst13}
\|e^{-\kappa t}\bar{w}\|_{{\mathbb{E}}(T)}&\le C_6\left(\|\phi(\tilde{z}_0)\|_{X_\gamma}
+\|e^{-\kappa t}N(\bar{w}+\tilde{w})\|_{\mathbb{{F}}(T)}\right)\\
&\le C_6\Big(\|\phi(\tilde{z}_0)\|_{X_\gamma}+\alpha_1(\varepsilon)(\|e^{-\kappa t}\bar{z}\|_{\tilde{\mathbb{E}}(T)}+\|e^{-\kappa t}\tilde{z}\|_{\tilde{\mathbb{E}}(T)}\\
&\hspace{1cm}+\|z_0\|_{X_\gamma})\Big).
\end{split}
\end{align}
Choose $\varepsilon>0$ sufficiently small, such that $C_6\alpha_1(\varepsilon)\le 1/2$ and note that
$$\|e^{-\kappa t}\bar{w}\|_{\mathbb{E}(T)}=\|e^{-\kappa t}\bar{z}\|_{\tilde{\mathbb{E}}(T)}+\|e^{-\kappa t}\bar{\pi}\|_{\mathbb{E}_\pi(T)}+
\|e^{-\kappa t}\Jump{\bar{\pi}}\|_{\mathbb{E}_q(T)}.$$
This implies the estimate
\begin{equation}\label{eq:inst14}
\|e^{-\kappa t}\bar{z}\|_{\tilde{\mathbb{E}}(T)}\le 2C_6\left(\|\phi(\tilde{z}_0)\|_{X_\gamma}+\alpha_1(\varepsilon)(\|e^{-\kappa t}\tilde{z}\|_{\tilde{\mathbb{E}}(T)}+\|z_0\|_{X_\gamma})\right).
\end{equation}
If $\varepsilon>0$ is sufficiently small, we obtain from \eqref{eq:inst12} and \eqref{eq:inst14} that
\begin{multline}\label{eq:inst16}
\|e^{-\kappa t}\tilde{z}\|_{\tilde{\mathbb{E}}(T)}+\|e^{-\kappa t}\bar{z}\|_{\tilde{\mathbb{E}}(T)}
\le C_7\left(\varepsilon+\|\tilde{z}_0\|_{X_\gamma^0}+\|P^-\tilde{z}_0\|_{X_\gamma^0}
+\|\phi(\tilde{z}_0)\|_{X_\gamma}\right)
\end{multline}
with $C_7>0$ being independent of $T>0$, where me made use of the fact that $z_0=\tilde{z}_0+\phi(\tilde{z}_0)$. In particular this shows that
$$e^{-\kappa t}\tilde{z},e^{-\kappa t}\bar{z}\in\tilde{\mathbb{E}}(\mathbb{R}_+).$$
This in turn yields that
\begin{multline*}
e^{-\kappa t}\int_t^\infty\|e^{L^+(t-s)}P^+\omega Q\bar{z}(s)\|_{X_0}\ ds\\
\le
M\left(\int_t^\infty e^{\eta p'(t-s)}\ ds\right)^{1/p'}\|e^{-\kappa t}\omega \bar{z}\|_{L_p(\mathbb{R}_+;X_0)}\le C(\eta,p')\|e^{-\kappa t}\omega \bar{z}\|_{\tilde{\mathbb{E}}(\mathbb{R}_+)}<\infty.
\end{multline*}
For the projection of the solution $\tilde{z}$ of \eqref{eq:tildeuh} to $X_0^+$ we have the variation of parameters formula
\begin{align*}
P^+\tilde{z}(t)&=P^+ e^{L^+ t}\tilde{z}_0+\int_0^t e^{L^+(t-s)}P^+ \omega Q\bar{z}(s) ds\\
&=P^+ e^{L^+ t}\tilde{z}_0+\int_0^\infty e^{L^+(t-s)}P^+ \omega Q\bar{z}(s) ds-\int_t^\infty e^{L^+(t-s)}P^+\omega Q\bar{z}(s) ds
\end{align*}
at our disposal. Since $e^{L^+ t}$ extends to a $C_0$-group, we obtain the identity
$$e^{-L^+ t}\left(P^+\tilde{z}(t)+\int_t^\infty e^{L^+(t-s)}P^+\omega Q\bar{z}(s) ds\right)=
P^+\tilde{z}_0+\int_0^\infty e^{-L^+s}P^+ \omega Q\bar{z}(s) ds,$$
which holds for all $t\ge 0$. The left hand side of this equation may be estimated in $X_0$ as follows.
\begin{align*}
\|e^{-L^+ t}\Big(P^+\tilde{z}(t)&+\int_t^\infty e^{L^+(t-s)}P^+\omega Q\bar{z}(s) ds\Big)\|_{X_0}\\
&\le M e^{-(\kappa+\eta)t}\left(\|\tilde{z}(t)\|_{X_0}+\int_t^\infty\|e^{L^+(t-s)}P^+\omega Q\bar{z}(s)\|_{X_0}\ ds\right)\\
&\le Me^{-\eta t}\left(\|e^{-\kappa t}\tilde{z}(t)\|_{X_0}+C\right).
\end{align*}
Here we made use of the fact that the integral does not grow faster than $e^{\kappa t}$ by the computations above. Since the function $[t\mapsto \|e^{-\kappa t}\tilde{z}(t)\|_{X_0}]$ is bounded (see above) it follows that
$$e^{-\eta t}\left(\|e^{-\kappa t}\tilde{z}(t)\|_{X_0}+C\right)\to 0$$
as $t\to\infty$. This shows in particular that $P^+\tilde{z}_0+\int_0^\infty e^{-L^+s}P^+ \omega Q\bar{z}(s) ds=0$, hence the relation \eqref{eq:inst3} holds.

From \eqref{eq:inst3} and Young's inequality we obtain the estimate
$$\|e^{-\kappa t}P^+\tilde{z}\|_{L_p(\mathbb{R}_+;X_0)}\le M(\eta)\|e^{-\kappa t}P^+\bar{z}\|_{L_p(\mathbb{R}_+;X_0)}.$$
By \eqref{eq:inst5} this yields
\begin{equation}\label{eq:inst17}
\|e^{-\kappa t}P^+\tilde{z}\|_{\mathbb{Z}(\mathbb{R}_+)}\le M(\eta)\|e^{-\kappa t}P^+\bar{z}\|_{\tilde{\mathbb{E}}(\mathbb{R}_+)}.
\end{equation}
One may now restart right after \eqref{eq:inst8} and imitate all the estimates with the interval $[0,T]$ being replaced by $\mathbb{R}_+$ to obtain the relation
\begin{equation}\label{eq:inst18}
\|e^{-\kappa t}\tilde{z}\|_{\tilde{\mathbb{E}}(\mathbb{R}_+)}+\|e^{-\kappa t}\bar{z}\|_{\tilde{\mathbb{E}}(\mathbb{R}_+)}
\le C\left(\|P^-\tilde{z}_0\|_{X_\gamma}
+\|\phi(\tilde{z}_0)\|_{X_\gamma}\right).
\end{equation}
At this point we want to emphasize that the term $\|\tilde{z}_0\|_{X_\gamma^0}$ does not appear on the right hand side of \eqref{eq:inst18}, since on $\mathbb{R}_+$ there is no need to apply Proposition \ref{prop:auxinst1}. Furthermore, since we estimate norms on the half line $\mathbb{R}_+$, we may use the first assertion of Proposition \ref{prop:estN} instead of the second one.

Formula \eqref{eq:inst3} for $t=0$ and \eqref{eq:inst18} then imply
\begin{multline*}
\|P^+\tilde{z}_0\|_{X_\gamma^0}\le M(\omega,\eta)\|e^{-\kappa t}\bar{z}\|_{L_\infty(\mathbb{R}_+;X_\gamma^0)}\le M_1(\omega,\eta)\|e^{-\kappa t}\bar{z}\|_{\tilde{\mathbb{E}}(\mathbb{R}_+)}\\
\le C\left(\|P^-\tilde{z}_0\|_{X_\gamma^0}
+\|\phi(\tilde{z}_0)\|_{X_\gamma}\right),
\end{multline*}
since $\tilde{\mathbb{E}}(\mathbb{R}_+)\hookrightarrow BUC(\mathbb{R}_+;X_\gamma^0)$. Due to the fact that $\phi(0)=0$ and $\phi'(0)=0$, we may decrease $\delta>0$ (if necessary) to achieve that
$$\|\phi(\tilde{z}_0)\|_{X_\gamma}\le \frac{1}{2}(\|P^-\tilde{z}_0\|_{X_\gamma^0}+\|P^+\tilde{z}_0\|_{X_\gamma^0}),$$
whenever $\tilde{z}_0\in \delta B_{X_\gamma^0}(0)$.
Finally, this yields the relation
$$\|P^+\tilde{z}_0\|_{X_\gamma^0}\le C\|P^-\tilde{z}_0\|_{X_\gamma^0}.$$
Choosing $\tilde{z}_0\in \delta B_{X_\gamma^0}(0)$ in such a way that $P^-\tilde{z}_0=0$ and $P^+\tilde{z}_0\neq 0$ we have a contradiction. The proof is complete.
\end{proof}
We complete this section by considering the special case $G=B_R(0)$ and give a result on stability in dependence on the radius $R>0$.
\begin{cor}
Let the conditions of Theorem \ref{mainresultstab} be satisfied and let the surface tension $\sigma>0$ be fixed. Denote by $\lambda_1^*>0$ the first nontrivial eigenvalue of the negative Neumann Laplacian on the unit ball $B_1(0)$. Then the following assertions hold.
\begin{enumerate}
\item If $R^2\Jump{\rho}\gamma_a/\sigma<\lambda_1^*$, then $(u_*,h_*)=(0,0)$ is exponentially stable in the sense of Theorem \ref{mainresultstab}.
\item If $\Jump{\rho}>0$ and $R^2\Jump{\rho}\gamma_a/\sigma>\lambda_1^*$, then $(u_*,h_*)=(0,0)$ is unstable in the sense of Theorem \ref{mainresultstab}.
\end{enumerate}
\end{cor}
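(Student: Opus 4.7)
The plan is to deduce the corollary from Theorem \ref{mainresultstab} by identifying how the first nontrivial Neumann eigenvalue on $G = B_R(0)$ depends on $R$. Theorem \ref{mainresultstab} gives stability when $\Jump{\rho}\gamma_a/\sigma < \lambda_1$ and instability (for $\Jump{\rho}>0$) when $\Jump{\rho}\gamma_a/\sigma > \lambda_1$, where $\lambda_1$ is the first nontrivial eigenvalue of $-\Delta_N$ on $G$. It therefore suffices to establish the scaling identity $\lambda_1 = \lambda_1^*/R^2$ and then substitute.

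The scaling identity is obtained via the change of variables $y = x'/R$, mapping $B_R(0) \subset \mathbb{R}^2$ diffeomorphically onto $B_1(0)$. For a Neumann eigenfunction $\phi$ on $B_R(0)$ with eigenvalue $\lambda$, set $\psi(y) := \phi(Ry)$. Then $-\Delta_y \psi(y) = -R^2(\Delta_{x'}\phi)(Ry) = R^2 \lambda\, \psi(y)$, and $\partial_{\nu_{\partial B_1}} \psi(y) = R\,\partial_{\nu_{\partial B_R}}\phi(Ry) = 0$ on $\partial B_1(0)$. Also $\int_{B_R(0)} \phi\, dx' = 0$ if and only if $\int_{B_1(0)} \psi\, dy = 0$. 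Hence the map $\phi \mapsto \psi$ is a bijection between the space of mean-zero Neumann eigenfunctions on $B_R(0)$ with eigenvalue $\lambda$ and the analogous space on $B_1(0)$ with eigenvalue $R^2\lambda$. In particular, the first nontrivial eigenvalue $\lambda_1$ on $B_R(0)$ satisfies $R^2 \lambda_1 = \lambda_1^*$.

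With this identity in hand, the condition $\Jump{\rho}\gamma_a/\sigma < \lambda_1$ from Theorem \ref{mainresultstab}(1) becomes $\Jump{\rho}\gamma_a/\sigma < \lambda_1^*/R^2$, equivalent to $R^2\Jump{\rho}\gamma_a/\sigma < \lambda_1^*$, which is assertion (1) of the corollary. Likewise, $\Jump{\rho}\gamma_a/\sigma > \lambda_1$ becomes $R^2\Jump{\rho}\gamma_a/\sigma > \lambda_1^*$, yielding assertion (2) from Theorem \ref{mainresultstab}(2).

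There is no essential obstacle here; the only care needed is to verify that the rescaling preserves the relevant function space (mean-zero $W_p$-Sobolev functions on the disk with vanishing normal derivative) and matches the definition of $\lambda_1^*$ used in the statement. Since these are routine, the corollary follows immediately once the scaling $\lambda_1 = \lambda_1^*/R^2$ is noted.
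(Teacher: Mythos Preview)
Your proof is correct and follows essentially the same approach as the paper: apply Theorem \ref{mainresultstab} and use the scaling identity $\lambda_1(R)=\lambda_1^*/R^2$. The paper simply states this identity as ``an easy computation,'' whereas you spell out the change of variables $y=x'/R$ explicitly; otherwise the arguments are identical.
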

\begin{proof}
The assertions follow from Theorem \ref{mainresultstab}. Indeed, denoting by $\lambda_1(R)>0$ the first nontrivial eigenvalue of the Neumann Laplacian on $B_R(0)$, Theorem \ref{mainresultstab} yields that $(0,0)$ is exponentially stable if $\Jump{\rho}\gamma_a/\sigma<\lambda_1(R)$ and unstable if  $\Jump{\rho}\gamma_a/\sigma>\lambda_1(R)$ and $\Jump{\rho}>0$. An easy computation yields that $\lambda_1(R)=\lambda_1^*/R^2$. This concludes the proof of the corollary.
\end{proof}


%
%
%

\chapter{Bifurcation at a multiple eigenvalue}\label{chptr:bifurc}

In this chapter we consider the special case $G=B_R:=B_R(0)\subset\mathbb{R}^2$ for some radius $R>0$. Proposition \ref{spectrumLR} implies that an eigenvalue of the linearization $L$ crosses the imaginary axis through zero if $\Jump{\rho}\gamma_a/\sigma=\lambda_1$, where $\lambda_1>0$ is the first nontrivial eigenvalue of the negative Neumann Laplacian in $G$. This suggests that $(\lambda_1,0)$ is a bifurcation point for the nonlinear Navier-Stokes system \eqref{eq:NScap2}. Unfortunately, the eigenvalue $\lambda_1>0$ is not simple. Indeed, it is a double eigenvalue being semi-simple. Therefore we cannot directly apply the results of Crandall \& Rabinowitz. Instead, we will use certain symmetry properties of the bifurcation equation to reduce it to a purely one dimensional bifurcation equation which then can be solved by the implicit function theorem. For a general theory concerning bifurcation at multiple eigenvalues, we refer the reader to \cite{Kie85,Sat73,Van82}.

We recall that the set of equilibria $\mathcal{E}$ for height functions $h$ with vanishing mean value is given by
$$
\mathcal{E}=\{(u_*,\pi_*,q_*,h_*):u_*=0,\ \pi_*=const., q_*=\Jump{\pi_*}=0,\ h_*\ \text{solves \eqref{eq:bifurc1}}\}.
$$
Note that if there exist nontrivial equilibria, i.e. $h_*\neq 0$, then these equilibria are determined by the nontrivial solutions of the quasilinear elliptic boundary value problem
\begin{align}\label{eq:bifurc1}
\begin{split}
\sigma \div_{x'}\left(\frac{\nabla_{x'} {h}}{\sqrt{1+|\nabla_{x'} {h}|^2}}\right)+\Jump{\rho}\gamma_a {h}&=0,\quad x'\in B_R(0),\\
\partial_{\nu_{B_R(0)}}{h}&=0,\quad x'\in\partial B_R(0).
\end{split}
\end{align}
Here the differential operators $\nabla_{x'}$ and $\div_{x'}$ act only in the variables ${x'}\in G$. We intend to show that if $\Jump{\rho}\gamma_a/\sigma=\lambda_1$, then there bifurcate nontrivial solutions $h_*$ of \eqref{eq:bifurc1} from the trivial solution $h=0$. To this end, let
\begin{align}\label{eq:bifurc1.1}
\begin{split}
X&:=\{h\in W_p^{1-1/p}(B_R):\int_{B_R}h d{x'}=0\},\\
Y&:=\{h\in W_p^{3-1/p}(B_R)\cap X:\partial_{B_R}h=0\},
\end{split}
\end{align}
and define $F:\mathbb{R}_+\times Y\to X$ by
\begin{equation}\label{eq:DefF}
F(\alpha,h):=\div_{x'}\left(\frac{\nabla_{x'} h}{\sqrt{1+|\nabla_{x'} h|^2}}\right)+\alpha h.
\end{equation}
For $h\in W_p^s(B_R)$, $s>0$, define $(\Gamma_{\mathcal{O}_\phi}h)(\bar{x}'):=h(\mathcal{O}_\phi \bar{x}')$, where
$$\mathcal{O}_\phi:=\begin{pmatrix}
\cos\phi & -\sin\phi\\
\sin\phi & \cos\phi
\end{pmatrix}$$
describes a two-dimensional rotation of $\bar{x}'\in B_R$ through the angle $\phi$. Note that $\mathcal{O}_\phi$ is an orthogonal matrix, i.e.\ $\mathcal{O}_\phi^{\sf T}=\mathcal{O}_\phi^{-1}$. Furthermore, we define $(\Gamma_\mathcal{R}h)(\bar{x}'):=h(\mathcal{R}\bar{x}')$, where $\mathcal{R}\bar{x}':=(\bar{x}_1,-\bar{x}_2)^{\sf T}$. It is easily seen that $\Gamma_j$ leaves both spaces $X$ and $Y$ invariant and one readily computes $\nabla_{\bar{x}'} (\Gamma_{\mathcal{O}_\phi} h)=\mathcal{O}_\phi^{\sf T}(\Gamma_{\mathcal{O}_\phi}\nabla_{x'} h)$, $\Delta_{\bar{x}'} (\Gamma_{\mathcal{O}_\phi} h)=\Gamma_{\mathcal{O}_\phi}\Delta_{x'} h$ and $\nabla_{\bar{x}'}^2(\Gamma_{\mathcal{O}_\phi} h)=\mathcal{O}_\phi^{\sf T}(\Gamma_{\mathcal{O}_\phi}\nabla_{x'}^2 h) \mathcal{O}_\phi$, where $\bar{x}'=\mathcal{O}_\phi^{\sf T} x'$.
Therefore, the identity
$$\div_{x'}\left(\frac{\nabla_{x'} h}{\sqrt{1+|\nabla_{x'} h|^2}}\right)=\frac{\Delta_{x'} h}{\sqrt{1+|\nabla_{x'} h|^2}}-\frac{(\nabla_{x'}^2 h\nabla_{x'} h|\nabla_{x'} h)}{\sqrt{1+|\nabla_{x'} h|^2}^3}$$
implies that $\Gamma_{\mathcal{O}_\phi} F(\alpha,h)=F(\alpha,\Gamma_{\mathcal{O}_\phi} h)$. Similarly it holds that $\Gamma_{\mathcal{R}} F(\alpha,h)=F(\alpha,\Gamma_{\mathcal{R}}h)$. This shows that $F$ is invariant with respect to the group operations of the orthogonal group $O(2)$.

\section{Lyapunov-Schmidt Reduction}\label{LSR}

By the smoothness of the mapping $[\mathbb{R}\ni s\mapsto(1+s^2)^{-1/2}]$ it holds that $F\in C^\infty(\mathbb{R}_+\times Y;X)$ and the first Fr\'{e}chet derivative of $F$ is given by
$$[D_h F(\alpha,h)]\hat{h}=\div_{x'}\left(\frac{\nabla_{x'}\hat{h}}{\sqrt{1+|\nabla_{x'} h|^2}}\right)-
\div_{x'}\left(\frac{\nabla_{x'} h(\nabla_{x'}\hat{h}|\nabla_{x'} h)}{\sqrt{1+|\nabla_{x'} h|^2}^3}\right)+\alpha\hat{h}.$$
Therefore it holds that $D_h F(\lambda_1,0)=\Delta_N+\lambda_1 I$, where $\Delta_N$ denotes the Neumann-Laplacian and $\lambda_1>0$ is the first eigenvalue of $-\Delta_N$ in $X$ (note that $0\notin\sigma(-\Delta_N)$, since all functions in $X$ have a vanishing mean value). For convenience, we set $A:=D_h F(\lambda_1,0)$. We claim that $0\in\sigma(A)$ is a semi-simple eigenvalue. Since the operator $A$ has a compact resolvent, it follows that the spectrum consists only of discrete eigenvalues having finite multiplicity. Therefore it suffices to show that $N(A)=N(A^2)$. To this end, let $0\neq v\in N(A^2)$ and $u:=Av$. Then $u\in N(A)$ and we compute
$$\|u\|_{L_2(B_R)}^2=(Av|u)_{L_2(B_R)}=(v|Au)_{L_2(B_R)}=0,$$
since $A$ is self-adjoint in $L_2(B_R)$. This shows that $u=0$, hence $v\in N(A)$ and $0\in \sigma(A)$ is semi-simple. We note on the go that this implies $X=N(A)\oplus R(A)$. Rewriting the eigenvalue problem $-\Delta_N h=\lambda h$ in polar coordinates $(r,\varphi)$, it follows that the kernel $N(A)$ of $A$ is spanned by the two linearly independent functions
\begin{equation}\label{eq:BesselFunc}
u_1^*(x'):=J_1(j_{1,1}'r/R)\cos\varphi,\quad u_2^*(x'):=J_1(j_{1,1}'r/R)\sin\varphi,
\end{equation}
$r\in [0,R],\ \varphi\in [0,2\pi)$, where $J_1$ is a Bessel function of first order and $j_{1,1}'$ denotes the first zero of the derivative $J_1'$ of $J_1$. Hence $\dim N(A)=2$ (notably, $A$ is a Fredholm operator of index zero). In particular, each $h\in X$ can be written in a unique way as $h=u+v$, where $u\in N(A)$ and $v\in R(A)$. Defining $Ph:=u$, it follows that $P:X\to N(A)$ is a projection onto $N(A)$. With $Q:=I-P$ we also have that $Q:X\to R(A)$ is onto and $Qh=v$. Moreover, it holds that $Y=U\oplus V$, where $U:=N(A)$ and $V:=R(A)\cap Y$.


Let us now split the equation $F(\alpha,h)=0$ into the two parts $PF(\alpha,u+v)=0$ and $QF(\alpha,u+v)=0$. Since the operator $D_vQF(\lambda_1,0)=QD_hF(\lambda_1,0):V\to R(A)$ is an isomorphism, we may solve the equation $QF(\alpha,u+v)=0$ in a neighborhood of $(\lambda_1,0)$ by the implicit function theorem, to obtain a unique smooth function $v_*:\mathbb{R}_+\times U\to V$ such that $QF(\alpha,u+v_*(\alpha,u))=0$ for all $(\alpha,u)$ close to $(\lambda_1,0)$. The function $v_*=v_*(\alpha,u)$ has the properties
\begin{enumerate}
\item $v_*(\alpha,0)=0$ if $\alpha>0$ is close to $\lambda_1$;
\item $D_\alpha v_*(\lambda_1,0)=0$, $D_uv_*(\lambda_1,0)=0$;
\item $\Gamma_jv_*(\alpha,u)=v_*(\alpha,\Gamma_ju)$, $j\in\{\mathcal{R},\mathcal{O}_\phi\}$ if $(\alpha,u)$ is close to $(\lambda_1,0)$.
\end{enumerate}
The first two properties follows directly from the (differentiated) equation $QF(\alpha,u+v_*,\alpha,u))=0$ and the fact that $F(\alpha,0)=0$ for each $\alpha\in\mathbb{R}_+$. The last property follows from the uniqueness of $v_*$ and the fact that $\Gamma_j QF(\alpha,u+v)=QF(\alpha,\Gamma_ju+\Gamma_j v)$, $j\in \{\mathcal{R},\mathcal{O}_\phi\}$. To see this, we differentiate the identity $\Gamma_j F(\alpha, u)=F(\alpha,\Gamma_j u)$ with respect to $u$ and evaluate the result at $(\alpha,u)=(\lambda_1,0)$ to obtain the relation
$$\Gamma_j A=A\Gamma_j.$$
In other words, $\Gamma_j$ commutes with the operator $A$. It follows readily that $\Gamma_j$ leaves $N(A)$ as well as $R(A)$ invariant, hence $\Gamma_j P=P\Gamma_j$ as well as $\Gamma_j Q= Q\Gamma_j$.

\section{Reduction to a 1-dimensional bifurcation equation}

It remains to study the equation $0=G(\alpha,u)$ for $(\alpha,u)\in\mathbb{R}_+\times U$ in some neighborhood of $(\lambda_1,0)$, where $G(\alpha,u):=PF(\alpha,u+v_*(\alpha,u))$. Let us remark that this equation is purely 2-dimensional. Similarly as above it holds that $\Gamma_j G(\alpha,u)=G(\alpha,\Gamma_j u)$ for $j\in\{\mathcal{R},\mathcal{O}_\phi\}$. Let $\Psi:U\to\mathbb{R}^2$ be defined by $\Psi(u):=(b_1,b_2)^{\sf T}$ for $u=b_1 u_1+b_2 u_2\in U$, $b_k:=(u|u_k)_{L_2(B_R)}\in \mathbb{R}$, where $u_j:=u_j^*/\|u_j^*\|_{L_2}$. It follows that $\Psi$ is an isomorphism with inverse $\Psi^{-1}$ given by $\Psi^{-1}(b_1,b_2)=b_1 u_1+b_2 u_2$. Consider now the equation
$$g(\alpha,b):=\Psi G(\alpha,\Psi^{-1}b)=0,\quad b\in\mathbb{R}^2,$$
and define $\Gamma_j^0:=\Psi\Gamma_j\Psi^{-1}$ on $\mathbb{R}^2$ for $j\in\{\mathcal{R},\mathcal{O}_\phi\}$. With these definitions it holds that $\Gamma_j^0g(\alpha,b)=g(\alpha,\Gamma_j^0 b)$ for $j\in\{\mathcal{R},\mathcal{O}_\phi\}$. A short computation also shows that
\begin{itemize}
\item $\Gamma_{\mathcal{O}_\phi}^0b=\mathcal{O}_\phi b$;
\item $\Gamma_{\mathcal{R}}^0b=\mathcal{R}b$;
\end{itemize}
hold for each $b\in\mathbb{R}^2$. We will use these two properties to reduce $g(\alpha,b)=0$ to a purely one dimensional equation. Choose $\phi$ in such a way that $\mathcal{O}_\phi b=s e_1=(s,0)^{\sf T}$ for some $s\in\mathbb{R}$ close to $0$. Then $g(\alpha,b)=0$ if and only if $g(\alpha,s e_1)=0$ by the first property. Furthermore $\mathcal{R}e_1=e_1$, hence $$g(\alpha,s e_1)=g(\alpha,s \mathcal{R}e_1)=\mathcal{R}g(\alpha,s e_1).$$
This in turn yields that $g_2(\alpha,se_1)=0$ is always satisfied and therefore we have reduced the equation $g(\alpha,b)=0$ to $g_1(\alpha,s e_1)=0$ for $(\alpha,s)\in\mathbb{R}_+\times\mathbb{R}$ close to $(\lambda_1,0)$.

Due to the fact that $D_\alpha g_1(\lambda_1,0)=0$, we cannot simply solve the equation $g_1(\alpha,se_1)=0$ for $\alpha$ in a neighborhood of $(\lambda_1,0)$ by the implicit function theorem.
To this end we define a new function
$$\tilde{g}(\alpha,s):=\begin{cases}
g_1(\alpha,se_1)/s,&\ s\neq 0,\\
D_b g_1(\alpha,0)e_1,&\ s=0.
\end{cases}
$$
Since $D_b g_1(\lambda_1,0)=0$, we have $\tilde{g}(\lambda_1,0)=0$. Moreover we compute
$$D_\alpha \tilde{g}(\lambda_1,0)=D_\alpha D_bg_1(\lambda_1,0)e_1.$$
Since $D_\alpha D_h F(\lambda_1,0)=I$ and
$$D_\alpha D_bg(\lambda_1,0)e_1=\Psi PD_\alpha D_h F(\lambda_1,0)\Psi^{-1}e_1=e_1,$$
it follows that $D_\alpha D_bg_1(\lambda_1,0)e_1=1\neq 0$. Hence, the implicit function theorem yields the existence of a smooth function $\alpha:(-\eta,\eta)\to\mathbb{R}$ with $\alpha(0)=\lambda_1$, such that $\tilde{g}(\alpha(s),s)=0$ for all $s\in (-\eta,\eta)$ and some (small) $\eta>0$. This in turn yields the following result.
\begin{thm}\label{thm:bifurc1}
Modulo the action in $O(2)$, all solutions of $F(\alpha,h)=0$ in a neighborhood $\mathcal{U}$ of $(\lambda_1,0)$ in $\mathbb{R}_+\times Y$ are given by
$$F^{-1}(0)\cap\mathcal{U}=\{(\alpha(s),su_1+y(s)):|s|<\eta\}\cup\{(\alpha,0):
(\alpha,0)\in\mathcal{U}\},$$
where $\alpha\in C^\infty((-\eta,\eta);\mathbb{R})$ with $\alpha(0)=\lambda_1>0$ and $y\in C^\infty((-\eta,\eta);R(A)\cap Y)$ with $y(0)=y'(0)=0$ are uniquely determined.
\end{thm}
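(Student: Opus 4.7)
The plan is to assemble the pieces already prepared in the two preceding subsections. The Lyapunov--Schmidt reduction has provided the smooth function $v_\ast:\mathbb{R}_+\times U\to V$ with $F(\alpha,u+v_\ast(\alpha,u))\in U$ for all $(\alpha,u)$ near $(\lambda_1,0)$, so every zero of $F$ close to $(\lambda_1,0)$ is of the form $h=u+v_\ast(\alpha,u)$ with $(\alpha,u)$ solving the reduced equation $G(\alpha,u)=PF(\alpha,u+v_\ast(\alpha,u))=0$. After identifying $U\cong\mathbb{R}^2$ via $\Psi$, the reduction to the single scalar equation $g_1(\alpha,se_1)=0$ has been carried out using the $O(2)$-invariance: the rotation subgroup allows one to normalize $b\in\mathbb{R}^2$ to the form $se_1$, and the reflection $\mathcal{R}$ forces the second component $g_2(\alpha,se_1)$ to vanish identically.

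The next step is to apply the implicit function theorem to the rescaled function
\[
\tilde{g}(\alpha,s):=\begin{cases} g_1(\alpha,se_1)/s,&s\neq 0,\\ D_b g_1(\alpha,0)e_1,&s=0,\end{cases}
\]
which is smooth because $g_1(\alpha,0)\equiv 0$. Since $D_b g_1(\lambda_1,0)=0$ and $D_\alpha D_b g_1(\lambda_1,0)e_1=1$, the theorem yields $\alpha\in C^\infty((-\eta,\eta);\mathbb{R})$ with $\alpha(0)=\lambda_1$ and $\tilde{g}(\alpha(s),s)=0$ for $|s|<\eta$, which is uniquely determined. Setting
\[
y(s):=v_\ast(\alpha(s),su_1),\qquad |s|<\eta,
\]
one then has $F(\alpha(s),su_1+y(s))=0$. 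The property $v_\ast(\alpha,0)=0$ gives $y(0)=0$, and the chain rule together with $D_\alpha v_\ast(\lambda_1,0)=0$ and $D_u v_\ast(\lambda_1,0)=0$ (both recorded in the reduction) yields
\[
y'(0)=D_\alpha v_\ast(\lambda_1,0)\,\alpha'(0)+D_u v_\ast(\lambda_1,0)[u_1]=0.
\]

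To conclude that every solution in a small neighborhood of $(\lambda_1,0)$ is accounted for modulo $O(2)$, I would argue as follows. Suppose $(\alpha,h)\in\mathcal{U}$ satisfies $F(\alpha,h)=0$ with $h\neq 0$. The uniqueness part of the Lyapunov--Schmidt step forces $h=u+v_\ast(\alpha,u)$ with $u=Ph$. If $u=0$, then $h=v_\ast(\alpha,0)=0$, a contradiction, so $\Psi(u)=b\neq 0$. Choosing $\phi$ such that $\mathcal{O}_\phi b=se_1$ with $s=|b|$ and using $\Gamma_{\mathcal{O}_\phi} v_\ast(\alpha,u)=v_\ast(\alpha,\Gamma_{\mathcal{O}_\phi}u)$, we see that the rotated solution $\Gamma_{\mathcal{O}_\phi}h$ sits on the branch $(\alpha,su_1+v_\ast(\alpha,su_1))$; the scalar equation $\tilde{g}(\alpha,s)=0$ together with the uniqueness in the implicit function theorem then forces $\alpha=\alpha(s)$, whence $\Gamma_{\mathcal{O}_\phi}h=su_1+y(s)$ as claimed. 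The trivial branch $\{(\alpha,0)\}$ covers the remaining case $u=0$.

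The main point that needs care is not the implicit function theorem itself but the equivariance bookkeeping: one must verify that the projections $P,Q$ and the solution $v_\ast$ commute with every $\Gamma\in O(2)$, which is inherited from $\Gamma A=A\Gamma$ and the uniqueness in the $Q$-equation, and that the induced action on $\mathbb{R}^2$ is indeed the standard linear action of $O(2)$ (both facts have been verified in the Lyapunov--Schmidt step). Given this, no further computation is required, since the $\mathcal{R}$-invariance alone rules out $g_2$ and the $SO(2)$-invariance alone normalizes $b$ to $se_1$, leaving only a one-parameter scalar branch to be determined.
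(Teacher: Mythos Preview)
Your proposal is correct and follows essentially the same approach as the paper. The paper's proof is in fact a one-liner: it simply sets $y(s):=v_\ast(\alpha(s),su_1)$ and notes that the stated properties of $y$ follow from the already-established properties of $v_\ast$; you have spelled out exactly this, together with the (implicit in the paper) verification that every nearby zero is accounted for modulo $O(2)$.
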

\begin{proof}
Define $y(s):=v_*(\alpha(s),s u_1)$. Then the assertions for $y$ follow from the properties of the function $v_*$.
\end{proof}
Let us now show that the bifurcation in $(\lambda_1,0)$ is of \emph{subcritical type}, i.e.\ $s\alpha'(s)<0$ for $0<|s|<\delta$ and some $\delta>0$. We first prove that $\alpha'(0)=0$. To this end we differentiate the expression $F(\alpha(s),su_1+y(s))=0$ with respect to $s$ twice and evaluate at $s=0$ to obtain
$$0=\Delta_N y''(0)+\lambda_1 y''(0)+2\alpha'(0)u_1.$$
Multiplying this identity with $u_1$ in $L_2(B_R)$ and integrating by parts yields\\ $\alpha'(0)\|u_1\|_{L_2(B_R)}^2=0$, since $u_1\in N(A)$. This implies that $\alpha'(0)=0$, since $u_1\neq 0$. Differentiating $F(\alpha(s),su_1+y(s))=0$ a third time yields in $s=0$
$$0=\Delta_N y'''(0)+\lambda_1 y'''(0)-3\div (\nabla u_1|\nabla u_1|^2)+3\alpha''(0)u_1,$$
where we have used the fact that $\alpha'(0)=0$. We test the latter equation by $u_1$ in $L_2(B_R)$ and integrate by parts to the result
$$0=\alpha''(0)\|u_1\|_{L_2(B_R)}^2+\|u_1\|_{L_4(B_R)}^4,$$
hence $\alpha''(0)<0$ since $u_1\neq 0$.
\begin{cor}\label{cor:bifurc1}
The bifurcation in Theorem \ref{thm:bifurc1} at $(\lambda_1,0)$ is of subcritical type, i.e.\ $s\alpha'(s)<0$ for $0<|s|<\delta$ and some $\delta>0$.
\end{cor}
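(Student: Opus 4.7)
The plan is essentially to read off the conclusion from the two identities that were just derived immediately before the corollary statement, i.e.\ $\alpha'(0)=0$ and $\alpha''(0)<0$. With those two facts in hand, subcriticality is just a first-year Taylor expansion, so there is no further hard analysis to do; the content of the corollary is that the sign of $\alpha''(0)$ is negative, and this has already been established.

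More precisely, I would proceed as follows. First, I record that $\alpha\in C^\infty((-\eta,\eta);\mathbb{R})$ by Theorem \ref{thm:bifurc1}, so Taylor's theorem with remainder gives
\begin{equation*}
\alpha'(s)=\alpha'(0)+\alpha''(0)s+\tfrac{1}{2}\alpha'''(\xi_s)s^{2},\qquad \xi_s\in(0,s),
\end{equation*}
for $|s|<\eta$. Second, I invoke the two preparatory computations in the paragraph above the corollary: differentiating the identity $F(\alpha(s),su_{1}+y(s))=0$ twice at $s=0$, testing the result against $u_{1}$ in $L_{2}(B_{R})$ and using that $u_{1}\in N(A)$ (so the $\Delta_{N}+\lambda_{1}$ part drops out after integration by parts) yields $\alpha'(0)\|u_{1}\|_{L_{2}(B_{R})}^{2}=0$, hence $\alpha'(0)=0$. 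Differentiating a third time at $s=0$ and testing against $u_{1}$ in the same way yields
\begin{equation*}
\alpha''(0)\|u_{1}\|_{L_{2}(B_{R})}^{2}+\|\nabla u_{1}\|_{L_{4}(B_{R})}^{4}=0,
\end{equation*}
so $\alpha''(0)=-\|\nabla u_{1}\|_{L_{4}(B_{R})}^{4}/\|u_{1}\|_{L_{2}(B_{R})}^{2}<0$, since $u_{1}$, given by \eqref{eq:BesselFunc} normalized in $L_{2}$, is non-constant and hence $\nabla u_{1}\not\equiv 0$.

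Inserting $\alpha'(0)=0$ into the Taylor expansion above gives
\begin{equation*}
s\alpha'(s)=\alpha''(0)\,s^{2}+\tfrac{1}{2}\alpha'''(\xi_s)\,s^{3},
\end{equation*}
and since $\alpha''(0)<0$ and $\alpha'''$ is bounded on a neighbourhood of $0$, one can pick $\delta\in(0,\eta)$ small enough that the quadratic term dominates, yielding $s\alpha'(s)<0$ for all $s$ with $0<|s|<\delta$. This is exactly the subcriticality claim.

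The only step that requires care, and which I would expect to be the sole potential pitfall, is the third-derivative computation already performed in the text: one must verify that the nonlinear contribution coming from differentiating $\operatorname{div}(\nabla h/\sqrt{1+|\nabla h|^{2}})$ three times at $h=0$ indeed produces the term $-3\operatorname{div}(\nabla u_{1}|\nabla u_{1}|^{2})$, whose $L_{2}$ pairing with $u_{1}$ (after integration by parts, using the Neumann condition $\partial_{\nu}u_{1}=0$ on $\partial B_{R}$) is $3\|\nabla u_{1}\|_{L_{4}(B_{R})}^{4}$. Granting the expansion $(1+|\nabla h|^{2})^{-1/2}=1-\tfrac12|\nabla h|^{2}+O(|\nabla h|^{4})$, this is a direct computation, and it is exactly the step that forces the coefficient $\alpha''(0)$ to be strictly negative rather than merely non-zero. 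Once this sign is secured, the rest is the elementary Taylor argument above.
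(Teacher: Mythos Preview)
Your proposal is correct and follows exactly the paper's approach: the computations $\alpha'(0)=0$ and $\alpha''(0)<0$ are carried out in the paragraph immediately preceding the corollary, and the subcriticality statement then follows by the elementary Taylor argument you spell out (which the paper leaves implicit). One small remark: you write $\|\nabla u_1\|_{L_4(B_R)}^4$ for the nonlinear contribution, whereas the paper writes $\|u_1\|_{L_4(B_R)}^4$; your version is the one that actually comes out of integrating $-3\operatorname{div}(\nabla u_1|\nabla u_1|^2)$ against $u_1$ by parts, so this appears to be a typo in the paper rather than an error on your part, and in any case both quantities are strictly positive.
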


\section{Instability of the bifurcating equilibria}

This section is devoted to the proof of instability of the bifurcating equilibria for the complete system \eqref{eq:NScap2}. For the sake of readability we shall replace the functions $u_1$ and $u_2$ from the preceding sections, which span the 2D-kernel of the operator $A$, by $h_1$ and $h_2$, respectively. Hence, it holds that $F(\alpha(s),sh_1+y(s))=0$ for all $|s|<\delta$ and some small $\delta>0$, where $y(s)=v_*(\alpha(s),sh_1)$ with $y(0)=y'(0)=0$.

\textbf{Step 1}. Let us first show that $0\in \sigma(D_h F(\alpha(s),s h_1+y(s)))$ for each $s\in (-\delta,\delta)$. For convenience we set $h_1(s):=s h_1+y(s)$ for  $s\in(-\delta,\delta)$ and $\Gamma_\phi:=\Gamma_{\mathcal{O}_\phi}$. The following Proposition will be of importance.
\begin{prop}\label{prop:bifurc1}
Let $|s|<\delta$ be fixed. The mapping $[\mathbb{R}\in\phi\mapsto\Gamma_\phi h_1(s)\in Y]$ is continuously Fr\'{e}chet differentiable and its derivative is given by
$$D_\phi \Gamma_\phi h_1(s)=s(I+D_2v_*(\alpha(s),s\Gamma_\phi h_1))D_\phi\Gamma_\phi h_1,$$
with $\Gamma_\phi h_1= h_1\cos\phi- h_2\sin\phi$ and $D_\phi\Gamma_\phi h_1=-(h_1\sin\phi+ h_2\cos\phi)$.
\end{prop}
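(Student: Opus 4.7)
The proposition is essentially a direct consequence of the $O(2)$-equivariance of $v_*$ recorded as property (3) in Section~\ref{LSR}, combined with the chain rule. First I would use the equivariance $v_*(\alpha,\Gamma_\phi u)=\Gamma_\phi v_*(\alpha,u)$ (valid for $(\alpha,u)$ close to $(\lambda_1,0)$, which is the case here since $|s|<\delta$ can be taken small enough to stay in this neighborhood) together with the linearity of $\Gamma_\phi$ to rewrite
\begin{equation*}
\Gamma_\phi h_1(s) \;=\; s\,\Gamma_\phi h_1 + \Gamma_\phi y(s) \;=\; s\,\Gamma_\phi h_1 + \Gamma_\phi v_*(\alpha(s),sh_1) \;=\; s\,\Gamma_\phi h_1 + v_*\bigl(\alpha(s),\,s\,\Gamma_\phi h_1\bigr).
\end{equation*}
This reduces the problem to differentiating each summand with respect to $\phi$.

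For the first summand, I would pass to polar coordinates $(r,\varphi)$ and use the representation \eqref{eq:BesselFunc} of $h_1,h_2$, together with the identity $\mathcal{O}_\phi(r\cos\varphi,r\sin\varphi)^{\sf T}=(r\cos(\varphi+\phi),r\sin(\varphi+\phi))^{\sf T}$ and the angle-addition formulas for sine and cosine, to obtain $\Gamma_\phi h_1 = h_1\cos\phi - h_2\sin\phi$. Since $h_1,h_2\in Y$ are fixed elements and the $\phi$-dependence enters only through the scalar coefficients $\cos\phi,\sin\phi$, the map $\phi\mapsto \Gamma_\phi h_1$ is $C^\infty$ from $\mathbb{R}$ into $Y$, and its derivative equals $-(h_1\sin\phi+h_2\cos\phi)$, which is precisely the claimed expression for $D_\phi\Gamma_\phi h_1$. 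For the second summand, since $v_*\in C^\infty$ and $\alpha(s)$ is held fixed, the chain rule applied to the smooth composition $\phi\mapsto v_*(\alpha(s),s\,\Gamma_\phi h_1)$ gives
\begin{equation*}
D_\phi\, v_*\bigl(\alpha(s),s\,\Gamma_\phi h_1\bigr) \;=\; D_2 v_*\bigl(\alpha(s),s\,\Gamma_\phi h_1\bigr)\cdot s\, D_\phi\Gamma_\phi h_1.
\end{equation*}
Adding the two contributions factors out $s D_\phi\Gamma_\phi h_1$ and yields the formula
\begin{equation*}
D_\phi \Gamma_\phi h_1(s) = s\bigl(I + D_2 v_*(\alpha(s),s\Gamma_\phi h_1)\bigr) D_\phi\Gamma_\phi h_1,
\end{equation*}
while continuous dependence of this derivative on $\phi$ follows from the continuity of $D_2v_*$ on its domain and the continuity of $\phi\mapsto\Gamma_\phi h_1$ in $Y$.

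The only conceptual step is the equivariance identity for $v_*$; once that is invoked, the rest is a routine application of the chain rule, so I do not expect any genuine obstacle. The only point that merits a brief verification is that $(\alpha(s),s\,\Gamma_\phi h_1)$ stays in the neighborhood on which $v_*$ is defined for all $\phi\in\mathbb{R}$, which is immediate from $\|\Gamma_\phi h_1\|_Y=\|h_1\|_Y$ (the operator $\Gamma_\phi$ is an isometry on $Y$) and $|s|<\delta$.
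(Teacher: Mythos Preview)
Your proposal is correct and follows essentially the same route as the paper: invoke the equivariance $\Gamma_\phi v_*(\alpha(s),sh_1)=v_*(\alpha(s),s\Gamma_\phi h_1)$ to reduce matters to differentiating $\phi\mapsto\Gamma_\phi h_1$, compute the latter explicitly via polar coordinates and the angle-addition formula, and finish with the chain rule. Your additional remark that $\Gamma_\phi$ is an isometry on $Y$ (so that $(\alpha(s),s\Gamma_\phi h_1)$ stays in the domain of $v_*$ for all $\phi$) is a useful observation that the paper leaves implicit.
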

\begin{proof}
By smoothness of $v_*$ and since $\Gamma_\phi v_*(\alpha(s),s h_1)=v_*(\alpha(s),s\Gamma_\phi h_1)$ it suffices to show that the mapping
$$[\mathbb{R}\in\phi\mapsto\Gamma_\phi h_1\in N(A)]$$
is continuously Fr\'{e}chet differentiable.

Observe that $(\Gamma_\phi h_1)(x')=J_1(j_{1,1}'r/R)\cos(\varphi+\phi)$, where $x'=(r\cos\varphi,r\sin\varphi)$. Since
$$\cos(\varphi+\phi)=\cos\phi\cos\varphi-\sin\phi\sin\varphi$$
it follows that
$$\Gamma_\phi h_1= h_1\cos\phi- h_2\sin\phi,$$
hence $\Gamma_\phi h_1\in N(A)$ for each $\phi\in\mathbb{R}$. Furthermore we may differentiate the last identity with respect to $\phi$ to obtain
$$D_\phi\Gamma_\phi h_1=-(h_1\sin\phi+ h_2\cos\phi)\in N(A).$$
This completes the proof.
\end{proof}
Applying Proposition \ref{prop:bifurc1} in connection with the identity $0=F(\alpha(s),\Gamma_\phi h_1(s))$, we obtain
$$0=D_\phi F(\alpha(s),\Gamma_\phi h_1(s))=D_hF(\alpha(s),\Gamma_\phi h_1(s))D_\phi\Gamma_\phi h_1(s).$$
Evaluating the last equation at $\phi=0$ yields
$$0=D_hF(\alpha(s),h_1(s))D_\phi\Gamma_\phi h_1(s)|_{\phi=0},$$
wherefore $D_\phi\Gamma_\phi h_1(s)|_{\phi=0}\in N(D_hF(\alpha(s),h_1(s))$. By Proposition \ref{prop:bifurc1} we have
$$D_\phi\Gamma_\phi h_1(s)|_{\phi=0}=-sh_2-sD_u v_*(\alpha(s),s h_1)h_2.$$
In particular
\begin{equation}\label{eq:eigenfunctioncurve}
h_2(s):=h_2+D_u v_*(\alpha(s),s h_1)h_2\neq 0,
\end{equation}
since $0\neq h_2\in N(A)$ and $D_u v_*(\alpha(s),s h_1)h_2\in R(A)$. Moreover, the identity
$$D_hF(\alpha(s),h_1(s))h_2(s)=0$$
holds for all $s\in (-\delta,\delta)$, i.e.\ $[s\mapsto h_2(s)]$ is a smooth nontrivial eigenfunction curve for the eigenvalue $0\in \sigma(D_h F(\alpha(s),h_1(s)))$.

\textbf{Step 2}. The goal of this step is to introduce an operator $\mathcal{L}$ as an analogue of the operator $L$ in Section \ref{EquilLin}, which represents the full linearization of \eqref{eq:NScap2} in one of the bifurcating equilibria given by Theorem \ref{thm:bifurc1}. As in Section \ref{EquilLin} we will first show how to reproduce the pressure out of a given velocity field and a given height function. For that purpose we define
$$
((u|v)):=\int_{\Omega}u(x)\cdot v(x)\det D\Theta_h(x) dx,
u\in L_p(\Omega)^3,\ v\in L_{p'}(\Omega)^3,
$$
where $h$ is an admissible height function, such that $\Theta_h$ and its inverse $\Theta_h^{-1}$ are well-defined. Then, for $f\in L_p(\Omega)^3$ and $g\in W_p^{1-1/p}(\Sigma)$, we consider the weak transmission problem
\begin{align}\label{eq:bifurc7}
\begin{split}
((\mathcal{M}(h)\nabla\pi|\mathcal{M}(h)\nabla\phi))&=(f|\nabla\phi)_{L_2},\quad \phi\in W_{p'}^1(\Omega),\\
\Jump{\rho\pi}&=g,\quad\text{on}\ \Sigma,
\end{split}
\end{align}
where $\mathcal{M}(h)\in\mathbb{R}^{3\times 3}$ and $\|\mathcal{M}(h)-I\|_{L_\infty}\to 0$ as $\|h\|_{W_\infty^1}\to 0$. We may rewrite the term in brackets $((\cdot|\cdot))$ as follows
\begin{multline*}
((\mathcal{M}(h)\nabla\pi|\mathcal{M}(h)\nabla\phi))=(\nabla\pi|\nabla\phi)_{L_2}
+(h\varphi'\mathcal{M}(h)^{\sf T}\mathcal{M}(h)\nabla \pi|\nabla\phi)_{L_2}+\\
+((\mathcal{M}(h)^{\sf T}\mathcal{M}(h)-I)\nabla \pi|\nabla\phi)_{L_2},
\end{multline*}
where we used the fact that $\det D\Theta_h=1+h\varphi'$. Note, that for each $\varepsilon>0$ there exists $\eta>0$ such that
$$|((\mathcal{M}(h)\nabla\pi|\mathcal{M}(h)\nabla\phi))-(\nabla\pi|\nabla\phi)_{L_2}|
\le\varepsilon \|\nabla\pi\|_{L_p(\Omega)}\|\nabla\phi\|_{L_{p'}(\Omega)},$$
provided that $\|h\|_{W_\infty^1(\Sigma)}\le\eta$. A Neumann series argument in combination with Lemma \ref{lem:appaux0} then implies that there exists $\eta_0>0$ such that for each $h\in W_\infty^1(\Sigma)$ with $\|h\|_{W_\infty^1(\Sigma)}\le\eta_0$, problem \eqref{eq:bifurc7} admits a unique solution $\pi\in \dot{W}_p^1(\Omega\backslash\Sigma)$.

If we linearize \eqref{eq:NScap2} around a bifurcating equilibrium $(0,c,0,h_1(s))$ (given by Theorem \ref{thm:bifurc1}, where $c$ is a constant) we obtain the following linear problem.
\begin{align}\label{eq:bifurc8}
\begin{split}
\partial_t(\rho u)-\mu\Delta u+(I-M_0(h_1(s)))\nabla\pi&=\mathcal{F}_u(u,h_1(s)),\quad \text{in}\ \Omega\backslash\Sigma,\\
\div u-(M_0(h_1(s))\nabla| u)&=0,\quad \text{in}\ \Omega\backslash\Sigma,\\
-\Jump{\mu\partial_3 v}-\Jump{\mu\nabla_{x'}w}&=\mathcal{G}_v(u,h_1(s)),\quad \text{on}\ \Sigma,\\
-2\Jump{\mu\partial_3 w}+\Jump{\pi}&=\mathcal{G}_w(u,h_1(s))+
\frac{\Jump{\rho}\gamma_a}{\alpha(s)}D_hF(\alpha(s),h_1(s))h,\quad \text{on}\ \Sigma,\\
\partial_t h-(u|e_3)&=-(v|\nabla_{x'} h_1(s)),\quad \text{on}\ \Sigma,\\
P_{S_1}\left(\mu(Du)\nu_{S_1}\right)&=0,\quad \text{on}\ S_1\backslash\partial\Sigma,\\
(u|\nu_{S_1})&=0,\quad \text{on}\ S_1\backslash\partial\Sigma,\\
u&=0,\quad \text{on}\ S_2,\\
\partial_{\nu_{\partial G}}h&=0,\quad \text{on}\ \Sigma,\\
u(0)&=u_0,\quad \text{in}\ \Omega\backslash\Sigma,\\
h(0)&=h_0,\quad \text{on}\ \Sigma.
\end{split}
\end{align}
Here the surface tension $\sigma=\sigma(s)$ is given by $\sigma(s)=\frac{\Jump{\rho}\gamma_a}{\alpha(s)}$ and we have set
$$\mathcal{F}_u(u,h):=-\mu(M_1(h):\nabla^2 u+M_2(h)\nabla u),$$
\begin{multline*}
\mathcal{G}_v(u,h):=-\Jump{\mu(\nabla_{x'} v+\nabla_{x'} v^{\sf T})}\nabla_{x'} h+|\nabla_{x'} h|^2\Jump{\mu\partial_3 v}+\\
+\left((1+|\nabla_{x'} h|^2)\Jump{\mu\partial_3 w}-(\nabla_{x'} h|\Jump{\mu\nabla_{x'} w})\right)\nabla_{x'} h,
\end{multline*}
$$\mathcal{G}_w(u,h):=-(\nabla_{x'} h|\Jump{\mu\nabla_{x'} w})-(\nabla_{x'} h|\Jump{\mu\partial_3 v})+|\nabla_{x'} h|^2\Jump{\mu \partial_4 w},$$
and
$$F(\alpha,h):=\div_{x'}\left(\frac{\nabla_{x'} h}{\sqrt{1+|\nabla_{x'} h|^2}}\right)+\alpha h.$$
For fixed $s\in (-\delta,\delta)$ we define a linear operator $\mathcal{L}(s):\mathcal{X}_1\to \mathcal{X}_0$ by
$$\mathcal{L}(s)(u,h):=
\begin{pmatrix}
\frac{\mu}{\rho}\Delta u-\frac{1}{\rho}(I-M_0(h_1(s)))\nabla p+\frac{1}{\rho}\mathcal{F}_u(u,h_1(s))\\
(u|e_3)-(v|\nabla_{x'} h_1(s))
\end{pmatrix}
$$
where $\mathcal{X}_0:=\tilde{L}_{p,\sigma}(\Omega)\times \{h\in W_p^{2-1/p}(\Sigma):\int_G h\ dx'=0,\ \partial_{\nu_{\partial G}}h=0\}$,
$$\tilde{L}_{p,\sigma}(\Omega):=\{u\in L_p(\Omega)^3:u\circ\Theta_{h_1(s)}^{-1}\in L_{p,\sigma}(\Theta_{h_1(s)}\Omega)\},\ \bar{\mathcal{X}}_1=H_p^2(\Omega\backslash \Sigma)^3\times W_p^{3-1/p}(\Sigma)$$
and
\begin{multline*}
\mathcal{X}_1:=D(\mathcal{L}(s))=\{(u,h)\in \mathcal{X}_0\cap \bar{\mathcal{X}}_1:P_{\Sigma}(\Jump{\mu Du}e_3)+\mathcal{G}_v(u,h_1(s))=0,\\ \Jump{u}=0,\ P_{S_1}\left(\mu(Du)\nu_{S_1}\right)=0,\ (u|\nu_{S_1})=0,\ \partial_{\nu_{\partial G}} h=0\}.
\end{multline*}
The function $p\in \dot{W}_p^1(\Omega\backslash \Sigma)$ in the definition of $\mathcal{L}(s)$ is determined as the solution of the weak transmission problem
\begin{align}\label{eq:bifurc9}
\begin{split}
\left(\left(\frac{1}{\rho}\mathcal{M}(h_1(s))\nabla p|\mathcal{M}(h_1(s))\nabla\phi\right)\right)&=\left(\left(\frac{\mu}{\rho}\Delta u+\mathcal{F}_u(u,h_1(s))|\mathcal{M}(h_1(s))\nabla\phi\right)\right),\\
\Jump{p}&=2\Jump{\mu\partial_{3}w}+\mathcal{G}_w(u,h_1(s))+\\
&\hspace{1cm}+\frac{\Jump{\rho}\gamma_a}{\alpha(s)}D_hF(\alpha(s),h_1(s))h,\quad\text{on}\ \Sigma.
\end{split}
\end{align}
where $\mathcal{M}(h):=I-M_0(h)$. The first equation has to be satisfied for each $\phi\in W_{p'}^1(\Omega)$. Note that the solution $p\in \dot{W}_p^1(\Omega\backslash \Sigma)$ is well-defined by the above considerations.

Observe also that for $s=0$ the operator $\mathcal{L}(0)$ coincides with the operator $L$ from Section \ref{EquilLin}, since then $h_1(0)=0$. Furthermore, for $u\in D(\mathcal{L}(s))$ we note that
$$\int_\Omega [u\cdot \mathcal{M}(h_1(s))\nabla\phi]\det D\Theta_{h_1(s)} dx=0.$$
This can be seen as follows. Let $\bar{u}:=u\circ\Theta^{-1}_{h_1(s)}$ and $\bar{\phi}:=\phi\circ\Theta^{-1}_{h_1(s)}$. Then we obtain from the transformation formula, integration by parts and the boundary conditions on $\bar{u}$ that
\begin{align*}
\int_\Omega [u\cdot \mathcal{M}(h_1(s))\nabla\phi]\det D\Theta_{h_1(s)} dx&=\int_{\Omega} [\bar{u}\cdot ({\nabla}\bar{\phi})](\Theta_{h_1(s)})\det D\Theta_{h_1(s)} dx\\
&=\int_{\Theta_{h_1(s)}\Omega}\bar{u}\cdot ({\nabla}\bar{\phi}) d\bar{x}\\
&=-\int_{\Theta_{h_1(s)}\Omega}\bar{\phi}\ {\div}\bar{u}\ d\bar{x}.
\end{align*}
Since $\bar{u}=u\circ\Theta^{-1}_{h_1(s)}\in L_{p,\sigma}(\Theta_{h_1(s)}\Omega)$, the claim follows.

\textbf{Step 3}. In this step we show that for each fixed $s\neq 0$ sufficiently close to zero, the operator $\mathcal{L}(s)$, which has been introduced in Step 2, possesses a real positive eigenvalue. For that purpose, define the spaces
$$
\mathbb{Y}_1:=\{u\in H_p^2(\Omega\backslash\Sigma)^3:
u|_{S_2}=0,\ u|_{S_1}\cdot\nu_{S_1}=0,\ P_{S_1}(\mu(Du)\nu_{S_1})=0,\ \Jump{u}=0\},
$$
$$
\mathbb{Y}_2:=\dot{H}_p^1(\Omega\backslash\Sigma),\ \mathbb{Y}_3:=W_p^{1-1/p}(\Sigma),
$$
$$
\mathbb{Y}_4:=\{h\in W_p^{3-1/p}(\Sigma):\int_{\Sigma}h\ dx'=0,\ \partial_{\nu_{B_R}}h=0\}\cap R(A),
$$
$$\mathbb{Y}:=\{(u,\pi,q,h)\in\times_{j=1}^4\mathbb{Y}_j:q=\Jump{\pi}\},$$
and
\begin{multline*}
\mathbb{X}:=\{(f,f_d,g_v,g_w,g_h)\in L_p(\Omega)^3\times [H_p^1(\Omega\backslash\Sigma)\cap \hat{H}_p^{-1}(\Omega)]\times\\ \times W_p^{1-1/p}(\Sigma)^{2}\times W_p^{1-1/p}(\Sigma)\times W_p^{2-1/p}(\Sigma):\\
\int_{\Sigma}g_h\ dx'=0,\ \partial_{\nu_{B_R}}g_h=0,\ g_v|_{S_1}\cdot\nu_{B_R}=0\},
\end{multline*}
where $A$ and $R(A)$ are defined in Section \ref{LSR}. Furthermore, for some small $\delta>0$ and $|s|<\delta$, we define a linear operator family $\mathbb{L}(s):\mathbb{Y}\to\mathbb{X}$ by
$$\mathbb{L}(s)(u,\pi,q,h):=
\begin{pmatrix}
\frac{\mu}{\rho}\Delta u-\frac{1}{\rho}\nabla\pi+\frac{1}{\rho}F_u(u,\pi,h_1(s))\\
\div u-P_0F_d(u,h_1(s))\\
\Jump{\mu\partial_3 v}+\Jump{\mu\nabla_{x'}w}+G_v(u,h_1(s))\\
2\Jump{\mu\partial_3 w}-q+G_w(u,h_1(s))+\frac{\Jump{\rho}\gamma_a}{\alpha(s)}D_hF(\alpha(s),h_1(s))h\\
P_0^\Sigma[(u|e_3)-(v|\nabla_{x'} h_1(s))]
\end{pmatrix},$$
where $P_0^\Sigma k:=k-\frac{1}{|\Sigma|}\int_\Sigma k\ dx'$, $u=(v,w)$,
$$F_u(u,\pi,h):=-\mu(M_1(h):\nabla^2 u+M_2(h)\nabla u)+M_0(h)\nabla\pi,$$
$$F_d(u,h):=(M_0(h)\nabla| u),$$
\begin{multline*}
G_v(u,h):=-\Jump{\mu(\nabla_{x'} v+\nabla_{x'} v^{\sf T})}\nabla_{x'} h+|\nabla_{x'} h|^2\Jump{\mu\partial_3 v}\\
+\left((1+|\nabla_{x'} h|^2)\Jump{\mu\partial_3 w}-(\nabla_{x'} h|\Jump{\mu\nabla_{x'} w})\right)\nabla_{x'} h,
\end{multline*}
$$G_w(u,h):=-(\nabla_{x'} h|\Jump{\mu\nabla_{x'} w})-(\nabla_{x'} h|\Jump{\mu\partial_3 v})+|\nabla_{x'} h|^2\Jump{\mu \partial_3 w},$$
and
$$F(\alpha,h):=\div_{x'}\left(\frac{\nabla_{x'} h}{\sqrt{1+|\nabla_{x'} h|^2}}\right)+\alpha h.$$
In the following, let $h_2(s)$ be defined by \eqref{eq:eigenfunctioncurve}. Let $w=(u,\pi,q,h)$ and consider the function $g:(-\delta,\delta)\times\mathbb{R}\times\mathbb{R}\times\mathbb{Y}\to \mathbb{X}$ given by
$$g(s,(\beta,\gamma,w)):=\mathbb{L}(s)w-
\begin{pmatrix}
-s\alpha'(s)\beta u\\
0\\
0\\
-\frac{\Jump{\rho}\gamma_a}{s\alpha(s)}h_1(s)\\
\beta (h_1'(s)-s\alpha'(s) h)+\gamma h_2(s)
\end{pmatrix}
.$$
We remark that $h_1(s)/s=h_1+y(s)/s$, hence $h_1(s)/s\to h_1$ as $s\to 0$, since $y(0)=y'(0)=0$. Evaluating $g$ at $s=0$ yields
$$g(0,(\beta,\gamma,w))=
\begin{pmatrix}
\frac{\mu}{\rho}\Delta u-\frac{1}{\rho}\nabla\pi\\
\div u\\
\Jump{\mu\partial_3 v}+\Jump{\mu\nabla_{x'}w}\\
2\Jump{\mu\partial_3 w}-q+\frac{\Jump{\rho}\gamma_a}{\lambda_1}D_hF(\lambda_1,0)h+
\frac{\Jump{\rho}\gamma_a}{\lambda_1}h_1\\
P_0^\Sigma[(u|e_3)]-\beta h_1-\gamma h_2
\end{pmatrix}.$$
In the following we intend to find $(\beta_0,\gamma_0,w_0)\in\mathbb{R}\times\mathbb{R}\times \mathbb{Y}$ such that $g(0,(\beta_0,\gamma_0,w_0))=0$. Note that if we have such a solution, then necessarily $P_0^\Sigma[(u|e_3)]=(u|e_3)$, since
$$\int_\Sigma (u|e_3)\ dx'=\int_{\Omega_1}\div u|_{\Omega_1}\ dx=0,$$
by the boundary conditions on $u\in\mathbb{Y}_1$. Hence we consider the elliptic problem
\begin{align}\label{eq:bifurc2}
\begin{split}
-\mu\Delta u+\nabla \pi&=0,\quad \text{in}\ \Omega\backslash\Sigma,\\
\div u&=0,\quad \text{in}\ \Omega\backslash\Sigma,\\
-\Jump{\mu Du}e_3+\Jump{\pi}e_3&=\frac{\Jump{\rho}\gamma_a}{\lambda_1}
\left(D_hF(\lambda_1,0)h+h_1\right)e_3,\quad \text{on}\ \Sigma,\\
\Jump{u}&=0,\quad \text{on}\ \Sigma,\\
\beta h_1+\gamma h_2-(u|e_3)&=0,\quad \text{on}\ \Sigma,\\
P_{S_1}\left(\mu (Du)\nu_{S_1}\right)&=0,\quad \text{on}\ S_1\backslash\partial\Sigma,\\
(u|\nu_{S_1})&=0,\quad \text{on}\ S_1\backslash\partial\Sigma,\\
u&=0,\quad \text{on}\ S_2,\\
\partial_{\nu_{B_R}}h&=0,\quad \text{on}\ \Sigma,
\end{split}
\end{align}
for the unknowns $((\beta,\gamma),w)\in \mathbb{R}^2\times\mathbb{Y}$. Let us first remove the inhomogeneity $h_1$ in the equation $\eqref{eq:bifurc2}_3$. To this end we solve the elliptic transmission problem
\begin{align*}
(\nabla p|\nabla\phi)_{L_2}&=0,\quad \phi\in W_{p'}^1(\Omega),\\
\Jump{p}&=\frac{\Jump{\rho}\gamma_a}{\lambda_1}h_1,\quad \text{on}\ \Sigma,
\end{align*}
to obtain a unique solution $p\in \mathbb{Y}_2$, thanks to Lemma \ref{lem:appauxlemweak}. Therefore we may reduce \eqref{eq:bifurc2} to the elliptic problem
\begin{align}\label{eq:bifurc3}
\begin{split}
-\mu\Delta u+\nabla \pi&=-\nabla p,\quad \text{in}\ \Omega\backslash\Sigma,\\
\div u&=0,\quad \text{in}\ \Omega\backslash\Sigma,\\
-\Jump{\mu Du}e_3+\Jump{\pi}e_3&=\frac{\Jump{\rho}\gamma_a}{\lambda_1}D_hF(\lambda_1,0)he_3,\quad \text{on}\ \Sigma,\\
\Jump{u}&=0,\quad \text{on}\ \Sigma,\\
\beta h_1+\gamma h_2-(u|e_3)&=0,\quad \text{on}\ \Sigma,\\
P_{S_1}\left(\mu (Du)\nu_{S_1}\right)&=0,\quad \text{on}\ S_1\backslash\partial\Sigma,\\
(u|\nu_{S_1})&=0,\quad \text{on}\ S_1\backslash\partial\Sigma,\\
u&=0,\quad \text{on}\ S_2,\\
\partial_{\nu_{B_R}}h&=0,\quad \text{on}\ \Sigma.
\end{split}
\end{align}
With the help of the operator $L:X_1\to X_0$ from Section \ref{EquilLin} we may rewrite system \eqref{eq:bifurc3} in the abstract form
\begin{equation}\label{eq:bifurc4}
Lz-\beta z_1-\gamma z_2=f,
\end{equation}
where we have set $z=(u,h)$, $z_j:=(0,h_j)$, $j\in\{1,2\}$ and $f:=(\nabla p,0)\in X_0$ is given.

Let us compute the kernel of $L$ in $X_0$. If $L(u,h)=0$, it follows from \eqref{L2spectrum} that $Du=0$, hence $u=0$ by Korn's second inequality. This in turn implies that $h$ is a solution of the elliptic problem
\begin{align*}
\Delta_{x'} h+\lambda_1 h&=0,\quad x'\in B_R(0),\\
\partial_{\nu_{B_R}}h&=0,\quad x'\in\partial B_R(0),
\end{align*}
since $h$ is mean value free. Therefore $h\in N(A)$, with $A$ being defined as in Section \ref{LSR}, and we already know that $\dim N(A)=2$. It follows that $\dim N(L)=2$ as well with $N(L)$ being spanned by the two elements $z_1=(0,h_1)$ and $z_2=(0,h_2)$, where
$h_j\in N(A)$.

Next, we show that $0\in\sigma(L)$ is a semi-simple eigenvalue. We already know that $\sigma(L)$ consist solely of discrete eigenvalues with finite multiplicity and $0\in \sigma(L)$. Therefore, it suffices to show that $N(L)=N(L^2)$. Let $z=(z_1,z_2)\in N(L^2)$ and define $w:=Lz$. Then $w=(w_1,w_2)\in N(L)$, hence $w_1=0$ and $w_2\in N(A)$ and it remains to show that $w_2=0$. To this end, we test the first equation in $Lz=w$ by $z_1$ and integrate by parts. It follows that
$$|\mu^{1/2} Dz_1|_{L_2(\Omega)}^2
-\sigma\left[(\Delta_{x'} z_2|w_2)_{L_2(B_R)}+\lambda_1(z_2|w_2)_{L_2(B_R)}^2\right]=0.$$
Integrating by parts a second time, we see that the term in brackets $[\ldots]$ vanishes, since $w_2\in N(A)$. Therefore $Dz_1=0$, hence $z_1=0$ by Korn's second inequality. Since then also $(z_1|e_3)=0$ on $\Sigma$, the second component in $Lz=w$ implies that $w_2=0$, showing that $0\in\sigma(L)$ is semi-simple. Notably it follows that $X_0=N(L)\oplus R(L)$ and, in particular, $L$ is a Fredholm operator with index zero.

We seek to find a solution $z=(u,h)\in X_1$ of \eqref{eq:bifurc4} with the additional property $h\in R(A)$. This extra condition yields the uniqueness of the solution. First of all we find unique $f_1\in N(L)$ and $f_2\in R(L)$ such that $f=f_1+f_2$. Since $0\in\sigma(A)$ is semi-simple, too, this yields the existence of $\kappa_1,\kappa_2\in\mathbb{R}$ and $z_*=(u_*,h_*)\in X_1$ with $h_*\in R(A)$ such that $f=Lz_*-\kappa_1 z_1-\kappa_2 z_2$. Setting $z=z_*$ it follows that
$$Lz_*-\beta z_1-\gamma z_2=Lz_*-\kappa_1 z_1-\kappa_2 z_2,$$
hence $\beta=\kappa_1$ and $\gamma=\kappa_2$. Therefore, the triple
$$(\beta_0,\gamma_0,w_0):=(\kappa_1,\kappa_2,(u_*,\pi_*,\Jump{\pi_*},h_*))$$ is the unique solution of \eqref{eq:bifurc2} in $\mathbb{Y}$, where $\pi_*\in \mathbb{Y}_2$ solves
\begin{align*}
(\nabla \pi_*|\nabla\phi)_{L_2}&=(\mu\Delta u_*|\phi)_{L_2},\quad \phi\in W_{p'}^1(\Omega),\\
\Jump{\pi_*}&=(\Jump{\mu Du_*}e_3|e_3)+\frac{\Jump{\rho}\gamma_a}{\lambda_1}\left(D_hF(\lambda_1,0)h_*+h_1\right),\quad \text{on}\ \Sigma.
\end{align*}
It is noteworthy that the relation
\begin{equation}\label{eq:bifurc10}\beta_0=\frac{\lambda_1}{\Jump{\rho}\gamma_a}
\frac{\|\mu^{1/2}Du_*\|_{L_2(\Omega)}^2}{\|h_1\|_{L_2(\Sigma)}^2}
\end{equation}
holds. This follows easily by testing the first equation in \eqref{eq:bifurc2} with $u_*$ and integrating by parts. Assuming that $Du_*=0$, Korn's second inequality yields $u_*=0$, hence $\pi_*$ is constant and therefore $\Jump{\pi_*}=0$, since
$$D_h F(\lambda_1,0)h_*+h_1=\Delta_{x'} h_*+\lambda_1 h_*+h_1$$
is mean value free. But then $h_1\in R(A)\cap N(A)$ which yields $h_1=0$, since $0\in\sigma(A)$ is semi-simple, a contradiction. This in turn implies that $\beta_0>0$, a fact that will be of importance later on.

Next, we consider the derivative of $g$ with respect to $(\beta,\gamma,w)$ which we will denote by $D_2g$ in the sequel. It is given by
$$D_2g(0,(\beta_0,\gamma_0,w_0))(\hat{\beta},\hat{\gamma},\hat{w})=
\begin{pmatrix}
\frac{\mu}{\rho}\Delta \hat{u}-\frac{1}{\rho}\nabla\hat{\pi}\\
\div \hat{u}\\
\Jump{\mu\partial_3 \hat{v}}+\Jump{\mu\nabla_{x'}\hat{w}}\\
2\Jump{\mu\partial_3 \hat{w}}-\hat{q}+\frac{\Jump{\rho}\gamma_a}{\lambda_1}D_hF(\lambda_1,0)\hat{h}\\
P_0^\Sigma[(\hat{u}|e_3)]-\hat{\beta} h_1-\hat{\gamma} h_2
\end{pmatrix}.
$$
We will now show that $D_2g(0,(\beta_0,\gamma_0,w_0)):\mathbb{Y}\to\mathbb{X}$ is an isomorphism. Injectivity follows by testing the first equation with $\hat{u}$, integrating by parts and invoking the additional condition $\hat{h}\in R(A)$. Therefore it remains to prove surjectivity of $D_2g(0,(\beta_0,\gamma_0,w_0))$. For that purpose, let $g=(g_1,g_2,g_3,g_4,g_5)\in \mathbb{X}$ be given and consider the equation
\begin{equation}\label{eq:bifurc4.1}
D_2g(0,(\beta_0,\gamma_0,w_0))(\hat{\beta},\hat{\gamma},\hat{w})=g.
\end{equation}
First, let us show that it suffices to consider the special case $g=(g_1,0,0,0,g_5)$. To see this, we solve the elliptic problem
\begin{align}\label{eq:bifurc5}
\begin{split}
\rho\omega u-\mu\Delta u+\nabla \pi&=0,\quad \text{in}\ \Omega\backslash\Sigma,\\
\div u&=g_2,\quad \text{in}\ \Omega\backslash\Sigma,\\
-\Jump{\mu\partial_3 v}-\Jump{\mu\nabla_{x'}w}&=g_3,\quad \text{on}\ \Sigma,\\
-2\Jump{\mu\partial_3 w}+\Jump{\pi}&=g_4,\quad \text{on}\ \Sigma,\\
\Jump{u}&=0,\quad \text{on}\ \Sigma,\\
P_{S_1}\left(\mu (Du)\nu_{S_1}\right)&=0,\quad \text{on}\ S_1\backslash\partial\Sigma,\\
(u|\nu_{S_1})&=0,\quad \text{on}\ S_1\backslash\partial\Sigma,\\
u&=0,\quad \text{on}\ S_2,
\end{split}
\end{align}
by Theorem \ref{thm:AppEllStokes}, where $\omega>0$ is large enough, to obtain a unique solution $(u_*,\pi_*,\Jump{\pi_*})\in \times_{j=1}^3\mathbb{Y}_j$. Note that all relevant compatibility conditions at the contact line are satisfied by the definition of the space $\mathbb{X}$. Then, the (shifted) triple $$(\hat{\beta},\hat{\gamma},w):=
(\hat{\beta},\hat{\gamma},\hat{w}-(u_*,\pi_*,\Jump{\pi_*},0))$$
satisfies the problem
\begin{align}\label{eq:bifurc6}
\begin{split}
-\mu\Delta u+\nabla \pi&=\rho (g_1+\omega u_*),\quad \text{in}\ \Omega\backslash\Sigma,\\
\div u&=0,\quad \text{in}\ \Omega\backslash\Sigma,\\
-\Jump{\mu Du}e_3+\Jump{\pi}e_3-\frac{\Jump{\rho}\gamma_a}{\lambda_1}D_hF(\lambda_1,0)he_3&=0,\quad \text{on}\ \Sigma,\\
\Jump{u}&=0,\quad \text{on}\ \Sigma,\\
P_0^\Sigma[(u|e_3)]-\hat{\beta} h_1-\hat{\gamma} h_2&=g_5-P_0^\Sigma[(u_*|e_3)],\quad \text{on}\ \Sigma,\\
P_{S_1}\left(\mu (Du)\nu_{S_1}\right)&=0,\quad \text{on}\ S_1\backslash\partial\Sigma,\\
(u|\nu_{S_1})&=0,\quad \text{on}\ S_1\backslash\partial\Sigma,\\
u&=0,\quad \text{on}\ S_2,\\
\partial_{\nu_{B_R}}h&=0,\quad \text{on}\ \Sigma.
\end{split}
\end{align}
We recall that for the existence of a solution of \eqref{eq:bifurc6} it is necessary that $P_0^\Sigma[(u|e_3)]=(u|e_3)$ because of the divergence condition and the boundary conditions. Let us define $f_1:=\rho(g_1+\omega u_*)-T_1[\rho(g_1+\omega u_*)]$ and $f_2:=g_5-P_0^\Sigma[(u_*|e_3)]$, where $T_1$ is the solution operator from \eqref{formulaPi}. With the help of the operator $L:X_1\to X_0$ from Section \ref{EquilLin} we may rewrite \eqref{eq:bifurc6} as the abstract equation
$$Lz-\hat{\beta} z_1-\hat{\gamma} z_2=f,$$
where $z=(u,h)$, $f=(f_1,f_2)$ and $z_j:=(0,h_j)\in N(L)$, $j\in\{1,2\}$.
It follows from exactly the same considerations as for \eqref{eq:bifurc4} that the latter equation has a unique solution $(\hat{\beta},\hat{\gamma})\in\mathbb{R}^2$ and $z=(u,h)\in X_1$ with $h\in R(A)$. This in turn yields a solution of \eqref{eq:bifurc4.1}, showing surjectivity of the operator $D_2g(0,(\beta_0,\gamma_0,w_0)):\mathbb{Y}\to\mathbb{X}$.

The implicit function theorem yields the existence of some $\delta>0$ and smooth functions $(\beta(s),\gamma(s),w(s))\in \mathbb{R}\times\mathbb{R}\times\mathbb{Y}$ such that $g(s,(\beta(s),\gamma(s),w(s)))=0$ for all $|s|<\delta$ and $(\beta(0),\gamma(0),w(0))=(\beta_0,\gamma_0,w_0)$. It can furthermore be shown that $P_0F_d(u(s),h_1(s))=F_d(u(s),h_1(s))$ and then also
$$P_0^\Sigma[(u(s)|e_3)-(v(s)|\nabla_{x'} h_1(s))]=(u(s)|e_3)-(v(s)|\nabla_{x'} h_1(s)).$$

For $s\neq 0$ we multiply the equation $g(s,(\beta(s),\gamma(s),w(s)))=0$ by $s\alpha'(s)$ to obtain the relation
$$
\mathbb{L}(s)\tilde{w}(s)=
\begin{pmatrix}
\tilde{\beta}(s) \tilde{u}(s)\\
0\\
0\\
-\frac{\Jump{\rho}\gamma_a}{\alpha(s)}\alpha'(s)h_1(s)\\
\tilde{\beta}(s) (\tilde{h}(s)-h_1'(s))-\tilde{\gamma}(s) h_2(s)
\end{pmatrix}
,$$
where $\tilde{w}(s):=s\alpha'(s) w(s)$, $\tilde{\beta}(s):=-s\alpha'(s)\beta(s)$ and $\tilde{\gamma}(s):=-s\alpha'(s)\gamma(s)$. The identity $F(\alpha(s),h_1(s))=0$ yields furthermore
$$\alpha'(s)h_1(s)=\alpha'(s)D_\alpha F(\alpha(s),h_1(s))=-D_h F(\alpha(s),h_1(s))h_1'(s),$$
hence
$$
\mathbb{L}(s)\tilde{w}(s)=
\begin{pmatrix}
\tilde{\beta}(s) \tilde{u}(s)\\
0\\
0\\
\frac{\Jump{\rho}\gamma_a}{\alpha(s)}D_h F(\alpha(s),h_1(s))h_1'(s)\\
\tilde{\beta}(s) (\tilde{h}(s)-h_1'(s))-\tilde{\gamma}(s) h_2(s)
\end{pmatrix}
.$$
Since $D_hF(\alpha(s),h_1(s))h_2(s)=0$ it follows from the fourth equation that
\begin{multline*}
\tilde{q}(s)=2\Jump{\mu\partial_{3} \tilde{w}(s)}+G_w(\tilde{u}(s),h_1(s))+\\
\frac{\Jump{\rho}\gamma_a}{\alpha(s)}D_h F(\alpha(s),h_1(s))[\tilde{h}(s)-h_1'(s)-\frac{\gamma(s)}{\beta(s)}h_2(s)].
\end{multline*}
Setting $\hat{h}(s):=\tilde{h}(s)-h_1'(s)-\frac{\gamma(s)}{\beta(s)}h_2(s)$ and $\hat{z}(s):=(\tilde{u}(s),\hat{h}(s))$, it follows that
$$\mathcal{L}(s)\hat{z}(s)=\tilde{\beta}(s)\hat{z}(s)$$
for all $|s|<\delta$. Since $\hat{z}(0)=-(0,h_1+\frac{\gamma_0}{\beta_0}h_2)\neq (0,0)$ ($h_1$ and $h_2$ are linearly independent) it holds that $\hat{z}(s)\neq 0$ for all $|s|$ close to zero. Furthermore $\tilde{\beta}(s)=-s\alpha'(s)\beta(s)>0$ for $s\neq 0$ close to zero, by Corollary \ref{cor:bifurc1} and \eqref{eq:bifurc10}, hence $\tilde{\beta}(s)>0$ is a positive real eigenvalue of the operator $\mathcal{L}(s)$ from Step 2.

\textbf{Step 4}. In this step we prove nonlinear instability of any equilibrium $(0,h_*)$ of \eqref{eq:NScap2} which is close to zero. Since the strategy follows one-to-one along the lines of the proof of Theorem \ref{mainresultstab}, we will only give a sketch of the proof. To explain the strategy, we rewrite \eqref{eq:NScap2} in the shorter form $\mathbb{L}w=N(w)$, $z(0)=z_0$, where $w=(u,\pi,q,h)$, $q=\Jump{\pi}$ and $z=(u,h)$, $z_0=(u_0,h_0)$. Let $w_*=(0,c_*,0,h_*)$ and $z_*:=(0,h_*)$, where $c_*$ is a constant and $h_*$ is determined by Theorem \ref{thm:bifurc1} in a small neighborhood of zero. Setting $\hat{w}:=w-w_*$, this yields $\mathbb{L}_*\hat{w}=\hat{N}(\hat{w})$, $\hat{z}(0)=z_0-z_*$, where $\mathbb{L}_*:=\mathbb{L}-DN(w_*)$ and
$$\hat{N}(\hat{w}):=N(\hat{w}+w_*)-N(w_*)-DN(w_*)\hat{w},$$
since $N(w_*)=0$. Note that $\hat{N}\in C^2$ by Proposition \ref{prop:regnonlin} and $\hat{N}(0)=0$ as well as $D\hat{N}(0)=0$. The operator $\mathbb{L}_*=\mathbb{L}-DN(w_*)$ is the full linearization of \eqref{eq:NScap2} at the equilibrium $w_*$. As in the proof of Theorem \ref{mainresultstab}, we will decompose $\hat{w}$ as follows. Let $\omega>0$ and $Bw:=(u,0,\ldots,0,h)$ for $w=(u,\pi,q,h)$. Then we consider the two problems
\begin{align*}
\omega B\bar{w}+\mathbb{L}_*\bar{w}&=\hat{N}(w),\quad t>0,\\
\bar{z}(0)&=\bar{z}_0,
\end{align*}
and
\begin{align*}
\mathbb{L}_*\tilde{w}&=\omega B\bar{w},\quad t>0,\\
\tilde{z}(0)&=\tilde{z}_0.
\end{align*}
Here $\hat{z}_0=\tilde{z}_0+\bar{z}_0$, and $\bar{z}_0$ is is determined by $\tilde{z}_0$ in a similar way as in Section \ref{paraphase}. Note that there exists $s_*\in\mathbb{R}$ close to zero such that $\mathbb{L}_*\tilde{w}=\omega B\bar{w}$ is equivalent to $\partial_t \tilde{z}-\mathcal{L}(s_*)\tilde{z}=\omega \bar{z}$, where $\mathcal{L}(s_*)$ denotes the operator from Step 2. For small $|s_*|>0$ the operator $\mathcal{L}(s_*)$ can be seen as a small perturbation of the operator $L$ from Section \ref{EquilLin}, wherefore it has the property of maximal regularity of type $L_p$. By Step 3 it holds that $\mathcal{L}(s_*)$ has a positive eigenvalue. We are now in a position to apply the same strategy as in the proof of Theorem \ref{mainresultstab} to prove instability of the equilibrium $z_*$ by making use of the corresponding spectral projections. This yields the following result.
\begin{thm}\label{thm:instabbifeq}
The equilibria on the subcritical branches ($s\alpha'(s)<0$, $0<|s|<\delta$) given by Theorem \ref{thm:bifurc1} and Corollary \ref{cor:bifurc1} are unstable in the sense of Theorem \ref{mainresultstab}.
\end{thm}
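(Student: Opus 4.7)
The plan is to reduce the question of nonlinear instability of a bifurcating equilibrium $w_* = (0, c_*, 0, h_*)$ to a spectral/semilinear argument that parallels the proof of part 2 of Theorem~\ref{mainresultstab}, but now with the full linearization $\mathbb{L}_* := \mathbb{L} - DN(w_*)$ in place of the linearization $L$ at the trivial equilibrium. First, I would introduce the shifted unknown $\hat{w} := w - w_*$ and rewrite the Navier--Stokes problem \eqref{eq:NScap2} in the form $\mathbb{L}_* \hat{w} = \hat{N}(\hat{w})$, with $\hat{z}(0) = z_0 - z_*$, where $\hat{N}(\hat{w}) := N(\hat{w}+w_*) - N(w_*) - DN(w_*)\hat{w}$. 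Since $N(w_*)=0$ and $N \in C^2$ in the sense of Proposition~\ref{prop:regnonlin}, the new nonlinearity $\hat{N}$ satisfies $\hat{N}(0)=0$, $D\hat{N}(0)=0$, which is exactly the structural assumption needed to run a Henry-type instability argument.

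The second step is to decouple the divergence constraint and the pressure from the evolution by using the same trick as in Section~\ref{paraphase} and in the proof of Theorem~\ref{mainresultstab}: splitting $\hat{w} = \bar{w} + \tilde{w}$ so that $\bar{w}$ absorbs the inhomogeneity coming from $\hat{N}$ together with a large damping term $\omega B\bar{w}$, while $\tilde{w}$ solves the evolution governed by $\mathbb{L}_*$ with forcing $\omega B\bar{w}$. Setting $\tilde{z} = (\tilde{u}, \tilde{h})$ the second equation becomes $\partial_t \tilde{z} - \mathcal{L}(s_*) \tilde{z} = \omega \bar{z}$, where $\mathcal{L}(s_*)$ is the operator introduced in Step~2 above. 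For $|s_*|>0$ sufficiently small, $\mathcal{L}(s_*)$ is a small perturbation of the operator $L$ of Section~\ref{EquilLin} and hence inherits the property of $L_p$-maximal regularity as well as compactness of the resolvent; in particular its spectrum is again discrete.

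Next, I would invoke the spectral information proved in Step~3: for $0<|s|<\delta$ the operator $\mathcal{L}(s)$ admits the real positive eigenvalue $\tilde\beta(s) = -s\alpha'(s)\beta(s)>0$ (positivity using Corollary~\ref{cor:bifurc1} and the identity \eqref{eq:bifurc10}), with eigenvector $\hat z(s)$ close to $-(0,h_1 + \tfrac{\gamma_0}{\beta_0}h_2)$. Thus the spectrum of $\mathcal{L}(s_*)$ intersects the right half-plane; together with the absence of spectrum on the imaginary axis (for generic $s_*$ close to zero, by compactness and continuous dependence of the discrete spectrum), this produces a spectral decomposition $X_0 = X_0^+ \oplus X_0^-$ with finite-dimensional unstable part $X_0^+$ and corresponding projections $P^\pm$, together with the exponential dichotomy estimates \eqref{eq:inst1} for the restricted semigroups. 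At this point the argument of part 2 of Theorem~\ref{mainresultstab} transfers essentially verbatim: assuming stability, derive the variation-of-parameters identity \eqref{eq:inst3} for $P^+\tilde z$ on $\mathbb{R}_+$, apply the maximal-regularity estimates in exponentially weighted spaces on finite intervals and then on $\mathbb{R}_+$, combine them with the nonlinear estimate of Proposition~\ref{prop:estN} for $\hat N$ (which remains valid around $w_*$ by smoothness of $N$), and finally obtain the contradiction $\|P^+\tilde z_0\| \le C \|P^-\tilde z_0\|$ by choosing initial data with $P^- \tilde z_0 = 0$ and $P^+\tilde z_0 \neq 0$ inside the phase manifold.

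The main obstacle I anticipate is the parametrization of the nonlinear phase manifold near the nontrivial equilibrium $(0,h_*)$, which is needed to cleanly pose the initial conditions in the decomposed system and to guarantee that the perturbations $\hat z_0$ of admissible initial data lie in the correct linear complement. This requires redoing the implicit-function construction of Section~\ref{paraphase} with the transformations $\Theta_h$ now centered at $h_*$ rather than at $h=0$, so that the divergence and jump conditions linearize around $(0,h_*)$; the nonlinearities acquire additional $s_*$-dependent but small lower-order terms, which must be absorbed into $\mathbb{L}_*$ without destroying its maximal-regularity property. Once this parametrization is in place, the rest of the argument is a bookkeeping exercise that mirrors Section~\ref{QualBeh} and yields the claimed instability in $X_\gamma$.
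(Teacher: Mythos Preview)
Your proposal is correct and follows essentially the same approach as the paper: shift to $\hat w = w - w_*$, rewrite as $\mathbb{L}_*\hat w = \hat N(\hat w)$ with $\hat N(0)=0$, $D\hat N(0)=0$, split $\hat w = \bar w + \tilde w$ via the $\omega$-damping trick so that the $\tilde z$-equation becomes $\partial_t\tilde z - \mathcal{L}(s_*)\tilde z = \omega\bar z$, use that $\mathcal{L}(s_*)$ is a small perturbation of $L$ (hence has maximal $L_p$-regularity and compact resolvent) together with the positive eigenvalue $\tilde\beta(s_*)$ from Step~3, and then rerun the contradiction argument of Theorem~\ref{mainresultstab} part~2 with the spectral projections of $\mathcal{L}(s_*)$. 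The paper itself only sketches Step~4, explicitly noting that the argument ``follows one-to-one along the lines of the proof of Theorem~\ref{mainresultstab}'' and that the phase-manifold parametrization near $(0,h_*)$ is handled ``in a similar way as in Section~\ref{paraphase}''; your identification of this last point as the main technical wrinkle is accurate.
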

\begin{rem}
The results in this chapter carry over to the two dimensional case, i.e.\ if $G=(-R,R)$. Indeed, the eigenvalue $\lambda_1>0$ of the Neumann Laplacian in $L_2(-R,R)$ is simple.  This allows to apply the results of Crandall \& Rabinowitz. It can also be shown that the full linearization $\mathcal{L}(s_*)$ has a positive eigenvalue for each $|s_*|>0$ close to zero.
\end{rem}

\appendix
%
%
%

\chapter{Appendix}

\section{Extension operators}

\begin{prop}\label{app:propext}
Let $p>2$. There exists a linear and bounded extension operator $\operatorname{ext}$ from
$$_0W_p^{1/2-1/p}(J;L_p(\mathbb{R}))\cap L_p(J;W_p^{1-2/p}(\mathbb{R}))$$
to
$$_0W_p^{1/2-1/2p}(J;L_p(\mathbb{R}\times\mathbb{R}_+))\cap L_p(J;W_p^{1-1/p}(\mathbb{R}\times\mathbb{R}_+))$$
such that $[\operatorname{ext}v]|_{\mathbb{R}\times\{0\}}=v$, for all $v\in\! _0W_p^{1/2-1/p}(J;L_p(\mathbb{R}))\cap L_p(J;W_p^{1-2/p}(\mathbb{R}))$.

Moreover, if
$$v=v(t,x,y)\in\!_0W_p^{1/2-1/2p}(J;L_p(\mathbb{R}\times\mathbb{R}_+))\cap L_p(J;W_p^{1-1/p}(\mathbb{R}\times\mathbb{R}_+))=:X,$$
then
$$\operatorname{tr}_{y=0}v\in\!_0W_p^{1/2-1/p}(J;L_p(\mathbb{R}))\cap L_p(J;W_p^{1-2/p}(\mathbb{R}))=:Y$$
and there exists a constant $C>0$ such that
$$\|\operatorname{tr}_{y=0}v\|_{Y}\le C\|v\|_{X}$$
for all $v\in X$.
\end{prop}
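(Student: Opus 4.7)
The plan is to recognise Proposition \ref{app:propext} as a trace/extension theorem for an anisotropic parabolic Sobolev-Slobodeckii space on the half-space $\{y>0\}$ with boundary $\{y=0\}$. First I would identify, by the standard anisotropic interpretation of such intersections, the space $X$ as the parabolic Sobolev-Slobodeckii space of order $s=1-1/p$ on the parabolic cylinder $J\times\mathbb{R}\times\mathbb{R}_+$ (time carrying parabolic weight $1/2$), and $Y$ as the corresponding space of order $s-1/p=1-2/p$ on the boundary $J\times\mathbb{R}$. The assumption $p>2$ ensures $s=1-1/p>1/p$, so that the spatial trace at $y=0$ is well-defined by Sobolev embedding.

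Next, to build an extension operator that preserves the vanishing time-trace at $t=0$, I would use a causal parabolic semigroup. Define $\Lambda:=(\omega+\partial_t-\partial_x^2)^{1/2}$ on $L_p(J\times\mathbb{R})$, interpreted via the joint $H^\infty$-calculus of the causal time derivative $\partial_t$ on the scale $_0W_p^r(J;\cdot)$ and of $-\partial_x^2$ on $L_p(\mathbb{R})$, with $\omega>0$ sufficiently large to ensure invertibility; both factors admit bounded $H^\infty$-calculi, so $\Lambda$ is a well-defined sectorial operator (see the framework of \cite{DHP07}, \cite{PrSi09a}). Then set
$$(\operatorname{ext} v)(t,x,y):=(e^{-y\Lambda}v)(t,x).$$
The trace identity $(\operatorname{ext} v)|_{y=0}=v$ is immediate, and causality of $\Lambda$ ensures that the vanishing trace at $t=0$ is preserved by $e^{-y\Lambda}$. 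Boundedness $\operatorname{ext}:Y\to X$ then follows from the real-interpolation identity
$$Y=(L_p(J\times\mathbb{R}),D(\Lambda))_{1-1/p,p},$$
combined with the standard equivalence
$$\|\operatorname{ext} v\|_X\simeq \|v\|_{L_p(J\times\mathbb{R})}+\|\Lambda^{1-1/p}v\|_{L_p(J\times\mathbb{R})}\simeq \|v\|_Y$$
via Da Prato-Grisvard theory, i.e., the trace characterisation of the maximal regularity space of $\Lambda$.

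For the trace boundedness $\operatorname{tr}_{y=0}:X\to Y$, one may either argue by density on smooth compactly supported functions and extend by continuity, or invoke the open mapping theorem: the surjectivity of $\operatorname{ext}$ yields a decomposition $v=\operatorname{ext}(\operatorname{tr}_{y=0}v)+w$ with $w|_{y=0}=0$, from which the estimate $\|\operatorname{tr}_{y=0}v\|_Y\le C\|v\|_X$ readily follows. The main technical obstacle is reconciling the time regularity $1/2-1/(2p)$ in $X$ with the boundary time regularity $1/2-1/p$ in $Y$: the $y$-trace must cost $1/(2p)$ in temporal and $1/p$ in spatial regularity, consistent with the parabolic scaling $t\sim y^2$. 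This mixed time-space interplay is precisely what the joint $H^\infty$-calculus of $\partial_t$ and $-\partial_x^2$, or equivalently the mixed derivative theorem, is tailored to handle, and it is here that the hypothesis $p>2$ enters crucially to guarantee that all the fractional traces in question are defined.
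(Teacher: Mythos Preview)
Your extension construction via $e^{-y\Lambda}v$ with $\Lambda=(\omega+\partial_t-\partial_x^2)^{1/2}$ is exactly the paper's approach (the paper takes $\omega=0$, which is already invertible on ${}_0W_p^r(J;\cdot)$, so your shift is harmless). However, your interpolation index is off: one has $Y=D_\Lambda(1-2/p,p)=(X_0,D(\Lambda^2))_{1/2-1/p,p}$, not $(L_p,D(\Lambda))_{1-1/p,p}$, and the equivalence $\|\operatorname{ext} v\|_X\simeq\|v\|_Y$ is obtained not through fractional powers $\Lambda^{1-1/p}$ but via di Blasio's characterisation: $[y\mapsto e^{-\Lambda y}v]\in W_p^{1-1/p}(\mathbb{R}_+;X_0)\cap L_p(\mathbb{R}_+;D_\Lambda(1-1/p,p))$ if and only if $v\in D_\Lambda(1-2/p,p)$, after which one checks $D_\Lambda(1-1/p,p)={}_0W_p^{1/2-1/2p}(J;L_p)\cap L_p(J;W_p^{1-1/p})$ by reiteration and Fubini.

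Your trace argument has a genuine gap. The open-mapping route is circular: writing $v=\operatorname{ext}(\operatorname{tr}_{y=0}v)+w$ presupposes that $\operatorname{tr}_{y=0}v$ is already well-defined and lies in $Y$, which is precisely the claim; and even granted the decomposition, no estimate on $\|\operatorname{tr}_{y=0}v\|_Y$ follows from it. The density argument is viable but you do not supply the estimate on smooth functions. The paper instead argues directly: view $v(t,x,y)$ as a function $w(y)\in W_p^{1-1/p}(\mathbb{R}_+;X_0)\cap L_p(\mathbb{R}_+;D_\Lambda(1-1/p,p))$ and apply the trace theorem of Meyries--Schnaubelt \cite[Lemma~4.1]{MeySchn12} to conclude $w(0)\in D_\Lambda(1-2/p,p)=Y$ with the required bound.
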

\begin{proof}
Let $X_0=L_p(J;L_p(\mathbb{R}))$ and consider the operator $(\partial_t-\partial_x^2)$ in $X_0$ with domain
$$_0W_p^1(J;L_p(\mathbb{R}))\cap L_p(J;W_p^2(\mathbb{R})).$$
The operator $-A:=-(\partial_t-\partial_x^2)^{1/2}$ generates an analytic semigroup $\{e^{-Ay}\}_{y\ge 0}$ in $X_0$ with domain $D(A)=[X_0,D(A^2)]_{1/2}$. Since
$$D_A(1-2/p,p)=(X_0,D(A))_{1-2/p,p}=(X_0,D(A^2))_{1/2-1/p,p},$$
by \cite[Theorem 1.15.2]{Tri95}, we obtain
$$D_A(1-2/p,p)=\,_0W_p^{1/2-1/p}(J;L_p(\mathbb{R}))\cap L_p(J;W_p^{1-2/p}(\mathbb{R})).$$
Hence, if $v\in D_A(1-2/p,p)$, then
$$[y\mapsto e^{-A y}v]\in W_p^{1-1/p}(\mathbb{R}_+;X_0)\cap L_p(\mathbb{R}_+;D_A(1-1/p,p))$$
by \cite[Theorems 3 \& 8]{diBla84}, where $D_A(1-1/p,p)=(X_0,D(A^2))_{1/2-1/2p,p}$, hence
$$D_A(1-1/p,p)=\!_0W_p^{1/2-1/2p}(J;L_p(\mathbb{R}))\cap L_p(J;W_p^{1-1/p}(\mathbb{R})).$$
Setting $[\operatorname{ext}v](y)=e^{-Ay}v$ yields the first claim, by the Fubini property of the spaces $W_p^s$.

For the proof of the second assertion, we consider $v(t,x,y)$ as a function $w(y)(t,x)$, i.e.\ $w(y)(t,x):=v(t,x,y)$. Then we have
$$w\in W_p^{1-1/p}(\mathbb{R}_+;X_0)\cap L_p(\mathbb{R}_+;D_A(1-1/p,p)),$$
where $X_0$ and $A$ are defined as above. By \cite[Lemma 4.1, (4.4)]{MeySchn12} with $\alpha=1-1/p$ and $\mu=1$ it holds that $\operatorname{tr}|_{y=0}$ is a continuous mapping from
$$W_p^{1-1/p}(\mathbb{R}_+;X_0)\cap L_p(\mathbb{R}_+;D_A(1-1/p,p))$$
to $D_A(1-1/2p,p)=D_{A^2}(1/2-1/p,p)=(X_0,D(A^2))_{1/2-1/p,p}$ with
$$(X_0,D(A^2))_{1/2-1/p,p}=\,_0W_p^{1/2-1/p}(J;L_p(\mathbb{R}))\cap L_p(J;W_p^{1-2/p}(\mathbb{R})).$$
The proof is complete.
\end{proof}

\begin{prop}\label{prop:app2}
Let $p>2$, $J=[0,T]$, $0<T<\infty$ or $J=\mathbb{R}_+$ and
$$g\in\!_0W_p^{3/2-1/p}(J;L_p(\mathbb{R}))\cap\!_0H_p^1(J;W_p^{1-2/p}(\mathbb{R}))\cap L_p(J;W_p^{2-2/p}(\mathbb{R}))=:Y.$$
Then there exists
$$h\in\!_0W_p^{2-1/2p}(J;L_p(\mathbb{R}^2_+))\cap\!_0H_p^1(J;W_p^{2-1/p}(\mathbb{R}^2_+))\cap L_p(J;W_p^{3-1/p}(\mathbb{R}^2_+))=:X,$$
such that $\partial_y h=g$ at $y=0$.

Moreover, the mapping $(\operatorname{tr}|_{y=0}\circ\partial_y):X\to Y$ is continuous.
\end{prop}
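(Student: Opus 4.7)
The plan is to construct $h$ via the Poisson-type semigroup extension that underlies the proof of Proposition~\ref{app:propext}. Let $X_0 := L_p(J; L_p(\mathbb{R}))$ and set $A := (\partial_t - \partial_x^2)^{1/2}$ with $D(A^2) = {}_0W_p^1(J; L_p(\mathbb{R})) \cap L_p(J; W_p^2(\mathbb{R}))$. Since $\partial_t$ has empty spectrum on functions with vanishing trace at $t = 0$, the operator $A$ is boundedly invertible and sectorial; let $\{e^{-yA}\}_{y \ge 0}$ denote the exponentially decaying analytic semigroup generated by $-A$. I would then define
\[
h(t, x, y) := -A^{-1} e^{-yA} g(t, x), \qquad y > 0,
\]
with $A$ acting only on the $(t, x)$-variables. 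A direct computation yields $\partial_y h = e^{-yA} g$, so $\partial_y h|_{y = 0} = g$, and $h$ decays in $y$ because of the exponential stability of the semigroup.

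The remaining task is to verify $h \in X$ with continuous dependence on $g$. I would combine three ingredients: the Fubini property for fractional Sobolev spaces $W_p^s$ (valid here since $p > 2$ and none of the exponents under consideration are integers); the identification
\[
D_A(\theta, p) = (X_0, D(A^2))_{\theta/2, p} = {}_0 W_p^{\theta/2}(J; L_p(\mathbb{R})) \cap L_p(J; W_p^\theta(\mathbb{R})), \quad \theta \in (0, 2),
\]
together with the higher-order convention $D_A(\theta, p) = \{v \in D(A^2) : A^2 v \in D_A(\theta - 2, p)\}$ for $\theta \in (2, 4)$; and the higher-order version of the trace/extension theorem from \cite{diBla84}, namely that $v \in D_A(\alpha, p)$ implies $[y \mapsto e^{-yA} v] \in W_p^{\alpha + 1/p}(\mathbb{R}_+; X_0) \cap L_p(\mathbb{R}_+; D_A(\alpha + 1/p, p))$ with norm-equivalent estimates. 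Since $Y$ is strictly smaller than $D_A(2 - 2/p, p)$ (which has less time regularity than required) but strictly larger than $D_{A^2}(3/2 - 1/p, p)$ (which has more space regularity), it is not literally a single interpolation space and cannot be fed into the above scheme as a monolithic object; instead, I would treat the three conditions defining $Y$ separately, propagate each through $A^{-1}$ (which gains one derivative in $x$ and half a derivative in $t$) and then through the semigroup extension (which gains $1/p$ in $y$-regularity), and finally combine using Fubini to realise each of the three conditions defining $X$.

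The hard part will be the top condition $h \in {}_0 W_p^{2 - 1/2p}(J; L_p(\mathbb{R}^2_+))$: the exponent $2 - 1/2p$ lies above $1$, so one cannot directly invoke the $\theta \in (0, 2)$ identification of the $D_A$-scale and must instead use the second-order version of the trace-extension theorem, carefully tracking that the mixed-derivative structure of $Y$ --- in particular the intermediate condition ${}_0 H_p^1(J; W_p^{1 - 2/p}(\mathbb{R}))$, which cannot be recovered from the other two by interpolation alone --- is preserved under the action of $A^{-1}$ without loss of $L_p$-integrability in $y$. Once this is accomplished, the continuity of the extension follows immediately from the continuity of each step, and the second assertion --- the boundedness of $(\operatorname{tr}|_{y = 0} \circ \partial_y) : X \to Y$ --- follows either from the same chain of estimates read in reverse or, more directly, from the intrinsic trace characterization in the spirit of \cite[Lemma 4.1]{MeySchn12} applied to the three defining components of $X$ separately.
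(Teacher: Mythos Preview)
Your proposal is correct and follows the same approach as the paper: the extension $h(y) = -e^{-yA}A^{-1}g$ with $A = (\partial_t - \partial_x^2)^{1/2}$, combined with the di~Blasio trace/extension theory and Fubini. The paper organizes the regularity verification slightly more concretely --- observing that $\partial_t g \in D_A(1-2/p,p)$ so that $h$ and $\partial_t h$ can be treated symmetrically (this handles the top time condition via the isomorphism $\partial_t:{}_0W_p^s\to{}_0W_p^{s-1}$), then obtaining the $y$-regularity from $\partial_y^2 h = -e^{-yA}Ag$, and finally the spatial regularity by re-running the semigroup argument in the shifted base $\tilde X_0 = L_p(J;W_p^{1-1/p}(\mathbb{R}))$ --- but these are exactly the three separate propagations you outline.
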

\begin{proof}
(1) Consider the operator $(\partial_t-\partial_x^2)$ in $X_0:=L_p(J;L_p(\mathbb{R}))$ with domain
$$_0W_p^{1}(J;L_p(\mathbb{R}))\cap L_p(J;W_p^2(\mathbb{R})).$$
Let $A:=(\partial_t-\partial_x^2)^{1/2}$ with domain $D(A)=[X_0,D(A^2)]_{1/2}$.
Denote by $e^{-Ay}$ the analytic $C_0$-semigroup, generated by $-A$ in $X_0$ and set $h(y):=-e^{-Ay}A^{-1}g$.
Since
$$g,\partial_t g,A^{-1}g,A^{-1}\partial_t g\in\!_0W_p^{1/2-1/p}(J;L_p(\mathbb{R}))\cap L_p(J;W_p^{1-2/p}(\mathbb{R}))$$ it follows from Proposition \ref{app:propext} that
$$h,\partial_t h,Ah,A\partial_t h\in W_p^{1-1/p}(\mathbb{R}_+;X_0)\cap L_p(\mathbb{R}_+;D_A(1-1/p,p)).$$
The operator $A^{-1}$ is an isomorphism from $(X_0,D(A^2))_{1/2-1/2p,p}$ to $(X_0,D(A^2))_{1-1/2p,p}$ by \cite[Theorem 1.15.2]{Tri95}, hence $h$ as well as $\partial_t h$ belong to
$$_0W_p^{1-1/2p}(J;L_p(\mathbb{R}_+^2))\cap L_p(J;W_p^{2-1/p}(\mathbb{R}_+^2))$$
by the Fubini property.
Furthermore $\partial_t:\!_0W_p^s(J;X)\to\!_0W_p^{s-1}(J;X)$, $s\in[1,2)$ is an isomorphism, hence
\begin{equation}\label{eq:app1}
h\in\!_0W_p^{2-1/2p}(J;L_p(\mathbb{R}_+^2))\cap\!_0W_p^1(J;W_p^{2-1/p}(\mathbb{R}_+^2)).
\end{equation}
(2) Next, we use the regularity
$$Ag\in\!_0W_p^{1-1/p}(J;L_p(\mathbb{R}))\cap L_p(J;W_p^{1-2/p}(\mathbb{R})),$$
to conclude
\begin{equation}\label{eq:app2}
-\partial_y^2 h=A^2e^{-Ay}A^{-1}g=e^{-Ay}Ag\in W_p^{1-1/p}(\mathbb{R}_+;X_0)
\end{equation}
by \cite[Theorem 8]{diBla84}, since
$$Ag\in D_A(1-2/p,p)=\!_0W_p^{1/2-1/p}(J;L_p(\mathbb{R}))\cap L_p(J;W_p^{1-2/p}(\mathbb{R})).$$
In particular, this yields that
$$h\in W_p^{3-1/p}(\mathbb{R}_+;L_p(J;L_p(\mathbb{R}))).$$
(3) It remains to show that
$$h\in L_p(\mathbb{R}_+;L_p(J;W_p^{3-1/p}(\mathbb{R}))).$$
To this end we consider the semigroup $\{e^{-A y}\}_{y\ge 0}$ in $\tilde{X}_0:=L_p(J;W_p^{1-1/p}(\mathbb{R}))$. The domain of the operator $A^2:=(\partial_t-\partial_x^2)$ in $\tilde{X}_0$ is given by
$$_0W_p^{1}(J;W_p^{1-1/p}(\mathbb{R}))\cap L_p(J;W_p^{3-1/p}(\mathbb{R})).$$
Then we have
$$[y\mapsto e^{-A y} g]\in L_p(\mathbb{R}_+;D(A)),$$
if
$$g\in D_A(1-1/p,p)=\!_0W_p^{1/2-1/2p}(J;W_p^{1-1/p}(\mathbb{R}))\cap L_p(J;W_p^{2-2/p}(\mathbb{R})).$$
Note that the assumption on $g$ implies
$$g\in\!_0H_p^{1}(J;W_p^{1-2/p}(\mathbb{R}))\cap L_p(J;W_p^{2-2/p}(\mathbb{R}))
\hookrightarrow \!_0W_p^{1/2-1/2p}(J;W_p^{1-1/p}(\mathbb{R})),
$$
which follows from \cite[Proposition 3.2]{MeySchn12}.
Replacing $g$ by $A^{-1}g$ it follows that
$$[y\mapsto e^{-A y} A^{-1}g]\in L_p(\mathbb{R}_+;D(A^2)),$$
hence
$$[y\mapsto e^{-A y} A^{-1}g]\in L_p(\mathbb{R}_+;L_p(J;W_p^{3-1/p}(\mathbb{R}))).$$
(4) For the proof of the second assertion, note first that $\partial_y$ maps $X$ continuously to
$$_0W_p^{3/2-1/2p}(J;L_p(\mathbb{R}^2_+))\cap\!_0H_p^1(J;W_p^{1-1/p}(\mathbb{R}^2_+))\cap L_p(J;W_p^{2-1/p}(\mathbb{R}^2_+)),$$
since
$$_0W_p^{2-1/2p}(J;L_p(\mathbb{R}^2_+))\cap\!_0H_p^1(J;W_p^{2-1/p}(\mathbb{R}^2_+))$$
is continuously embedded into
$$_0W_p^{3/2-1/2p}(J;H_p^1(\mathbb{R}^2_+)),$$
by \cite[Proposition 3.2]{MeySchn12}. Then the assertion follows from similar arguments as in the proof of Proposition \ref{app:propext}.
\end{proof}

\section{Partition of unity with vanishing Neumann trace}\label{sec:partitionone}

\begin{prop}\label{partitionone}
Let $G\subset\mathbb{R}^2$ be a bounded domain with boundary $\partial G\in C^{m+1}$. Then for each finite open covering $\{U_k\}_{k=1}^N$ of $\partial G$ in $\mathbb{R}^2$ there exists an open set $U_0\subset G$ with $U_0\cap \partial G=\emptyset$, $\bigcup_{k=0}^N U_k\supset \overline{G}$ and a subordinated partition of unity $\{\psi_k\}_{k=0}^N\subset C_c^m(\mathbb{R}^2)$ such that $\supp\psi_k\subset U_k$ and $\partial_{\nu}\psi_k=0$ at $\partial G$.
\end{prop}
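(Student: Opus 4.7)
The plan is to exploit a tubular neighbourhood of $\partial G$ and build the partition-of-unity functions near $\partial G$ so that they are \emph{constant in the normal direction}; then the vanishing Neumann trace is automatic. Since $\partial G\in C^{m+1}$, one has an $\varepsilon>0$ such that the signed distance function $\rho$ (taken positive inside $G$) and the nearest-point projection $\pi$ onto $\partial G$ are both of class $C^{m+1}$ on $\mathcal{T}_\varepsilon:=\{x\in\mathbb{R}^2:|\rho(x)|<\varepsilon\}$. The outward unit normal at $y\in\partial G$ equals $-\nabla\rho(y)$, so normal differentiation on $\partial G$ coincides (up to sign) with $\partial_\rho$ in tubular coordinates $(\pi(x),\rho(x))$.

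First I would shrink the cover on $\partial G$: by compactness of $\partial G$ choose open sets $W_k\Subset U_k\cap\partial G$ (with respect to the topology of $\partial G$) still covering $\partial G$, and decrease $\varepsilon$ if necessary so that $\pi^{-1}(\overline{W_k})\cap\mathcal{T}_\varepsilon\subset U_k$ for each $k=1,\dots,N$. Pick a $C^m$ partition of unity $\{\phi_k\}_{k=1}^N$ on the compact manifold $\partial G$ subordinate to $\{W_k\}$, and a cutoff $\chi\in C_c^\infty(\mathbb{R})$ with $\chi\equiv 1$ on $[-\varepsilon/3,\varepsilon/3]$ and $\supp\chi\subset(-\varepsilon/2,\varepsilon/2)$. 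Define
\begin{equation*}
\tilde\psi_k(x):=\chi(\rho(x))\,\phi_k(\pi(x))\quad\text{for }x\in\mathcal{T}_\varepsilon,\qquad \tilde\psi_k:=0\text{ elsewhere.}
\end{equation*}
Then $\tilde\psi_k\in C_c^m(\mathbb{R}^2)$ with $\supp\tilde\psi_k\subset U_k$, and at a point $y\in\partial G$, $\partial_\rho\tilde\psi_k(y)=\chi'(0)\phi_k(y)+\chi(0)\partial_\rho(\phi_k\circ\pi)(y)=0$ because $\chi'$ vanishes at $0$ and $\phi_k\circ\pi$ is independent of $\rho$. Hence $\partial_\nu\tilde\psi_k\big|_{\partial G}=0$.

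Next I would handle the interior. By construction $\sum_{k=1}^N\tilde\psi_k(x)=\chi(\rho(x))$ on $\mathcal{T}_\varepsilon$, which equals $1$ whenever $|\rho(x)|\le\varepsilon/3$. The set $K:=\{x\in\overline{G}:\rho(x)\ge\varepsilon/3\}$ is a compact subset of $G$, so I can pick an open $U_0$ with $K\subset U_0\Subset G$, hence $U_0\cap\partial G=\emptyset$, and a cutoff $\tilde\psi_0\in C_c^\infty(U_0)$ with $\tilde\psi_0\equiv 1$ on $K$ and $0\le\tilde\psi_0\le 1$. Because $\tilde\psi_0$ vanishes identically in a neighbourhood of $\partial G$, its Neumann trace is automatically zero. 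Set $\Sigma:=\tilde\psi_0+\sum_{k=1}^N\tilde\psi_k$. On $\overline{G}$ one has $\Sigma\ge 1$: either $\rho\le\varepsilon/3$ and already $\sum_{k\ge1}\tilde\psi_k=1$, or $\rho\ge\varepsilon/3$ and $\tilde\psi_0=1$. Finally define
\begin{equation*}
\psi_k:=\tilde\psi_k/\Sigma,\qquad k=0,1,\dots,N,
\end{equation*}
on a neighbourhood of $\overline{G}$, multiplied by a further cutoff equal to $1$ on $\overline{G}$ to keep them in $C_c^m(\mathbb{R}^2)$.

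The only point requiring care---and the main obstacle---is the compatibility of the final normalization with the Neumann-trace condition. This is where the choices of $\chi$ and $\tilde\psi_0$ become crucial: on the strip $\{|\rho|<\varepsilon/3\}$ one has $\chi(\rho)\equiv 1$ \emph{and} $\tilde\psi_0\equiv 0$, so $\Sigma\equiv 1$ on an entire open neighbourhood of $\partial G$. Consequently, dividing by $\Sigma$ does not alter the functions in a neighbourhood of $\partial G$, and the equalities $\partial_\nu\psi_k=\partial_\nu\tilde\psi_k=0$ on $\partial G$ persist. The support conditions are preserved because $\Sigma>0$ on $\overline{G}$, and $\sum_{k=0}^N\psi_k=1$ on $\overline{G}\subset\bigcup_{k=0}^N U_k$ by construction, which completes the proof.
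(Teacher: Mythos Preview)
Your proof is correct and follows essentially the same route as the paper: both construct the near-boundary functions as $\chi(\text{signed distance})\cdot\phi(\text{nearest-point projection})$ in a tubular neighbourhood so that they are constant in the normal direction, add an interior cutoff, and normalize. The only variation is in the final step---the paper verifies $\partial_\nu\psi_k=0$ after normalization via the quotient rule (since every $\partial_\nu\tilde\phi_j$ vanishes on $\partial G$, so does the quotient), whereas you arrange $\Sigma\equiv 1$ near $\partial G$; note that your claim $\tilde\psi_0\equiv 0$ on the full strip $\{|\rho|<\varepsilon/3\}$ is slightly overstated (you only chose $U_0\Subset G$, not $U_0\subset\{\rho>\varepsilon/3\}$), but since $\operatorname{supp}\tilde\psi_0\Subset G$ one still has $\Sigma\equiv 1$ on some smaller neighbourhood of $\partial G$, which is all your argument needs.
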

\begin{proof}
Let $\{U_j\}_{j=1}^N$ be a finite open cover of $\partial G$. Then there exist open sets $V_j$ such that $K_j:=\overline{V}_j\subset U_j$ and $\bigcup_{k=1}^N V_j\supset\partial G$. Moreover, there exist functions $\phi_j\in C_c^\infty(U_j)$ with $0\le\phi_j\le 1$ such that $\phi_j|_{K_j}=1$. It is well-known that for sufficiently small $a>0$, the mapping $F:\partial G\times (-a,a)\to \mathbb{R}^n$, defined by $F(p,r):=p+r\nu(p)$, is a $C^2$-diffeomorphism onto its image $U_a:=\im F$. The inverse mapping $F^{-1}$ may be decomposed as $F^{-1}=(\Pi,d)$, where $\Pi\in C^m(U_a;\partial G)$ and $d\in C^m(U_a;(-a,a))$. Note that $\Pi(x)$ denotes the nearest point on $\partial G$ to $x\in U_a$ and $d(x)$ stands for the signed distance from $x\in U_a$ to $\partial G$. It can be shown that
$$U_a=\{x\in\mathbb{R}^n:\dist (x,\partial G)<a\}.$$
Choose $a>0$ small enough such that $U_a\subset\bigcup_{j=1}^N K_j$ and define new functions $\bar{\phi}_j(x):=\phi_j(\Pi(x))$ for $x\in U_a$. It follows that $\nabla\bar{\phi}_j(x)=D\Pi^{\sf T}(x)\nabla\phi_j(\Pi(x))$, hence $\partial_\nu\bar{\phi}_j(x)=(\nabla\phi_j(\Pi(x))|D\Pi(x)\nu(x))=0$ for $x\in\partial G$, since $D\Pi(x)\nu(x)$, $x\in\partial G$. Let
$$\tilde{\phi}_j(x):=\begin{cases}
\bar{\phi}_j(x)\varphi(d(x)),&\quad x\in U_a,\\
0,&\quad x\notin U_a,
\end{cases}$$
where $\varphi\in C_c^\infty(\mathbb{R};[0,1])$ such that $\varphi(s)=1$ if $|s|<a/2$ and $\varphi(s)=0$ if $|s|>3a/4$. Then we still have $\partial_\nu\tilde{\phi}(x)=0$ for $x\in\partial G$. Define $\tilde{K}_j:=K_j\cap\partial G$. Then there exists some $\delta\in (0,a/2)$ such that $F_j:=F(\tilde{K}_j,[-\delta,\delta])$ is compact, $F_j\subset U_j$ and $\bigcup_{j=1}^NF_j\supset\partial G$. It follows that $\phi_j|_{\tilde{K}_j}=1$ and therefore $\tilde{\phi}_j|_{F_j}=1$.

Consider the set $\mathcal{G}:=G\backslash \bigcup_{j=1}^NF_j$. Then $\mathcal{G}$ is a proper open subset of $G$. Choose an open set $\mathcal{U}_0\subset G$ that covers $\mathcal{G}$ and a set $\mathcal{F}_0\supset\mathcal{G}$ that is compactly contained in $\mathcal{U}_0$. Define $F_0:=\overline{\mathcal{F}_0}$. Then there exists a smooth function $\tilde{\phi}_0\in C_c^\infty(\mathcal{U}_0;[0,1])$ such that $\tilde{\phi}_0|_{F_0}=1$. In particular it holds that $\bigcup_{j=0}^N F_j\supset \overline{G}$ and $\sum_{j=0}^N\tilde{\phi}_j(x)>0$ for $x\in\overline{G}$. Finally, we set $\psi_k:=\tilde{\phi}_k/\sum_{j=0}^N\tilde{\phi}_j$, $k=0,\ldots, N$. Then $\sum_{k=0}^N\psi_k=1$ and
$$\partial_\nu\psi_k=\frac{\partial_{\nu}\tilde{\phi}_k}{\sum_{j=0}^N\tilde{\phi}_j}-
\frac{\tilde{\phi}_k\sum_{j=0}^N\partial_\nu\tilde{\phi}_j}
{\left(\sum_{j=0}^N\tilde{\phi}_j\right)^2}=0,$$
for $k\in\{0,\ldots,N\}$ at $\partial G$, since by construction also $\partial_\nu\tilde{\phi}_0=0$ at $\partial G$. The proof is complete.
\end{proof}
It is possible to extend the previous result to cylindrical domains $\Omega:=G\times(H_1,H_2)$. To this end let $S_1:=\partial G\times (H_1,H_2)$,
$$S_2:=\bigcup_{j=1}^2 G\times\{H_j\},$$
and $\Sigma:= G\times\{0\}$.
\begin{prop}\label{partitionone2}
Let $G\subset\mathbb{R}^{2}$ be a bounded domain with boundary $\partial G\in C^{m+1}$ and $\Omega:=G\times (H_1,H_2)$, $H_1<0<H_2$.
Then for each finite open covering $\{U_k\}_{k=1}^N$ of $\partial S_2\cup\partial\Sigma$
in $\mathbb{R}^n$ there exist open sets $U_j\subset\mathbb{R}^3$, $j\in\{N+1,\ldots,N+7\}$ such that
\begin{itemize}
\item $U_{N+1}\subset G\times (H_1,0)$, $U_{N+2}\subset G\times (0,H_2)$,
\item $U_{N+3}\cap U_{N+1}\cap S_1\neq\emptyset$, $U_{N+3}\cap(\Sigma\cup S_2)=\emptyset$,
\item $U_{N+4}\cap U_{N+2}\cap S_1\neq\emptyset$, $U_{N+4}\cap(\Sigma\cup S_2)=\emptyset$,
\item $U_{N+5}\cap\Sigma\neq\emptyset$, $U_{N+5}\cap(S_1\cup S_2)=\emptyset$,
\item $U_{N+6}\cap U_{N+1}\cap S_2\neq\emptyset$, $U_{N+6}\cap (S_1\cup\Sigma)=\emptyset$,
\item $U_{N+7}\cap U_{N+2}\cap S_2\neq\emptyset$, $U_{N+7}\cap (S_1\cup\Sigma)=\emptyset$,
\item $\bigcup_{j=1}^{N+7} U_j\supset \overline{\Omega}$.
\end{itemize}
Furthermore, there exists a subordinated partition of unity $\{\phi_k\}_{k=1}^{N+7}\subset C_c^m(\mathbb{R}^3)$ such that $\supp\phi_k\subset U_k$ and $\partial_{\nu_{\partial G}}\phi_k=\partial_{e_n}\phi_k=0$ at $\partial S_2\cup\partial\Sigma$.
\end{prop}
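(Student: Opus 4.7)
The plan is to combine Proposition \ref{partitionone}, applied to the horizontal variable $x'\in G$, with an elementary one-dimensional analogue in the vertical variable $x_3\in[H_1,H_2]$, and then to assemble the tensor-product partition of unity. Observe that the contact set $\partial S_2\cup\partial\Sigma$ is the disjoint union of the three ``circles'' $\partial G\times\{a\}$ with $a\in\{H_1,0,H_2\}$, and the vertical normal at $\partial S_2$ is $\pm e_3$ while the $\partial G$-normal at $\partial\Sigma$ is $\nu_{\partial G}$, so the two Neumann conditions in the conclusion decouple according to the tensor factors.

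First I would project the given cover $\{U_k\}_{k=1}^N$ onto $\mathbb{R}^2\times\{0\}$; this yields a finite open cover of $\partial G$. After a standard shrinking I would apply Proposition \ref{partitionone} to obtain a partition of unity $\{\psi_j\}_{j=0}^{N'}\subset C_c^m(\mathbb{R}^2)$ on $\overline{G}$ with $\partial_{\nu_{\partial G}}\psi_j=0$ on $\partial G$, $\supp\psi_0\Subset G$, and the supports of the $\psi_j$ with $j\ge 1$ chosen small enough that the ``slabs'' $\supp\psi_j\times I_a$, with $I_a$ a sufficiently small neighborhood of $a\in\{H_1,0,H_2\}$, each fit inside one of the given $U_k$. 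Independently I would construct a five-piece partition $\{\varphi_l\}_{l=1}^{5}\subset C_c^m(\mathbb{R})$ subordinated to a cover of $[H_1,H_2]$, with $\varphi_1,\varphi_3,\varphi_5$ supported in small neighborhoods of $H_1,0,H_2$ respectively and $\varphi_2,\varphi_4$ compactly supported in $(H_1,0)$ and $(0,H_2)$. The vanishing derivatives $\varphi_l'(a)=0$ for $a\in\{H_1,0,H_2\}$ are arranged by the same symmetrization/distance trick used in Proposition \ref{partitionone} (taking $\chi(a+|t-a|)$ and renormalizing), which collapses to a triviality in one dimension; equivalently, one may simply choose each $\varphi_l$ to be locally constant near those three points that lie in its support.

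Setting $\eta_{j,l}(x',x_3):=\psi_j(x')\varphi_l(x_3)$ one has $\sum_{j,l}\eta_{j,l}\equiv 1$ on $\overline{\Omega}$, and at every point of $\partial S_2\cup\partial\Sigma$ one factor supplies a vanishing derivative in the relevant direction, so $\partial_{\nu_{\partial G}}\eta_{j,l}=\partial_{e_3}\eta_{j,l}=0$ there. I would then group and relabel the $\eta_{j,l}$ as follows: $\eta_{0,2}$ and $\eta_{0,4}$ become $\phi_{N+1},\phi_{N+2}$; $\sum_{j\ge 1}\eta_{j,2}$ and $\sum_{j\ge 1}\eta_{j,4}$ become $\phi_{N+3},\phi_{N+4}$; $\eta_{0,3}$ becomes $\phi_{N+5}$; and $\eta_{0,1},\eta_{0,5}$ become $\phi_{N+6},\phi_{N+7}$. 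Each remaining $\eta_{j,l}$ with $j\ge 1$ and $l\in\{1,3,5\}$ is absorbed into the corresponding $\phi_k$ (for $k\in\{1,\dots,N\}$) by summing those whose supports lie in the same $U_k$. The sets $U_{N+1},\dots,U_{N+7}$ are then defined as open neighborhoods of the respective supports; the disjointness requirements in the statement (e.g.\ $U_{N+5}\cap(S_1\cup S_2)=\emptyset$) follow because the relevant factor has support away from $\partial G$ or from $\{H_1,H_2\}$.

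The main obstacle is the combinatorial bookkeeping in the final grouping step: every cross-term $\eta_{j,l}$ intended for $\phi_1,\dots,\phi_N$ must actually be supported inside some given $U_k$, which forces one to prescribe \emph{in advance} (before invoking Proposition \ref{partitionone} and choosing the $\varphi_l$) smallness conditions on both the horizontal supports $\supp\psi_j$ $(j\ge 1)$ and the vertical neighborhoods $I_a$. Once a Lebesgue-number style argument is applied to the cover of $\partial G$ obtained by projecting $\{U_k\}$, and the $\varphi_l$ for $l\in\{1,3,5\}$ are taken with sufficiently small supports, all such cross-terms automatically satisfy the required inclusion, and the construction is complete.
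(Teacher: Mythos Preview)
Your approach is correct and rests on the same idea as the paper's proof: tensor the horizontal partition of unity from Proposition~\ref{partitionone} against vertical cutoffs that are locally constant near $H_1$, $0$, $H_2$, so that the two Neumann conditions $\partial_{\nu_{\partial G}}\phi_k=0$ and $\partial_{e_3}\phi_k=0$ are supplied by the two factors separately.

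The only organisational difference is that the paper treats each contact circle $\partial G\times\{a\}$, $a\in\{H_1,0,H_2\}$, on its own: it intersects the given charts with the slice $\mathbb{R}^{2}\times\{a\}$, applies Proposition~\ref{partitionone} at that fixed height, and then multiplies by a single vertical bump $\varphi$ supported near $a$; the remaining region away from $\overline{S_2}\cup\overline{\Sigma}$ is covered afterwards by ordinary charts. Your version is more global: one horizontal partition $\{\psi_j\}$ on $\overline G$ times one five-piece vertical partition $\{\varphi_l\}$ on $[H_1,H_2]$, followed by a regrouping of the products $\psi_j\varphi_l$. This buys you a cleaner bookkeeping for the seven auxiliary sets $U_{N+1},\dots,U_{N+7}$ at the cost of the Lebesgue-number argument you flag (one horizontal $\psi_j$ must simultaneously fit, after thickening, into given charts at all three heights). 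The paper's level-by-level construction avoids that simultaneity requirement but is otherwise identical in spirit; either route is perfectly adequate here.
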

\begin{proof}
The idea of the proof is quite simple. Let $\{U_j\}_{j=1}^{N_1}$ be an open covering of $\partial\Sigma$ in $\mathbb{R}^n$ and define $\tilde{U}_j:=U_j\cap\{\mathbb{R}^{n-1}\times\{0\}\}$. Let $V_j:=\tilde{U}_j$, $j\in\{1,\ldots,N_1\}$, where we identify $V_j$ with a set in $\mathbb{R}^{n-1}$. Then, of course, $\{V_j\}_{j=1}^{N_1}$ is an open covering of $\partial\Sigma$ in $\mathbb{R}^{n-1}$. Now we are in a position to apply Proposition \ref{partitionone} to find an open set $V_0\subset\Sigma$ such that $\bigcup_{j=0}^{N_1}V_j\supset\overline{\Sigma}$. Furthermore, by Proposition \ref{partitionone}, there exists a subordinated partition of unity $\{\psi_j^\Sigma\}_{j=0}^{N_1}\subset C_c^m(\mathbb{R}^{n-1})$ with $\supp\psi_j^\Sigma\subset V_j$ and $\partial_{\nu_{\partial G}}\psi_j^{\Sigma}=0$ at $\partial\Sigma$.

Now we define $\phi_j^\Sigma(x',x_n):=\psi_j^\Sigma(x')\varphi(x_n)$, where $\varphi\in C_c^\infty(\mathbb{R};[0,1])$ such that $\varphi(s)=1$ if $|s|<\delta$ and $\varphi(s)=0$ if $|s|>2\delta$, where $\delta>0$ is sufficiently small. It follows that $\phi_j^\Sigma\in C_c^m(\mathbb{R}^n)$ and, if $\delta>0$ is sufficiently small, then $\supp\phi_j^\Sigma\subset U_j$ for $j\in\{1,\ldots,N_1\}$. Furthermore we still have $\partial_{\nu_{\partial G}}\phi_k^\Sigma=0$ and, in addition, $\partial_{e_n}\phi_j^\Sigma=0$ at $\partial\Sigma$, since $\varphi$ is constant in a neighborhood of $s=0$.

The same procedure can be applied for the charts covering $\partial S_2$. The remaining set which is a proper subset of $\Omega\backslash(\overline{S_2}\cup\overline{\Sigma})$ can be covered by finitely many open charts.
\end{proof}

\section{Auxiliary elliptic and parabolic problems}

\subsection{Elliptic problems}

The following result deals with the two-phase elliptic problem
\begin{align}\label{eq:appauxlem0}
\begin{split}
\lambda u-\Delta u&=f\quad \mbox{ in } \; \Omega\backslash\Sigma,\\
[\![\rho u]\!]&=g_1\quad \mbox{ on } \;\Sigma,\\
[\![\partial_{\nu_\Sigma} u]\!]&=g_2\quad \mbox{ on } \;\Sigma,\\
\partial_{\nu_{S_1}}u&=h_1\quad \mbox{ on }\; S_1\backslash\partial\Sigma,\\
\partial_{\nu_{S_2}}u&=h_2\quad \mbox{ on }\; S_2,
\end{split}
\end{align}
where $\Omega$ and $\Sigma$ satisfy one of the following conditions.
\begin{compactenum}[(a)]
\item $\Omega$ is either a full space, a (bent) half space or a (bent) quarter space and $\Sigma=\emptyset$,
\item $\Omega$ is either a full space or a (bent) half space with outer unit normal $-e_{n-1}$ at $x=0$ and $\Sigma=\{\mathbb{R}^{n-1}\times\{0\}\}\cap\Omega$,
\item $\Omega=G\times (H_1,H_2)$, $H_1<0<H_2$, is a cylindrical domain where $G$ is a bounded domain with boundary $\partial G\in C^4$ and $\Sigma=G\times\{0\}$.
\end{compactenum}
The sets $S_1$ and $S_2$ are the corresponding vertical and horizontal parts of the boundary of $\Omega$, respectively.

\begin{lem}\label{lem:appaux00}
Let $n=2,3$, $p\ge 2$ and assume that $\Omega$ and $\Sigma$ are subject to one of the conditions in (a)-(c) above. Then there exists $\lambda_0\ge 0$ such that for each $\lambda\ge \lambda_0$ the problem \eqref{eq:appauxlem0} has a unique solution $u\in W_p^2(\Omega\backslash\Sigma)$ if and only if the  data satisfy the following regularity and compatibility conditions.
\begin{enumerate}
\item $f\in L_p(\Omega)$,
\item $g_1\in W_p^{2-1/p}(\Sigma)$,
\item $g_2\in W_p^{1-1/p}(\Sigma)$,
\item $h_1\in W_p^{1-1/p}(S_1\backslash\partial\Sigma)$,
\item $h_2\in W_p^{1-1/p}(S_2)$,
\item $\Jump{\rho h_1}=\partial_{\nu_{\partial G}}g_1$ on $\partial\Sigma$.
\end{enumerate}
Furthermore, for each $\lambda_0>0$ there exists a constant $C=C(\lambda_0)>0$ such that for all $\lambda\ge\lambda_0$ the estimate
\begin{multline}\label{eq:MaxRegEllEst}
\lambda\|u\|_{L_p}+\|u\|_{W_p^2}
\le C\Big(\|f\|_{L_p}+\lambda^{1-1/2p}\|g_1\|_{W_p^{2-1/p}}\\
+\lambda^{1/2-1/2p}[\|g_2\|_{W_p^{1-1/p}}+\|h_1\|_{W_p^{1-1/p}}+\|h_2\|_{W_p^{1-1/p}}]\Big)
\end{multline}
for the solution of \eqref{eq:appauxlem0} is valid.
\end{lem}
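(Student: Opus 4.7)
The plan is to establish necessity by trace theory and then attack sufficiency via successive liftings that reduce \eqref{eq:appauxlem0} to a homogeneous transmission problem on flat geometries, for which the resolvent estimate with the claimed $\lambda$-scaling is standard.

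Necessity of (1)--(5) is immediate from trace theory for $u\in W_p^2(\Omega\setminus\Sigma)$. The compatibility (6) at the contact line $\partial\Sigma$ arises by evaluating $\partial_{\nu_{S_1}}u=h_1$ from both sides of $\Sigma$, multiplying by the respective density, and subtracting: since $\nu_{S_1}$ is tangential to $\Sigma$ at $\partial\Sigma$, the derivative $\partial_{\nu_{S_1}}$ commutes with the trace to $\Sigma$, so the jump $\Jump{\rho h_1}$ coincides with $\partial_{\nu_{\partial G}}\Jump{\rho u}|_{\Sigma}=\partial_{\nu_{\partial G}}g_1$.

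For sufficiency I proceed in three reduction steps. First, I lift $g_1$: choose $G_1\in W_p^2(\Omega\setminus\Sigma)$ with $\Jump{\rho G_1}=g_1$, constructed by extending $g_1$ piecewise into each phase using a standard higher-order trace inverse; compatibility (6) guarantees that the modified boundary datum $h_1-\partial_{\nu_{S_1}}G_1$ on $S_1\setminus\partial\Sigma$ has vanishing jump at $\partial\Sigma$, so the next liftings remain consistent. Next, I lift the remaining traces $g_2,h_1,h_2$ using trace-inverse operators with control of the $W_p^{1-1/p}\to W_p^2$ norm proportional to $\lambda^{-1/2+1/2p}$ (so that the $\lambda$-scaling is respected). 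After these reductions we are left with $\lambda u-\Delta u=f$ in $\Omega\setminus\Sigma$, homogeneous jump conditions $\Jump{\rho u}=\Jump{\partial_{\nu_\Sigma}u}=0$, and homogeneous Neumann conditions on $\partial\Omega\setminus\partial\Sigma$, with $f\in L_p(\Omega)$.

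The reduced problem is treated geometrically. In case (a) the Neumann problem in a full, half, or quarter space is handled by even reflection across each flat piece of $\partial\Omega$, reducing to the full-space resolvent $\lambda-\Delta$ on $\mathbb{R}^n$, which satisfies the scaling \eqref{eq:MaxRegEllEst} by standard Mihlin multiplier theory. In case (b), since $\Jump{\rho u}=0$ forces $u^+=(\rho_-/\rho_+)u^-$ on $\Sigma$ and $\Jump{\partial_{\nu_\Sigma}u}=0$, the rescaled function $\tilde u:=\rho u$ extends by even reflection across $\Sigma$ to a solution of a single-phase Neumann problem, again of type (a). In case (c) I use a partition of unity subordinate to the atlas of Proposition \ref{partitionone2}; after flattening $\partial G$ via the $C^4$-regularity each localized problem is one of the model problems (a) or (b), and the commutator terms produced by localization are lower order and can be absorbed into the left-hand side by choosing $\lambda_0$ sufficiently large. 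The $\lambda$-scaling in \eqref{eq:MaxRegEllEst} follows from the parabolic scaling of the resolvent: each derivative costs a factor $\lambda^{1/2}$, which matches the trace-interpolation identity $\lambda^{1-s/2}\|\cdot\|_{L_p}+\|\cdot\|_{W_p^{2-s}}\simeq \lambda^{(2-s)/2}\|\cdot\|_{W_p^{2-s}}$ used in inverting the traces, yielding exactly $\lambda^{1-1/2p}$ for $g_1$ and $\lambda^{1/2-1/2p}$ for $g_2,h_1,h_2$.

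The main obstacle is the careful bookkeeping of compatibility (6) through the successive liftings: each lifting must preserve the jump relations at $\partial\Sigma$ so that the subsequent one is well-defined, and this is where condition (6) enters in an essential way. The second delicate point is ensuring the $\lambda$-uniform constants survive the localization in case (c), which is handled by taking $\lambda_0$ larger than the operator-norm of the commutator terms arising from the partition of unity.
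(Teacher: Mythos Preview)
Your overall strategy---lift the boundary and transmission data, then treat the residual homogeneous problem by reflection and localization---is the same as the paper's, and the necessity argument and the handling of cases (a) and (c) are essentially correct sketches.

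The genuine gap is in case (b). You claim that after reducing to homogeneous transmission data $\Jump{\rho u}=0$, $\Jump{\partial_{\nu_\Sigma}u}=0$, the rescaled function $\tilde u:=\rho u$ ``extends by even reflection across $\Sigma$ to a solution of a single-phase Neumann problem.'' This is false when $\rho_1\neq\rho_2$. The rescaling does make $\tilde u$ continuous across $\Sigma$, but for the normal derivative one finds
\[
\Jump{\partial_{\nu_\Sigma}\tilde u}=\Jump{\rho\,\partial_{\nu_\Sigma}u}=(\rho_2-\rho_1)\,\partial_{\nu_\Sigma}u\big|_\Sigma,
\]
which does not vanish in general, so $\tilde u\notin W_p^2$ across $\Sigma$ and no even reflection yields a single-phase problem. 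The two transmission conditions are genuinely mismatched with respect to the weight $\rho$, and the problem does not decouple into a one-phase problem by any pointwise rescaling. The paper handles this instead by an explicit Poisson-type representation: after removing $f$ via the full-space resolvent, the remaining problem with data $(\bar g_1,g_2)$ on $\Sigma$ is solved by
\[
\bar u(x_3)=\frac{1}{\rho_1+\rho_2}\,e^{-L|x_3|}a_\pm,\qquad L=(\lambda-\Delta_{x'})^{1/2},
\]
with $a_\pm$ explicit linear combinations of $\bar g_1$ and $L^{-1}g_2$; the required $W_p^2$-estimate and $\lambda$-scaling then follow from the mapping properties of $e^{-Ly}$. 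For the two-phase half space one first removes $h_1$ by half-space solvers applied separately on each side of $\Sigma$ (this is where compatibility (6) is used, much as you indicate), and then even-reflects in $x_2$---not across $\Sigma$---to reduce to the two-phase full space just described. You need to replace your rescaling step by one of these arguments.
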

\begin{proof}
For convenience we restrict ourselves to the case $n=3$. The arguments for the case $n=2$ are similar.

(a) If $\Omega$ and $\Sigma$ are subject to the first two conditions in (a), i.e.\ $\Omega$ is a full space or a half space, then the result is folklore. So let us consider the case where $\Sigma=\emptyset$ and $\Omega$ is a quarter space. To be precise, let $\Omega:=\mathbb{R}\times\mathbb{R}_+\times\mathbb{R}_+$ with $S_1:=\mathbb{R}\times\{0\}\times\mathbb{R}_+$ and $S_2:=\mathbb{R}\times\mathbb{R}_+\times\{0\}$. Therefore we have to study the problem
\begin{align}
\begin{split}\label{eq:auxell1}
\lambda u-\Delta u&=f,\quad x\in\Omega,\\
\partial_2 u&=h_1,\quad x\in S_1,\\
\partial_3 u&=h_2,\quad x\in S_2.
\end{split}
\end{align}
Extend $f$ and $h_2$ with respect to $x_2$ (by even reflection) to some functions $f\in L_p(\mathbb{R}^2\times\mathbb{R}_+)$ and $\tilde{h}_2\in W_p^{1-1/p}(\mathbb{R}^2)$ and solve the half space problem
\begin{align*}
\lambda \tilde{u}-\Delta \tilde{u}&=\tilde{f},\quad x\in\mathbb{R}^2\times\mathbb{R}_+,\\
\partial_3 \tilde{u}&=\tilde{h}_2,\quad x\in\mathbb{R}^2\times\{0\},
\end{align*}
to obtain a unique solution $\tilde{u}\in W_p^2(\mathbb{R}^2\times\mathbb{R}_+)$ for each $\lambda>0$. Note that by symmetry, the function $[x\mapsto \tilde{u}(x_1,-x_2,x_3)]$ is a solution of this problem too. Therefore, by uniqueness, it holds that $\partial_2\tilde{u}|_{S_1}=0$.

In a next step, we extend $h_1$ by even reflection and with respect to the $x_3$ variable to some $\tilde{h}_1\in W_p^{1-1/p}(\mathbb{R}^2)$ and solve the half space problem
\begin{align*}
\lambda \tilde{v}-\Delta \tilde{v}&=0,\quad x\in\mathbb{R}\times\mathbb{R}_+\times \mathbb{R},\\
\partial_2 \tilde{v}&=\tilde{h}_2,\quad x\in\mathbb{R}\times\{0\}\times \mathbb{R},
\end{align*}
to obtain a unique solution $\tilde{v}\in W_p^2(\mathbb{R}\times\mathbb{R}_+\times \mathbb{R})$ for each $\lambda>0$. As above, by symmetry and uniqueness, it holds that $\partial_3\tilde{v}|_{S_2}=0$. Therefore it follows that $u:=(\tilde{u}+\tilde{v})|_{\Omega}$ is the unique solution of \eqref{eq:auxell1}.

Finally, let $\Omega$ be a bent quarter space with $S_2$ as above and
$$S_{1,\theta}:=\{(x_1,x_2,x_3)\in\mathbb{R}^3:x_2=\theta(x_1)\},$$
where $\theta\in BC^3(\mathbb{R})$ with $\|\theta\|_\infty+\|\theta'\|_\infty\le\eta$ and $\eta>0$ can be made as small as we wish. Then the corresponding result follows from change of coordinates (set $\bar{x}_2:=x_2-\theta(x_1)$) and perturbation theory for elliptic problems. We will give a detailed proof for the case of a two-phase half space in part (b) below. The technique carries over to this case. Indeed, things are easier in (a) as there are no compatibility conditions, since $\Sigma=\emptyset$.

(b) Let $\Omega=\mathbb{R}^3$ and $\Sigma=\mathbb{R}^{2}\times\{0\}$. Then we have to solve the problem
\begin{align}
\begin{split}\label{eq:auxell2}
\lambda u-\Delta u&=f,\quad x\in\Omega\backslash\Sigma,\\
[\![\rho u]\!]&=g_1,\quad x\in\Sigma,\\
[\![\partial_3u]\!]&=g_2,\quad x\in\Sigma,
\end{split}
\end{align}
where $\rho=\rho_1\chi_{x_3<0}+\rho_2\chi_{x_3>0}$ and $\rho_j>0$. Since $f\in L_p(\mathbb{R}^3)$ we may first solve the full space problem
$$\lambda \tilde{u}-\Delta \tilde{u}=f,\quad x\in\mathbb{R}^n;$$
to obtain a unique solution $\tilde{u}\in W_p^2(\mathbb{R}^n)$ for each $\lambda>0$. Consider now the problem
\begin{align}
\begin{split}\label{eq:auxell3}
\lambda \bar{u}-\Delta \bar{u}&=0,\quad x\in\Omega\backslash\Sigma,\\
[\![\rho \bar{u}]\!]&=g_1-\Jump{\rho\tilde{u}}=:\bar{g}_1,\quad x\in\Sigma,\\
[\![\partial_3\bar{u}]\!]&=g_2,\quad x\in\Sigma.
\end{split}
\end{align}
By semigroup theory, it is easy to see that the unique solution of \eqref{eq:auxell3} is explicitly given by
$$\bar{u}(x_3):=\frac{1}{\rho_1+\rho_2}\begin{cases}
e^{-L x_3}a_+,&\quad x_3\ge 0,\\
e^{-L(-x_3)}a_-,&\quad x_3<0,
\end{cases}$$
where $L:=(\lambda-\Delta_{x'})^{1/2}$ and
$$a_+:=\bar{g}_1+\rho_2L^{-1} g_2-(\rho_1+\rho_2)L^{-1}g_2,\ a_-=-(\bar{g}_1+\rho_2L^{-1}) g_2.$$
Therefore the function $u:=\tilde{u}+\bar{u}$ is the unique solution of \eqref{eq:auxell2} which exists for each $\lambda>0$.

Let now $\Omega=\mathbb{R}\times\mathbb{R}_+\times\mathbb{R}$ and $\Sigma=\{\mathbb{R}^2\times\{0\}\}\cap\Omega$, i.e.\ we consider the case of a two-phase half space. Now we have to solve the problem
\begin{align}\label{eq:auxell4}
\begin{split}
\lambda u-\Delta u&=f\quad \mbox{ in } \; \Omega\backslash\Sigma,\\
[\![\rho u]\!]&=g_1\quad \mbox{ on } \;\Sigma,\\
[\![\partial_{3} u]\!]&=g_2\quad \mbox{ on } \;\Sigma,\\
\partial_{2}u&=h_1\quad \mbox{ on }\; S_1\backslash\partial\Sigma,
\end{split}
\end{align}
where $S_1:=\mathbb{R}\times\{0\}\times\mathbb{R}$. We will first reduce \eqref{eq:auxell4} to the case $h_1=0$. To this end we first extend $h_1^+:=h_1|_{x_3>0}$ with respect to the $x_3$ variable to some $\tilde{h}_1^+\in W_p^{1-1/p}(\mathbb{R}^2)$ and solve the half space problem
$$\lambda u^+-\Delta u^+=0,\ x_2>0,\ \partial_2 u^+=\tilde{h}_1^+,\ x_2=0,$$
to obtain a unique solution $u^+\in W_p^2(\mathbb{R}\times\mathbb{R}_+\times\mathbb{R})$. Then we repeat the same procedure for $h_1^-:=h_1|_{x_3<0}$ to obtain a unique solution $u^-\in W_p^2(\mathbb{R}\times\mathbb{R}_+\times\mathbb{R})$. Define the function
$$\bar{u}:=
\begin{cases}
u^+,&\quad x_3\ge 0,\\
u^-,&\quad x_3<0,
\end{cases}$$
and consider the problem
\begin{align}\label{eq:auxell5}
\begin{split}
\lambda \tilde{u}-\Delta \tilde{u}&=\bar{f}\quad \mbox{ in } \; \Omega\backslash\Sigma,\\
[\![\rho \tilde{u}]\!]&=\bar{g}_1\quad \mbox{ on } \;\Sigma,\\
[\![\partial_{3} \tilde{u}]\!]&=\bar{g}_2\quad \mbox{ on } \;\Sigma,\\
\partial_{2}\tilde{u}&=0\quad \mbox{ on }\; S_1\backslash\partial\Sigma,
\end{split}
\end{align}
where $\bar{f}:=f$, $\bar{g}_1:=g_1-\Jump{\rho\bar{u}}$ and $\bar{g}_2:=g_2-\Jump{\partial_3\bar{u}}$. Note that by the compatibility condition on $g_1$ and $h_1$ at $\partial\Sigma$ it holds that $\partial_2\bar{g}_1=0$ at $\partial\Sigma$. Therefore it is possible to extend $\bar{f},\bar{g}_j$ by even reflection in $x_2$ to some functions $\hat{f}\in L_p(\mathbb{R}^3)$, $\hat{g}_1\in W_p^{2-1/p}(\mathbb{R}^2)$ and $\hat{g}_2\in W_p^{1-1/p}(\mathbb{R}^2)$. Solve \eqref{eq:auxell2} with $(f,g_j)$ replaced by $(\hat{f},\hat{g}_j)$ to obtain a unique solution $\hat{u}\in W_p^2(\mathbb{R}^2\times\dot{\mathbb{R}})$. Since the function $[x\mapsto \hat{u}(x_1,-x_2,x_3)]$ is a solution of this problem too, it follows by uniqueness that $\partial_2\hat{u}=0$ at $S_1\backslash\partial\Sigma$, hence $\tilde{u}:=\hat{u}|_{\Omega}$ is the unique solution of \eqref{eq:auxell5}. Finally, $u:=\tilde{u}+\bar{u}$ solves \eqref{eq:auxell4} for each $\lambda>0$ and this solution is unique.

Consider now the case of a bent two-phase half space with outer unit normal $-e_{2}$ at $x=0$. To be precise, let
$$\Omega_\theta:=\{x\in\mathbb{R}^3:x_2>\theta(x_1)\},$$
where $\theta\in BC^3(\mathbb{R})$, with $\theta(0)=\theta'(0)=0$ and $\|\theta'\|_\infty+\|\theta\|_\infty\le\eta$, where $\eta>0$ can be made as small as we wish. Furthermore, let $S_{1,\theta}:=\partial\Omega_\theta$ and $\Sigma_\theta:=\{\mathbb{R}^2\times\{0\}\}\cap\Omega_\theta$.
We have to investigate the following problem.
\begin{align}\label{eq:auxell6}
\begin{split}
\lambda u-\Delta u&=f\quad \mbox{ in } \; \Omega_\theta\backslash\Sigma_\theta,\\
[\![\rho u]\!]&=g_1\quad \mbox{ on } \;\Sigma_\theta,\\
[\![\partial_{3} u]\!]&=g_2\quad \mbox{ on } \;\Sigma_\theta,\\
\partial_{\nu_{\partial\Sigma_\theta}}u&=h_1\quad \mbox{ on }\; S_{1,\theta}\backslash\partial\Sigma_\theta.
\end{split}
\end{align}
First of all we extend $f$ , $g_1$ and $g_2$ to some functions $\tilde{f}\in L_p(\mathbb{R}^3)$, $\tilde{g}_1\in W_p^{2-1/p}(\mathbb{R}^2)$ and $\tilde{g}_2\in W_p^{1-1/p}(\mathbb{R}^2)$, respectively. Then we solve \eqref{eq:auxell2} with $(f,g_1,g_2)$ replaced by $(\tilde{f},\tilde{g}_1,\tilde{g}_2)$ to obtain a unique solution $\tilde{u}\in W_p^2(\mathbb{R}^2\times\dot{\mathbb{R}})$. Let $\bar{h}_1:=h_1-\partial_{\nu_{\partial\Sigma_\theta}}\tilde{u}|_{S_{1,\theta}}$ and note that $\Jump{\rho \bar{h}_1}=0$ at $\partial\Sigma_{\theta}$ by the compatibility condition on $(g_1,h_1)$ at $\partial\Sigma_\theta$. We arrive at the problem
\begin{align}\label{eq:auxell7}
\begin{split}
\lambda \bar{u}-\Delta \bar{u}&=0\quad \mbox{ in } \; \Omega_\theta\backslash\Sigma_\theta,\\
[\![\rho \bar{u}]\!]&=0\quad \mbox{ on } \;\Sigma_\theta,\\
[\![\partial_{3} \bar{u}]\!]&=0\quad \mbox{ on } \;\Sigma_\theta,\\
\partial_{\nu_{\partial\Sigma_\theta}}\bar{u}&=\bar{h}_1\quad \mbox{ on }\; S_{1,\theta}\backslash\partial\Sigma_\theta.
\end{split}
\end{align}
Transforming $\Omega_\theta$, $S_{1,\theta}$ and $\Sigma_{\theta}$ to $\Omega=\mathbb{R}\times\mathbb{R}_+\times\mathbb{R}$, $S_1=\mathbb{R}\times\{0\}\times\mathbb{R}$ and $\Sigma=\{\mathbb{R}^2\times\{0\}\}\cap\Omega$ via the diffeomorphism
$$\Omega\ni (\bar{x}_1,\bar{x}_2,\bar{x}_3)\mapsto (\bar{x}_1,\bar{x}_2+\theta(\bar{x}_1),\bar{x}_3)\in\Omega_\theta$$
yields the transformed problem
\begin{align}\label{eq:auxell8}
\begin{split}
\lambda \hat{u}-\Delta \hat{u}&=M_1(\theta,\hat{u})\quad \mbox{ in } \; \Omega\backslash\Sigma,\\
[\![\rho \hat{u}]\!]&=0\quad \mbox{ on } \;\Sigma,\\
[\![\partial_{3} \hat{u}]\!]&=0\quad \mbox{ on } \;\Sigma,\\
\partial_{2}\hat{u}&=M_2(\theta,\hat{u})-\sqrt{1+\theta'^2}\hat{h}_1\quad \mbox{ on }\; S_{1}\backslash\partial\Sigma,
\end{split}
\end{align}
where $\hat{u}(\bar{x}):=\bar{u}(\bar{x}_1,\bar{x}_2+\theta(\bar{x}_1),\bar{x}_3)$, $\hat{h}_1(\bar{x}_1,\bar{x}_3):=\bar{h}(\bar{x}_1,\theta(\bar{x}_1),\bar{x}_3)$,
$$M_1(\theta,\hat{u}):=-2\theta'(\bar{x}_1)\partial_{1}
\partial_{2}\hat{u}-\theta''(\bar{x}_1)\partial_{2}\hat{u}
+\theta'(\bar{x}_1)^2\partial_{2}^2\hat{u},$$
and 
$$M_2(\theta,\hat{u}):=\theta'(\bar{x}_1)\partial_1\hat{u}|_{S_1\backslash\partial\Sigma}
-\theta'(\bar{x}_1)^2\partial_2\hat{u}|_{S_1\backslash\partial\Sigma}.$$ 
Observe that $\Jump{\rho\hat{h}_1}=0$ at $\partial\Sigma$.

Define the function spaces
$$\mathbb{E}:=\{\hat{u}\in W_p^2(\Omega\backslash\Sigma):\Jump{\rho\hat{u}}=\Jump{\partial_3 \hat{u}}=0\ \text{on}\ \Sigma\},$$
equipped with the equivalent norm $\|\hat{u}\|_{\mathbb{E},\lambda}:=\lambda\|\hat{u}\|_{L_p}+\|\hat{u}\|_{W_p^2}$, $\lambda>0$ and
$$\mathbb{F}:=\{(f_1,f_2)\in L_p(\Omega)\times W_p^{1-1/p}(S_1\backslash\partial\Sigma):\Jump{\rho f_2}=0\ \text{at}\ \partial\Sigma\}.$$
Furthermore, let a linear operator $L:\mathbb{E}\to\mathbb{F}$ be defined by
$$L\hat{u}:=\begin{pmatrix}
\lambda\hat{u}-\Delta\hat{u}\\
\partial_2\hat{u}|_{S_1\backslash\partial\Sigma}
\end{pmatrix}.
$$
It follows from our previous arguments that $L:\mathbb{E}\to\mathbb{F}$ is an isomorphism, provided $\lambda>0$.
Furthermore, by the same strategy as in \cite[Section 3.1.1]{Lun95}, there exists $\lambda_0>0$ and a constant $C>0$ such that for all $\lambda\ge\lambda_0$ and $(f_1,f_2)\in\mathbb{F}$ the estimate
\begin{equation}\label{eq:auxell9}
\|L^{-1}(f_1,f_2)\|_{\mathbb{E},\lambda}\le C\left(\|f_1\|_{L_p(\Omega)}+|\lambda|^{1/2}\|\tilde{f}_2\|_{L_p(\Omega)}+\|\nabla\tilde{f}_2\|_{L_p(\Omega)}\right),
\end{equation}
is valid, where $\tilde{f}_2$ is an extension of $f_2$ to $W_p^1(\Omega\backslash\Sigma)$.

Let now $F:=(0,-\sqrt{1+\theta'^2}\hat{h}_1)$ and $M(\theta,\hat{u}):=(M_1,M_2)(\theta,\hat{u})$.
Clearly, for each $\hat{u}\in\mathbb{E}$, it holds that $M(\theta,\hat{u})\in\mathbb{F}$, since
$$\Jump{\rho\theta'(\bar{x}_1)\partial_1\hat{u}}=\theta'(\bar{x}_1)\partial_1
\Jump{\rho\hat{u}}=0$$
at $\partial\Sigma$. Furthermore it holds that
$$\Jump{\rho\sqrt{1+\theta'^2}\hat{h}_1}=\sqrt{1+\theta'^2}
\Jump{\rho\hat{h}_1}=0$$
at $\partial\Sigma$ as well, hence $F\in\mathbb{F}$. Therefore, for $\hat{u}\in \mathbb{E}$, the expressions $L^{-1}M(\theta,\hat{u})$, $L^{-1}F$ are well defined in $\mathbb{E}$ and we may rewrite \eqref{eq:auxell8} in the shorter form
\begin{equation}\label{eq:auxell10}
\hat{u}=L^{-1}M(\theta,\hat{u})+L^{-1}F.
\end{equation}
We will now apply \eqref{eq:auxell9} to the term $L^{-1}M(\theta,\hat{u})$. To this end, note that 
$$\tilde{M}_2(\theta,\hat{u}):=\theta'(\bar{x}_1)\partial_1\hat{u}
-\theta'(\bar{x}_1)^2\partial_2\hat{u}$$
is a proper extension of $M_2(\theta,\hat{u})$ to $W_p^1(\Omega\backslash\Sigma)$. By \eqref{eq:auxell9}, this yields the estimate
\begin{multline*}
\|L^{-1}M(\theta,\hat{u})\|_{\mathbb{E},\lambda}\le \\
\le C\left(\|\theta'\|_{L_\infty(\Omega)}\|\hat{u}\|_{W_p^2(\Omega)}+[\|\theta''\|_{L_\infty(\Omega)}+\lambda^{1/2}\|\theta'\|_{L_\infty(\Omega)}]\|\hat{u}\|_{W_p^1(\Omega)}\right).
\end{multline*}
Clearly, $\|\hat{u}\|_{W_p^2(\Omega)}\le\|\hat{u}\|_{\mathbb{E},\lambda}$ and by complex interpolation we obtain furthermore
$$\|\hat{u}\|_{W_p^1(\Omega)}\le C\|\hat{u}\|_{L_p(\Omega)}^{1/2}\|\hat{u}\|_{W_p^2(\Omega)}^{1/2}\le \frac{1}{\lambda^{1/2}}\|\hat{u}\|_{\mathbb{E},\lambda}.$$
Choosing first $\|\theta'\|_\infty$ sufficiently small and then $\lambda>0$ sufficiently large, it follows that for each $\varepsilon>0$ there exist numbers $\eta_0>0$ and $\lambda_1>0$ such that $\|L^{-1}M(\theta,\hat{u})\|_{\mathbb{E},\lambda}\le\varepsilon\|\hat{u}\|_{\mathbb{E},\lambda}$, whenever $\|\theta'\|_\infty\le\eta\in (0,\eta_0)$ and $\lambda\ge\lambda_1$. Therefore, a Neumann series argument yields a unique solution of \eqref{eq:auxell10}.

(c) The proof for this assertion uses the technique of localization. By Proposition \ref{partitionone2} there exists a finite covering of $\overline\Omega$ and a subordinated partition of unity $\{\phi_k\}_{k=1}^N$ such that $\partial_{\nu_{\partial G}}\phi_k=0$ at $(\partial\Sigma\cup\partial S_2)\cap\supp\phi_k$.

Multiplying each equation in \eqref{eq:appauxlem0} by $\phi_k$, we obtain problems in local coordinates, which correspond to perturbed versions of one of the problems which have been treated in (a) \& (b). Assume that $u$ is a solution of \eqref{eq:appauxlem0}, $u_k:=u\phi_k$, $g_1^k:=g_1\phi_k$ and $h_1^k:=h_1\phi_k$, then $\Jump{\rho u_k}=g_1^k$ and
$$\partial_{\nu_{S_1}}u_k=\phi_k\partial_{\nu_{S_1}}u+u\partial_{\nu_{S_1}}\phi_k=
\phi_kh_1=h_1^k,$$
since $\nu_{S_1}=(\nu_{\partial G},0)^{\sf T}$. In particular, the commutator term in the Neumann boundary condition is identically zero. By the same reason, one has $$\partial_{\nu_{S_1}}g_{1,k}=\phi_k\Jump{\rho h_1}=\Jump{\rho h_1^k},$$
hence the local data $(g_1^k,h_1^k)$ satisfy the compatibility condition at $\partial\Sigma\cap\supp\phi_k$.

The remaining localization procedure follows along standard arguments. We refrain from giving the details and refer the reader e.g.\ to \cite{DHP03}.
\end{proof}

We shall also prove some results on the solvability of \eqref{eq:appauxlem0} in case $\lambda=0$. If $\lambda=0$ and $\Omega$ is unbounded, one cannot expect to obtain $u\in L_p(\Omega)$. Instead, we are looking for solutions $u\in \dot{W}_p^1(\Omega\backslash\Sigma)\cap\dot{W}_p^2(\Omega\backslash\Sigma)$, or equivalently $\nabla u\in W_p^1(\Omega\backslash\Sigma)$.

If $\nabla u\in W_p^1(\Omega\backslash\Sigma)$ is a solution of \eqref{eq:appauxlem0} with $g_1=0$, then, by trace theory, $f\in L_p(\Omega)$, $g_2\in W_p^{1-1/p}(\Sigma)$, $h_1\in W_p^{1-1/p}(S_1\backslash\partial\Sigma)$ and $h_2\in W_p^{1-1/p}(S_2)$. There is some hidden compatibility/regularity condition for the data $(f,g_2,h_1)$. To see this, let $\phi\in C_c^\infty(\overline{\Omega})$. We multiply $\eqref{eq:appauxlem0}_1$ by $\phi$ and integrate by parts, to obtain the identity
\begin{multline*}
\langle(f,g_2,h_1,h_2),\phi\rangle:=\int_\Omega f\phi\ dx+\int_{S_1} h_1\phi\ dS_1\\
+\int_{S_2} h_2\phi\ dS_2-\int_\Sigma g_2\phi\ d\Sigma=\int_\Omega\nabla u\cdot\nabla\phi\ dx.
\end{multline*}
It follows that the linear mapping $[\phi\mapsto \langle(f,g_2,h_1,h_2),\phi\rangle]$ is continuous on $C_c^\infty(\overline{\Omega})$ with respect to the norm $\|\nabla\cdot\|_{L_{p'}(\Omega)}$.

If $\Omega$ is a full space, a (bent) half space or a (bent) quarter space, then it is well known, that $C_c^\infty(\overline{\Omega})$ (hence also $W_{p'}^1(\Omega))$ is dense in $\dot{W}_{p'}^1(\Omega)$ with respect to the norm $\|\nabla\cdot\|_{L_{p'}(\Omega)}$. Therefore, since each functional in $$\hat{W}_p^{-1}(\Omega):=\left(\dot{W}_{p'}^1(\Omega)\right)^*,$$
is uniquely determined by its restriction to $C_c^\infty(\overline{\Omega})$, it follows that $(f,g_2,h_1,h_2)$ yields a well defined element of $\hat{W}_p^{-1}(\Omega)$ with norm given by
\begin{align*}
\|(f,g_2,h_1,h_2)\|_{\hat{W}_p^{-1}}&:=\sup\{\langle(f,g_2,h_1,h_2),\phi\rangle/
\|\nabla\phi\|_{L_{p'}}:\phi\in C_c^\infty(\overline{\Omega})\}\\
&=\sup\{\langle(f,g_2,h_1,h_2),\phi\rangle/
\|\nabla\phi\|_{L_{p'}}:\phi\in W_{p'}^1(\Omega)\}.
\end{align*}
Note that if $\Omega$ is bounded, then the above representation formula for $(f,g_2,h_1,h_2)$ holds for each $\phi\in \dot{W}_{p'}^1(\Omega)$, since $\dot{W}_q^1(\Omega)\subset{W}_q^1(\Omega)$ if $\Omega$ is bounded. This follows for example from the Poincar\'{e}-Wirtinger inequality. However, if $\Omega$ is unbounded, then the above representation for $(f,g_2,h_1,h_2)$ holds at least on the dense subspace $C_c^\infty(\overline{\Omega})$.

Furthermore, if $S_j=\emptyset$, $j\in\{1,2\}$ and/or $\Sigma=\emptyset$, then we simply neglect $h_j$ and/or $g_2$ in $(f,g_2,h_1,h_2)$.

We are now in a position to state the next auxiliary lemma.
\begin{lem}\label{lem:appaux0}
Let $n=2,3$, $p\ge 2$ and $\lambda=0$. Then the following assertions are valid.
\begin{enumerate}
\item If $\Omega$ and $\Sigma$ satisfy one of the conditions in $(a),(b)$ above, then there exists a unique solution $\nabla u\in {W}_p^{1}(\Omega\backslash\Sigma)$ of \eqref{eq:appauxlem0} with $g_1=0$ if and only if $f\in L_p(\Omega)$, $g_2\in W_p^{1-1/p}(\Sigma)$, $h_1\in W_p^{1-1/p}(S_1\backslash\partial\Sigma)$, $h_2\in W_p^{1-1/p}(S_2)$, $\Jump{\rho h_1}=0$ on $\partial\Sigma$ and $(f,g_2,h_1,h_2)\in\hat{W}_{p}^{-1}(\Omega)$.
\item If $\Omega$ and $\Sigma$ are subject to the condition $(c)$ above, then there exists a unique solution $u\in {W}_p^{2}(\Omega\backslash\Sigma)$ of \eqref{eq:appauxlem0} with $g_1=h_1=h_2=0$ if and only if
    $$f\in L_p^{(0)}(\Omega):=\{f\in L_p(\Omega):\int_\Omega f dx=0\}.$$
\end{enumerate}
\end{lem}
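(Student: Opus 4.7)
Necessity of the listed regularity and compatibility conditions follows from trace theory and integration by parts, together with the density of $W_{p'}^1$ in $\dot{W}_{p'}^1$, which is precisely what allows one to read the tuple $(f,g_2,h_1,h_2)$ as a functional on the latter homogeneous space.

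For the sufficiency in (1), I would reuse the localization scheme of Lemma \ref{lem:appaux00}, reducing to the flat model problems posed on a full space, a half space, or a quarter space, each with or without the flat interface $\Sigma$. On each flat model one has an explicit Fourier/Poisson representation of the solution operator, and Mikhlin's multiplier theorem, combined with the characterization $\hat{W}_p^{-1}(\mathbb{R}^n) = \{f = \operatorname{div} g : g \in L_p(\mathbb{R}^n)^n\}$, shows that $\nabla(-\Delta)^{-1}$ is an isomorphism from the data space onto $L_p$, which is exactly what the statement claims. The bent cases follow from this by the flattening change of variables and Neumann series argument of Section \ref{bentQS}, using smallness of $\|\theta'\|_\infty$ to absorb the commutator terms. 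An arguably cleaner route is to take the resolvent solutions $u_\lambda$ from Lemma \ref{lem:appaux00} for $\lambda>0$, prove the $\lambda$-independent estimate
\[
\|\nabla u_\lambda\|_{W_p^1(\Omega\backslash\Sigma)} \le C\bigl(\|(f,g_2,h_1,h_2)\|_{\hat{W}_p^{-1}(\Omega)} + \|g_2\|_{W_p^{1-1/p}} + \|h_1\|_{W_p^{1-1/p}} + \|h_2\|_{W_p^{1-1/p}}\bigr)
\]
by duality against the adjoint resolvent equation, and then pass to the limit $\lambda \to 0_+$. Uniqueness comes from the energy identity obtained by testing the homogeneous problem with $u$ itself.

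For the sufficiency in (2), the piecewise substitution $v := \rho u$ turns the problem into a standard symmetric Neumann-transmission problem for $v$,
\[
-\Delta v = \rho f\ \text{in}\ \Omega\backslash\Sigma,\qquad \Jump{v} = 0,\ \Jump{\rho^{-1}\partial_{\nu_\Sigma} v} = 0\ \text{on}\ \Sigma,\qquad \partial_\nu v = 0\ \text{on}\ \partial\Omega,
\]
whose symmetric weak formulation $\int_\Omega \rho^{-1}\nabla v \cdot \nabla \psi\,dx = \int_\Omega f \psi\,dx$ on $H^1(\Omega)$ is coercive modulo constants by Poincar\'e-Wirtinger. Lax-Milgram thus produces a unique mean-zero weak solution precisely when $f \in L_p^{(0)}(\Omega)$. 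Applying Lemma \ref{lem:appaux00} to $(\lambda I - \Delta) v = \rho f + \lambda v$ for a sufficiently large fixed $\lambda$ with the appropriate transmission and boundary data, combined with uniqueness, then upgrades $v$ to $W_p^2(\Omega\backslash\Sigma)$, from which one recovers the desired $u = v/\rho$.

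The main technical obstacle is the $\lambda$-independent gradient bound in (1): the constant in \eqref{eq:MaxRegEllEst} blows up like $\lambda^{-1}$ in the $L_p$-norm of $u$ as $\lambda \to 0$, so extracting a $\lambda$-uniform estimate on $\nabla u_\lambda$ alone requires testing against the gradient of a dual potential and invoking the $\hat{W}_p^{-1}$-compatibility condition in an essential way; without it the family $\{u_\lambda\}$ has unbounded gradients even for smooth data. This condition is vacuous on bounded domains, which is why only the mean-value constraint survives in (2).
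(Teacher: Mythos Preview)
Your overall strategy is sound and leads to a correct proof, but it diverges from the paper's in two places worth noting.

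For part (2), the paper does not go through the symmetrizing substitution $v=\rho u$ and Lax--Milgram. Instead it argues purely in $L_p$ via Fredholm theory: the operator $A=-\Delta$ with the transmission and Neumann conditions built into its domain has compact resolvent by Lemma~\ref{lem:appaux00}, hence isolated eigenvalues; one checks directly that $N(A)=\lin\{\mathds{1}_\rho\}$ with $\mathds{1}_\rho=\chi_{\Omega_1}+(\rho_1/\rho_2)\chi_{\Omega_2}$, that $0$ is semi-simple (by integrating $Au=v$ over $\Omega$), and that $R(A)=L_p^{(0)}(\Omega)$. Your variational route is perfectly fine and arguably more elementary, but it requires a bootstrap from $H^1$ to $W_p^2$ that, as you note, passes through Lemma~\ref{lem:appaux00}; the paper's argument stays in $L_p$ throughout and avoids the $L_2$ detour.

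For part (1), the paper does not take a resolvent limit $\lambda\to 0_+$. It fixes one large $\lambda_0$, solves the shifted problem to absorb the inhomogeneous boundary data (obtaining both the $W_p^2$ estimate and, crucially, the $W_p^1$ estimate via the weak $(W_{p'}^1)^*$-realization of the shifted operator), and then handles the remaining problem $-\Delta u_*=f_*$ with \emph{homogeneous} Neumann data by successive even reflections down to the full space, where the key bound $\|\nabla u\|_{L_p}\le C\|f\|_{\hat W_p^{-1}}$ is obtained from Calder\'on--Zygmund. The two-phase interface is treated by an explicit Poisson-semigroup formula. Your localization/Fourier sketch would also work, but the reflection argument is what actually appears.

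One genuine gap: ``uniqueness by the energy identity'' only works for $p=2$. On the unbounded model domains you know $\nabla u\in L_p$, not $\nabla u\in L_2$, so testing against $u$ is not justified. The paper gets uniqueness on $\bbR^n$ from the a~priori estimate itself (which forces $\nabla u=0$ when $f=0$), and on the two-phase full space by forming the combinations $v=\rho_2 u_+-\rho_1 u_-(\cdot,-x_3)$ and $w=u_++u_-(\cdot,-x_3)$, which reduce to half-space Dirichlet and Neumann problems respectively; reflection then gives $\nabla v=\nabla w=0$. You should replace the energy argument by one of these.
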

\begin{proof}
1. (a) If $\Omega=\mathbb{R}^n$, then we have to solve $-\Delta u=f$ for $f$ in $\hat{W}_p^{-1}(\Omega)\cap L_p(\Omega)$. It is a folkloristic result that whenever $f\in L_p(\mathbb{R}^n)$, then there is a unique solution $u\in \dot{W}_p^2(\mathbb{R}^n)$ of the equation $-\Delta u=f$. Multiplying $-\Delta u=f$ by $\phi\in C_c^\infty(\mathbb{R}^n)$ and integrating by parts, we obtain
$$\int_{\mathbb{R}^n}\nabla u\cdot\nabla\phi\ dx=-\int_{\mathbb{R}^n}\Delta u\phi\ dx=\int_{\mathbb{R}^n} f\phi\ dx.$$
Let us show that there exists a constant $C>0$ such that the estimate
\begin{equation}\label{eq:estweaklaplacian}\|\nabla u\|_{L_p(\mathbb{R}^n)}\le C\sup\left\{\frac{|\int_{\mathbb{R}^n}\nabla u\cdot\nabla\phi\ dx|}{\|\nabla\phi\|_{L_{p'}(\mathbb{R}^n)}}:\phi\in C_c^\infty(\mathbb{R}^n)\right\}
\end{equation}
is valid. Indeed, it holds that
\begin{align}
\begin{split}\label{eq:appauxlem0.02}
\sup\left\{\frac{|\int_{\mathbb{R}^n}\nabla u\cdot\nabla\phi\ dx|}{\|\nabla\phi\|_{L_{p'}(\mathbb{R}^n)}}:\phi\in C_c^\infty(\mathbb{R}^n)\right\}&\ge\frac{|\int_{\mathbb{R}^n}\nabla u\cdot\nabla\partial_j\varphi\ dx|}{\|\nabla\partial_j\varphi\|_{L_{p'}(\mathbb{R}^n)}}\\
&\ge \frac{1}{C}\frac{|\int_{\mathbb{R}^n}\partial_ju\cdot\Delta\varphi\ dx|}{\|\Delta\varphi\|_{L_{p'}(\mathbb{R}^n)}},
\end{split}
\end{align}
for all $\varphi\in C_c^\infty(\mathbb{R}^n)$, where we integrated by parts and applied the Cald\'{e}ron-Zygmund inequality $\|\nabla^2\varphi\|_{L_{p'}(\mathbb{R}^n)}\le C\|\Delta\varphi\|_{L_{p'}(\mathbb{R}^n)}$.


It is well-known that $\Delta C_c^\infty(\mathbb{R}^n)$ is dense in $L_{p'}(\mathbb{R}^n)$ with respect to the $L_{p'}$-norm. Taking the supremum on the right hand side of \eqref{eq:appauxlem0.02} over all functions
$\varphi\in C_c^\infty(\mathbb{R}^n)$ we obtain the desired inequality \eqref{eq:estweaklaplacian}. Evidently, for the solution $u\in\dot{W}_p^2(\mathbb{R}^n)$ of $-\Delta u=f$ it follows that
$$\|\nabla u\|_{L_p(\mathbb{R}^n)}\le C\sup\left\{\frac{|\int_{\mathbb{R}^n}f\phi\ dx|}{\|\nabla\phi\|_{L_{p'}(\mathbb{R}^n)}}:\phi\in C_c^\infty(\mathbb{R}^n)\right\}.$$
hence, if $f\in L_p(\mathbb{R}^n)\cap \hat{W}_p^{-1}(\mathbb{R}^n)$, then
$$\|f\|_{\hat{W}_p^{-1}}=\sup\left\{\frac{|\int_{\mathbb{R}^n}f\phi\ dx|}{\|\nabla\phi\|_{L_{p'}(\mathbb{R}^n)}}:\phi\in C_c^\infty(\mathbb{R}^n)\right\}<\infty,$$
and we obtain the estimate $\|\nabla u\|_{L_p(\mathbb{R}^n)}\le C\|f\|_{\hat{W}_p^{-1}}$. This shows that $u\in \dot{W}_p^1(\mathbb{R}^n)\cap\dot{W}_p^2(\mathbb{R}^n)$ is the unique solution.

Let $\Omega=\mathbb{R}^2\times\mathbb{R}_+$ be a half space and consider the problem
\begin{align}
\begin{split}\label{eq:auxell10aa}
-\Delta u&=f,\quad x\in\Omega,\\
\partial_3 u&=h,\quad x\in S,
\end{split}
\end{align}
where $S:=\partial\Omega=\mathbb{R}^2\times\{0\}$. By Lemma \ref{lem:appaux00} there exists some $\lambda_0>0$ such that the shifted problem
\begin{align}
\begin{split}\label{eq:auxell10ab}
\lambda_0\bar{u}-\Delta \bar{u}&=f,\quad x\in\Omega,\\
\partial_3 \bar{u}&=h,\quad x\in S,
\end{split}
\end{align}
admits a unique solution $\bar{u}\in W_p^2(\Omega)$ satisfying the estimates
$$\|\bar{u}\|_{W_p^2(\Omega)}\le C(\|f\|_{L_p(\Omega)}+\|h\|_{W_p^{1-1/p}(S)}),$$
and
$$\|\bar{u}\|_{W_p^1(\Omega)}\le C\|(f,h)\|_{\hat{W}_p^{-1}(\Omega)}.$$
To see the validity of the second estimate we use the notation from \cite[Chapter V]{Ama95} and let $A_0:=\lambda_0-\Delta$ with domain $$E_1:=D(A_0)=\{u\in W_p^2(\Omega):\partial_3 u=0\ \text{on}\ S\}$$
in $E_0:=L_p(\Omega)$. Then $A_0$ is a linear isomorphism from $E_1$ to $E_0$. Let $E_{1/2}:=[E_0,E_1]_{1/2}=W_p^1(\Omega)$ and $E_{-1/2}:=(E_{1/2}^\sharp)^*=(W_{p'}^1(\Omega))^*$, since  $A_0^\sharp=(\lambda_0-\Delta)|_{L_{p'}(\Omega)}$). Denote by $A_{-1/2}$ the $E_{-1/2}$-realization of $A_0$. By the results in \cite[Chapter V]{Ama95} it follows that $A_{-1/2}:E_{1/2}\to E_{-1/2}$ is a linear isomorphism. Moreover, since $E_1$ is dense in $E_{1/2}$, it holds that
$$\langle A_{-1/2}u,\phi\rangle=\lambda_0\int_\Omega u\phi\ dx+\int_\Omega\nabla u\cdot\nabla\phi\ dx$$
for all $\phi\in W_{p'}^1(\Omega)$ and each $u\in W_p^1(\Omega)$.

Multiply the first equation in \eqref{eq:auxell10ab} by $\phi\in W_{p'}^1(\Omega)$ and integrate by parts to the result
$$\lambda_0\int_\Omega \bar{u}\phi\ dx+\int_\Omega\nabla \bar{u}\cdot\nabla\phi\ dx=\int_\Omega f\phi\ dx-\int_S h\phi|_{S}\ dS.$$
By assumption, the right side of the last equation determines a functional $(f,h)$ on $\dot{W}_{p'}^1(\Omega)$, hence also on $W_{p'}^1(\Omega)$. Therefore it follows from the considerations above that
\begin{multline*}
\|\bar{u}\|_{W_p^1(\Omega)}\le C\|(f,h)\|_{(W_{p'}^1(\Omega))^*}=C\sup_{0\neq\phi\in W_{p'}^1(\Omega)}
\frac{|\langle(f,h),\phi\rangle|}{\|\phi\|_{W_{p'}^1(\Omega)}}\\ \le
C\sup_{0\neq\phi\in W_{p'}^1(\Omega)}
\frac{|\langle(f,h),\phi\rangle|}{\|\nabla\phi\|_{L_{p'}(\Omega)}}
=C\|(f,h)\|_{\hat{W}_p^{-1}(\Omega)}.
\end{multline*}
Therefore it suffices to study the problem
\begin{align}
\begin{split}\label{eq:auxell10ac}
-\Delta u_*&=f_*,\quad x\in\Omega,\\
\partial_3 u_*&=0,\quad x\in S,
\end{split}
\end{align}
where $f_*:=f+\Delta \bar{u}$. Observe that $f_*\in L_p(\Omega)\cap\hat{W}_p^{-1}(\Omega)$. We extend $f_*$ with respect to $x_3$ by even reflection to some $\tilde{f}$ to obtain $\tilde{f}\in L_p(\mathbb{R}^3)\cap \hat{W}_p^{-1}(\mathbb{R}^3)$. Solve the full space problem $-\Delta \tilde{u}=\tilde{f}$ to obtain a unique solution $\tilde{u}\in\dot{W}_p^1(\mathbb{R}^3)\cap\dot{W}_p^2(\mathbb{R}^3)$. By uniqueness and symmetry, it follows that $\tilde{u}(x_1,x_2,x_3)=\tilde{u}(x_1,x_2,-x_3)$, hence $\partial_3\tilde{u}=0$ on $S$. Since
$$\|\nabla u_*\|_{L_p(\Omega)}\le\|\nabla \tilde{u}\|_{L_p(\mathbb{R}^3)}\le C\|\tilde{f}\|_{\hat{W}_p^{-1}(\mathbb{R}^3)},$$
and $\|\tilde{f}\|_{\hat{W}_p^{-1}(\mathbb{R}^3)}\le 2\|f_*\|_{\hat{W}_p^{-1}(\Omega)}$ ($\tilde{f}$ is the even extension of $f_*$) it follows that
$$\|\nabla u_*\|_{L_p(\Omega)}\le C\|f_*\|_{\hat{W}_p^{-1}(\Omega)}.$$
The function $u:=\bar{u}+\tilde{u}|_{\Omega}=\bar{u}+u_*$ is the desired unique solution of \eqref{eq:auxell10a}, satisfying the estimates
$$\|\nabla^2 u\|_{L_p(\Omega)}\le C(\|f\|_{L_p(\Omega)}+\|h\|_{W_p^{1-1/p}(S)}),$$
and
$$\|\nabla u\|_{L_p(\Omega)}\le C\|(f,h)\|_{\hat{W}_p^{-1}(\Omega)}.$$
Uniqueness follows by even reflection of the solution of \eqref{eq:auxell10aa} with $f=h=0$ at $S$ and the uniqueness result for the full space.

If $\Omega=\mathbb{R}\times\mathbb{R}_+\times\mathbb{R}_+$ is a quarter space, we have to solve
\begin{align}
\begin{split}\label{eq:auxell10a}
-\Delta u&=f,\quad x\in\Omega,\\
\partial_2 u&=h_1,\quad x\in S_1,\\
\partial_3 u&=h_2,\quad x\in S_2,
\end{split}
\end{align}
where $S_1=\mathbb{R}\times\{0\}\times\mathbb{R}_+$ and $S_2=\mathbb{R}\times\mathbb{R}_+\times\{0\}$. The data satisfy $f\in L_p(\Omega)$, $h_j\in W_p^{1-1/p}(S_j)$, $j=1,2$ and $(f,h_1,h_2)\in \hat{W}_p^{-1}(\Omega)$.

By Lemma \ref{lem:appaux00} we first solve
\begin{align}
\begin{split}\label{eq:auxell10b}
\lambda_0\bar{u}-\Delta \bar{u}&=f,\quad x\in\Omega,\\
\partial_2 \bar{u}&=h_1,\quad x\in S_1,\\
\partial_3 \bar{u}&=h_2,\quad x\in S_2,
\end{split}
\end{align}
for some sufficiently large $\lambda_0>0$ to obtain a unique solution $\bar{u}\in W_p^2(\Omega)$. Note that $\bar{u}$ satisfies the estimates
$$\|\bar{u}\|_{W_p^2(\Omega)}\le C(\|f\|_{L_p(\Omega)}+\|h_1\|_{W_p^{1-1/p}(S_1)}+
\|h_2\|_{W_p^{1-1/p}(S_2)}),$$
and
$$\|\bar{u}\|_{W_p^1(\Omega)}\le C\|(f,h_1,h_2)\|_{\hat{W}_p^{-1}(\Omega)}.$$
We arrive at the problem
\begin{align}
\begin{split}\label{eq:auxell10c}
-\Delta {u}_*&={f}_*,\quad x\in\Omega,\\
\partial_2 {u}_*&=0,\quad x\in S_1,\\
\partial_3 {u}_*&=0,\quad x\in S_2,
\end{split}
\end{align}
where ${f}_*:=f+\Delta\bar{u}\in \hat{W}_p^{-1}(\Omega)\cap L_p(\Omega)$, which follows from integration by parts.
Extend $f_*$ to the half space $\mathbb{R}^3_+$ by even reflection, i.e.\ we set
$$\tilde{f}(x):=\begin{cases}
f_*(x_1,x_2,x_3),&\quad x_2\ge 0,\\
f_*(x_1,-x_2,x_3),&\quad x_2<0.
\end{cases}$$
Then $\tilde{f}\in \hat{W}_p^{-1}(\mathbb{R}^3_+)\cap L_p(\mathbb{R}^3_+)$. Next we extend $\tilde{f}$ by even reflection to the full space $\mathbb{R}^3$ by defining
$$\hat{f}(x):=\begin{cases}
\tilde{f}(x_1,x_2,x_3),&\quad x_3\ge 0,\\
\tilde{f}(x_1,x_2,-x_3),&\quad x_3<0,
\end{cases}$$
This yields that $\hat{f}\in \hat{W}_p^{-1}(\mathbb{R}^3)\cap L_p(\mathbb{R}^3)$. Solve the full space problem $-\Delta \hat{u}=\hat{f}$ to obtain a unique solution $\hat{u}\in \dot{W}_p^{1}(\mathbb{R}^3)\cap\dot{W}_p^{2}(\mathbb{R}^3)$. Since with $\hat{u}$ also $\hat{u}(-x_3)$ and $\hat{u}(-x_2)$ are solutions of $-\Delta \hat{u}=\hat{f}$, it follows from the uniqueness of the solution that $\hat{u}(x_3)=\hat{u}(-x_3)$ and $\hat{u}(x_2)=\hat{u}(-x_2)$, hence $\partial_3 \hat{u}=0$ on $S_2$ as well as $\partial_2 \hat{u}=0$ on $S_1$. Since
$$\|\nabla u_*\|_{L_p(\Omega)}\le\|\nabla \hat{u}\|_{L_p(\mathbb{R}^3)}\le C\|\hat{f}\|_{\hat{W}_p^{-1}(\mathbb{R}^3)},$$
and $\|\hat{f}\|_{\hat{W}_p^{-1}(\mathbb{R}^3)}\le C\|f_*\|_{\hat{W}_p^{-1}(\Omega)}$ it follows that
$$\|\nabla u_*\|_{L_p(\Omega)}\le C\|f_*\|_{\hat{W}_p^{-1}(\Omega)}.$$
The function $u:=\bar{u}+\hat{u}|_{\Omega}=\bar{u}+u_*$ is the desired unique solution of \eqref{eq:auxell10a}, satisfying the estimates
$$\|\nabla^2 u\|_{L_p(\Omega)}\le C(\|f\|_{L_p(\Omega)}+\|h_1\|_{W_p^{1-1/p}(S_1)}+
\|h_2\|_{W_p^{1-1/p}(S_2)}),$$
and
$$\|\nabla u\|_{L_p(\Omega)}\le C\|(f,h_1,h_2)\|_{\hat{W}_p^{-1}(\Omega)}.$$

If $\Omega$ is a bent quarter space, we will use change of coordinates and perturbation theory to prove the assertion in this case. We will give a detailed proof for the case of a bent two-phase half space below. The technique from this case carries over to the bent quarter space case.

(b) Let $\Omega=\mathbb{R}^3$ and $\Sigma=\mathbb{R}^2\times\{0\}$. Consider the problem
\begin{align}\label{eq:auxell11}
\begin{split}
-\Delta {u}&={f}\quad \mbox{ in } \; \Omega\backslash\Sigma,\\
[\![\rho {u}]\!]&=0\quad \mbox{ on } \;\Sigma,\\
[\![\partial_{3} {u}]\!]&={g}_2\quad \mbox{ on } \;\Sigma,
\end{split}
\end{align}
with $f\in L_p(\Omega)$, $g_2\in W_p^{1-1/p}(\Sigma)$ and $(f,g_2)\in \hat{W}_p^{-1}(\Omega)$.

By Lemma \ref{lem:appaux00} we may first solve the problem
\begin{align}\label{eq:auxell12}
\begin{split}
\lambda_0 \bar{u}-\Delta \bar{u}&={f}\quad \mbox{ in } \; \Omega\backslash\Sigma,\\
[\![\rho \bar{u}]\!]&=0\quad \mbox{ on } \;\Sigma,\\
[\![\partial_{3} \bar{u}]\!]&={g}_2\quad \mbox{ on } \;\Sigma,
\end{split}
\end{align}
where $\lambda_0>0$ is sufficiently large but fixed. This yields a unique solution $\bar{u}\in W_p^2(\Omega\backslash\Sigma)$. Next we consider the equation $-\Delta\tilde{u}=\tilde{f}$ in $\mathbb{R}^3$, where $\tilde{f}:=f+\Delta \bar{u}\in \hat{W}_p^{-1}(\mathbb{R}^3)\cap L_p(\mathbb{R}^3)$, since
$$\int_{\mathbb{R}^3}(f+\Delta\bar{u})\phi\ dx=-\int_\Sigma g_2\phi\ d\Sigma+
\int_{\mathbb{R}^3} f\phi\ dx-\int_{\mathbb{R}^3}\nabla\bar{u}\cdot\nabla\phi\ dx.$$
by what we have already shown, this full space problem admits a unique solution $\tilde{u}\in \dot{W}_p^1(\Omega)\cap\dot{W}_p^2(\Omega)$. Finally we study the problem
\begin{align}\label{eq:auxell13}
\begin{split}
-\Delta \hat{u}&=0\quad \mbox{ in } \; \Omega\backslash\Sigma,\\
[\![\rho \hat{u}]\!]&=\hat{g}_1\quad \mbox{ on } \;\Sigma,\\
[\![\partial_{3} \hat{u}]\!]&=0\quad \mbox{ on } \;\Sigma,
\end{split}
\end{align}
with $\hat{g}_1:=-\Jump{\rho\tilde{u}}\in \dot{W}_p^{1-1/p}(\Sigma)\cap\dot{W}_p^{2-1/p}(\Sigma)$. The solution is given in terms of the Poisson semigroup as follows.
$$\hat{u}(x_3)=\frac{1}{\rho_1+\rho_2}\begin{cases}
e^{-L x_3}\hat{g}_1,&\quad x_3\ge 0,\\
-e^{-L(-x_3)}\hat{g}_1,&\quad x_3<0,
\end{cases}$$
where $L:=(-\Delta_{x'})^{1/2}$. By semigroup theory it follows that $\hat{u}\in \dot{W}_p^{1}(\Omega\backslash\Sigma)\cap\dot{W}_p^{2}(\Omega\backslash\Sigma)$. Here we use the fact that
\begin{equation}\label{eq:equivnormhom}
\left(\int_0^\infty z^{(k-s)p}\|L^ke^{-Lz}g\|_{L_p(\Sigma)}^p \frac{dz}{z}\right)^{1/p}
\end{equation}
defines an equivalent norm in $\dot{W}_p^s(\Sigma)$ for $s>0$ and $k>s$ (if $s=j-1/p$, $j\in\{1,2\}$, we choose $k=j$). The function $u:=\bar{u}+\tilde{u}+\hat{u}$ is the unique solution of \eqref{eq:auxell11}, satisfying the estimates
$$\|\nabla^2 u\|_{L_p(\Omega)}\le C(\|f\|_{L_p(\Omega)}+\|g_2\|_{W_p^{1-1/p}(\Sigma)}),$$
and
$$\|\nabla u\|_{L_p(\Omega)}\le C\|(f,g_2)\|_{\hat{W}_p^{-1}(\Omega)}.$$
The uniqueness of the solution can be seen as follows. Let $u\in \dot{W}_p^1(\Omega\backslash\Sigma)\cap\dot{W}_p^2(\Omega\backslash\Sigma)$ be a solution of \eqref{eq:auxell11} with $f=g_2=0$. We want to show that $\nabla u=0$ in $\Omega\backslash\Sigma$. To this end we define two functions
$$v(x_1,x_2,x_3):=\rho_2 u_+(x_1,x_2,x_3)-\rho_1 u_-(x_1,x_2,-x_3),\ (x_1,x_2)\in\mathbb{R}^2,\ x_3>0,$$
and
$$w(x_1,x_2,x_3):=u_+(x_1,x_2,x_3)+u_-(x_1,x_2,-x_3),\ (x_1,x_2)\in\mathbb{R}^2,\ x_3>0,$$
where $u_\pm:=u|_{x_3\gtrless 0}$. It follows that $v$ and $w$ solve the half space problems
$$\Delta v=0,\ (x_1,x_2)\in\mathbb{R}^2,\ x_3>0,\quad v=0,\ (x_1,x_2)\in\mathbb{R}^2,\ x_3=0,$$
and
$$\Delta w=0,\ (x_1,x_2)\in\mathbb{R}^2,\ x_3>0,\quad \partial_3w=0,\ (x_1,x_2)\in\mathbb{R}^2,\ x_3=0,$$
respectively in $\dot{W}_p^1(\mathbb{R}_+^3)\cap\dot{W}_p^2(\mathbb{R}_+^3)$. Therefore $\nabla w=\nabla v=0$ by even or uneven reflection at $\{x_3=0\}$. This yields
\begin{align*}
0&=\rho_2\nabla u_+ +\rho_1\nabla u_-,\\
0&=\nabla u_+-\nabla u_-,
\end{align*}
wherefore $\nabla u_-=\nabla u_+=0$, hence $\nabla u=0$ in $\Omega\backslash\Sigma$.

Let now $\Omega=\mathbb{R}^{2}_+\times\mathbb{R}$ with $\Sigma=\{\mathbb{R}^{2}\times\{0\}\}\cap\Omega$. Here we have to solve the problem
\begin{align}\label{eq:auxell14}
\begin{split}
-\Delta u&=f\quad \mbox{ in } \; \Omega\backslash\Sigma,\\
[\![\rho u]\!]&=0\quad \mbox{ on } \;\Sigma,\\
[\![\partial_{3} u]\!]&=g_2\quad \mbox{ on } \;\Sigma,\\
\partial_{2}u&=h_1\quad \mbox{ on }\; S_1\backslash\partial\Sigma,
\end{split}
\end{align}
with $\Jump{\rho h_1}=0$ at $\partial\Sigma$. For some large $\lambda_0>0$, we first solve the problem
\begin{align}\label{eq:auxell15}
\begin{split}
\lambda_0\bar{u}-\Delta \bar{u}&=f\quad \mbox{ in } \; \Omega\backslash\Sigma,\\
[\![\rho \bar{u}]\!]&=0\quad \mbox{ on } \;\Sigma,\\
[\![\partial_{3} \bar{u}]\!]&=g_2\quad \mbox{ on } \;\Sigma,\\
\partial_{2}\bar{u}&=h_1\quad \mbox{ on }\; S_1\backslash\partial\Sigma,
\end{split}
\end{align}
by Lemma \ref{lem:appaux00}, to obtain a unique solution $\bar{u}\in W_p^2(\Omega\backslash\Sigma)$. Let $f_*:=f+\Delta \bar{u}$ and note that $f_*\in \hat{W}_p^{-1}(\Omega)\cap L_p(\Omega)$, which follows from integration by parts and from the assumption on $(f,g_2,h_1)$.
We extend $f_*$ with respect to $x_2$ by even reflection to some function
$$\tilde{f}(x):=\begin{cases}
f_*(x_1,x_2,x_3),&\quad x_2\ge 0,\\
f_*(x_1,-x_2,x_3),&\quad x_2<0.
\end{cases}$$
Then $\tilde{f}\in \hat{W}_p^{-1}(\mathbb{R}^3)\cap L_p(\mathbb{R}^3)$ and we may solve the full space problem $-\Delta \tilde{u}=\tilde{f}$ to obtain a unique solution $\tilde{u}\in \dot{W}_p^{1}(\mathbb{R}^3)\cap\dot{W}_p^{2}(\mathbb{R}^3)$ with the property $\tilde{u}(x_2)=\tilde{u}(-x_2)$, hence $\partial_2\tilde{u}=0$ at $S_1\backslash\partial\Sigma$. Consider next the problem
\begin{align}\label{eq:appauxlem0.1}
\begin{split}
\Delta \hat{u}&=0\quad \mbox{ in } \; \mathbb{R}^n\backslash\Sigma,\\
[\![\rho \hat{u}]\!]&=g\quad \mbox{ on } \;\Sigma,\\
[\![\partial_3 \hat{u}]\!]&=0\quad \mbox{ on } \;\Sigma,
\end{split}
\end{align}
where $g:=-\Jump{\rho\tilde{u}}\in \dot{W}_p^{1-1/p}(\Sigma)\cap \dot{W}_p^{2-1/p}(\Sigma)$. As in the previous case, the unique solution $\hat{u}\in \dot{W}_p^1(\Omega)\cap\dot{W}_p^1(\Omega)$ of \eqref{eq:appauxlem0.1} is given in terms of the Poisson semigroup.

Finally, since $\tilde{u}(x_2)=\tilde{u}(-x_2)$, it follows that $g(x_2)=g(-x_2)$, hence $\hat{u}(x_2)=\hat{u}(-x_2)$, by uniqueness, and therefore $\partial_2 \hat{u}=0$ at $S_1\backslash\partial\Sigma$. The function $u:=\bar{u}+\tilde{u}|_\Omega+\hat{u}|_\Omega$ is the unique solution of \eqref{eq:auxell14}, satisfying the estimates
$$\|\nabla^2 u\|_{L_p(\Omega)}\le C (\|f\|_{L_p(\Omega)}+\|h_1\|_{W_p^{1-1/p}(S_1\backslash\partial\Sigma)}+
\|g_2\|_{W_p^{1-1/p}(\Sigma)}),$$
and
$$\|\nabla u\|_{L_p(\Omega)}\le C\|(f,h_1,g_2)\|_{\hat{W}_p^{-1}(\Omega)}.$$

In a next step we consider the case of a bent two-phase half space. To be precise, we assume that
$$\Omega_\theta:=\{x\in\mathbb{R}^3:x_2>\theta(x_1)\},$$
where $\theta\in BC^3(\mathbb{R})$ with $\|\theta\|_\infty+\|\theta'\|_\infty<\eta$, and $\eta>0$ can be made as small as we wish. Let furthermore $S_{1,\theta}:=\{x\in\mathbb{R}^3:x_2=\theta(x_1)\}$ and $\Sigma_\theta:=\{\mathbb{R}^2\times\{0\}\}\cap\Omega_\theta$.
Consider the problem
\begin{align}\label{eq:auxell16}
\begin{split}
-\Delta u&=f\quad \mbox{ in } \; \Omega_\theta\backslash\Sigma_\theta,\\
[\![\rho u]\!]&=0\quad \mbox{ on } \;\Sigma_\theta,\\
[\![\partial_{3} u]\!]&=g_2\quad \mbox{ on } \;\Sigma_\theta,\\
\partial_{\nu_{\partial\Sigma_\theta}}u&=h_1\quad \mbox{ on }\; S_{1,\theta}\backslash\partial\Sigma_\theta,
\end{split}
\end{align}
where $f\in L_p(\Omega_\theta)$, $g_2\in W_p^{1-1/p}(\Sigma_\theta)$, $h_1\in W_p^{1-1/p}(S_{1,\theta}\backslash\partial\Sigma_\theta)$ and $(f,g_2,h_1)\in \hat{W}_p^{-1}(\Omega_\theta)$. Moreover, the compatibility condition $\Jump{\rho h_1}=0$ at $\partial\Sigma_\theta$ holds.

By means of Lemma \ref{lem:appaux00}, we may solve the problem
\begin{align}\label{eq:auxell17}
\begin{split}
\lambda_0\hat{u}-\Delta \hat{u}&={f}\quad \mbox{ in } \; \Omega_\theta\backslash\Sigma_\theta,\\
[\![\rho \hat{u}]\!]&=0\quad \mbox{ on } \;\Sigma_\theta,\\
[\![\partial_{3} \hat{u}]\!]&={g}_2\quad \mbox{ on } \;\Sigma_\theta,\\
\partial_{\nu_{\partial\Sigma_\theta}}\hat{u}&=h_1\quad \mbox{ on }\; S_{1,\theta}\backslash\partial\Sigma_\theta,
\end{split}
\end{align}
where $\lambda_0>0$ is large but fixed. This yields a unique solution $\hat{u}\in W_p^2(\Omega_\theta\backslash\Sigma_\theta)$. Let $\tilde{f}:=f+\Delta\hat{u}$ and consider
\begin{align}\label{eq:auxell18}
\begin{split}
-\Delta \tilde{u}&=\tilde{f}\quad \mbox{ in } \; \Omega_\theta\backslash\Sigma_\theta,\\
[\![\rho \tilde{u}]\!]&=0\quad \mbox{ on } \;\Sigma_\theta,\\
[\![\partial_{3} \tilde{u}]\!]&=0\quad \mbox{ on } \;\Sigma_\theta,\\
\partial_{\nu_{\partial\Sigma_\theta}}\tilde{u}&=0\quad \mbox{ on }\; S_{1,\theta}\backslash\partial\Sigma_\theta.
\end{split}
\end{align}
Observe that $\tilde{f}\in \hat{W}_p^{-1}(\Omega_\theta)\cap L_p(\Omega_\theta)$. We will now transform $\Omega_\theta$ to $\Omega_0$ by means of the coordinates $\bar{x}_1:=x_1$, $\bar{x}_2:=x_2-\theta (x_1)$ and $\bar{x}_3:=x_3$. Assume that $\tilde{u}$ solves \eqref{eq:auxell18} and define $\bar{u}(\bar{x}):=\tilde{u}(\bar{x}_1,\bar{x}_2+\theta(\bar{x}_1),\bar{x}_3)$. Then, the function $\bar{u}$ is a solution of the problem
\begin{align}\label{eq:auxell19}
\begin{split}
-\Delta \bar{u}&=\bar{f}+M_1(\theta,\bar{u})\quad \mbox{ in } \; \Omega\backslash\Sigma,\\
[\![\rho \bar{u}]\!]&=0\quad \mbox{ on } \;\Sigma,\\
[\![\partial_{3} \bar{u}]\!]&=0\quad \mbox{ on } \;\Sigma,\\
-\partial_{2}\bar{u}&=M_2(\theta,\bar{u})\quad \mbox{ on }\; S_{1}\backslash\partial\Sigma,
\end{split}
\end{align}
where $\bar{f}$ is the transformation of $\tilde{f}$,
$$M_1(\theta,\bar{u}):=-2\theta'(\bar{x}_1)\partial_{1}
\partial_{2}\bar{u}-\theta''(\bar{x}_1)\partial_{2}\bar{u}
+\theta'(\bar{x}_1)^2\partial_{2}^2\bar{u},$$
and $M_2(\theta,\bar{u}):=-\theta'(\bar{x}_1)\partial_1\bar{u}|_{S_1\backslash\partial\Sigma}+
\theta'(\bar{x}_1)^2\partial_2\bar{u}|_{S_1\backslash\partial\Sigma}$.

Define the function spaces
$$\mathbb{E}:=\{\nabla\bar{u}\in W_p^1(\Omega\backslash\Sigma):\Jump{\rho\bar{u}}=\Jump{\partial_3 \bar{u}}=0\ \text{on}\ \Sigma\},$$
with the equivalent norm
$\|\bar{u}\|_{\mathbb{E},\lambda}:=\lambda\|\nabla\bar{u}\|_{L_p}+\|\nabla^2\bar{u}\|_{L_p}$, $\lambda>0$, and let
\begin{multline*}
\mathbb{F}:=\{(f_1,f_2)\in L_p(\Omega)\times W_p^{1-1/p}(S_1\backslash\partial\Sigma):\Jump{\rho f_2}=0\ \text{at}\ \partial\Sigma\ \text{and}\ (f_1,f_2)\in \hat{W}_p^{-1}(\Omega)\}.
\end{multline*}
Moreover, we define a linear operator $L:\mathbb{E}\to\mathbb{F}$ by
$$L\bar{u}:=\begin{pmatrix}
\Delta\bar{u}\\
\partial_2\bar{u}|_{S_1\backslash\partial\Sigma}
\end{pmatrix}.
$$
It follows from our previous considerations that $L:\mathbb{E}\to\mathbb{F}$ is an isomorphism and, by the same arguments as in \cite[Section 3.1.1]{Lun95}, there exists $\lambda_0>0$ and a constant $C>0$ such that for all $\lambda\ge \lambda_0$ and $(f_1,f_2)\in\mathbb{F}$ the estimate
$$\|L^{-1}(f_1,f_2)\|_{\mathbb{E},\lambda}\le C\left(\|f_1\|_{L_p(\Omega)}+\lambda^{1/2}\|\tilde{f}_2\|_{L_p(\Omega)}+\|\nabla \tilde{f}_2\|_{L_p(\Omega)}\right)$$
holds, where $\tilde{f}_2$ is an extension of $f_2$ to $W_p^1(\Omega\backslash\Sigma)$.

Let $F:=(\bar{f},0)$ and $M(\theta,\bar{u}):=(M_1,M_2)(\theta,\bar{u})$. It follows that $F\in\mathbb{F}$, since
$$\int_{\Omega_\theta}\tilde{f}\phi\ dx=\int_\Omega\bar{f}\bar{\phi}\ d\bar{x},$$
with $\bar{\phi}(\bar{x}):=\phi(\bar{x}_1,\bar{x}_2-\theta(\bar{x}_1),\bar{x}_3)$ and $\phi\in C_c^\infty(\overline{\Omega_\theta})$. Furthermore, for each $\bar{u}\in\mathbb{E}$, we have $M(\theta,\bar{u})\in\mathbb{F}$. Indeed, as in the proof of Lemma \ref{lem:appaux00}, it can be readily checked that $M(\theta,\bar{u})\in L_p(\Omega)\times W_p^{1-1/p}(S_1\backslash\partial\Sigma)$ and $\Jump{\rho M_2(\theta,\bar{u})}=0$ at $\partial\Sigma$. It remains to verify the condition $(M_1,M_2)(\theta,\bar{u})\in \hat{W}_p^{-1}(\Omega)$. To this end, we integrate by parts to obtain the identity
\begin{multline*}
\int_\Omega M_1(\theta,\bar{u})\phi\ dx+\int_{S_1} M_2(\theta,\bar{u})\phi\ dS_1\\
=\int_\Omega\left(\theta'(\bar{x}_1)\partial_2\bar{u}\partial_1\phi+
\theta'(\bar{x}_1)\partial_1\bar{u}\partial_2\phi-\theta'(\bar{x}_1)^2
\partial_2\bar{u}\partial_2\phi\right)\ dx.
\end{multline*}
for each $\phi\in C_c^\infty(\overline{\Omega})$. This in turn yields the claim. We are now in a position to write \eqref{eq:auxell19} in the shorter form
$$\bar{u}=L^{-1}M(\theta,\bar{u})+L^{-1}F.$$
We may now follow the lines of the proof of Lemma \ref{lem:appaux00} to obtain a unique solution of \eqref{eq:auxell16}.

2. It follows from Lemma \ref{lem:appaux00} that the operator $Au:=-\Delta u$ with domain
$$D(A)=\{u\in W_p^{2}(\Omega\backslash\Sigma):[\![\rho u]\!]=[\![\partial_{\nu_\Sigma} u]\!]=0,\ \partial_{\nu_{S_j}}u=0\},$$
is closed. Since $D(A)$ is compactly embedded in $L_p(\Omega)$, the spectrum $\sigma(A)$ consists solely of isolated eigenvalues and $\lambda=0$ is a simple eigenvalue of $A$. Indeed, $N(A)=\lin{\mathds{1}_\rho}$, with
$$\mathds{1}_\rho:=\chi_{\Omega_1}+\frac{\rho_1}{\rho_2}\chi_{\Omega_2}.$$
Furthermore, $N(A^2)\subset N(A)$, since if $u\in N(A^2)$, then $v:=Au\in N(A)$. It follows that $v\in L_1(\Omega)$ and we may integrate $Au=v$ over $\Omega$ to obtain
$$\int_\Omega v\ dx=-\int_\Omega\Delta u\ dx=0,$$
hence $v=0$, since $\mathds{1}_\rho$ has a non-vanishing mean value.

In particular this yields $L_p(\Omega)=N(A)\oplus R(A)$ and it holds that $R(A)=L_p^{(0)}(\Omega)$. This can be seen as follows. Obviously one has the inclusion
$$R(A)\subset L_p^{(0)}(\Omega).$$
So, let $f\in L_p^{(0)}(\Omega)$. Then there exist unique $f_1\in N(A)$ and $f_2\in R(A)$ such that $f=f_1+f_2$. This in turn yields $f_1\in L_p^{(0)}(\Omega)$. Since $f_1=\alpha \mathds{1}_\rho$ for some $\alpha\in\mathbb{R}$ with
$$\mathds{1}_\rho:=\chi_{\Omega_1}+\frac{\rho_1}{\rho_2}\chi_{\Omega_2},$$
it follows that
$$(f_1|\mathds{1})=\alpha\left(|\Omega_1|+\frac{\rho_1}{\rho_2}|\Omega_2|\right),$$
hence $\alpha=0$ and therefore $f=f_2\in R(A)$, hence $L_p^{(0)}(\Omega)\subset R(A)$.
\end{proof}

We will also need an existence and uniqueness result for the weak version of \eqref{eq:appauxlem0} with $\lambda=0$. To be precise, we consider the problem
\begin{align}\label{eq:appauxlem0.01}
\begin{split}
(\nabla u|\nabla\phi)_2&=\langle f,\phi\rangle,\quad \phi\in W_{p'}^1(\Omega),\\
\Jump{\rho u}&=g,\quad \text{on} \; \Sigma.
\end{split}
\end{align}
Then we have the following result.
\begin{lem}\label{lem:appauxlemweak}
Let $\rho_j>0$, $n=2,3$, $p\ge 2$ and let $\Omega\subset\mathbb{R}^n$ satisfy condition (c) from above. Then there exists a unique solution $u\in \dot{W}_p^{1}(\Omega\backslash\Sigma)$ of
\eqref{eq:appauxlem0.01} if and only if $f\in \hat{W}_p^{-1}(\Omega)$ and $g\in W_p^{1-1/p}(\Sigma)$.
\end{lem}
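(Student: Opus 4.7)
\textbf{Necessity.} If $u\in\dot{W}_p^1(\Omega\setminus\Sigma)$ solves the equation, then trace theory applied in each phase gives $u|_{\Sigma^\pm}\in W_p^{1-1/p}(\Sigma)$, so $g=\Jump{\rho u}\in W_p^{1-1/p}(\Sigma)$. The H\"older bound $|(\nabla u|\nabla\phi)_2|\le\|\nabla u\|_{L_p}\|\nabla\phi\|_{L_{p'}}$ shows the left-hand side is continuous on $W_{p'}^1(\Omega)$ with respect to $\|\nabla\,\cdot\,\|_{L_{p'}}$, and since (as already recalled in the paper) $W_{p'}^1(\Omega)$ is dense in $\dot{W}_{p'}^1(\Omega)$ for this seminorm, $f$ extends uniquely to an element of $\hat{W}_p^{-1}(\Omega)$.

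\textbf{Reduction to $g=0$ and scalar reformulation.} Pick a standard bounded right inverse $E$ of the trace map $W_p^1(\Omega_2)\to W_p^{1-1/p}(\Sigma)$ and set $u_1:=0$ on $\Omega_1$, $u_1:=\rho_2^{-1}Eg$ on $\Omega_2$; then $u_1\in W_p^1(\Omega\setminus\Sigma)$, $\Jump{\rho u_1}=g$, and $u-u_1$ satisfies the same weak equation with right-hand side $\tilde f:=f-(\nabla u_1|\nabla\,\cdot\,)_2\in\hat{W}_p^{-1}(\Omega)$ and homogeneous jump. Because $\Jump{\rho(u-u_1)}=0$, the lifted variable $v:=\rho(u-u_1)$ actually belongs to $\dot{W}_p^1(\Omega)$ and the weak equation rewrites as
$$\int_\Omega\rho^{-1}\nabla v\cdot\nabla\phi\,dx=\langle\tilde f,\phi\rangle,\qquad \phi\in W_{p'}^1(\Omega),$$
which is the scalar weak Neumann problem for the divergence-form operator $-\div(\rho^{-1}\nabla\,\cdot\,)$ on $\Omega$. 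The usual compatibility condition $\langle\tilde f,1\rangle=0$ is automatic, since $\hat{W}_p^{-1}(\Omega)$ annihilates constants.

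\textbf{Isomorphism of the reduced operator.} Define $A:\dot{W}_p^1(\Omega)\to\hat{W}_p^{-1}(\Omega)$ by $\langle Av,\phi\rangle:=(\rho^{-1}\nabla v|\nabla\phi)_2$. For $p=2$ this is an immediate Lax--Milgram argument on $\{v\in W_2^1(\Omega):\int_\Omega v\,dx=0\}\simeq\dot{W}_2^1(\Omega)$, using $0<\rho_1,\rho_2<\infty$ and the Poincar\'e--Wirtinger inequality. For $p>2$ I plan to approximate the datum: $L_p^{(0)}(\Omega)$ is dense in $\hat{W}_p^{-1}(\Omega)$, so pick $\tilde f_n\in L_p^{(0)}(\Omega)$ with $\tilde f_n\to\tilde f$ in $\hat{W}_p^{-1}$, and for each $n$ apply Lemma \ref{lem:appaux0}(2) to obtain strong solutions $u_{0,n}\in W_p^2(\Omega\setminus\Sigma)$ of $-\Delta u_{0,n}=\tilde f_n$ with $\Jump{\rho u_{0,n}}=\Jump{\partial_{\nu_\Sigma}u_{0,n}}=0$ and $\partial_{\nu_{\partial\Omega}}u_{0,n}=0$; the functions $v_n:=\rho u_{0,n}\in\dot{W}_p^1(\Omega)$ are the associated strong $A$-solutions.

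\textbf{Main obstacle.} The crucial ingredient is the a priori estimate
$$\|\nabla v\|_{L_p(\Omega)}\le C\,\|Av\|_{\hat{W}_p^{-1}(\Omega)},$$
which simultaneously yields injectivity of $A$, the Cauchy property of $\{v_n\}$ in $\dot{W}_p^1(\Omega)$, and uniqueness. I would prove it by a duality bootstrap: for $\chi\in L_{p'}(\Omega)^n$ of unit norm, decompose $\chi=\rho^{-1}\nabla\psi+\chi_\sigma$ with $\psi\in \dot{W}_{p'}^1(\Omega)$ and $\chi_\sigma$ orthogonal in the duality pairing to every gradient $\nabla\phi$ with $\phi\in W_p^1(\Omega)$; this auxiliary decomposition is obtained by solving the dual problem of the same type (invoking the strong $W_{p'}^2$-theory of Lemma \ref{lem:appaux0}(2) when $p'\ge 2$, and reflecting back to the already-solved $p=2$ case otherwise). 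Then $(\nabla v|\chi)_2=\langle Av,\psi\rangle$ is controlled by $\|Av\|_{\hat{W}_p^{-1}}\|\nabla\psi\|_{L_{p'}}\lesssim\|Av\|_{\hat{W}_p^{-1}}\|\chi\|_{L_{p'}}$, and taking the supremum over $\chi$ gives the estimate. The limit $v=\lim v_n\in\dot{W}_p^1(\Omega)$ then provides the unique weak solution, completing the proof.
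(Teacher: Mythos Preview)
Your reduction to $g=0$ via a trace extension and the scalar reformulation $v=\rho(u-u_1)\in\dot W_p^1(\Omega)$ are correct and cleaner than what the paper does for that part. The Lax--Milgram argument for $p=2$ is also fine. The genuine gap is in the passage to $p>2$.

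Your a priori estimate hinges on the Helmholtz-type decomposition $\chi=\rho^{-1}\nabla\psi+\chi_\sigma$ in $L_{p'}(\Omega)^n$ with $\|\nabla\psi\|_{L_{p'}}\lesssim\|\chi\|_{L_{p'}}$, which is precisely the well-posedness of the \emph{same} weak problem at exponent $p'$. Since $p\ge 2$ forces $p'\le 2$, your case ``$p'\ge 2$: invoke Lemma~\ref{lem:appaux0}(2)'' only covers $p=2$, and the remaining clause ``reflecting back to the already-solved $p=2$ case'' is not an argument: for $p'<2$ one has $L_2(\Omega)\hookrightarrow L_{p'}(\Omega)$ on the bounded domain, so a generic $\chi\in L_{p'}$ is \emph{not} in $L_2$, and the $L_2$-solution operator gives neither existence nor the needed $L_{p'}$-bound for $\nabla\psi$. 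As written, the duality step is circular.

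The paper circumvents this entirely. For the jump it approximates $g$ by $g_n\in W_p^{2-1/p}(\Sigma)$ with $\partial_{\nu_{\partial G}}g_n=0$, solves the \emph{shifted} strong problem from Lemma~\ref{lem:appaux00}, and invokes a $W_p^1$-estimate $\|u_n-u_m\|_{W_p^1}\le C\|g_n-g_m\|_{W_p^{1-1/p}}$ obtained by localization and the model-problem analysis (the reference to \cite[Section~8]{KPW10}). For the homogeneous-jump part it defines the weak operator $B:D(B)\subset W_p^1(\Omega\setminus\Sigma)\to(W_{p'}^1(\Omega))^*$, observes that the strong operator $A$ of Lemma~\ref{lem:appaux0} is the part of $B$ in $L_p$, and runs a Fredholm argument: $\lambda+B$ is an isomorphism for large $\lambda$, $B-(\lambda+B)$ is compact by the embedding $W_p^1\hookrightarrow\hookrightarrow L_p\hookrightarrow(W_{p'}^1)^*$, hence $B$ is Fredholm of index zero with one-dimensional kernel, and $R(B)=\hat W_p^{-1}(\Omega)$. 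No dual $L_{p'}$-theory with $p'<2$ is needed. If you want to salvage your route, you would have to supply an independent $L_{p'}$-bound for the weighted Helmholtz projection associated with $\rho^{-1}$ (e.g.\ via the same localization machinery), or replace the duality step by the paper's compact-perturbation/Fredholm argument.
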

\begin{proof}
Let $g\in W_p^{1-1/p}(\Sigma)$.  The Neumann Laplacian $\Delta_N$ in $L_p(\Sigma)$ with domain
$$D(\Delta_N)=\{u\in W_p^2(\Sigma):\partial_{\nu_{\partial G}} u=0\ \text{on}\ \partial\Sigma\}$$
generates an analytic semigroup. In particular, $D(\Delta_N)$ is dense in
$$W_p^{1-1/p}(\Sigma)=(L_p(\Sigma),D(\Delta_N))_{1/2-1/2p}=D_{\Delta_N}(1/2-1/2p,p).$$
Therefore, there exists $(g_n)_{n\in\mathbb{N}}\subset W_p^{2-1/p}(\Sigma)$ such that $\partial_{\nu_{\partial G}}g_n=0$ for each $n\in\mathbb{N}$ on $\partial\Sigma$ and $g_n\to g$ as $n\to\infty$ in $W_p^{1-1/p}(\Sigma)$. Denote by $u_n\in W_p^{2}(\Omega\backslash\Sigma)$ the solution of \eqref{eq:appauxlem0} with $f=g_2=h_1=h_2=0$, $g_1=g_n$ and a fixed $\lambda\ge\lambda_0$. Making use of local coordinates one can show that the estimate
$$\|u_n-u_m\|_{W_p^{1}(\Omega\backslash\Sigma)}\le C\|g_n-g_m\|_{W_p^{1-1/p}(\Sigma)}$$
is valid, with some constant $C>0$ which does not depend on $n$. Indeed, each of the local charts yields a transformed problem which is subject to one of the conditions in (a) and (b) above. We have already seen in the proof of Lemma \ref{lem:appaux00} that the two-phase half space and the quarter space can be drawn back to a two-phase full space and an ordinary half space, respectively, by means of reflection techniques. Making use of change of coordinates, perturbation theory and the results in \cite[Section 8]{KPW10} one obtains the desired estimate.

In particular, $(u_n)$ is a Cauchy sequence in $W_p^1(\Omega\backslash\Sigma)$ and therefore it has a limit point $u\in W_p^{1}(\Omega\backslash\Sigma)$. By trace theory it follows that $u$ satisfies the weak problem
\begin{align}\label{eq:appauxlem0.3}
\begin{split}
\lambda (u|\phi)_2+(\nabla u|\nabla\phi)_2&=0,\quad \phi\in W_{p'}^1(\Omega),\\
\Jump{\rho u}&=g,\quad \text{on} \; \Sigma,
\end{split}
\end{align}
for some fixed $\lambda\ge\lambda_0$.

Next, let $$a:\{u\in W_p^1(\Omega\backslash\Sigma):\Jump{\rho u}=0\ \text{on}\ \Sigma\}\times W_{p'}^1(\Omega)\to\mathbb{R},\quad a(u,\phi):=\int_\Omega\nabla u\cdot\nabla\phi dx,$$
and define an operator $B:W_p^1(\Omega\backslash\Sigma)\to (W_{p'}^1(\Omega))^*$
with domain
$$D(B)=\{u\in W_p^1(\Omega\backslash\Sigma):\Jump{\rho u}=0\ \text{on}\ \Sigma\},$$
by means of $\langle Bu,\phi\rangle:=a(u,\phi)$. It follows from integration by parts that the operator $A$ from the proof of the second assertion of Lemma \ref{lem:appaux0} is the part of $B$ in $L_p(\Omega)$. As in the proof of Lemma \ref{lem:appaux0} one can show that $\lambda=0$ is a simple eigenvalue of $B$. It follows that $(W_{p'}^1(\Omega))^*=N(B)\oplus R(B)$ and $W_p^1(\Omega\backslash\Sigma)=N(B)\oplus Y$, where $Y$ is a closed subspace of $W_p^1(\Omega\backslash\Sigma)$. Therefore there exists a unique solution $v\in Y$ of the equation $Bv=f$ if and only if $f\in R(B)$ or equivalently $\langle f,\mathds{1}\rangle=0$. It follows readily that $R(B)=\hat{W}_p^{-1}(\Omega)$. Indeed, the inclusion $\hat{W}_p^{-1}(\Omega)\subset R(B)$ is easy, since $\langle f,\mathds{1}\rangle=0$ for each $f\in \hat{W}_p^{-1}(\Omega)$ and the restriction of $f$ to $W_{p'}^1(\Omega)$ belongs to $(W_{p'}^1(\Omega))^*$. Let now $f\in R(B)$, i.e. $f\in (W_{p'}^1(\Omega))^*$ and $\langle f,\mathds{1}\rangle=0$. This yields
$$|\langle f,\phi\rangle|=|\langle f,\phi-\bar{\phi}\rangle|\le C\|\phi-\bar{\phi}\|_{W_{p'}^1(\Omega)}\le C\|\nabla\phi\|_{L_{p'}(\Omega)},$$
by the Poincar\'{e}-Wirtinger inequality and therefore $[\phi\mapsto\langle f,\phi\rangle]$ is continuous on $C_c^\infty(\overline{\Omega})$ with respect to the norm $\|\nabla\cdot\|_{L_{p'}(\Omega)}$.

Let $u\in W_p^1(\Omega\backslash\Sigma)$ denote the unique solution of \eqref{eq:appauxlem0.3} and let $v\in \dot{W}_p^1(\Omega\backslash\Sigma)$ denote the unique solution of
\begin{align*}
(\nabla v|\nabla\phi)_2&=\langle f,\phi\rangle-(\nabla u|\nabla\phi)_2,\quad \phi\in W_{p'}^1(\Omega),\\
\Jump{\rho v}&=0,\quad \text{on} \; \Sigma.
\end{align*}
It follows readily that the function $w:=v+u\in \dot{W}_p^1(\Omega\backslash\Sigma)$ is the unique solution of \eqref{eq:appauxlem0.01}.
\end{proof}

A final result in this subsection considers the system \eqref{eq:appauxlem0} with $\lambda=g_1=g_2=h_1=h_2=0$. We assume that the function $f$ depends on the spatial variable $x$ and on some parameter $t$, i.e. $f=f(t,x)$. In this case the solution $u=u(t,x)$ depends on $t$ as well. The following result contains some information about the regularity of $u$ with respect to $t$ and $x$.
\begin{lem}\label{lem:appauxhighreg}
Let $n=2,3$, $p\ge 2$, $J=[0,T]$ or $J=\mathbb{R}_+$ and $\lambda=g_1=g_2=h_1=h_2=0$. Then the following assertions are valid.
\begin{enumerate}
\item If $\Omega$ and $\Sigma$ satisfy one of the conditions in $(a),(b)$ above, then there exists a unique solution
    $$\nabla u\in\!_0W_p^1(J;{W}_p^{1}(\Omega\backslash\Sigma))\cap L_p(J;W_p^3(\Omega\backslash\Sigma))$$
    of \eqref{eq:appauxlem0}
    if and only if
    $$f\in\!_0W_p^1(J;\hat{W}_p^{-1}(\Omega)\cap L_p(\Omega))\cap L_p(J;W_p^2(\Omega\backslash\Sigma)).$$
\item If $\Omega$ and $\Sigma$ are subject to the condition $(c)$ above, then there exists a unique solution
    $$u\in\!_0W_p^1(J;W_p^1(\Omega\backslash\Sigma))\cap L_p(J;W_p^3(\Omega\backslash\Sigma))$$
    of \eqref{eq:appauxlem0} if and only if
    $$f\in\!_0W_p^1(J;\hat{W}_p^{-1}(\Omega))\cap L_p(J;W_p^1(\Omega\backslash\Sigma)).$$
\end{enumerate}
\end{lem}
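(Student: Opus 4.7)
The key observation is that the system is linear and autonomous in $t$, so the solution is obtained by applying the spatial solution operator from Lemma \ref{lem:appaux0} pointwise in $t$, and the time regularity is inherited from the data by linearity. The only serious work is to separate the two spatial scales: the low regularity $\|\nabla u\|_{L_p}\lesssim \|f\|_{\hat{W}_p^{-1}}$ obtained by testing, which controls the $W_p^1$ scale needed for the time derivative, and the high regularity $W_p^3$ (resp.\ $W_p^4$ for $\nabla u$ in part 1) obtained by a bootstrap on spatial derivatives.

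\textbf{Spatial solution operator.} For a.e.\ fixed $t\in J$ I would first apply Lemma \ref{lem:appaux0}(1) (resp.\ Lemma \ref{lem:appaux0}(2)) with $g_1=g_2=h_1=h_2=0$ and $f$ replaced by $f(t)$. In part 1 this requires $f(t)\in L_p(\Omega)\cap \hat{W}_p^{-1}(\Omega)$ and yields a unique $u(t)$ with $\nabla u(t)\in W_p^1(\Omega\setminus\Sigma)$ together with the two estimates
\begin{align*}
\|\nabla u(t)\|_{L_p(\Omega)}&\le C\|f(t)\|_{\hat{W}_p^{-1}(\Omega)},\\
\|\nabla^2 u(t)\|_{L_p(\Omega\setminus\Sigma)}&\le C\|f(t)\|_{L_p(\Omega)}.
\end{align*}
In part 2 the hypothesis $f(t)\in\hat{W}_p^{-1}(\Omega)$ already forces $\int_\Omega f(t)\,dx=0$ on the bounded cylinder, which is the mean-value compatibility needed by Lemma \ref{lem:appaux0}(2); this gives $u(t)\in W_p^2(\Omega\setminus\Sigma)$.

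\textbf{Upgrading the spatial regularity.} To get $\nabla u\in L_p(J;W_p^3)$ in part 1 (equivalently, $u$ is in $W_p^4$ in the $x$-variables), I would differentiate the equation $-\Delta u=f$ in tangential directions in each of the local charts produced in the proof of Lemma \ref{lem:appaux0} (full space, bent half space, bent quarter space, two-phase full/half space). Since the boundary conditions $\partial_{\nu_{S_j}}u=0$ and the transmission conditions $\Jump{\rho u}=0$, $\Jump{\partial_{\nu_\Sigma}u}=0$ are all homogeneous and invariant under tangential differentiation, each tangential derivative satisfies an elliptic/transmission problem of the same type, to which Lemma \ref{lem:appaux0} applies again provided the right-hand side $\partial^\alpha f$ lies in $L_p\cap\hat{W}_p^{-1}$. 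The normal second derivative at each boundary piece is then recovered algebraically from $\Delta u=-f$ using the tangential derivatives. This gives the bound $\|u\|_{L_p(J;W_p^4)}\le C\|f\|_{L_p(J;W_p^2)}$ in part 1, and the analogous $\|u\|_{L_p(J;W_p^3)}\le C\|f\|_{L_p(J;W_p^1)}$ in part 2. In the cylindrical domain of part 2, localization and a partition of unity subordinate to the covering of Proposition \ref{partitionone2} reduces the bootstrap to the chart problems already treated.

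\textbf{Time regularity.} Denote by $S$ the spatial solution operator, so that $u(t)=Sf(t)$ and $S$ is bounded and linear from $\hat{W}_p^{-1}(\Omega)$ to the $\nabla$-$L_p$ class, and from $W_p^k(\Omega\setminus\Sigma)$ to $W_p^{k+2}(\Omega\setminus\Sigma)$ for $k\in\{0,1,2\}$ (with the compatibility restrictions of each part). Because $S$ is time-independent, applying $S$ commutes with the Bochner derivative $\partial_t$ on compactly supported smooth data, and extends by density to
\[
\partial_t u=S(\partial_t f)\qquad\text{in the sense of vector-valued distributions.}
\]
From the hypothesis $f\in{}_0W_p^1(J;\hat{W}_p^{-1}(\Omega))$ and the $L_p$-boundedness of $S:\hat{W}_p^{-1}\to \dot W_p^1$, I get $\partial_t u\in L_p(J;\dot W_p^1(\Omega\setminus\Sigma))$, hence $u\in{}_0W_p^1(J;W_p^1(\Omega\setminus\Sigma))$; in part 1 the same identity with $S:L_p\to W_p^2$ promotes $\partial_t u$ into $L_p(J;W_p^2)$ as well, which combined with the spatial bootstrap above gives $\nabla u\in {}_0W_p^1(J;W_p^1)\cap L_p(J;W_p^3)$. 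The vanishing initial trace of $u$ follows from $f|_{t=0}=0$ (encoded in the zero-subscript of the data class) and linearity of $S$.

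\textbf{Necessity and main difficulty.} The necessity of the regularity conditions on $f$ follows from trace theory in time plus the inverse estimates embedded in Lemma \ref{lem:appaux0}: any solution $u$ with the stated regularity satisfies $-\Delta u=f$ in $L_p(J;W_p^2)$ (or $W_p^1$), and a weak test against $\phi\in\dot W_{p'}^1(\Omega)$ shows $f(t)\in\hat{W}_p^{-1}$ with time derivative of the same regularity. The main obstacle I expect is the bookkeeping at the contact line $\partial\Sigma$ and at the corner $\partial\Sigma\cap\partial S_2$ when doing the tangential-derivative bootstrap in the bent quarter-space and bent two-phase half-space charts of part 1: one must verify that, after tangential differentiation, the reduced problem still fits the framework of Lemma \ref{lem:appaux0} (in particular, the compatibility $\Jump{\rho\,\partial^\alpha_{\rm tan}h_1}=0$ at $\partial\Sigma$ is trivially preserved because $h_1=0$, so no new obstruction arises). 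Once this is checked, the Neumann-series/perturbation argument used in Lemma \ref{lem:appaux0} for the bent geometries transfers verbatim to the $\nabla^2 f$-level and then lifts to the parameter-dependent setting by the pointwise application of $S$ described above.
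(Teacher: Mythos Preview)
Your overall scheme matches the paper's: time regularity comes from applying a time-independent spatial solution operator pointwise in $t$, and the extra spatial regularity is obtained by a bootstrap in local charts. One small imprecision: in part 2 the estimate $\|u\|_{W_p^1}\le C\|f\|_{\hat W_p^{-1}}$ that you need to pass from $\partial_t f\in L_p(J;\hat W_p^{-1})$ to $\partial_t u\in L_p(J;W_p^1)$ is not contained in Lemma~\ref{lem:appaux0}(2) (which only yields $W_p^2$ from $L_p^{(0)}$) but is exactly the content of Lemma~\ref{lem:appauxlemweak}; the paper cites both lemmas here, and you implicitly use the weak one when you write ``$S:\hat W_p^{-1}\to \dot W_p^1$''.

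For the spatial bootstrap, however, your route differs from the paper's. You propose differentiating tangentially in each chart and re-applying the basic lemma, recovering normal derivatives from the equation. The paper instead reduces the quarter-space and two-phase half-space charts by the same even/odd reflections used in Lemma~\ref{lem:appaux0} to the ordinary half-space and the two-phase full space, and then invokes the mapping behaviour of the inverse Laplacian and of the Poisson semigroup $e^{-(-\Delta_{x'})^{1/2}x_n}$ on the \emph{homogeneous} Sobolev--Slobodeckii scale (via the equivalent norm~\eqref{eq:equivnormhom}), referring to \cite[Proof of Theorem~8.6]{KPW10}. The advantage of the paper's approach is that in the unbounded model domains of part~1 the solution lives only in homogeneous spaces $\dot W_p^k$, and the Poisson-semigroup calculus is tailored to shift regularity within that scale without ever producing inhomogeneous norms; it also sidesteps the bent-chart issue entirely, because after reflection there is no bending left. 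Your tangential-derivative argument is a legitimate alternative in principle, but you should be aware that in the bent charts the perturbation terms $M_j(\theta,\bar u)$ contain $\theta',\theta'',\theta'''$, and pushing the Neumann-series argument to the $W_p^4$ level (needed for $\nabla u\in W_p^3$) already exhausts the assumed regularity $\theta\in BC^3$; the paper's reflection-plus-semigroup route avoids this borderline bookkeeping.
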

\begin{proof}
(i) The regularity
$$\nabla u\in\!_0W_p^1(J;{W}_p^{1}(\Omega\backslash\Sigma))$$
in the first assertion and
$$u\in\!_0W_p^1(J;W_p^1(\Omega\backslash\Sigma))$$ in the second assertion is a direct consequence of Lemma \ref{lem:appaux0} and Lemma \ref{lem:appauxlemweak}, respectively.

(ii) Concerning the additional spatial regularity of $u$, one uses the fact that one already knows the unique solution $u$ of \eqref{eq:appauxlem0} with the regularity stated in Lemma \ref{lem:appaux0} and Lemma \ref{lem:appauxlemweak}. By means of local coordinates, one reduces each of the local problems to one of the model probolems in (a) and (b) above. In particular, the two-phase half space and the quarter space can be drawn back to a two-phase full space and an ordinary half space, respectively, by reflection techniques. The mapping behavior of the Laplacian and the Poisson semigroup in homogeneous Sobolev-Slobodeckii spaces, see \eqref{eq:equivnormhom}, yield the corresponding higher order estimates for the solution operators of the model problems. Therefore, the proof of the additional regularity of $u$ with respect to $x$ follows along the lines of \cite[Proof of Theorem 8.6]{KPW10}. We will not repeat the arguments.
\end{proof}

\subsection{Parabolic problems}

The following auxiliary lemma is concerned with the parabolic one-phase problem
\begin{align}
\begin{split}\label{eq:appauxlem1}
\partial_tu-\mu\Delta u&=f,\quad \text{in}\ \Omega,\\
P_{S_1}\left(\mu(\nabla u +\nabla u^{\sf T})\nu_{S_1}\right)&=P_{S_1}g_1,\quad \text{on}\ S_1,\\
u\cdot\nu_{S_1}&=g_2,\quad \text{on}\ S_1,\\
u&=g_3,\quad \text{on}\ S_2,\\
u(0)&=u_0,\quad \text{in}\ \Omega.
\end{split}
\end{align}
Again, we will concentrate on the case $n=3$. The results in this section remain true for the case $n=2$.
\begin{lem}\label{lem:appaux1}
Let $p>2$, $p\neq 3$, $\mu>0$, $T>0$ and $J=[0,T]$. Then there exists a unique solution
$$u\in H_p^1(J;L_p(\Omega)^3)\cap L_p(J;H_p^2(\Omega)^3)$$
of \eqref{eq:appauxlem1} if and only if the data are subject to the following regularity and compatibility conditions
\begin{enumerate}
\item $f\in L_p(J;L_p(\Omega)^3)$,
\item $g_1\in W_p^{1/2-1/2p}(J;L_p(S_1)^3)\cap L_p(J;W_p^{1-1/p}(S_1)^3)$,
\item $g_2\in W_p^{1-1/2p}(J;L_p(S_1))\cap L_p(J;W_p^{2-1/p}(S_1))$,
\item $g_3\in W_p^{1-1/2p}(J;L_p(S_2)^3)\cap L_p(J;W_p^{2-1/p}(S_2)^3)$,
\item $u_0\in W_p^{2-2/p}(\Omega)^3$,
\item $P_{S_1}\left(\mu(\nabla u_0 +\nabla u_0^{\sf T})\nu_{S_1}\right)=P_{S_1}g_1|_{t=0}$ $(p>3)$,
\item $u_0|_{S_1}\cdot\nu_{S_1}=g_2|_{t=0}$, $u_0|_{S_2}=g_3|_{t=0}$,
\item $g_3\cdot\nu_{S_1}=g_2$ at $\partial S_2$,
\item $P_{\partial G}\left(\mu(\nabla_{x'} g_3' +\nabla_{x'} (g_3')^{\sf T})\nu_{\partial S_2}\right)=P_{\partial G}g_1'$ at $\partial S_2$,
\item $\mu(\partial_{\nu_{S_1}}(g_3\cdot e_3)+\partial_3 g_2)=g_1\cdot e_3$ at $\partial S_2$,
\end{enumerate}
where $g_j':=\sum_{k=1}^2(g_j\cdot e_k)e_k$ for $j\in\{1,3\}$.

The result remains true for the case $J=\mathbb{R}_+$ if $\partial_t$ is replaced by $\partial_t+\omega$, with some sufficiently large $\omega>0$.
\end{lem}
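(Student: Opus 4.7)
The plan is to adapt the localization strategy used for the two-phase Stokes system in Chapter \ref{chptr:redmodprbl} and Chapter \ref{chptr:localization} to this simpler parabolic one-phase setting; the absence of the pressure, the divergence constraint and the interface $\Sigma$ makes each step lighter, so I would not have to introduce the homogeneous dual space $\hat{H}_p^{-1}(\Omega)$ or treat weak transmission problems, but the contact line $\partial S_2$ where partial-slip on $S_1$ meets no-slip on $S_2$ remains the delicate place.

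First I would reduce to $u_0=0$ by the same reflection-extension trick used in Section \ref{sec:Redzerotrace}: extend $u_0$ from a neighbourhood of $S_2$ to a function in $W_p^{2-2/p}(\Omega)^3$ which still satisfies $u_0=0$ on $S_2$, $u_0\cdot\nu_{S_1}=0$ and $P_{S_1}(\mu Du_0)\nu_{S_1}=0$ on $S_1$ (the compatibilities (6)--(7)), then solve the pure partial-slip/no-slip parabolic problem with this initial value by Lemma \ref{lem:appaux1} in a purely homogeneous data setting (which in fact is what we are constructing, so proceeding by bootstrap requires that this auxiliary problem be done on a simpler domain); a cleaner alternative is to extend $u_0$ across $x_3=0$ via a higher-order reflection (odd in the $\nu_{S_1}$ direction, making $u_0\cdot\nu_{S_1}$ odd and $P_{S_1}u_0$ even) and solve the pure-slip parabolic problem on an enlarged cylinder, for which reflection reduces matters to a standard boundary-value problem. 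The compatibilities (8)--(10) at $\partial S_2$ force the corresponding traces of $g_1,g_2,g_3$ to be compatible with $u_0$, so after subtracting this extension all inhomogeneities have vanishing trace at $t=0$.

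Second, I would apply the localization of Proposition \ref{partitionone2}: a finite covering $\{U_k\}$ with a partition of unity $\{\phi_k\}$ satisfying $\partial_{\nu_{S_1}}\phi_k = \partial_{e_3}\phi_k = 0$ at $\partial S_2$. The charts split into six types --- interior (full space), charts touching only $S_1$ ((bent) half-space with partial slip), charts touching only $S_2$ ((bent) half-space with Dirichlet conditions), and charts around the contact line $\partial S_2$ ((bent) quarter-space with mixed partial-slip/Dirichlet conditions). The first three model problems are classical (see \cite{DHP07}); the contact-line model problem is exactly the parabolic analogue of the Stokes system treated in Section \ref{QS}. For it I would use the reflection strategy developed there: extend $u_0$ and $g_3$ across $\{x_2=0\}$ by reflecting the $\nu_{S_1}$-component oddly and the tangential components evenly, solve the resulting half-space Dirichlet problem, reduce to zero boundary data on $S_2$, then extend the partial-slip data $(g_1,g_2)$ across $\{x_3=0\}$ and solve the pure partial-slip half-space problem. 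The corresponding bent-domain cases follow via the diffeomorphism $\bar x_2 = x_2-\theta(\bar x_1)$ and a Neumann series argument in maximal-regularity spaces, exactly as in Section \ref{bentQS} but without the Mullins--Sekerka-type pressure estimates; the smallness requirement on $\|\theta'\|_\infty$ arises from the same commutator estimates.

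The main obstacle is preserving the compatibility conditions (8)--(10) at the contact line under all these reductions. Condition (10), $\mu(\partial_{\nu_{S_1}}(g_3\cdot e_3)+\partial_3 g_2)=g_1\cdot e_3$ at $\partial S_2$, is precisely the matching of the partial-slip condition on $S_1$ with the Dirichlet condition on $S_2$, and it is fragile under extension and reflection. Thanks to the partition of unity in Proposition \ref{partitionone2}, the commutator terms $[\Delta,\phi_k]u$ and $[\partial_{\nu_{S_1}},\phi_k]u$ automatically vanish at $\partial S_2\cup\partial\Sigma$, so the localized data inherit the required compatibilities; the reflection across $\{x_2=0\}$ in the quarter-space model preserves them because of the chosen parities of the components of $g_3$. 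Once the local solutions $u_k$ are patched together via $u=\sum\phi_ku_k$ and the commutators are absorbed through the a priori estimate on small intervals, a successive application argument (as at the end of the proof of Theorem \ref{thm:linmaxreg}) extends the solution to the full interval $J$. The case $J=\mathbb{R}_+$ with the shifted operator $\partial_t+\omega$ for sufficiently large $\omega>0$ then follows by the exponential decay of the semigroups generated by the model operators on the half line.
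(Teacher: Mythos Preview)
Your overall architecture --- reduce to $u_0=0$, localize via Proposition~\ref{partitionone2}, identify the model problems (full space, half-space with pure slip, half-space with Dirichlet, bent quarter-space with mixed conditions), and close by a Neumann series on small intervals --- is exactly the paper's route. The reduction to $u_0=0$ is simpler than you make it: the paper just extends $u_0$ to $W_p^{2-2/p}(\mathbb{R}^3)^3$ and solves the full-space heat equation; no reflection gymnastics are needed at this stage.

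There is, however, a genuine gap in your treatment of the contact-line compatibilities. You write that ``the commutator terms $[\Delta,\phi_k]u$ and $[\partial_{\nu_{S_1}},\phi_k]u$ automatically vanish at $\partial S_2$'', and conclude that the localized data inherit conditions (8)--(10). This is not correct for the commutator that actually appears in the tangential stress condition. When you localize $u_k=\phi_k u$, the term added to $g_1^k$ on $S_1^k$ is
\[
G_k(u)=P_{S_1^k}\big(\mu(\nabla\phi_k\otimes u+u\otimes\nabla\phi_k)\nu_{S_1^k}\big)
=\mu\,(\partial_{\nu_{S_1}}\phi_k)\,P_{S_1}u+\mu\,(u\cdot\nu_{S_1})\,P_{S_1}\nabla\phi_k.
\]
At $\partial S_2$ the first summand vanishes by the partition-of-unity property, but the second summand equals $\mu\,(u\cdot\nu_{S_1})\,P_{S_1}\nabla\phi_k$ and is in general nonzero, since only $\partial_{\nu_{S_1}}\phi_k$ and $\partial_{e_3}\phi_k$ are forced to vanish, not the tangential components of $\nabla\phi_k$. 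Working through compatibility (9) for the localized data one finds that it holds at $\partial S_2$ if and only if $(u\cdot\nu_{S_1}-g_2)\,P_{\partial G}\nabla_{x'}\phi_k=0$ there. For the actual solution this is fine because $u\cdot\nu_{S_1}=g_2$ on $S_1$; but in the Neumann-series construction of a right inverse you must apply the local solution operators $\mathcal{S}_k$ to data built from an \emph{arbitrary} $u\in{}_0\mathbb{E}(T)$, and then the compatibility fails.

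The paper repairs this by replacing $G_k(u)$ with
\[
\tilde G_k(u,g_3):=G_k(u)-\operatorname{ext}_{x_3,k}\big(G_k(u)|_{x_3=H_j}-G_k(g_3)|_{x_3=H_j}\big),
\]
using the extension operator of Proposition~\ref{app:propext}; this forces $\tilde G_k(u,g_3)=G_k(g_3)$ at $\partial S_2$, so the compatibilities (8)--(10) hold for any $u$, while $\tilde G_k(u,g_3)=G_k(u)$ whenever $u$ is an actual solution. The corrected fixed-point identity then yields the right inverse, and the left inverse follows from the a~priori estimate as you describe. Without this correction your argument gives injectivity with closed range but not surjectivity.
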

\begin{proof}
1. Extend $u_0$ to some function $\tilde{u}_0\in W_p^{2-2/p}(\mathbb{R}^3)^3$ and solve the full space problem
\begin{align}
\begin{split}\label{eq:appauxlem1.1}
\partial_t\tilde{u}-\mu\Delta \tilde{u}&=0,\quad \text{in}\ \mathbb{R}^3,\\
\tilde{u}(0)&=\tilde{u}_0,\quad \text{in}\ \mathbb{R}^3,
\end{split}
\end{align}
to obtain a unique solution
$$\tilde{u}\in H_p^1(J;L_p(\mathbb{R}^3)^3)\cap L_p(J;H_p^2(\mathbb{R}^3)^3).$$
If $u$ is a solution of \eqref{eq:appauxlem1}, then $u-\tilde{u}|_{\Omega}$ solves \eqref{eq:appauxlem1} with $u_0=0$ and some modified data $(f,g_1,g_2,g_3)$ (not to be relabeled) having vanishing temporal trace at $t=0$, whenever it exists. Therefore, we may w.l.o.g. assume that $u_0=0$ in \eqref{eq:appauxlem1}.

Suppose that $u$ is a solution of \eqref{eq:appauxlem1} with $u_0=0$. We cover $\partial S_2$ by finitely many open balls $U_k:=B_{r}(x_k)$, $x_k\in \partial S_2$, $k=1,\ldots,N$. This way, we obtain $N$ bent quarter spaces with corresponding solution operators $\mathcal{S}_k$, which are well-defined, if $r>0$ is sufficiently small. Furthermore, by the results in Section \ref{sec:partitionone} there exist open sets $U_{N+j}$, $j=1,\ldots,3$ such that
\begin{itemize}
\item $U_{N+1}\subset\Omega$,
\item $U_{N+2}\cap S_1\neq\emptyset$, $U_{N+2}\cap S_2=\emptyset$,
\item $U_{N+3}\cap S_1=\emptyset$, $U_{N+3}\cap S_2\neq\emptyset$,
\item $\overline{\Omega}\subset\bigcup_{k=1}^{N+3} U_k$,
\end{itemize}
and a subordinated partition of unity $\{\varphi_k\}_{k=0}^N\subset C_c^3(\mathbb{R}^3;[0,1])$ with $\partial_{\nu_{\partial G}}\varphi_k=\partial_3\varphi_k=0$ at $\partial S_2$. Let $u_k:=u\varphi_k$, $f_k:=f\varphi_k$ and $g_j^k:=g_j\varphi_k$. Then $u_k$ solves the problem
\begin{align}
\begin{split}\label{eq:appauxlem1.2}
\partial_tu_k-\mu\Delta u_k&=F_k(u)+f_k,\quad \text{in}\ \Omega_k,\\
P_{S_1^k}\left(\mu(\nabla u_k +\nabla u_k^{\sf T})\nu_{S_1^k}\right)&=G_k(u)+P_{S_1^k}g_1^k,\quad \text{on}\ S_1^k,\\
u_k\cdot\nu_{S_1^k}&=g_2^k,\quad \text{on}\ S_1^k,\\
u_k&=g_3^k,\quad \text{on}\ S_2^k,\\
u_k(0)&=0,\quad \text{in}\ \Omega_k,
\end{split}
\end{align}
where $F_k(u):=-\mu[\Delta,\varphi_k]u$ and $G_k(u):=P_{S_1^k}\left(\mu(\nabla\varphi_k\otimes u+u\otimes\nabla\varphi_k)\nu_{S_1^k}\right)$.

Here $\Omega_{N+1}=\mathbb{R}^3$, $\Omega_{N+2}$ reduces to bent half-spaces with pure-slip boundary conditions, $\Omega_{N+3}$ is a half-space with Dirichlet boundary conditions and $\Omega_k$, $k=1,\ldots,N$ are bent quarter-spaces with pure-slip boundary conditions on one part of the boundary and Dirichlet boundary conditions on the other part. $S_j^k$ denote the corresponding parts of the boundary $\partial\Omega_k$ and $S_j^{N+1}=S_1^{N+3}=S_2^{N+2}=\emptyset$.

Denoting by $\mathcal{S}_k$ the corresponding solution operators to each of the $N+3$ problems, we obtain the representation
$$u_k=\mathcal{S}_k\left((f_k,g_1^k,g_2^k,g_3^k)+(F_k(u),G_k(u),0,0)\right).$$
Let $\{\psi_k\}_{k=0}^N\subset C_c^\infty(\mathbb{R}^3;[0,1])$ such that $\psi_k\equiv 1$ on $\operatorname{supp}\varphi_k$ and $\operatorname{supp}\psi_k\subset U_k$. Multiplying $u_k$ with $\psi_k$ and summing from $k=0$ to $N$ yields the identity
\begin{equation}\label{eq:appauxlem1.3}
u=\sum_{k=0}^N\psi_k\mathcal{S}_k\left((f_k,g_1^k,g_2^k,g_3^k)+(F_k(u),G_k(u),0,0)\right).
\end{equation}
Therefore, any solution to \eqref{eq:appauxlem1}, with $u_0=0$, necessarily satisfies \eqref{eq:appauxlem1.3}. The converse however is in general not true. This pathology stems from the compatibility conditions at $\partial S_2^k$ for the commutator term $G_k(u)$ in \eqref{eq:appauxlem1.2}. Thanks to Proposition \ref{app:propext} there exists an appropriate extension operator $\operatorname{ext}_{x_3,k}$ from
$$_0W_p^{1/2-1/p}(J;L_p(\partial S_2^k))\cap L_p(J;W_p^{1-2/p}(\partial S_2^k))$$
to
$$_0W_p^{1/2-1/2p}(J;L_p(\partial S_2^k\times\mathbb{R}_+))\cap L_p(J;W_p^{1-1/p}(\partial S_2^k\times\mathbb{R}_+)),$$
such that $[\operatorname{ext}_{x_3,k}v](0)=v$. Replace $G_k(u)$ by
$$
\tilde{G}_k(u,g_3):=G_k(u)-\operatorname{ext}_{x_3,k}\left(G_k(u)|_{x_3=H_j}
-G_k(g_3)|_{x_3=H_j}\right)=G_k^1(g_3)+G_k^2(u),
$$
where $G_k^1(g_3):=\operatorname{ext}_{x_3,k} G_k(g_3)|_{x_3=H_j}$.
We note on the go that $\tilde{G}_k(u,g_3)=G_k(u)$, if $u$ is a solution of \eqref{eq:appauxlem1}, since then $u=g_3$ at $\partial S_2$ and $g_3|_{S_1}\cdot\nu_{S_1}=g_2|_{S_2}$ at $\partial S_2$ by assumption.

Therefore we will henceforth work with the identity
\begin{equation}\label{eq:appauxlem1.4}
u=\sum_{k=0}^N\psi_k\left(\mathcal{S}_k(f_k,g_1^k+G_k^1(g_3),g_2^k,g_3^k)+
\mathcal{S}_k(F_k(u),G_k^2(u),0,0)\right).
\end{equation}
Let $_0\mathbb{E}(T):=\!_0 H_p^1(J;L_p(\Omega)^3)\cap L_p(J;H_p^2(\Omega)^3)$,
$$\mathbb{F}_1(T):=L_p(J\times\Omega)^3,$$
$$_0\mathbb{F}_2(T):=\!_0W_p^{1/2-1/2p}(J;L_p(S_1)^3)\cap L_p(J;W_p^{1-1/p}(S_1)^3),$$
$$_0\mathbb{F}_3(T):=\!_0W_p^{1-1/2p}(J;L_p(S_1))\cap L_p(J;W_p^{2-1/p}(S_1)),$$
$$_0\mathbb{F}_4(T):=\!_0W_p^{1-1/2p}(J;L_p(S_2)^3)\cap L_p(J;W_p^{2-1/p}(S_2)^3)$$
and
\begin{multline*}
_0\mathbb{F}(T):=\{(f,g_1,g_2,g_3)\in\mathbb{F}_1(T)
\times_{j=2}^4\left\{\,_0\mathbb{F}_j(T)\right\}:\\
(8)-(10)\ \text{in Lemma \ref{lem:appaux1} are satisfied}\}.
\end{multline*}
Since the terms involving $u$ on the right side of \eqref{eq:appauxlem1.4} are of lower order, it follows that there exists $\gamma>0$ such that the a priori estimate
$$\|u\|_{\mathbb{E}(T)}\le M\left(\|(f,g_1,g_2,g_3)\|_{\mathbb{F}(T)}+T^\gamma\|u\|_{\mathbb{E}(T)}\right),$$
holds for any solution $u$ of \eqref{eq:appauxlem1.4}. Therefore, if $T>0$ is sufficiently small, it follows that the operator $L:\!_0\mathbb{E}(T)\to\!_0\mathbb{F}(T)$ defined by the left side of \eqref{eq:appauxlem1} without the initial condition is injective and has closed range. This in turn implies that $L$ has a left-inverse.

Applying a Neumann series argument, we see that for each given set of data $(f,g_1,g_2,g_3)\in\!_0\mathbb{F}(T)$ there exists a unique solution $u$ of \eqref{eq:appauxlem1.4} on a (possibly) small time interval $[0,T]$. This follows as above by taking into account that the terms involving $u$ on the right side of \eqref{eq:appauxlem1.4} are linear and of lower order. Denote by $\mathcal{S}:\!_0\mathbb{F}(T)\to\!_0\mathbb{E}(T)$ the corresponding solution operator. It remains to prove the existence of a right inverse for $L$. Writing $u=\mathcal{S}(f,g_1,g_2,g_3)$, where $(f,g_1,g_2,g_3)\in\!_0\mathbb{F}(T)$, it follows that
\begin{equation}\label{eq:appauxlem1.5}
\mathcal{S}(f,g_1,g_2,g_3)=\sum_{k=0}^N\psi_k\Big(\mathcal{S}_k(f_k,g_1^k+G_k^1(g_3),g_2^k,g_3^k)
+
\mathcal{S}_k(F_k(u),G_k^2(u),0,0)\Big).
\end{equation}
Applying the operator $L$ to \eqref{eq:appauxlem1.5} we obtain
$$L\mathcal{S}(f,g_1,g_2,g_3)=(f,g_1,g_2,g_3)+R(f,g_1,g_2,g_3),$$
where the linear operator $R$ is given by
\begin{align*}
R(f,g_1,g_2,g_3)&:=\sum_{k=0}^N[L,\psi_k]\Big(\mathcal{S}_k(f_k,g_1^k+G_k^1(g_3),g_2^k,g_3^k)
+\mathcal{S}_k(F_k(u),G_k^2(u),0,0)\Big)\\
&+\sum_{k=0}^N(F_k(u),G_k(u,g_3),0,0)
\end{align*}
Since the commutator $[L,\psi_k]$ as well as $F_k(u)$ and $G_k(u,g_3)$ are of lower order compared to $L$, it follows that there exists $\gamma>0$ such that $R$ satisfies the estimate
$$\|R(f,g_1,g_2,g_3)\|_{\mathbb{F}(T)}\le MT^\gamma\|(f,g_1,g_2,g_3)\|_{\mathbb{F}(T)},$$
where $M>0$ does not depend on $T$. Therefore, a Neumann series argument implies that the right inverse for $L$ is given by the linear operator $\mathcal{S}(I-R)^{-1}$, provided that $T>0$ is sufficiently small. This implies that $L$ is boundedly invertible and the proof of the first assertion is complete.

2. Concerning the second assertion, we use local coordinates and make use of the fact that the corresponding local solution operators are bounded by $1/\omega$ in the norm of $\mathbb{F}$. By means of interpolation we are able to control all lower order terms by $C/\omega^a$ for some uniform $a>0$. Choosing $\omega>0$ large enough, the norms of the lower order terms will become small. This yields the invertibility of $L_\omega$ as above, where $L_\omega$ results from $L$ by replacing $\partial_t$ with $\partial_t+\omega$.
\end{proof}
We will also need a result on the well-posedness of the two-phase problem
\begin{align}
\begin{split}\label{eq:appauxlem2}
\partial_t(\rho {u})-\mu\Delta {u}&=f,\quad \text{in}\ \Omega\backslash\Sigma,\\
\Jump{\mu \partial_3 {v}}+\Jump{\mu\nabla_{x'} {w}}&=g_v,\quad \text{on}\ \Sigma,\\
\Jump{\mu \partial_3 {w}}&=g_w,\quad \text{on}\ \Sigma,\\
\Jump{{u}}&=u_\Sigma,\quad \text{on}\ \Sigma,\\
P_{S_1}\left(\mu(\nabla {u}+\nabla {u}^{\sf T})\nu_{S_1}\right)&=P_{S_1}g_1,\quad \text{on}\ S_1\backslash\partial\Sigma,\\
{u}\cdot\nu_{S_1}&=g_2,\quad \text{on}\ S_1\backslash\partial\Sigma,\\
{u}&=g_3,\quad \text{on}\ S_2,\\
{u}(0)&=u_0,\quad \text{in}\ \Omega\backslash\Sigma.
\end{split}
\end{align}
\begin{lem}\label{lem:appaux2}
Let $p>2$, $p\neq 3$, $\mu_j>0$, $\rho_j>0$, $T>0$ and $J=[0,T]$. Then there exists a unique solution
$$u\in H_p^1(J;L_p(\Omega)^3)\cap L_p(J;H_p^2(\Omega\backslash\Sigma)^3)$$
of \eqref{eq:appauxlem2} if and only if the data are subject to the following regularity and compatibility conditions
\begin{enumerate}
\item $f\in L_p(J;L_p(\Omega)^3)$,
\item $g_v\in W_p^{1/2-1/2p}(J;L_p(\Sigma)^2)\cap L_p(J;W_p^{1-1/p}(\Sigma)^2)$,
\item $g_w\in W_p^{1/2-1/2p}(J;L_p(\Sigma))\cap L_p(J;W_p^{1-1/p}(\Sigma))$,
\item $u_\Sigma=(v_\Sigma,w_\Sigma)\in W_p^{1-1/2p}(J;L_p(\Sigma)^3)\cap L_p(J;W_p^{2-1/p}(\Sigma)^3)$,
\item $g_1\in W_p^{1/2-1/2p}(J;L_p(S_1)^3)\cap L_p(J;W_p^{1-1/p}(S_1\backslash\partial\Sigma)^3)$,
\item $g_2\in W_p^{1-1/2p}(J;L_p(S_1))\cap L_p(J;W_p^{2-1/p}(S_1\backslash\partial\Sigma))$,
\item $g_3\in W_p^{1-1/2p}(J;L_p(S_2)^3)\cap L_p(J;W_p^{2-1/p}(S_2)^3)$,
\item $u_0=(v_0,w_0)\in W_p^{2-2/p}(\Omega\backslash\Sigma)^3$,
\item $P_{S_1}\left(\mu(\nabla u_0 +\nabla u_0^{\sf T})\nu_{S_1}\right)=P_{S_1}g_1|_{t=0}$, $\Jump{\mu\partial_3 v_0}+\Jump{\mu\nabla_{x'} w_0}=g_v|_{t=0}$ $(p>3)$,
\item $u_0|_{S_1}\cdot\nu_{S_1}=g_2|_{t=0}$, $u_0|_{S_2}=g_3|_{t=0}$, $\Jump{\mu\partial_3 w_0}=g_w|_{t=0}$, $\Jump{u_0}=u_\Sigma|_{t=0}$,
\item $g_3\cdot\nu_{S_1}=g_2$ at $\partial S_2$, $u_\Sigma\cdot\nu_{S_1}=\Jump{g_2}$ at $\partial\Sigma$,
\item $P_{\partial \Sigma}\left((\nabla_{x'} v_\Sigma +\nabla_{x'} v_\Sigma^{\sf T})\nu_{\partial \Sigma}\right)=P_{\partial \Sigma}\Jump{g_1'/\mu}$ at $\partial \Sigma$,
\item $\partial_{\nu_{S_1}}w_\Sigma=\Jump{(g_1\cdot e_3)/\mu-\partial_3 g_2}$, $(g_v|\nu_{\partial \Sigma})=\Jump{g_1\cdot e_3}$ at $\partial\Sigma$,
\item $P_{\partial G}\left(\mu(\nabla_{x'} {g}_3' +\nabla_{x'} ({g}_3')^{\sf T})\nu_{\partial S_2}\right)=P_{\partial G}g_1'$ at $\partial S_2$
\item $\mu(\partial_{\nu_{S_1}}(g_3\cdot e_3)+\partial_3 g_2)=g_1\cdot e_3$ at $\partial S_2$,
\end{enumerate}
where $g_j'=\sum_{k=1}^2({g}_j\cdot e_k)e_k$ for $j\in\{1,3\}$.

The result remains true for the case $J=\mathbb{R}_+$ if $\partial_t$ is replaced by $\partial_t+\omega$, with some sufficiently large $\omega>0$.
\end{lem}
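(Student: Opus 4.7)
The plan is to mimic the proof of Lemma \ref{lem:appaux1}, but now incorporate charts which see the interface $\Sigma$ and the contact line $\partial\Sigma$. As a preliminary step I would reduce to $u_0 = 0$: since the compatibility conditions (9)--(10) hold in each phase, I extend $u_0^\pm := u_0|_{\Omega_{2/1}}$ across $S_2$ by a higher-order reflection (as done in Section~\ref{sec:Redzerotrace}) and solve two one-phase heat equations with pure-slip conditions on $S_1$ and Dirichlet conditions on $S_2$ by Lemma~\ref{lem:appaux1}. Gluing the solutions across $\Sigma$ gives $\tilde u \in H_p^1(J;L_p(\Omega)^3)\cap L_p(J;H_p^2(\Omega\setminus\Sigma)^3)$ with $\tilde u|_{t=0} = u_0$, and subtracting leaves a problem of the same form with $u_0 = 0$ and data whose time traces at $t=0$ vanish whenever they exist.

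Next I would assemble six classes of local model problems: (i) the full-space heat equation; (ii) the bent half-space heat equation with either pure-slip or Dirichlet boundary condition; (iii) the bent quarter-space with pure-slip on one face and Dirichlet on the other (Lemma~\ref{lem:appaux1} already handles (i)--(iii)); (iv) the two-phase full space with a flat interface and jump conditions, solved explicitly by the Poisson semigroup generated by $L = (\partial_t - \mu\Delta_{x'}/\rho)^{1/2}$, as in Section~\ref{HS}; (v) the two-phase bent half-space with pure-slip on $S_1$ and 90 degree contact, treated as in Section~\ref{bentHS} but without pressure or height function, so the model problem decouples into two scalar transmission heat equations plus a transport-free $\tilde v$-equation. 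All of these admit unique solution operators $\mathcal S_k$ of maximal $L_p$-regularity. Using Proposition~\ref{partitionone2} I cover $\overline\Omega$ by finitely many charts $U_k$ subordinate to a partition $\{\varphi_k\}$ whose Neumann traces on $\partial\Sigma\cup\partial S_2$ vanish; multiplying the equation by $\varphi_k$ produces local problems of types (i)--(v) with inhomogeneities of the form $(f_k, g_{v,k}, g_{w,k}, u_{\Sigma,k}, g_{1,k}, g_{2,k}, g_{3,k})$ together with lower-order commutator terms $F_k(u), G_k(u,u_\Sigma)$.

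The main obstacle is ensuring that the localized data genuinely lie in the admissible data spaces of the model problems, i.e. that all relevant compatibility conditions at $\partial\Sigma$ and at $\partial S_2$ are preserved by multiplication with $\varphi_k$ and by the commutator operations. The Neumann-trace vanishing of $\varphi_k$ on $\partial S_2 \cup \partial\Sigma$ takes care of conditions (11), (14) and of the normal parts of (6)--(9); for the remaining conditions involving jumps, I replace the commutator $G_k$ by a modified version $\tilde G_k$ obtained by subtracting an $\operatorname{ext}_\Sigma$- or $\operatorname{ext}_{x_3}$-extension of its trace at the offending submanifold, exactly as in Section~\ref{bentHS} and in the proof of Lemma~\ref{lem:appaux1}. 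These extensions exist by Proposition~\ref{app:propext} and are the identity when $u$ actually solves \eqref{eq:appauxlem2}, so the modification is inert on solutions but yields data in the correct class.

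With this in place, the identity
\[
u = \sum_k \psi_k \mathcal S_k\bigl((f_k, g_{v,k}, g_{w,k}, u_{\Sigma,k}, g_{1,k}+G_k^1, g_{2,k}, g_{3,k}) + (F_k(u), \tilde G_k^2(u), 0, \ldots, 0)\bigr),
\]
with $\psi_k \equiv 1$ on $\operatorname{supp}\varphi_k$, is a fixed-point equation in the natural maximal-regularity space $_0\mathbb{E}(T)$. Since all commutator terms carry additional time or space regularity, one obtains an estimate
\[
\|R\|_{\mathcal{B}(_0\mathbb{F}(T))} \le M T^{\gamma}
\]
for some $\gamma > 0$, and for $T$ small enough a Neumann series argument furnishes both a left and a right inverse of the solution-to-data operator $L$, yielding existence and uniqueness on $[0,T]$. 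Patching finitely many such intervals gives the result on any finite $J$. For $J = \mathbb{R}_+$ with $\partial_t$ replaced by $\partial_t + \omega$, the same scheme works verbatim because the model solution operators are, for $\omega$ large, contractions whose norms in $\mathcal B(\mathbb F,\mathbb E)$ decay like $\omega^{-a}$ on lower-order terms, absorbing the commutators globally in time.
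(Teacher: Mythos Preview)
Your direct-localization scheme is viable but takes a genuinely different route from the paper. The paper does \emph{not} localize for general $(\mu_j,\rho_j)$; instead it (i) first proves the special case $\mu_1=\mu_2=\rho_1=\rho_2=1$, where after removing the interface data $(g_v,g_w,u_\Sigma)$ via a two-phase full-space heat problem the remaining compatibility conditions at $\partial\Sigma$ collapse to mere continuity of $\hat g_1,\hat g_2$ across $\partial\Sigma$, so Lemma~\ref{lem:appaux1} applies on the whole cylinder; (ii) for general coefficients, reduces to $g_1=g_2=g_3=0$ by applying Lemma~\ref{lem:appaux1} separately on each phase; (iii) runs a Fredholm-index continuity argument along the homotopy $(\rho_j^\tau,\mu_j^\tau)=\tau(\rho_j,\mu_j)+(1-\tau)(1,1,1,1)$: the a~priori estimate from localization gives semi-Fredholmness uniformly in $\tau$, and invertibility at $\tau=0$ forces index zero, hence invertibility, at $\tau=1$. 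The paper explicitly notes that step~(ii) is essential so that $\mu$ does not appear in the data space and the latter is $\tau$-independent. Your approach, by contrast, keeps $\mu$ fixed throughout and attacks the two-phase charts at $\partial\Sigma$ head-on; this obliges you to actually establish the bent two-phase half-space heat model problem (your type~(v)) via the reflection technique of Section~\ref{HS}, which is doable but is extra work the paper bypasses by recycling Lemma~\ref{lem:appaux1} and Fredholm theory. A minor point: your $u_0$-reduction is heavier than needed---the paper simply extends each $u_0^\pm$ to $\mathbb{R}^3$ and solves two full-space heat equations, with no boundary conditions to match at that stage.
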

\begin{proof}
1. Without loss of generality we may assume $u_0=0$. This can be seen as follows. Extend $u_0^+:=u_0|_{x_3\in (0,H_2)}\in W_p^{2-2/p}(G\times(0,H_2))^3$ first w.r.t. $x_3$, then w.r.t. $(x_1,x_2)$ to some $\tilde{u}_0^+\in W_p^{2-2/p}(\mathbb{R}^3)^3$ and solve the full space problem
\begin{align}
\begin{split}\label{eq:appauxlem2.1}
\partial_t\tilde{u}^+-\Delta \tilde{u}^+&=0,\quad \text{in}\ \mathbb{R}^3,\\
\tilde{u}^+(0)&=\tilde{u}_0^+,\quad \text{in}\ \mathbb{R}^3,
\end{split}
\end{align}
to obtain a unique solution
$$\tilde{u}^+\in H_p^1(J;L_p(\mathbb{R}^3)^3)\cap L_p(J;H_p^2(\mathbb{R}^3)^3).$$
Then we extend $u_0^-:=u_0|_{x_3\in (H_1,0)}\in W_p^{2-2/p}(G\times(H_1,0))^3$ first w.r.t. $x_3$, then w.r.t. $(x_1,x_2)$ to some $\tilde{u}_0^-\in W_p^{2-2/p}(\mathbb{R}^3)^3$ and solve \eqref{eq:appauxlem2.1} with $\tilde{u}_0^+$ replaced by $\tilde{u}_0^-$ to obtain a unique solution
$$\tilde{u}^-\in H_p^1(J;L_p(\mathbb{R}^3)^3)\cap L_p(J;H_p^2(\mathbb{R}^3)^3).$$
Define $\tilde{u}:=\tilde{u}^+\chi_{G\times(0,H_2)}+\tilde{u}^-\chi_{G\times (H_1,0)}$. If $u$ solves \eqref{eq:appauxlem2}, then $u-\tilde{u}$ solves \eqref{eq:appauxlem2} with $u_0=0$ and with some modified data $(f,g_j,u_\Sigma)$ (not to be relabeled). Note that the time traces of the modified data at $t=0$ are zero by construction, whenever they exist.

\textbf{Step 1:} In a first step we consider the case $\mu_j=\rho_j=1$. Extend
$$(g_v,g_w)\in\!_0W_p^{1/2-1/2p}(J;L_p(\Sigma)^3)\cap L_p(J;W_p^{1-1/p}(\Sigma)^3)$$
and
$$u_\Sigma\in\!_0W_p^{1-1/2p}(J;L_p(\Sigma)^3)\cap L_p(J;W_p^{2-1/p}(\Sigma)^3),$$
to some functions
$$(\tilde{g}_v,\tilde{g}_w)\in\!_0W_p^{1/2-1/2p}(J;L_p(\mathbb{R}^2)^3)\cap L_p(J;W_p^{1-1/p}(\mathbb{R}^2)^3)$$
and
$$\tilde{u}_\Sigma\in\!_0W_p^{1-1/2p}(J;L_p(\mathbb{R}^2)^3)\cap L_p(J;W_p^{2-1/p}(\mathbb{R}^2)^3)$$
respectively. Then we solve the following two-phase problem in $\dot{\mathbb{R}}^3:=\mathbb{R}^2\times \dot{\mathbb{R}}$.
\begin{align}
\begin{split}\label{eq:appauxlem2.2}
\partial_t\tilde{u}-\Delta \tilde{u}&=0,\quad \text{in}\ \dot{\mathbb{R}}^3,\\
\Jump{\partial_3 \tilde{v}}+\Jump{\nabla_{x'} \tilde{w}}&=\tilde{g}_v,\quad \text{on}\ \mathbb{R}^2\times\{0\},\\
\Jump{\partial_3 \tilde{w}}&=\tilde{g}_w,\quad \text{on}\ \mathbb{R}^2\times\{0\},\\
\Jump{\tilde{u}}&=\tilde{u}_\Sigma,\quad \text{on}\ \mathbb{R}^2\times\{0\},\\
\tilde{u}(0)&=0,\quad \text{in}\ \dot{\mathbb{R}}^3.
\end{split}
\end{align}
This yields the existence of a unique solution
$$\tilde{u}\in\!_0H_p^1(J;L_p(\mathbb{R}^3)^3)\cap L_p(J;H_p^2(\dot{\mathbb{R}}^3)^3).$$
If $u$ solves \eqref{eq:appauxlem2} with $u_0=0$, then $u-\tilde{u}|_{\Omega}$ solves \eqref{eq:appauxlem2} with $u_0=g_v=g_w=u_\Sigma=0$ and some modified data $(\hat{f},\hat{g}_1,\hat{g}_2,\hat{g}_3)$ in the right regularity classes and with vanishing trace at $t=0$ whenever it exists. Observe that the compatibility conditions on the modified data at $\partial\Sigma$ read as follows.
$$\Jump{\hat{g}_2}=\Jump{\partial_3\hat{g}_2}=0,\ \text{and}\ \Jump{P_{S_1}\hat{g}_1}=P_{S_1}\Jump{\hat{g}_1}=0$$
Note that this is in general not the case if $\Jump{\mu}\neq 0$. Therefore it follows that
$$P_{S_1}\hat{g}_1\in\!_0W_p^{1/2-1/2p}(J;L_p(S_1)^3)\cap L_p(J;W_p^{1-1/p}(S_1)^3),$$
and
$$\hat{g}_2\in\!_0W_p^{1-1/2p}(J;L_p(S_1))\cap L_p(J;W_p^{2-1/p}(S_1)).$$
Since the modified data $\hat{g}_j$ also satisfy the compatibility conditions at $\partial S_2$, we may solve \eqref{eq:appauxlem1} by Lemma \ref{lem:appaux1} with $\mu=1$, $f=\hat{f}$, $g_1=P_{S_1}\hat{g}_1$, $g_2=\hat{g}_2$, $g_3=\hat{g}_3$ and $u_0=0$. This in turn implies that problem \eqref{eq:appauxlem2} is well-posed, provided that $\mu_1=\mu_2=1$.

\textbf{Step 2:} In the second step we consider the case $\Jump{\rho}\neq 0$, $\Jump{\mu}\neq 0$. Let us first reduce \eqref{eq:appauxlem2} with $u_0=0$ to the case $g_1=g_2=g_3=0$. To this end will apply Lemma \ref{lem:appaux1} twice. First we extend $g_j^+:=g_j|_{x_3\in(0,H_2)}$ by some (higher order) reflections at $\{x_3=0\}$ to some functions
$$\tilde{g}_1^+\in\!_0W_p^{1/2-1/2p}(J;L_p(S_1)^3)\cap L_p(J;W_p^{1-1/p}(S_1)^3)$$
and
$$\tilde{g}_2^+\in\!_0W_p^{1-1/2p}(J;L_p(S_1))\cap L_p(J;W_p^{2-1/p}(S_1)),$$
such that $\tilde{g}_j^+|_{x_3=H_1}=0$. Then, we solve \eqref{eq:appauxlem1} with $\mu=\mu_2$, $f=0$, $g_1=P_{S_1}\tilde{g}_1^+$, $g_2=\tilde{g}_2^+$, $g_3|_{x_3=H_2}=g_3^+$ and $g_3|_{x_3=H_1}=0$ to obtain a unique solution
$$\tilde{u}^+\in\!_0H_p^1(J;L_p(\Omega)^3)\cap L_p(J;H_p^2(\Omega)^3).$$
Repeating the same procedure for $g_j^-:=g_j|_{x_3\in (H_1,0)}$ yields a unique solution
$$\tilde{u}^-\in\!_0H_p^1(J;L_p(\Omega)^3)\cap L_p(J;H_p^2(\Omega)^3).$$
Define $\tilde{u}:=\tilde{u}^+\chi_{G\times(0,H_2)}+\tilde{u}^-\chi_{G\times (H_1,0)}$. If $u$ solves \eqref{eq:appauxlem2} with $u_0=0$, then $u-\tilde{u}$ solves \eqref{eq:appauxlem2} with $u_0=0$, $g_1=0$, $g_2=0$ $g_3=0$ and some modified data $(\hat{f},\hat{g}_v,\hat{g}_w,\hat{u}_\Sigma)$ which are subject to the following compatibility conditions at $\partial\Sigma$:
\begin{equation}\label{eq:appauxlem2.3}\hat{u}_\Sigma\cdot\nu_{S_1}=0,\ \partial_{\nu_{S_1}}\hat{w}_\Sigma=0,\ \hat{g}_v\cdot\nu_{\partial \Sigma}=0
\end{equation}
and
\begin{equation}\label{eq:appauxlem2.4}P_{\partial \Sigma}\left((\nabla_{x'} \hat{v}_\Sigma +\nabla_{x'} \hat{v}_\Sigma^{\sf T})\nu_{\partial \Sigma}\right)=0.
\end{equation}
\textbf{Step 3:} Let $_0\mathbb{E}(T):=\!_0H_p^1(J;L_p(\Omega)^3)\cap L_p(J;H_p^2(\Omega\backslash\Sigma)^3)$ and denote by $_0\mathbb{F}(T)$ the space of data $(f,g_j,u_\Sigma)$, $j\in\{v,w,1,2,3\}$ such that the compatibility conditions (11)-(15) in Lemma \ref{lem:appaux2} are satisfied. Define $L:\!_0\mathbb{E}(T)\to\!_0\mathbb{F}(T)$ by the left side of \eqref{eq:appauxlem2} without the initial condition. By means of a localization procedure one can show that $L$ satisfies the a priori estimate
\begin{equation}\label{eq:appauxlem2.5}
\|u\|_{_0\mathbb{E}(T)}\le M\|Lu\|_{_0\mathbb{F}(T)}.
\end{equation}
This can be seen as in the proof of Lemma \ref{lem:appaux1}. Indeed, the charts which intersect $\partial S_2$ and $\partial\Sigma$ may be treated as in Sections \ref{QS} \& \ref{HS}, respectively, while the treatment of the remaining charts is well-known. Note that there is no need to carry any correction terms as in the proof of Lemma \ref{lem:appaux1}, since for the proof of \eqref{eq:appauxlem2.5} one already starts with a solution of \eqref{eq:appauxlem2}. Therefore, the compatibility conditions at $\partial S_2$ and $\partial\Sigma$ are necessarily satisfied.

Next, we set
\begin{multline*}
_0\tilde{\mathbb{E}}(T):=\{u\in \!_0H_p^1(J;L_p(\Omega)^3)\cap L_p(J;H_p^2(\Omega\backslash\Sigma)^3):\\
u|_{S_2}=0,\ u|_{S_1}\cdot\nu_{S_1}=0,\ P_{S_1}\left((\nabla {u}+\nabla {u}^{\sf T})\nu_{S_1}\right)=0\},
\end{multline*}
and denote by $_0\tilde{\mathbb{F}}(T)$ the space of data $(f,g_v,g_w,u_\Sigma)$ together with the compatibility conditions \eqref{eq:appauxlem2.3} \& \eqref{eq:appauxlem2.4} at $\partial\Sigma$. Note that
$$P_{S_1}\left((\nabla {u}+\nabla {u}^{\sf T})\nu_{S_1}\right)=0 \Leftrightarrow P_{S_1}\left(\mu(\nabla {u}+\nabla {u}^{\sf T})\nu_{S_1}\right)=0$$
at $S_1\backslash\partial\Sigma$.
Define $\tilde{L}:\!_0\tilde{\mathbb{E}}(T)\to\!_0\tilde{\mathbb{F}}(T)$ by
$$\tilde{L}u=
\begin{pmatrix}
\partial_t (\rho u)-\mu \Delta u\\
\Jump{\mu\partial_3 v}+\Jump{\mu\nabla_{x'} w}\\
\Jump{\mu\partial_3 w}\\
\Jump{u}
\end{pmatrix}.
$$
Since the norm in $_0\tilde{\mathbb{E}}(T)$ is the same as in $_0\mathbb{E}(T)$ and since
$$\|Lu\|_{_0\mathbb{F}(T)}=\|\tilde{L}u\|_{_0\tilde{\mathbb{F}}(T)}$$
for $u\in\!_0\tilde{\mathbb{E}}(T)$, it follows from \eqref{eq:appauxlem2.5} that $\tilde{L}$ is injective with closed range, i.e.\ $\tilde{L}$ is a semi-Fredholm operator. It is also crucial to observe that the constant $M>0$ is uniform on compact sets of $\mu>0$ and $\rho>0$, by continuity.

We replace the coefficients $(\rho_1,\rho_2,\mu_1,\mu_2)$ by $$(\rho_1^\tau,\rho_2^\tau,\mu_1^\tau,\mu_2^\tau):=
\tau(\rho_1,\rho_2,\mu_1,\mu_2)+(1-\tau)(1,1,1,1),\quad \tau\in[0,1],$$
and denote by $\tilde{L}_\tau:\!_0\tilde{\mathbb{E}}(J)\to\!_0\tilde{\mathbb{F}}(J)$ the corresponding operator which is induced by replacing $\rho$ and $\mu$ with $\rho^\tau$ and $\mu^\tau$, resectively. Note that $\tilde{L}_\tau$ satisfies the estimate
\begin{equation*}
\|u\|_{_0\tilde{\mathbb{E}}(T)}\le M\|\tilde{L}_\tau u\|_{_0\tilde{\mathbb{F}}(T)},
\end{equation*}
with some constant $M>0$ which is uniform with respect to $\tau\in[0,1]$. Hence $\tilde{L}_\tau$ is semi-Fredholm for each $\tau\in[0,1]$. By Step 1 of the proof, we already know that $L_0$ is a Fredholm operator with index zero. The continuity method for semi-Fredholm operators implies that $L_1$ is Fredholm with index zero as well. We remark that the reduction obtained in Step 2 of the proof is essential, since otherwise the viscosity coefficient $\mu$ appears in the definition of $\tilde{\mathbb{F}}(T)$. Replacing $\mu$ by $\mu^\tau$, it would follow that $\tilde{\mathbb{F}}(T)$ depends on $\tau$ as well.

2. The strategy for proof of the second assertion is the same as in the proof of Lemma \ref{lem:appaux1}. Will will not repeat the arguments.
\end{proof}

\section{The two-phase Stokes problem on the half line}

In this section we want to show that there exists $\omega_0>0$ such that for each $\omega\ge\omega_0$ the two-phase Stokes problem
\begin{align}\label{eq:AppStokes}
\begin{split}
\omega\rho u+\partial_t(\rho u)-\mu\Delta u+\nabla \pi&=f,\quad \text{in}\ \Omega\backslash\Sigma,\\
\div u&=f_d,\quad \text{in}\ \Omega\backslash\Sigma,\\
-\Jump{\mu \partial_3 v}-\Jump{\mu\nabla_{x'} w}&=g_v,\quad \text{on}\ \Sigma,\\
-2\Jump{\mu \partial_3 w}+\Jump{\pi}&=g_w,\quad \text{on}\ \Sigma,\\
\Jump{u}&=u_\Sigma,\quad \text{on}\ \Sigma,\\
P_{S_1}\left(\mu(\nabla u+\nabla u^{\sf T})\nu_{S_1}\right)&=P_{S_1}g_1,\quad \text{on}\ S_1\backslash\partial\Sigma,\\
u\cdot\nu_{S_1}&=g_2,\quad \text{on}\ S_1\backslash\partial\Sigma,\\
u&=g_3,\quad \text{on}\ S_2,\\
u(0)&=u_0,\quad \text{in}\ \Omega\backslash\Sigma,\\
\end{split}
\end{align}
has a unique solution $(u,\pi,\Jump{\pi})$ with maximal regularity of type $L_p$ on the half line $\mathbb{R}_+$. To this end we define
$$\mathbb{F}_1:=L_p(\mathbb{R}_+;L_p(\Omega)^3),\quad \mathbb{F}_2:=L_p(\mathbb{R}_+;H_p^1(\Omega\backslash\Sigma)),$$
$$\mathbb{F}_3:=W_p^{1/2-1/2p}(\mathbb{R}_+;L_p(\Sigma)^2)\cap L_p(\mathbb{R}_+;W_p^{1-1/p}(\Sigma)^2),$$
$$\mathbb{F}_4:=W_p^{1/2-1/2p}(\mathbb{R}_+;L_p(\Sigma))\cap L_p(\mathbb{R}_+;W_p^{1-1/p}(\Sigma)),$$
$$\mathbb{F}_5:=W_p^{1-1/2p}(\mathbb{R}_+;L_p(\Sigma)^3)\cap L_p(\mathbb{R}_+;W_p^{2-1/p}(\Sigma)^3),$$
$$\mathbb{F}_6:=W_p^{1/2-1/2p}(\mathbb{R}_+;L_p(S_1)^3)\cap L_p(\mathbb{R}_+;W_p^{1-1/p}(S_1\backslash\partial\Sigma)^3),$$
$$\mathbb{F}_7:=W_p^{1-1/2p}(\mathbb{R}_+;L_p(S_1))\cap L_p(\mathbb{R}_+;W_p^{2-1/p}(S_1\backslash\partial\Sigma)),$$
$$\mathbb{F}_8:=W_p^{1-1/2p}(\mathbb{R}_+;L_p(S_2))\cap L_p(\mathbb{R}_+;W_p^{2-1/p}(S_2)),$$
and $\tilde{\mathbb{F}}:=\times_{j=1}^{8}\mathbb{F}_j$ as well as
$$\mathbb{F}:=\{(f_1,\ldots,f_{8})\in \tilde{\mathbb{F}}:(f_2,f_5,f_7,f_{8})\in H_p^1(\mathbb{R}_+;\hat{H}_p^{-1}(\Omega))\}.$$
Furthermore, we set $X_{\gamma}:=W_p^{2-2/p}(\Omega\backslash\Sigma)^3$. Then we have the following result.
\begin{thm}\label{thm:AppStokes}
Let $\mu_j,\rho_j,H_j,\sigma>0$, $p>2$, $p\neq 3$ and let $G\in\mathbb{R}^{2}$ be open and bounded with $\partial G\in C^4$. Define $\Omega:=G\times (H_1,H_2)$ and let $\Sigma:=G\times\{0\}$. Let $S_1:=\partial G\times (H_1,H_2)$ and $S_2:=(G\times\{H_1\})\cup (G\times\{H_2\})$. Then there exists $\omega_0>0$ such that for each $\omega\ge\omega_0$ problem \eqref{eq:AppStokes} has a unique solution
$$u\in H_p^1(\mathbb{R}_+;L_p(\Omega)^3)\cap L_p(\mathbb{R}_+;H_p^2(\Omega\backslash\Sigma)^3),\quad \pi\in L_p(\mathbb{R}_+;\dot{H}_p^1(\Omega\backslash\Sigma)),$$
and
$$\Jump{\pi}\in W_p^{1/2-1/2p}(\mathbb{R}_+;L_p(\Sigma))\cap L_p(\mathbb{R}_+;W_p^{1-1/p}(\Sigma))$$
if and only if the data are subject to the following regularity and compatibility conditions.
\begin{enumerate}
\item $(f,f_d,g_v,g_w,u_\Sigma,g_1,g_2,g_3)\in\mathbb{F}$,
\item $u_0\in X_\gamma$,
\item $\div u_0=f_d|_{t=0}$, $-\Jump{\mu\nabla_{x'} w_0}-\Jump{\mu\partial_3 v_0}=g_v|_{t=0}$, $\Jump{u_0}=u_\Sigma|_{t=0}$,
\item $P_{S_1}(\mu(\nabla u_0+\nabla u_0^{\sf T})\nu_{S_1})=P_{S_1}g_1|_{t=0}$ $(p>3)$, $u_0\cdot \nu_{S_1}=g_2|_{t=0}$, $u_0=g_3|_{t=0}$,
\item $\Jump{g_2}=u_\Sigma\cdot \nu_{S_1}$,
\item $\Jump{(g_1\cdot e_3)/\mu-\partial_3 g_2}=\partial_{\nu_{S_1}}(u_\Sigma\cdot e_3),$
\item $P_{\partial \Sigma}[(D'v_\Sigma)\nu']=\Jump{P_{\partial \Sigma}g_1'/\mu},$
\item $(g_v|\nu_{S_1})=-\Jump{g_1\cdot e_3}$, $(g_3|\nu_{S_1})=g_2$,
\item $P_{\partial G}[\mu(D'g_3')\nu']=(P_{\partial G}{g}_{1}'),$
\item $\mu\partial_{\nu_{S_1}} (g_3\cdot e_3)+\mu\partial_3 g_2=g_1\cdot e_3$.
\end{enumerate}
\end{thm}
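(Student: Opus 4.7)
The plan is to adapt the localization-based proof of Theorem~\ref{thm:linmaxreg} (respectively Corollary~\ref{cor:linmaxreg}) to the present setting, where the height function $h$ is absent but the problem is posed on the half-line $\mathbb{R}_+$ with an exponential shift $\omega$. The overall strategy follows the scheme of Chapter~\ref{chptr:localization}: (i) reduce to vanishing initial data and homogeneous boundary data; (ii) apply the localization procedure with the solution operators from Sections~\ref{QS}, \ref{bentQS}, \ref{HS}, \ref{bentHS} for each chart; (iii) use the shift $\omega$ instead of the smallness of $T$ to close the Neumann series argument. Necessity of conditions (1)--(10) follows from standard trace theory and the hidden divergence compatibility explained around equation~\eqref{eq:regdiv}.

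First I would reduce the data so that $u_0=0$, $f=f_d=0$, $u_\Sigma\cdot\nu_\Sigma=g_2=g_3\cdot\nu_{S_1}=0$ and all remaining inhomogeneities have vanishing temporal trace at $t=0$, imitating Section~\ref{sec:RedofData}. The auxiliary problems used there (a parabolic transmission problem via Lemma~\ref{lem:appaux2}, a transmission problem for $\psi$ via Lemma~\ref{lem:appauxhighreg} to remove $f_d$, and a weak elliptic transmission problem via Lemma~\ref{lem:appauxlemweak} to absorb $g_w$ into a pressure contribution) must be solved in exponentially weighted $L_p$-spaces $e^{-\omega t}L_p(\mathbb{R}_+;\cdot)$; this works for $\omega\ge\omega_0$ because the parabolic generators from Lemma~\ref{lem:appaux1} and Lemma~\ref{lem:appaux2} admit the shifted version stated in those lemmas, and the elliptic reductions are time-pointwise. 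After this reduction the problem becomes purely one of solving the shifted Stokes system with zero initial and compatibility data.

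Next I would run the localization argument exactly as in part~(I) of the proof of Theorem~\ref{thm:linmaxreg}: cover $\overline{\Omega}$ by charts according to Proposition~\ref{partitionone2}, multiply the equations by a partition of unity $\{\varphi_k\}$ with $\partial_{\nu_{\partial G}}\varphi_k=\partial_{e_3}\varphi_k=0$ at $\partial\Sigma\cup\partial S_2$, and obtain local problems in full spaces, (bent) half-spaces, (bent) quarter-spaces, and (bent) two-phase half-spaces. For each of these model problems the shifted analogue with $\omega\ge\omega_0$ is solvable with uniform bounds on $\mathbb{R}_+$; this is classical for the pure and mixed one-phase Stokes problems and, for the two-phase half-space, is contained in the reduction performed in Section~\ref{HS} (there the argument ultimately rests on \cite[Theorem~5.1]{PrSi09a}, which provides maximal regularity on $\mathbb{R}_+$). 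Reassembling the local solutions produces, as in \eqref{eq:loc3}--\eqref{eq:loc4}, a fixed-point equation on $\mathbb{R}_+$ for $(\tilde u,\tilde\pi,\Jump{\tilde\pi})$ involving commutators with $\varphi_k$ and correction terms for the divergence.

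The main obstacle is closing the Neumann series without the smallness of $T$ that was available in Theorem~\ref{thm:linmaxreg}. Here the role of $T^\gamma$ is taken over by negative powers of $\omega$: the commutator terms $[\nabla,\varphi_k]$, $[\Delta,\varphi_k]$, and the divergence correction $\nabla\phi_k$ from the transmission problem \eqref{eq:loc6} are of lower order, and by interpolation between $e^{-\omega t}L_p(\mathbb{R}_+;L_p)$ and $e^{-\omega t}L_p(\mathbb{R}_+;H_p^2)$ one gains a factor $\omega^{-\alpha}$ with some $\alpha>0$, provided the model solution operators are bounded by $C/\omega^\alpha$ on the lower-order norms. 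The pressure part is controlled via Lemma~\ref{lem:RegPressure}, whose estimates are independent of the length of the interval and therefore extend to $J=\mathbb{R}_+$. Choosing $\omega_0$ large enough so that $C\omega^{-\alpha}\le 1/2$ uniformly over the finitely many charts yields invertibility of the perturbation $I-R$ and hence a bounded solution operator $\mathcal{S}\colon\mathbb{F}\times X_\gamma\to\mathbb{E}(\mathbb{R}_+)$, establishing both existence and, via the resulting a~priori estimate, uniqueness of the solution.
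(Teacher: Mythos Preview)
Your proposal is correct and follows essentially the same approach as the paper: localization as in Theorem~\ref{thm:linmaxreg}, with the smallness of $T$ replaced by the decay $C/\omega^{\alpha}$ of the lower-order commutator terms, obtained via interpolation after shifting $\partial_t\mapsto\omega+\partial_t$ in all model and auxiliary problems. The paper's proof is only a brief sketch of exactly this scheme, so your write-up in fact supplies more of the details (reduction of data, the role of Lemma~\ref{lem:RegPressure}, the chart-by-chart solution operators) than the paper itself does.
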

\begin{proof}
The proof is based on a localization procedure. However, in contrast to the proof of Theorem \ref{thm:linmaxreg} we are not able to control the commutator terms in the corresponding local problems which are of lower order by decreasing the length of the time interval. However, replacing the time derivative $\partial_t$ by $\omega+\partial_t$ in all auxiliary problems which were used in Chapter \ref{chptr:redmodprbl}, it follows that there exists $\omega_0>0$ such that each of these problems has maximal regularity of type $L_p$ on the half line $\mathbb{R}_+$, provided $\omega\ge \omega_0$. Indeed, this can be seen by studying the corresponding symbols $s(\lambda,\xi)$ of the differential operators. The parameter $\lambda$ is the Laplace transform of $\partial_t$, hence replacing $\lambda$ by $\omega+\lambda$ this yields the symbol $s_\omega(\lambda,\xi):=s(\omega+\lambda,\xi)$.

By means of interpolation and trace theory we are able to control all commutator terms which appear during the localization procedure by $C/\omega^a$ for some uniform $a\in (0,1)$ and some $C>0$ being independent of $\omega$. Choosing $\omega>0$ large enough, the norms of the lower order terms will become small. This yields the linear well-posedness of \eqref{eq:AppStokes} on the half line $\mathbb{R}_+$.
\end{proof}
As an immediate consequence of the last theorem, one obtains maximal regularity of type $L_p$ of \eqref{eq:AppStokes} in exponentially weighted spaces. To see this, we define $$e^{-\delta}\mathbb{F}_j:=\{f\in \mathbb{F}_j:[t\mapsto e^{\delta t} f(t)]\in \mathbb{F}_j\},$$
where $\delta\in\mathbb{R}$ and in the same way $e^{-\delta}\tilde{\mathbb{F}}$ and $e^{-\delta}\mathbb{F}$.

We write $\omega=\omega-\delta+\delta$ in \eqref{eq:AppStokes}, multiply each equation by $e^{\delta t}$ and use the formula $\partial_t (e^{\delta t}u(t))=e^{\delta t}(\delta u(t)+\partial_t u(t))$ to obtain the following result
\begin{cor}\label{cor:AppStokes}
Let the conditions of Theorem \ref{thm:AppStokes} be satisfied. Suppose that $\delta\in\mathbb{R}$ and let $\omega\ge \max\{\omega_0,\omega_0+\delta\}$. Then there exists a unique solution
$$u\in e^{-\delta}[H_p^1(\mathbb{R}_+;L_p(\Omega)^3)\cap L_p(\mathbb{R}_+;H_p^2(\Omega\backslash\Sigma)^3)],\quad \pi\in e^{-\delta}[L_p(\mathbb{R}_+;\dot{H}_p^1(\Omega\backslash\Sigma))],$$
and
$$\Jump{\pi}\in e^{-\delta}[W_p^{1/2-1/2p}(\mathbb{R}_+;L_p(\Sigma))\cap L_p(\mathbb{R}_+;W_p^{1-1/p}(\Sigma))]$$
of \eqref{eq:AppStokes} if and only if the data are subject to the conditions in Theorem \ref{thm:AppStokes} with $\mathbb{F}$ being replaced by $e^{-\delta}\mathbb{F}$.
\end{cor}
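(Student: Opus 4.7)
The plan is a straightforward time-exponential rescaling that reduces Corollary \ref{cor:AppStokes} to Theorem \ref{thm:AppStokes}. Introducing the rescaled unknowns $\tilde{u} := e^{\delta t} u$, $\tilde{\pi} := e^{\delta t}\pi$ and analogously the tilded inhomogeneities $(\tilde{f}, \tilde{f}_d, \tilde{g}_v, \tilde{g}_w, \tilde{u}_\Sigma, \tilde{g}_1, \tilde{g}_2, \tilde{g}_3) := e^{\delta t}(f, f_d, g_v, g_w, u_\Sigma, g_1, g_2, g_3)$, I would use the product rule
\begin{equation*}
\partial_t(e^{\delta t}\rho u) = e^{\delta t}\bigl(\partial_t(\rho u) + \delta \rho u\bigr)
\end{equation*}
to convert the first equation of \eqref{eq:AppStokes} into
\begin{equation*}
(\omega - \delta)\rho \tilde{u} + \partial_t(\rho \tilde{u}) - \mu\Delta \tilde{u} + \nabla\tilde{\pi} = \tilde{f}.
\end{equation*}
Every other equation of \eqref{eq:AppStokes} is pointwise in time and linear, so multiplication by $e^{\delta t}$ on both sides simply substitutes the tilded datum for the original one. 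The initial value $u(0) = u_0$ is invariant because $e^{\delta\cdot 0} = 1$.

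Next I would observe that the hypothesis $\omega \geq \max\{\omega_0, \omega_0 + \delta\}$ yields $\omega - \delta \geq \omega_0$, so Theorem \ref{thm:AppStokes} (with $\omega$ replaced by $\omega - \delta$) applies to the transformed system. The regularity and compatibility conditions of Theorem \ref{thm:AppStokes} for the tilded data are equivalent to the asserted conditions in $e^{-\delta}\mathbb{F}$ for the original data: the norms transform bijectively by the very definition of the weighted spaces, while the spatial and $t=0$ compatibility conditions are pointwise algebraic relations among traces and hence survive multiplication by the positive smooth function $e^{\delta t}$ unchanged (at $t=0$ the factor equals $1$). Applying Theorem \ref{thm:AppStokes} therefore yields a unique $(\tilde{u}, \tilde{\pi}, \Jump{\tilde{\pi}})$ in the unweighted regularity classes, and multiplying back by $e^{-\delta t}$ produces the unique solution $(u, \pi, \Jump{\pi})$ of \eqref{eq:AppStokes} in the weighted classes.

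The only point that must be verified carefully — but that presents no essential obstacle — is that multiplication by $e^{\delta t}$ is a topological isomorphism between $e^{-\delta}\mathbb{F}_j$ and $\mathbb{F}_j$, and analogously between the weighted and unweighted solution-space components, for each factor $j$. For the Bochner $L_p$-in-time components this is built into the definition of the weighted space. For the integer-order $H_p^k(\mathbb{R}_+;X)$-components it follows from the Leibniz rule together with the boundedness of $\delta e^{\delta t}$ against the weight. For the fractional $W_p^s(\mathbb{R}_+;X)$-components with $s\in(0,1)$ it follows from the Gagliardo-seminorm characterization via the elementary bound $|e^{\delta t} - e^{\delta s}| \leq |\delta|\, e^{\delta\max(s,t)}\,|t-s|$, and the higher-order time norms are then reduced inductively. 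The hidden temporal regularity in $H_p^1(\mathbb{R}_+; \hat{H}_p^{-1}(\Omega))$ transforms in the same manner. Once these isomorphism properties are in place, the biconditional claim follows, the converse (necessity) direction being obtained by applying the same transformation with $-\delta$ in place of $\delta$.
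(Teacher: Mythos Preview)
Your proposal is correct and follows exactly the same route as the paper: write $\omega=(\omega-\delta)+\delta$, multiply \eqref{eq:AppStokes} by $e^{\delta t}$, use $\partial_t(e^{\delta t}\rho u)=e^{\delta t}(\delta\rho u+\partial_t(\rho u))$, and apply Theorem \ref{thm:AppStokes} with shift parameter $\omega-\delta\ge\omega_0$. Your additional remarks on why $e^{\delta t}$ acts as an isomorphism between the weighted and unweighted function-space factors make explicit what the paper leaves tacit.
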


\section{Elliptic two-phase Stokes problems}

Let $\hat{f}\in L_p(\Omega)^3$, $\hat{f}_d\in H_p^1(\Omega\backslash\Sigma)$, $(\hat{g}_v,\hat{g}_w)\in W_p^{1-1/p}(\Sigma)^3$, $\hat{u}_\Sigma\in W_p^{2-1/p}(\Sigma)^3$, $\hat{g}_1\in W_p^{1-1/p}(S_1\backslash\partial\Sigma)$, $\hat{g}_2\in W_p^{2-1/p}(S_1\backslash\partial\Sigma)$ and $\hat{g}_3\in W_p^{2-1/p}(S_2)$ be given such that $(\hat{f}_d,\hat{u}_\Sigma,\hat{g}_2,\hat{g}_3)\in\hat{H}_p^{-1}(\Omega)$ and such that the compatibility conditions (5)-(10) in Theorem \ref{thm:AppStokes} are satisfied at $\partial S_1\cap\partial S_2$ and $S_1\cap\partial\Sigma$.

Define $f(t):=t e^{-t}\hat{f}$ and in the same way $f_d(t),u_\Sigma(t),g_j(t)$, $j\in\{v,w,1,2,3\}$. Then it holds that
$$(f,f_d,g_v,g_w,u_\Sigma,g_1,g_2,g_3)\in e^{-\delta}\mathbb{F}$$
for each $\delta\in (0,1)$ and the compatibility conditions (3)-(10) in Theorem \ref{thm:AppStokes} are satisfied with $u_0=0$. By Corollary \ref{cor:AppStokes} there exists a unique solution $(u,\pi,\Jump{\pi})$ of \eqref{eq:AppStokes} with $\omega\ge\omega_0+\delta$ such that
$$u\in e^{-\delta}[\, _0H_p^1(\mathbb{R}_+;L_p(\Omega)^3)\cap L_p(\mathbb{R}_+;H_p^2(\Omega\backslash\Sigma)^3)],\quad \pi\in e^{-\delta}[L_p(\mathbb{R}_+;\dot{H}_p^1(\Omega\backslash\Sigma))],$$
and
$$\Jump{\pi}\in e^{-\delta}[\,_0W_p^{1/2-1/2p}(\mathbb{R}_+;L_p(\Sigma))\cap L_p(\mathbb{R}_+;W_p^{1-1/p}(\Sigma))].$$
Therefore, the Laplace transform $\mathcal{L}$ of each term in \eqref{eq:AppStokes} is well defined. Observe that
$$
(\mathcal{L}f)(\lambda)=\int_0^\infty e^{-\lambda t} f(t)\ dt=\hat{f}\int_0^\infty te^{-(\lambda+1)t}\ dt=\frac{1}{(\lambda+1)^2}\hat{f},
$$
for $\Re\lambda>-1$, hence $(\mathcal{L}f)(0)=\hat{f}$. Doing the same for all the other data and defining $(\hat{u},\hat{\pi},\Jump{\hat{\pi}}):=\mathcal{L}(u,\pi,\Jump{\pi})$ we obtain that $(\hat{u},\hat{\pi},\Jump{\hat{\pi}})$ solves the elliptic problem
\begin{align}\label{eq:AppEllStokes}
\begin{split}
\omega\rho \hat{u}-\mu\Delta \hat{u}+\nabla \hat{\pi}&=\hat{f},\quad \text{in}\ \Omega\backslash\Sigma,\\
\div \hat{u}&=\hat{f}_d,\quad \text{in}\ \Omega\backslash\Sigma,\\
-\Jump{\mu \partial_3 \hat{v}}-\Jump{\mu\nabla_{x'} \hat{w}}&=\hat{g}_v,\quad \text{on}\ \Sigma,\\
-2\Jump{\mu \partial_3 \hat{w}}+\Jump{\hat{\pi}}&=\hat{g}_w,\quad \text{on}\ \Sigma,\\
\Jump{\hat{u}}&=\hat{u}_\Sigma,\quad \text{on}\ \Sigma,\\
P_{S_1}\left(\mu(\nabla \hat{u}+\nabla \hat{u}^{\sf T})\nu_{S_1}\right)&=P_{S_1}\hat{g}_1,\quad \text{on}\ S_1\backslash\partial\Sigma,\\
\hat{u}\cdot\nu_{S_1}&=\hat{g}_2,\quad \text{on}\ S_1\backslash\partial\Sigma,\\
\hat{u}&=\hat{g}_3,\quad \text{on}\ S_2,
\end{split}
\end{align}
whenever $\omega\ge\omega_0+\delta$. Let $Au:=(\mu/\rho)\Delta u-(1/\rho)\nabla\pi$ with domain
\begin{multline*}
D(A)=\{u\in H_p^2(\Omega\backslash\Sigma)^3\cap L_{p,\sigma}(\Omega):\Jump{\mu\partial_3 v}+\Jump{\mu\nabla_{x'}w}=0,\ \Jump{u}=0,\\
P_{S_1}(\mu (Du)\nu_{S_1})=0,\ u\cdot\nu_{S_1}=0,\ u|_{S_2}=0\},
\end{multline*}
and $\pi\in\dot{W}_p^1(\Omega\backslash\Sigma)$ is the unique solution of the weak transmission problem
\begin{align*}
\left(\frac{1}{\rho}\nabla \pi|\nabla\phi\right)_{L_2(\Omega)}&=\left(\frac{\mu}{\rho}\Delta u|\nabla\phi\right)_{L_2(\Omega)},\quad\phi\in W_{p'}^1(\Omega),\\
\Jump{\pi}&=2\Jump{\mu\partial_3 w},\quad \text{on}\ \Sigma.
\end{align*}
Since $A$ has a compact resolvent, the spectrum $\sigma(A)$ of $A$ consists solely of isolated eigenvalues having a finite multiplicity. Furthermore it holds that $\Re\sigma(A)=\sigma(A)\subset (-\infty,0)$ by Korn's inequality. Indeed, multiplying the eigenvalue problem $Au=\lambda u$ by $u$ and integrating by parts, we obtain
the identity
$$\lambda\|u\|_{L_2(\Omega)}^2=-\|\mu^{1/2}Du\|_{L_2(\Omega)}^2.$$
This yields the following result.
\begin{thm}\label{thm:AppEllStokes}
Let $\omega\ge 0$, $\mu_j,\rho_j,\sigma>0$, $p>2$, $p\neq 3$ and let $\Omega$ and $\Sigma$ as in Theorem \ref{thm:AppStokes}. Then there exists a unique solution $(\hat{u},\hat{\pi},\Jump{\hat{\pi}})$ with
$$\hat{u}\in H_p^2(\Omega\backslash\Sigma)^3,\quad \hat{\pi}\in \dot{H}_p^1(\Omega\backslash\Sigma),\quad\Jump{\hat{\pi}}\in W_p^{1-1/p}(\Sigma)$$
of \eqref{eq:AppEllStokes} if and only if the data are subject to the following regularity and compatibility conditions.
\begin{enumerate}
\item $\hat{f}\in L_p(\Omega)^3$, $\hat{f}_d\in H_p^1(\Omega\backslash\Sigma)$,
\item $(\hat{g}_v,\hat{g}_w)\in W_p^{1-1/p}(\Sigma)^3$, $\hat{u}_\Sigma\in W_p^{2-1/p}(\Sigma)^3$,
\item $\hat{g}_1\in W_p^{1-1/p}(S_1\backslash\partial\Sigma)$, $\hat{g}_2\in W_p^{2-1/p}(S_1\backslash\partial\Sigma)$,
\item $\hat{g}_3\in W_p^{2-1/p}(S_2)$, $(\hat{f}_d,\hat{u}_\Sigma,\hat{g}_2,\hat{g}_3)\in\hat{H}_p^{-1}(\Omega)$,
\item $\Jump{\hat{g}_2}=\hat{u}_\Sigma\cdot \nu_{S_1}$,
\item $\Jump{(\hat{g}_1\cdot e_3)/\mu-\partial_3 \hat{g}_2}=\partial_{\nu_{S_1}}(\hat{u}_\Sigma\cdot e_3),$
\item $P_{\partial \Sigma}[(D'\hat{v}_\Sigma)\nu']=\Jump{P_{\partial \Sigma}\hat{g}_1'/\mu},$
\item $(\hat{g}_v|\nu_{S_1})=-\Jump{\hat{g}_1\cdot e_3}$, $(\hat{g}_3|\nu_{S_1})=\hat{g}_2$,
\item $P_{\partial G}[\mu(D'\hat{g}_3')\nu']=(P_{\partial G}\hat{g}_{1}'),$
\item $\mu\partial_{\nu_{S_1}} (\hat{g}_3\cdot e_3)+\mu\partial_3 \hat{g}_2=\hat{g}_1\cdot e_3$,
\end{enumerate}
where $\nu'=\nu_{\partial G}$.
\end{thm}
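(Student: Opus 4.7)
The plan is to deduce the theorem by combining the Laplace-transform construction already sketched immediately before the statement (which yields existence for $\omega\ge\omega_0+\delta$) with a continuity argument in $\omega$ that brings the result down to $\omega\ge 0$, together with an energy identity for uniqueness. The necessity of conditions (1)-(4) follows from standard trace theory applied to the stated regularity of $(\hat u,\hat\pi,\Jump{\hat\pi})$. The condition $(\hat f_d,\hat u_\Sigma,\hat g_2,\hat g_3)\in \hat H_p^{-1}(\Omega)$ is obtained by testing the divergence equation against $\phi\in H_{p'}^1(\Omega)$ and integrating by parts as in \eqref{eq:regdiv}. Conditions (5)-(10) are obtained by comparing boundary traces of $\hat u$, $\nabla\hat u$, $\hat\pi$ on $S_1\cap\partial\Sigma$ and $\partial S_1\cap\partial S_2$.

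For sufficiency, Step 1 is the Laplace transform argument laid out before the statement: for fixed small $\delta\in(0,1)$ and any $\omega\ge\omega_0+\delta$, define the time-dependent data $f(t):=te^{-t}\hat f$ (and analogously for the other six inhomogeneities), verify that the compatibility conditions (5)-(10) lift to the temporal data and that $u_0=0$ matches the required trace at $t=0$, apply Corollary \ref{cor:AppStokes} to obtain a unique solution in the exponentially weighted class, and evaluate its Laplace transform at $\lambda=0$. This yields existence at the value $\omega_*:=\omega_0+\delta$.

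Step 2 is the extension to arbitrary $\omega\ge 0$. Let $T(\omega)$ denote the bounded linear operator from the solution space into the data space $\mathbb{F}$ (enlarged by the compatibility conditions (5)-(10)) defined by the left-hand side of \eqref{eq:AppEllStokes}. I would establish a uniform-in-$\omega$ a priori estimate of the form
$$\|(\hat u,\hat\pi,\Jump{\hat\pi})\|_{\mathbb{S}}\le C\,\|T(\omega)(\hat u,\hat\pi,\Jump{\hat\pi})\|_{\mathbb{D}}+C\,\|\hat u\|_{L_p(\Omega)}$$
for $\omega$ in compact subsets of $[0,\infty)$, by mimicking the localization procedure from the proof of Theorem \ref{thm:linmaxreg} applied to the stationary problem and using the model-problem results of Sections \ref{QS}-\ref{bentHS} in their elliptic versions. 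Combined with compactness of the embedding $H_p^2(\Omega\setminus\Sigma)\hookrightarrow L_p(\Omega)$, this makes $T(\omega)$ semi-Fredholm with closed range. Uniqueness is obtained by the standard energy identity: testing the homogeneous momentum equation with $\hat u$, integrating by parts, and using the stress-jump condition, the tangential stress condition on $S_1\setminus\partial\Sigma$, the no-slip condition on $S_2$, and the absence of interfacial pressure jump gives
$$\omega\|\rho^{1/2}\hat u\|_{L_2(\Omega)}^2+\tfrac12\|\mu^{1/2}D\hat u\|_{L_2(\Omega)}^2=0,$$
so $D\hat u=0$, hence $\hat u=0$ by Korn's inequality (which applies because $\hat u$ vanishes on the nontrivial piece $S_2\subset\partial\Omega$), after which $\nabla\hat\pi=0$ and $\Jump{\hat\pi}=0$ follow from the weak transmission problem in Lemma \ref{lem:appauxlemweak}.

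Since $T(\omega_*)$ is bijective by Step 1, it has Fredholm index zero; constancy of the index along the continuous family $\{T(\omega)\}_{\omega\in[0,\omega_*]}$ (which is well defined in the operator-norm topology thanks to the uniform a priori estimate) forces Fredholm index zero throughout, and combined with injectivity proved above yields bijectivity of $T(\omega)$ for every $\omega\ge 0$. The main obstacle will be the uniform-in-$\omega$ a priori estimate: one must verify that the commutator terms appearing after cutting off with a partition of unity carry only lower-order dependence on $\omega$ (since $\omega$ enters as a zero-order coefficient and the principal part is $\omega$-independent), and that the model-problem solution operators in Chapter \ref{chptr:redmodprbl} behave continuously in $\omega\ge 0$. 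Once these pieces are in place, the bookkeeping is routine and runs parallel to the corresponding parts of Theorem \ref{thm:linmaxreg}.
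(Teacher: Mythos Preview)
Your proposal is correct, and shares the essential ingredients with the paper's argument---the Laplace-transform construction for large $\omega$ together with the energy identity for uniqueness---but the route you take to extend down to $\omega\ge 0$ is somewhat different.

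The paper does not set up a Fredholm continuity argument for the full data-to-solution operator $T(\omega)$. Instead, it passes immediately to the reduced Stokes operator $A u:=(\mu/\rho)\Delta u-(1/\rho)\nabla\pi$ on $L_{p,\sigma}(\Omega)$ with the natural domain incorporating the homogeneous boundary and interface conditions. From the large-$\omega$ solvability one knows the resolvent of $A$ is nonempty and compact, so $\sigma(A)$ is discrete; the same energy identity you invoke shows $\sigma(A)\subset(-\infty,0)$, whence $(\omega-A)^{-1}$ exists for every $\omega\ge 0$. The passage from this to the full inhomogeneous problem is left implicit (it is the data-reduction machinery from Section~\ref{sec:RedofData}, which does not itself involve the resolvent parameter).

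Your approach trades this abstract spectral step for a direct Fredholm argument on $T(\omega)$: you need the uniform-in-$\omega$ a priori estimate with compact remainder, which you correctly identify as the main work, but in exchange you avoid having to separately argue the data reduction. The paper's route is shorter because compactness of the resolvent and location of the spectrum come essentially for free once the large-$\omega$ result is known; your route is more self-contained in that it handles the full operator at once. Either is acceptable, and the uniqueness step is identical in both.
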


\section{Miscellaneous results}

Let $G\subset\mathbb{R}^{n-1}$, $n\in\{2,3\}$ be a bounded domain with boundary $\partial G\in C^1$ and define $\Omega:=G\times (H_1,H_2)$, with $H_1<0<H_2$. Furthermore, let $\Sigma:=G\times\{0\}$, $S_1:=\partial G\times (H_1,H_2)$ and $S_2:=\bigcup_{j=1}^2\{G\times\{H_j\}\}$.
Define $x'=(x_1,\ldots,x_{n-1})^{\sf T}$ and $x=(x',x_n)^{\sf T}$. Assume that $h:G\to (H_1,H_2)$ is continuous and set
$$\Gamma:=\{x=(x',x_n)\in\Omega:x_n=h(x'),\ x'\in G\},$$
that is, $\Gamma$ is an $(n-1)$-dimensional manifold in $\Omega$ which is given as the graph of the height function $h$ over $\Sigma$.
\begin{prop}[Divergence theorem in cylindrical domains]\label{pro:divthmcyldom}
For each $u\in H_2^1(\Omega\backslash\Sigma)^n$ the following identity holds.
$$\int_{\Omega}\div u\ dx=\int_{S_1}u|_{S_1}\cdot\nu_{S_1}\ dS_1+\int_{S_2}u|_{S_2}\cdot\nu_{S_2}\ dS_2-\int_\Gamma\Jump{u}\nu_{\Gamma}\ d\Gamma,$$
where $\nu_{S_j}$ are the outer unit normals on $S_j$ and $\nu_{\Gamma}$ is the normal on $\Gamma$ pointing from
$$\Omega_1:=\{x=(x',x_n)\in\Omega:x_n<h(x'),\ x'\in G\}$$
to $\Omega_2:=\Omega\backslash\overline{\Omega_1}$.
\end{prop}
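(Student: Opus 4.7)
The plan is to reduce to a decomposition of $\Omega$ along $\Gamma$ and apply the classical divergence theorem on each side. Because $h$ is assumed only continuous, the subdomains $\Omega_1$ and $\Omega_2$ are not a priori Lipschitz, so I would begin by approximating: pick a sequence $h_k\in C^1(\overline G)$ with $h_k\to h$ uniformly (and $H_1<h_k<H_2$ for $k$ large), producing graphs $\Gamma_k$ and subdomains $\Omega_{1,k}=\{x_n<h_k(x')\}$, $\Omega_{2,k}=\Omega\setminus\overline{\Omega_{1,k}}$. For each $k$, both $\Omega_{j,k}$ are bounded Lipschitz domains, since $h_k$ is $C^1$ and $\partial G\in C^1$.

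For fixed $k$, the classical divergence theorem on Lipschitz domains yields, for $j=1,2$,
$$\int_{\Omega_{j,k}}\div u\,dx=\int_{\partial\Omega_{j,k}}u|_{\Omega_{j,k}}\cdot\nu_{\partial\Omega_{j,k}}\,d\sigma.$$
The boundary of $\Omega_{j,k}$ decomposes into the lateral part $S_1\cap\overline{\Omega_{j,k}}$, the horizontal face $G\times\{H_j\}\subset S_2$ (on which the outer normal coincides with $\nu_{S_j}$), and the interface $\Gamma_k$. By the definition of $\nu_{\Gamma_k}$, the outer unit normal to $\Omega_{1,k}$ on $\Gamma_k$ equals $+\nu_{\Gamma_k}$, whereas the outer unit normal to $\Omega_{2,k}$ on $\Gamma_k$ equals $-\nu_{\Gamma_k}$. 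Summing the two identities and observing that the lateral integrals recombine exactly to $\int_{S_1}u\cdot\nu_{S_1}\,dS_1$ (the traces of $u|_{\Omega_{1,k}}$ and $u|_{\Omega_{2,k}}$ on $S_1$ agree since $u\in H^1_2(\Omega\setminus\Sigma)^n$ has no jump across $S_1$) and similarly for $S_2$, the $\Gamma_k$-contributions telescope into
$$\int_{\Gamma_k}u|_{\Omega_{1,k}}\cdot\nu_{\Gamma_k}\,d\Gamma_k-\int_{\Gamma_k}u|_{\Omega_{2,k}}\cdot\nu_{\Gamma_k}\,d\Gamma_k=-\int_{\Gamma_k}\Jump{u}_k\cdot\nu_{\Gamma_k}\,d\Gamma_k,$$
which proves the identity for $\Gamma_k$ in place of $\Gamma$.

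It remains to pass $k\to\infty$. The left-hand side and the $S_1,S_2$ boundary integrals are independent of $k$, so the task is to show that
$$\int_{\Gamma_k}\Jump{u}_k\cdot\nu_{\Gamma_k}\,d\Gamma_k\longrightarrow \int_{\Gamma}\Jump{u}\cdot\nu_\Gamma\,d\Gamma.$$
Parametrizing $\Gamma_k,\Gamma$ over $G$ via $x'\mapsto(x',h_k(x'))$ and $x'\mapsto(x',h(x'))$, the surface measure is $\sqrt{1+|\nabla h_k|^2}\,dx'$, and the normal has the explicit form $(-\nabla h_k,1)^{\sf T}/\sqrt{1+|\nabla h_k|^2}$, so that after pulling back the integrand reduces to $\Jump{u}_k\cdot(-\nabla h_k,1)^{\sf T}$ integrated against $dx'$. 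The main obstacle is controlling the traces $\Jump{u}_k$ on the moving graphs: since $u\in H^1_2(\Omega\setminus\Sigma)^n$, the traces on $\Gamma_k$ can be expressed using one-dimensional Sobolev traces in $x_n$ along a.e.\ vertical line, and the uniform convergence $h_k\to h$ together with dominated convergence yields convergence of the pulled-back integrals. (Of course, in the application of this proposition within the paper, $h$ is always smooth enough that $\Omega_1,\Omega_2$ are Lipschitz a priori and this last step is unnecessary; the approximation argument is only needed to cover the stated continuous case.)
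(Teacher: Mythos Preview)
Your core argument---decompose $\Omega$ along $\Gamma$ into $\Omega_1$ and $\Omega_2$, apply the classical divergence theorem on each piece, and add so that the two interface contributions combine into $-\int_\Gamma\Jump{u}\cdot\nu_\Gamma\,d\Gamma$---is exactly the paper's proof. The paper simply asserts that $\Omega_1,\Omega_2$ are Lipschitz and cites the divergence theorem on Lipschitz domains; it does not attempt to treat the merely continuous case separately (and in every application only smooth $h$ occurs).

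The extra approximation step you add to cover continuous $h$ does not go through as written. The function $u$ has its jump along $\Gamma$, not along $\Gamma_k$; once you split $\Omega$ using $h_k\neq h$, the original discontinuity surface $\Gamma$ will generically cut through both $\Omega_{1,k}$ and $\Omega_{2,k}$, so $u|_{\Omega_{j,k}}\notin H^1_2(\Omega_{j,k})$ and the divergence theorem on $\Omega_{j,k}$ is unavailable. There is also a more basic obstruction: for merely continuous $h$ the unit normal $\nu_\Gamma$ and the surface measure $d\Gamma$ need not exist, so the right-hand side of the identity is itself undefined, and the limit you want to take (after pulling back, $\Jump{u}_k\cdot(-\nabla h_k,1)^{\sf T}$) involves $\nabla h_k$, which has no reason to converge. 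As you already note, restricting to Lipschitz $h$ suffices for every use in the paper, and with that hypothesis your argument and the paper's coincide.
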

\begin{proof}
The proof follows from the fact that $\Omega_j$ are both Lipschitz domains. Indeed, it is well-known that the divergence theorem is valid for Lipschitz domains, see for example \cite[Section 4.3]{EG92}.
\end{proof}

For $u\in H_2^1(\Omega)^n$, let $Du:=\nabla u+\nabla u^{\sf T}$. The following result is well-known: There exists a constant $C>0$ such that
$$\|u\|_{H_2^1(\Omega)}\le C\|Du\|_{L_2(\Omega)}$$
for all $u\in H_2^1(\Omega)^n$ such that $u=0$ on $\partial\Omega$ (in the sense of traces). The proof of this inequality relies on integration by parts. We will show that the estimate remains true, if $u=0$ on some subset of $\partial\Omega$ having a positive $(n-1)$-dimensional Hausdorff measure.
\begin{thm}[Korn's inequality]\label{thm:korn}
Let $\Omega\subset\mathbb{R}^n$, $n=2,3$, be a connected, bounded Lipschitz domain. Then there exists $C>0$, which does only depend on $\Omega$ such that the estimate
\begin{equation}\label{Kornineq}
\|\nabla u\|_{L_2(\Omega)}\le C\|Du\|_{L_2(\Omega)}
\end{equation}
holds for each $u\in H_2^1(\Omega)^n$ with $u=0$ on some subset $\partial_D\Omega$ of the boundary $\partial\Omega$ of $\Omega$ such that $\mathcal{H}^{n-1}(\partial_{D}\Omega)>0$, where $\mathcal{H}^d$ denotes the $d$-dimensional Hausdorff measure.
\end{thm}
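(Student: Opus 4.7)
My plan is to deduce the inequality from the classical Korn's second inequality via a compactness-and-contradiction argument, with the crucial algebraic step being that an infinitesimal rigid motion whose trace vanishes on a set of positive $\mathcal{H}^{n-1}$ measure must be identically zero.

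First I would recall Korn's second inequality for Lipschitz domains: there exists $K>0$ such that
\begin{equation*}
\|u\|_{H_2^1(\Omega)} \le K \bigl(\|u\|_{L_2(\Omega)} + \|Du\|_{L_2(\Omega)}\bigr),\qquad u\in H_2^1(\Omega)^n.
\end{equation*}
This is standard for bounded Lipschitz domains and is available without any boundary condition. It is the analytic ingredient on which everything rests.

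Next I would argue by contradiction to remove the $L_2$ term. Suppose \eqref{Kornineq} fails. Then there exists a sequence $(u_k)\subset H_2^1(\Omega)^n$ with $u_k|_{\partial_D\Omega}=0$ (in the trace sense), $\|\nabla u_k\|_{L_2(\Omega)}=1$, and $\|Du_k\|_{L_2(\Omega)}\to 0$. By Poincar\'e's inequality on the subspace $\{v\in H_2^1(\Omega)^n : v|_{\partial_D\Omega}=0\}$ (valid since $\mathcal{H}^{n-1}(\partial_D\Omega)>0$), $\|u_k\|_{L_2(\Omega)}\le C\|\nabla u_k\|_{L_2(\Omega)}=C$, so $(u_k)$ is bounded in $H_2^1(\Omega)^n$. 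After extracting a subsequence, $u_k\rightharpoonup u$ in $H_2^1(\Omega)^n$ and, by Rellich's theorem, $u_k\to u$ in $L_2(\Omega)^n$. Applying Korn's second inequality to the difference $u_k-u_\ell$ gives
\begin{equation*}
\|u_k-u_\ell\|_{H_2^1(\Omega)} \le K\bigl(\|u_k-u_\ell\|_{L_2(\Omega)}+\|Du_k-Du_\ell\|_{L_2(\Omega)}\bigr)\to 0,
\end{equation*}
so $(u_k)$ is Cauchy in $H_2^1(\Omega)^n$ and $u_k\to u$ strongly. Hence $\|\nabla u\|_{L_2(\Omega)}=1$, $Du=0$, and $u|_{\partial_D\Omega}=0$ by continuity of the trace.

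The final step is to conclude $u=0$, which contradicts $\|\nabla u\|_{L_2(\Omega)}=1$. Since $Du=0$ on the connected domain $\Omega$, $u$ is an infinitesimal rigid motion: $u(x)=a+Bx$ with $a\in\mathbb{R}^n$ and $B\in\mathbb{R}^{n\times n}$ skew-symmetric. For $n=2$ this means $u(x)=(a_1-bx_2,\,a_2+bx_1)$, whose zero set in $\mathbb{R}^2$ is either all of $\mathbb{R}^2$ (if $a=0$, $b=0$) or a single point; for $n=3$, $u(x)=a+\omega\times x$ has zero set either all of $\mathbb{R}^3$ or a single affine line. In either dimension, a nonzero $u$ has zero set of $\mathcal{H}^{n-1}$ measure zero, but $u$ vanishes on $\partial_D\Omega$ which has $\mathcal{H}^{n-1}(\partial_D\Omega)>0$, forcing $u\equiv 0$.

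The main obstacle, and the reason the assumption $\mathcal{H}^{n-1}(\partial_D\Omega)>0$ cannot be weakened, is precisely this last algebraic-geometric step: one must rule out nonzero rigid motions using only a set of positive codimension-one measure on $\partial\Omega$, which works because the vanishing locus of a nontrivial rigid motion is an affine subspace of codimension at least two in $\mathbb{R}^n$ for $n\in\{2,3\}$. The rest is a routine compactness argument built on Korn's second inequality.
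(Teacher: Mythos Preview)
Your proof is correct and follows essentially the same compactness-and-contradiction strategy as the paper: both use Korn's second inequality plus Rellich to produce a strongly convergent sequence whose limit is an infinitesimal rigid motion vanishing on $\partial_D\Omega$, and then derive a contradiction. There are two minor differences worth noting. First, the paper normalizes $\|u_m\|_{L_2}=1$ and thereby proves the Poincar\'e-type bound $\|u\|_{L_2}\le C\|Du\|_{L_2}$ as an intermediate step (combining it with Korn's second inequality at the end), whereas you normalize $\|\nabla u_k\|_{L_2}=1$ and invoke an external Poincar\'e inequality on $\{v:v|_{\partial_D\Omega}=0\}$; your route is slightly more direct but imports one more standard result. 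Second, for the rigid-motion step the paper gives a dimension-independent linear-algebra argument (showing that if a skew-symmetric $A$ annihilates an $(n-1)$-dimensional subspace then $A=0$), while you argue case-by-case for $n=2,3$ that the zero set of a nontrivial rigid motion has codimension at least two; both are valid, and your version is arguably cleaner in the low-dimensional setting of the statement.
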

\begin{proof}
Let us first show that we have some kind of Poincar\'{e} type estimate, that is, there exists a constant $C>0$ such that the estimate
$$\|u\|_{L_2(\Omega)}\le C\|Du\|_{L_2(\Omega)},$$
holds for all $u\in H_2^1(\Omega)^n$ with $u=0$ on some subset $\partial_D\Omega$ of the boundary $\partial\Omega$ of $\Omega$ such that $
\mathcal{H}^{n-1}(\partial_{D}\Omega)>0$.

Assume on the contrary that for each $m\in\mathbb{N}$ there exists $u_m\in H_2^1(\Omega)^n$ with $u_m=0$ on $\partial_D\Omega$ and $\|u_m\|_{L_2(\Omega)}=1$ such that
$$\|u_m\|_{L_2(\Omega)}\ge m\|Du_m\|_{L_2(\Omega)}.$$
It follows that $D u_m\to 0$ in $L_2(\Omega)$ as $m\to\infty$. By Korn's inequality for functions in $H_2^1(\Omega)^n$ (see \cite{Necas80}) we obtain
\begin{equation}\label{genKornineq}
\|u_m\|_{H_2^1(\Omega)}\le C_0(\|D u_m\|_{L_2(\Omega)}+\|u_m\|_{L_2(\Omega)}),
\end{equation}
for some constant $C_0>0$. It follows that $(u_m)\subset H_2^1(\Omega)^n$ is bounded. By Rellich's theorem, there exists a subsequence $(u_{m_k})$ such that $u_{m_k}\to u_*$ in $L_2(\Omega)$. Then $\|u_*\|_{L_2(\Omega)}=1$ and by trace theory it holds that $u_*(x)=0$ for a.e.\ $x\in\partial_D\Omega$. We make use of \eqref{genKornineq} one more time to conclude that $(u_{m_k})$ is a Cauchy sequence in $H_2^1(\Omega)^n$, since $D u_{m_k}\to 0$ in $L_2(\Omega)$. Therefore we obtain $u_{m_k}\to u_*$ even in $H_2^1(\Omega)$. Since
$$\|D u_{m_k}-D u_*\|_{L_2(\Omega)}\le C\|\nabla u_{m_k}-\nabla u_*\|_{L_2(\Omega)}\to 0$$
as $k\to\infty$ it follows readily that $D u_*=0$.

Therefore there exists a skew-symmetric matrix $A\in\mathbb{R}^{n\times n}$ and some $b\in \mathbb{R}^n$ such that $u_*(x)=Ax+b$ for a.e.\ $x\in\Omega$ (see \cite{Necas80}). Define $U:=\{x\in\mathbb{R}^n:Ax+b=0\}$. Then $U$ is an $(n-1)$-dimensional affine subspace of $\mathbb{R}^n$, since $\partial_{D}\Omega\subset U$ and $u_*\not\equiv 0$. Fix any $x_0\in U$ and define
$$U_0:=U-x_0:=\{x-x_0:x\in U\}.$$
It follows that $\dim U_0=n-1$ and $Ax=0$ for each $x\in U_0$. Let $U_0^\perp$ be the orthogonal complement of $U_0$ and let $y\in U_0^\perp$. Then $(x|Ay)=-(Ax|y)=0$ for each $x\in U_0$, since $A$ is skew-symmetric, wherefore $Ay\in U_0^\perp$. Furthermore we have $(Ay|y)=0$, since $A$ is skew-symmetric, hence $Ay\in (U_0^\perp)^\perp=U_0$ and therefore $Ay=0$ for each $y\in U_0^\perp$. But this means that $Ax=0$ for each $x\in\mathbb{R}^n$, since $\mathbb{R}^n=U_0\oplus U_0^\perp$. Thus, we have shown that $A=0$, hence $u_*(x)=b$ for some $b\in\mathbb{R}^n$. Since $\|u_*\|_{L_2(\Omega)}=1$ and $u_*(x)=0$ for a.e.\ $x\in\partial_D\Omega$ we have a contradiction.

Finally, the assertion of the proposition follows from the Poincar\'{e} type estimate combined with Korn's inequality for functions in $H_2^1(\Omega)^n$.
\end{proof}

Last but not least, we need an auxiliary result which is crucial for the proof of local well-posedness in Chapter \ref{chptr:LWP}.
\begin{prop}\label{prop:appaux}
Let $p>2$, $G\subset\mathbb{R}^2$ be a bounded domain with boundary $\partial G\in C^2$ and outer unit normal vector field $\nu$ which is $C^1$ in a neighborhood of $\partial G$. If $v\in W_p^{2}(G;\mathbb{R}^2)$ and $h\in W_p^{3-1/p}(G)$ such that $(v|\nu)=\partial_\nu h=0$ and $P_{\partial G}[(Dv)\nu]=0$, then $\partial_\nu (v|\nabla h)=0$.
\end{prop}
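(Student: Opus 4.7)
The plan is to reduce everything to computations on $\partial G$ using the orthonormal frame $(\tau,\nu)$, where $\tau$ is a unit tangent to $\partial G$ (well-defined up to sign since $\partial G$ is $1$-dimensional in $\mathbb{R}^2$), and to exploit the fact that differentiating a boundary condition tangentially stays zero. All traces involved make sense by standard trace theory since $p>2$: $v\in W_p^2(G;\mathbb{R}^2)$ gives $\partial_\nu v|_{\partial G}\in W_p^{1-1/p}(\partial G)$, and $h\in W_p^{3-1/p}(G)$ gives $\nabla h|_{\partial G}\in W_p^{2-2/p}(\partial G)$ as well as $\nabla^2 h|_{\partial G}\in W_p^{1-2/p}(\partial G)$, so the product rule and all the traces below are justified.

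First I would rewrite the two tangential traces at the boundary. From $(v|\nu)=0$ on $\partial G$ we have $v=(v\cdot\tau)\tau$ there, and from $\partial_\nu h=0$ on $\partial G$ we have $\nabla h=(\partial_\tau h)\tau$ there. Applying the product rule,
\begin{equation*}
\partial_\nu(v\cdot\nabla h)=(\partial_\nu v)\cdot\nabla h+v\cdot(\nabla^2 h)\nu
=(\partial_\tau h)(\tau\cdot\partial_\nu v)+(v\cdot\tau)(\nabla^2 h)(\tau,\nu)\quad\text{on }\partial G.
\end{equation*}
So the task reduces to proving $(\partial_\tau h)(\tau\cdot\partial_\nu v)+(v\cdot\tau)(\nabla^2 h)(\tau,\nu)=0$ on $\partial G$.

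Next I would expand the remaining two quantities using the other two boundary conditions. For $\tau\cdot\partial_\nu v$, I expand the symmetric gradient $Dv=\nabla v+\nabla v^{\sf T}$ and use that $P_{\partial G}[(Dv)\nu]=0$ implies $\tau\cdot(Dv)\nu=0$. A direct computation gives
\begin{equation*}
\tau\cdot(Dv)\nu=\tau\cdot\partial_\nu v+\partial_\tau(v\cdot\nu)-v\cdot\partial_\tau\nu,
\end{equation*}
and since $v\cdot\nu\equiv 0$ on $\partial G$ the tangential derivative $\partial_\tau(v\cdot\nu)$ vanishes there; hence $\tau\cdot\partial_\nu v=v\cdot\partial_\tau\nu=(v\cdot\tau)(\tau\cdot\partial_\tau\nu)$ on $\partial G$. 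For the Hessian term I would write
\begin{equation*}
(\nabla^2 h)(\tau,\nu)=\partial_\tau(\nabla h\cdot\nu)-\nabla h\cdot\partial_\tau\nu=\partial_\tau(\partial_\nu h)-\nabla h\cdot\partial_\tau\nu,
\end{equation*}
and since $\partial_\nu h\equiv 0$ on $\partial G$ its tangential derivative vanishes; using $\nabla h=(\partial_\tau h)\tau$ on $\partial G$ this gives $(\nabla^2 h)(\tau,\nu)=-(\partial_\tau h)(\tau\cdot\partial_\tau\nu)$.

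Substituting these two identities into the expression from the first step produces
\begin{equation*}
\partial_\nu(v\cdot\nabla h)=(\partial_\tau h)(v\cdot\tau)(\tau\cdot\partial_\tau\nu)-(v\cdot\tau)(\partial_\tau h)(\tau\cdot\partial_\tau\nu)=0,
\end{equation*}
which is the claim. There is no real obstacle here; the only point needing care is that the two derivatives $\partial_\tau$ and $\partial_\nu$ do not commute when acting on a function defined only near $\partial G$, so one must be systematic and write $\partial_\tau\nabla$ and $(\nabla^2 h)(\tau,\nu)$ correctly, picking up the extra terms $v\cdot\partial_\tau\nu$ and $\nabla h\cdot\partial_\tau\nu$ — and it is precisely the common factor $\tau\cdot\partial_\tau\nu$ (essentially the signed curvature of $\partial G$) produced by both of these that makes the two contributions cancel.
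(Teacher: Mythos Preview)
Your proof is correct and follows essentially the same approach as the paper: both start from the product-rule split $\partial_\nu(v\cdot\nabla h)=(\partial_\nu v\cdot\nabla h)+(v\cdot\nabla^2 h\,\nu)$, pass to the $(\tau,\nu)$ frame using $v=(v\cdot\tau)\tau$ and $\nabla h=(\partial_\tau h)\tau$ on $\partial G$, and then differentiate the boundary conditions $(v\cdot\nu)=0$ and $\partial_\nu h=0$ tangentially to extract the common factor $(v\cdot\partial_\tau\nu)$ (equivalently $(v\cdot\tau)(\tau\cdot\partial_\tau\nu)$) that makes the two terms cancel. The only cosmetic difference is that the paper first observes $((Dv)\nu\,|\,\nabla h)=0$ and then rewrites $(\partial_\nu v\,|\,\nabla h)=-(\nabla v\,\nu\,|\,\nabla h)$, whereas you expand $\tau\cdot(Dv)\nu$ directly; the resulting identities are the same.
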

\begin{proof}
An easy computation shows that
$$\partial_\nu (v|\nabla h)=(\partial_\nu v|\nabla h)+(v|\nabla^2 h\nu),$$
where $\partial_\nu v:=\nabla v^{\sf T}\nu$.

Note that $P_{\partial G}[(Dv)\nu]=0$ implies that $((Dv)\nu|\nabla h)=0$, since by assumption $\partial_\nu h=0$. This in turn yields $(\partial_\nu v|\nabla h)=-(\nabla v\nu|\nabla h)$. Making use of the representation $\nabla h=\tau\partial_\tau h+\nu\partial_\nu h=\tau\partial_\tau h$, where $\tau\in\mathbb{R}^2$ with $|\tau|=1$ and $(\tau|\nu)=0$, we obtain
$$(\nabla v\nu|\nabla h)=((\nabla h\cdot\nabla)v|\nu)=\partial_\tau h(\partial_\tau v|\nu)=-\partial_\tau h(v|\partial_\tau\nu).$$
Here we made use of the assumption $(v|\nu)=0$ and $\nabla h\cdot\nabla:=\sum_{j=1}^2\partial_j h\partial_j$.

Concentrating on the term $(v|\nabla^2 h\nu)$, we obtain
\begin{align*}
(v|\nabla^2 h\nu)&=\sum_{i,j=1}^2v_i\partial_i\partial_j h\nu_j=\sum_{i,j=1}^2[v_i\partial_i(\partial_jh \nu_j)-v_i\partial_jh\partial_i \nu_j]\\
&=(v\cdot\nabla)\partial_\nu h-\sum_{i,j=1}^2v_i\partial_jh\partial_i \nu_j=(v|\tau)\partial_\tau\partial_\nu h+(v|\nu)\partial_\nu^2 h-\sum_{i,j=1}^2v_i\partial_jh\partial_i \nu_j\\
&=-\sum_{i,j=1}^2v_i\partial_jh\partial_i \nu_j,
\end{align*}
since $(v|\nu)=\partial_\nu h=0$. Here it is important to observe that $\partial_\tau\partial_\nu h=0$, whenever $\partial_\nu h=0$, since $\partial_\tau$ denotes the derivative in tangential direction.

Note that $$\sum_{i,j=1}^2v_i\partial_jh\partial_i \nu_j=((v\cdot\nabla)\nu|\nabla h)=(v|\tau)(\partial_\tau\nu|\nabla h)=(v|\tau)\partial_\tau h(\partial_\tau\nu|\tau)=\partial_\tau h(\partial_\tau\nu|v),$$
since $v=\tau(v|\tau)+\nu(v|\nu)=\tau(v|\tau)$ and $\nabla h=\tau\partial_\tau h$. Finally, this yields
$$\partial_\nu (v|\nabla h)=\partial_\tau h[(\partial_\tau\nu|v)-(\partial_\tau\nu|v)]=0.
$$
The proof is complete.

\end{proof}


\backmatter
\bibliographystyle{amsalpha}

\bibliographystyle{amsalpha}


\printindex

\end{document}